\documentclass{amsart}
\usepackage{euscript,graphicx,epstopdf,amscd,amsgen,amsfonts,amssymb,latexsym,amsmath,amsthm,graphicx,mathrsfs,times,color,overpic,bbm, array}

\usepackage{fourier}

\usepackage{contour}
\contourlength{1pt}

\usepackage{enumitem}
\usepackage{fourier}
 \usepackage[colorlinks=true,backref=page]{hyperref}
\usepackage{xcolor}
\definecolor{forestgreen}{rgb}{0.0, 0.27, 0.13}

\newtheorem{maintheorem}{Theorem}
\newtheorem{maincorollary}[maintheorem]{Corollary}
\newtheorem*{Mthm}{Main Theorem}
\newtheorem{theorem}{Theorem}[section]
\newtheorem{lemma}[theorem]{Lemma}
\newtheorem{corollary}[theorem]{Corollary}
\newtheorem{proposition}[theorem]{Proposition}
\newtheorem{claim}[theorem]{Claim}
\newtheorem{scholium}[theorem]{Scholium}

\theoremstyle{definition}

\newtheorem{remark}[theorem]{Remark}
\newtheorem{definition}[theorem]{Definition}

\newtheorem{notation}[theorem]{Notation}

\def\DD{\mathcal D}
\def\mcE{\mathcal E}

\def\cE{\EuScript{P}^\ast}
\def\cH{\mathcal{H}}
\def\cL{\mathcal{L}}
\def\cN{\EuScript{N}}
\def\cM{\EuScript{M}}

\def\cP{\EuScript{P}}

\def\cW{\EuScript{W}}
\def\cV{\EuScript{V}}
\def\bN{\mathbb{N}}

\def\bR{\mathbb{R}}

\def\bX{\mathbb{X}}

\def\cU{\mathcal{U}}
\def\cS{\mathcal{S}}

\def\cs{\mathrm{cs}}
\def\cu{\mathrm{cu}}
\def\u{\mathrm{u}}
\def\s{\mathrm{s}}
\def\ss{\mathrm{ss}}
\def\uu{\mathrm{uu}}
\def\c{\mathrm{c}}
\def\b{\text{\rm blend}}

\def\h{\mathfrak{h}}

\def\fL{\ell^\sharp}
\def\ud{\,d}
\def\im{\operatorname{Im}}
\def\e{\varepsilon}
\def\sX{\mathfrak{X}}

\def\blender{\mathfrak{B}}

\DeclareMathOperator{\length}{length}
\DeclareMathOperator{\inrad}{inrad}

\DeclareMathOperator{\Per}{Per}

\DeclareMathOperator{\card}{card}
\DeclareMathOperator{\interior}{int}

\DeclareMathOperator{\Image}{Im}

\DeclareMathOperator{\PH}{PH}

\DeclareMathOperator{\RTPH}{RT}
\DeclareMathOperator{\BM}{BM}
\DeclareMathOperator{\MB}{BM}

\DeclareMathOperator{\Diff}{Diff}
\DeclareMathOperator{\Homeo}{Homeo}

\DeclareMathOperator{\Leb}{Leb}
\title[The amount of nonhyperbolicity]{The amount of nonhyperbolicity  for partially hyperbolic diffeomorphisms}

\author[L.~J.~D\'iaz]{Lorenzo J. D\'\i az}
\address{Departamento de Matem\'atica PUC-Rio, Marqu\^es de S\~ao Vicente 225, G\'avea, Rio de Janeiro 22451-900, Brazil}
\email{lodiaz@mat.puc-rio.br}

\author[K.~Gelfert]{Katrin~Gelfert}
\address{Instituto de Matem\'atica Universidade Federal do Rio de Janeiro, Av. Athos da Silveira Ramos 149, Cidade Universit\'aria - Ilha do Fund\~ao, Rio de Janeiro 21945-909,  Brazil}\email{gelfert@im.ufrj.br}

\author[J.~Zhang]{Jinhua~Zhang}
\address{School of Mathematical Sciences, Beihang University\\ 100191, Beijing, P.R. China}
\email{jinhua$\_$zhang@buaa.edu.cn}
 
\thanks{This research has been supported [in part] by 
CAPES -- Finance Code 001, by 
CNPq-grants  310069/2020-3 and 
305327/2022-4,  
CNPq Projeto Universal 430154/2018-6  and 
404943/2023-3, 
E-16/2014 INCT/FAPERJ and  
E-26/200.371/2023 CNE/FAPERJ (all Brazil). 
J. Zhang was partially supported by National Key R\&D Program of China (2022YFA1005801), National Key R\&D Program of China (2021YFA1001900), NSFC 12001027 and the Fundamental Research Funds for the Central Universities (all China).
The authors thank their home institutions for their hospitality while preparing this paper. They also thank A. Tahzibi and J. Yang for helpful comments. Especial thanks to B. Santiago for discussions during early stages of this paper.}

\keywords{partial hyperbolicity, Lyapunov exponent,  blender-horseshoe,  entropy, ergodic measure, Legendre-Fenchel  transform,  pressure, restricted variational principle, transitivity}
\subjclass[2000]{%
37D25, 
37D35, 
37D30, 
28D20, 
28D99
}

\newcommand{\eqdef}{\stackrel{\scriptscriptstyle\rm def}{=}}

\numberwithin{equation}{section}

\begin{document}

 \begin{abstract}
 We study the amount of nonhyperbolicity within a broad class of (nonhyperbolic) partially hyperbolic diffeomorphisms with a one-dimensional center. For that, we focus on the center Lyapunov exponent and the entropy of its level sets. We show that these entropies vary continuously and can be expressed in terms of restricted variational principles. In this study, no dynamical coherence is required. 
  
Of particular interest is the case where the exponent is zero. To study this level set, we construct a compact set  foliated by curves tangent to the central direction.  Within this set, the entropy attains the maximal possible (and positive) value. Moreover, finite-time Lyapunov exponents converge uniformly to zero. In this construction, we introduce a mechanism to concatenate center curves.

The class studied consists of those robustly transitive diffeomorphisms that have a pair of blender-horseshoes with different types of hyperbolicity and possess minimal strong stable and unstable foliations. This classes includes flow-type and circle-fibered diffeomorphisms as well as some derived from Anosov diffeomorphisms. It also includes the so-called anomalous examples which are dynamically incoherent. 
 \end{abstract}

\maketitle

\tableofcontents

\section{Introduction}

We describe the ``amount of nonhyperbolicity'' of several classes of robustly transitive and nonhyperbolic diffeomorphisms. This is done in terms of entropy and restricted variational principles. Here we deal with those that are partially hyperbolic with a one-dimensional center bundle. For that, we develop a thermodynamical formalism suited for this setting. 

Central to the thermodynamical formalism are (1) Lyapunov exponents (a special type of Birkhoff averages), (2) level sets of points with equal exponent, and (3) their metric and topological entropies. Frequently, the interrelation between these quantifiers is expressed through pressure functions and variational principles. In hyperbolic settings, one of the striking features of this formalism, one may highlight, is that there are uncountably many highly irregular, intertwined, fractal level sets whose quantifiers change in an even analytic way and can be described by employing Gibbs measures. To show this, one heavily relies on symbolic dynamics (see, for example, \cite{Bow:08,Rue:04,PesWei:97}). Some of the major difficulties to go beyond uniform hyperbolicity come from the non-existence of symbolic codings or lack of specification-like properties up to the occurrence of nonhyperbolic measures (for which ``Pesin theory'' is unavailable). 

To go beyond uniform hyperbolicity is a challenge, which this paper addresses. To do so, a minimum amount of hyperbolicity seems natural to require. Following the motto of Pugh--Shub ``a little hyperbolicity goes a long way'' \cite{PugShu:97}, here we assume \emph{partially hyperbolicity with a one-dimensional center bundle}. According to \cite[Preface]{BonDiaVia:05}, this is a noncritical setting where two types of hyperbolicity are ``intermingled'' and ``interact'' and share a common center bundle. The case of higher-dimensional centers is beyond scope. We will see that there is a huge class of examples that fall into the setting studied here.

To be more precise, assume that $f\in\Diff^1(M)$ is \emph{partially hyperbolic} with a $Df$-in\-variant splitting with three nontrivial directions $TM=E^\ss \oplus E^\c\oplus E^\uu$, where $E^\ss$ and $E^\uu$ are the uniformly contracting and expanding bundles, respectively, and $E^\c$ is the one-dimensional center bundle. Recall that there exist (uniquely defined) foliations tangent to the bundles $E^\ss$ and $E^\uu$, called \emph{strong stable} and \emph{strong unstable foliations} and denoted by $\cW^\ss$ and $\cW^\uu$, respectively, \cite{BriPes:74,HirPugShu:77}. Our hypotheses require that $f$ belongs to the class $\BM^1(M)$ of maps satisfying: the simultaneous minimality of strong foliations and the existence of two blender-horseshoes (for discussion see \cite{BonCroDiaWil:16}) of different types of hyperbolicity. These conditions provide precisely the ``interaction'' between two regions with different types of hyperbolicity mentioned above. We observe that these hypotheses are fairly common among partially hyperbolic diffeomorphisms which are nonhyperbolic and robustly transitive.
We emphasize that our methods do not require \emph{dynamical coherence}, that is, the existence of invariant foliations tangent to $E^\ss \oplus E^\c $ and $E^\c  \oplus E^\uu$ (and hence, the existence of a center foliation tangent to $E^\c$). We neither require the existence of a center foliation tangent to $E^\c$. 
Dynamically coherent examples of diffeomorphisms satisfying our hypotheses include 
flow-type ones (this includes perturbations of the time-one map of transitive Anosov flows), 
circle-fibered ones (partially hyperbolic diffeomorphisms with a one-dimensional compact center foliation), 
and some DA (derived from Anosov) diffeomorphisms. Dynamically incoherent ones include the so-called \emph{anomalous} examples in \cite{BonGogHamPot:20}. See Section \ref{ssec:robusts} for further details.

Under the above partially hyperbolic hypothesis, the center bundle is among those provided by the Oseledets splitting \cite{Ose:68} and the absence of hyperbolicity is reflected by a zero Lyapunov exponent associated with this bundle. Three distinct types of nonhyperbolic behavior can be identified:
\begin{itemize}
	\item[(P1)] occurrence of ergodic measures with zero Lyapunov exponent,
	\item[(P2)] existence of points having zero Lyapunov exponent,
	\item[(P3)] existence of ergodic measures whose (nonzero) Lyapunov exponents converge to 0.
\end{itemize}
Clearly, (P1) implies (P2), but the other implications \emph{a priori} do not hold. For $f\in\BM^1(M)$, all three properties simultaneously hold (see \cite{BocBonDia:16,BonZhan:19,DiaGelSan:20,YanZha:20}) and therefore we are in a genuinely nonhyperbolic setting.

To state our main result, let us introduce some notations. Consider the function
\begin{equation}\label{def:varphi}
	\varphi^\c\colon M\to\bR,\quad
	\varphi^\c(x)\eqdef\log\,\lVert D^\c f(x)\rVert,
	\quad\text{ where }\quad
	D^\c f(x)
	\eqdef Df|_{E^\c(x)},
\end{equation}
which, by our hypotheses, is continuous. A point $x\in M$ is \emph{$\varphi^\c$-regular} if
\[
\chi^\c(x)
\eqdef \lim_{n\to\pm\infty}\frac1n\sum_{k=0}^{n-1}\varphi^\c\circ f^k(x)
= \lim_{n\to\pm\infty}\frac1n\log\,\lVert D^\c f^n(x)\rVert.
\]
Given $\alpha$, consider the \emph{level set} of points with exponent $\alpha$,
\begin{equation}\label{defLevelalpha}
	\cL(\alpha)
	\eqdef \big\{x \in M\colon 
	\chi^\c(x)=\alpha\big\}.
\end{equation}
One key feature of our setting is that these sets are nonempty for a closed interval of values $\alpha$ containing zero in its interior. 

By the Birkhoff ergodic theorem, given $\mu\in\cM_{\rm erg}(f)$ (the set of $f$-ergodic measures), $\mu$-almost every point $x$ is $\varphi^\c$-regular and verifies
\[
\chi^\c(x)
= \chi^\c(\mu)
\eqdef \int\varphi^\c\,d\mu.
\]
We call this number the \emph{(center) Lyapunov exponent} of $\mu$. We can also consider the ``level sets'' of ergodic measures having exponent $\alpha$ and the numbers
\begin{equation}\label{e:Halpha}
\cH(\alpha)
	\eqdef \sup\big\{h_\mu(f)\colon\mu\in\cM_{\rm erg}(f),\chi^\c(\mu)=\alpha\big\}.
\end{equation}	
Let
\begin{equation}\label{eq:defalphamin}
	\alpha_{\rm min}^f
	\eqdef \inf_{\mu\in\cM_{\rm erg}(f)}\chi^\c(\mu),
	\quad
	\alpha_{\rm max}^f
	\eqdef \sup_{\mu\in\cM_{\rm erg}(f)}\chi^\c(\mu).
\end{equation}
If the map $f$ is clear from the context, we drop the super-index ${}^f$ from the notation. It is straightforward to check that the above infimum and supremum are both attained by ergodic measures (taking limit measures and considering  ergodic decompositions).

Our main theorem extends classical results in multifractal analysis for uniformly hyperbolic dynamics. For that, as a quantifier, we use the concept of topological entropy on a set $X\subset M$, $h_{\rm top}(f,X)$, introduced by Bowen \cite{Bow:73}.

\begin{Mthm}
Let $f\in\BM^1(M)$. Then the function $\alpha\in[\alpha_{\rm min},\alpha_{\rm max}]\mapsto h_{\rm top}(f,\cL(\alpha))$ is continuous and 
\begin{equation}\label{newoutofmind}
	0	
	< h_{\rm top}(f,\cL(0))
	= \lim_{\alpha\nearrow0}\cH(\alpha)
	= \lim_{\beta\searrow0}\cH(\beta).
\end{equation}
Moreover, there is a set $\Lambda(0)\subset\cL(0)$ consisting of non-degenerate curves tangent to the center bundle $E^\c$ such that 
\begin{itemize}[ leftmargin=0.7cm ]
\item[(1)] $h_{\rm top}(f,\Lambda(0))=h_{\rm top}(f,\cL(0))$, 
\item[(2)] $\lim_{n\to\pm\infty}\frac{1}{n}\log\|D^{\c}f^n(x)\|=0$, uniformly on $x\in \Lambda(0)$.
\end{itemize}
\end{Mthm}

This theorem is a consequence of Theorems~\ref{theorem1} and \ref{theorem2} stated in Section~\ref{ssec:contextresults}. In Theorem~\ref{theorem1}, we deal with the continuity of the entropy spectrum for the hyperbolic part (i.e. for $\alpha\neq 0$) and in Theorem~~\ref{theorem2}, we deal with the nonhyperbolic part (i.e. at $\alpha=0$).

 In the particular case of step skew-products with symbolic base dynamics (a full shift in finitely many symbols) and circle fibers, a precursor of this theorem is given in  \cite{DiaGelRam:19}. The proofs in \cite{DiaGelRam:19} strongly rely on a symbolic description that is unavailable here. Moreover, we deal with dynamics that do have a one-dimensional center bundle, but this bundle may not give rise to a center foliation (see, for instance, the examples in \cite{BonGogHamPot:20}) nor a foliation by circles (for instance the perturbations of time one-map of geodesic flows on negatively curved surfaces). This is a major difficulty overcome in this paper and will be further discussed in Section~\ref{ssec:contextresults}.  
 
One of the main difficulties is to deal with points with zero Lyapunov exponent. By \cite{LedYou1:85,LedYou2:85}, from measure-theoretical viewpoint, when the center exponent is zero then any entropy is ``created in the strong foliations'' (see also \cite{Tah:21}). Inspired by this fact, we introduce a flexible symbol-free strategy working directly with the strong foliations. Our proofs combine two tools. The first one is the use of pressure functions (and their Legendre-Fenchel transforms) and gradual approximation by (uniformly hyperbolic) exhausting families. This part implements and extends the formalism introduced in \cite{DiaGelRam:17,DiaGelRam:19}. The second one is a method to concatenate center curves, which has intrinsic interest.  It is used to construct the set $\Lambda(0)$ in the Main Theorem. For that, we use the weakly integrable property which holds for partially hyperbolic diffeomorphisms with one-dimensional center (see \cite[Proposition 3.4]{BriBurIva:04}, where this property is called \emph{weakly dynamically coherent}). 

In Section \ref{ssec:contextresults}, we provide more detailed statements towards the Main Theorem as well as a discussion of the context and techniques. We also describe the organization of this paper.  

\section{Statement of results and context}\label{ssec:contextresults}

In Section \ref{ss.context}, we provide details on the partially hyperbolic context and define the set of maps $\BM^1(M)$ that we study. In Section \ref{ss.results}, we state our main results. In Section \ref{ss.concatenation}, we present the concatenation and outbranching principle for center curves. In Section \ref{ssec:robusts}, we discuss the relevance of the set $\BM^1(M)$ and present classes of maps in this set. Finally, in Section \ref{ssec:organization}, we explain the organization of this paper.

\subsection{Context}\label{ss.context}

We endow the space $\Diff^1(M)$ of $C^1$-diffeomorphisms of $M$ with the $C^1$-topology. Recall that $f\in\Diff^1(M)$ is \emph{partially hyperbolic}, if there exist a $Df$-invariant splitting $TM=E^\ss \oplus E^\c\oplus E^\uu $ and an integer $N\in\mathbb{N}$ such that for every point $x\in M$,   
\begin{itemize}
	\item $\|Df^N|_{E^\ss (x)}\|<1$ and $\|Df^{-N}|_{E^\uu (x)}\|<1$;
	\item $\max\big\{\|Df^N|_{E^\ss (x)}\|\cdot \|Df^{-N}|_{E^\c(f^N(x))}\|, \|Df^N|_{E^\c(x)}\|\cdot \|Df^{-N}|_{E^\uu (f^N(x))}\|\big\}<1.$
\end{itemize}
Up to changing the metric, one can assume that $N=1$ (see~\cite{Gou:07}) and we do so throughout this paper. Denote by $\PH^1_{\c =1}(M)$ the set of $C^1$-partially hyperbolic diffeomorphisms on $M$ with a one-dimensional center bundle. By definition, this set is open in $\Diff^1(M)$. In what follows, we assume $f\in\PH^1_{\c=1}(M)$.

Given any $f\in \PH^1_{\c =1}(M)$ its space of ergodic measures naturally splits as follows
\[
	\cM_{\rm erg}(f)
	= \cM_{\rm erg,<0}(f)\cup\cM_{\rm erg,>0}(f)\cup\cM_{\rm erg,0}(f),
\]
into the sets of measures with negative, positive, and zero center Lyapunov exponents, respectively. The former we call \emph{hyperbolic} ergodic measures of \emph{contracting} and \emph{expanding type}, respectively. The ones in $\cM_{\rm erg,0}(f)$ are \emph{nonhyperbolic}. \emph{A priori}, without further hypotheses, any of these components (and also any pair of them) can be empty. The most interesting case, and the focus of this paper, occurs when all three components are nonempty. This happens when $f\in\BM^1(M)$ as we proceed to describe.

We start introducing some terminology. Given a \emph{saddle}, that is, a hyperbolic periodic point, its \emph{$\u$-index} is the dimension of its unstable bundle. Any saddle has either $\u$-index $d^\uu\eqdef\dim(E^\uu)$ or $\u$-index $d^\uu+1$. \emph{Transitivity} of $f$ means that this map has a dense orbit. A property of a diffeomorphism is \emph{$C^1$-robust} if it holds in a neighborhood of it in $\Diff^1(M)$. We define $\RTPH^1(M)$ as the subset of $\PH^1_{\c =1}(M)$ of diffeomorphisms which are $C^1$-robustly transitive and have saddles of different $\u$-indices. By its very definition, 
 $\RTPH^1(M)$ is open in $\Diff^1(M)$ and every  $f\in \RTPH^1(M)$ is not hyperbolic. A key property of $\RTPH^1(M)$ is that there is a $C^1$-open and -dense subset formed by diffeomorphisms which possess \emph{stable} and \emph{unstable blender-horseshoes}, simultaneously (see Section \ref{ssec:minfoli}). Blender-horseshoes are important local plugs for obtaining topological and ergodic properties. However, they alone are insufficient for describing the (nonempty) sets $\cM_{\rm erg,0}(f)$ and $\cL(0)$. To achieve this one needs to ``dynamically connect"  these blender-horseshoes. This connection is obtained under the assumption of (simultaneous) minimality of the strong stable and unstable foliations. Recall that a foliation of $M$ is \emph{minimal} if every one of its leaves is dense in $M$. This approach is reminiscent of \cite{GorIlyKleNal:05}, where the systems have ``center-contracting'' and ``center-expanding'' regions with orbits going from one region to the other one.
This discussion provides the context for our two key hypotheses.

\begin{definition}[The set $\BM^1(M)$]\label{defr.ourhypotheses}
The set $\BM^1(M)$ consists of  diffeomorphisms $f\in  \PH^1_{\c =1}(M)$ satisfying the following two properties:
\begin{enumerate}[ leftmargin=1cm ]
	\item[(M)] the foliations $\cW^\uu$ and $\cW^\ss$ are both minimal,
	\item[(B)] there are $n^\pm\in\bN$ such that $f^{n_-}$ has a stable blender-horseshoe and $f^{n_+}$ has an unstable blender-horseshoe.
	\end{enumerate}
\end{definition}

We postpone the precise definitions of the blender-horseshoes in (B) to Section \ref{ss.bh}. The interrelation between properties (B) and (M), and their relation with robust transitivity are discussed in Sections~\ref{ssec:ocblen} and \ref{ssec:minfoli}. We discuss examples in Section \ref{ssec:robusts}.

\begin{remark}[$\BM^1(M)$ is $C^1$-open]\label{r.defr.ourhypotheses}
	Condition (B) is $C^1$-open, while condition (M) itself \emph{a priori} is not $C^1$-open. However, if (M) and (B) hold for a diffeomorphism, then both hold in any small $C^1$-neighborhood of it. This is essentially a consequence of \cite{BonDiaUre:02}, see also Lemma~\ref{l.robustness-of-BM}.	
	Note that $\BM^1(M)\subset\RTPH^1(M)$.
\end{remark} 

\begin{remark}[Topology of the space of ergodic measures]\label{BM1almpa}
	Given $f\in\BM^1(M)$, the occurrence of un-/stable blender-horseshoes implies that the spectrum $\{\chi^\c(\mu)\colon\cM_{\rm erg}(f)\}$ contains negative and positive values. Moreover, the interaction between saddles of different indices (provided by the minimality of the foliations and the blender-horseshoes) implies that this spectrum also contains the value 0. Moreover, the value $0$ is accumulated by negative and positive values (see \cite[Theorem A]{BonZhan:19} and also \cite{DiaGelSan:20,YanZha:20}). These properties have their counterpart on the space of ergodic measures: every measure in $\cM_{\rm erg,0}(f)$  can be approximated in the weak$\ast$-topology and in entropy by measures  in $\cM_{\rm erg,<0}(f)$ and in $\cM_{\rm erg,>0}(f)$ (see \cite{DiaGelSan:20,YanZha:20}). Moreover, there are measures in $\cM_{\rm erg,0}(f)$ with positive entropy \cite{BocBonDia:16}. 
\end{remark}

\subsection{Results}\label{ss.results}

The following result states a \emph{restricted variational principle} for the level sets in \eqref{defLevelalpha} in terms of their topological and metric entropies. Recall  the number  $\cH(\alpha)$ defined in \eqref{e:Halpha} and recall \eqref{eq:defalphamin}.

\begin{maintheorem}[Entropy spectrum]\label{theorem1}
Let $f\in\BM^1(M)$. The following properties hold:
\begin{enumerate}[label=$\bullet$, leftmargin=0.5cm ]
\item $\alpha_{\rm min}<0<\alpha_{\rm max}$ and 
\[
		\big\{\chi^\c(\mu)\colon \mu\in\cM_{\rm erg}(f)\big\}
		= [\alpha_{\rm min},\alpha_{\rm max}]. 
\]	
\item $\cL(\alpha)\ne\emptyset$ for every $\alpha\in[\alpha_{\rm min},\alpha_{\rm max}]$,  
\item the map $\alpha\mapsto h_{\rm top}(f,\cL(\alpha))$ is continuous and concave on $[\alpha_{\rm min},0) $ and $(0,\alpha_{\rm max}]$, respectively, and satisfies
\[
			h_{\rm top}(f,\cL(\alpha))
			= \cH(\alpha),\quad
			\alpha\in[\alpha_{\rm min},0)\cup(0,\alpha_{\rm max}].
\]
\end{enumerate}
Letting
\begin{equation}\label{resvp0}
		\h^-(f)\eqdef \lim_{\alpha\nearrow0}\cH(\alpha)
		\quad\text{ and }\quad
		\h^+(f)\eqdef	\lim_{\beta\searrow0}\cH(\beta)		,
\end{equation}
it holds	
\begin{equation}\label{outofmind}
	0	
	< \cH(0)
	\le h_{\rm top}(f,\cL(0))
	\le \h^-(f)
	= \h^+(f)\eqdef \h(f).
\end{equation}
\end{maintheorem}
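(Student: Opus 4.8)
\emph{Proof strategy.} The plan is to reduce everything to uniformly hyperbolic subsystems --- ``exhausting families'' of horseshoes --- and pass to a limit, extending the pressure / Legendre--Fenchel formalism of \cite{DiaGelRam:17,DiaGelRam:19} to the present symbol-free setting. First, for the spectrum: the two blender-horseshoes of~(B) have different types of hyperbolicity, hence carry periodic points of both $\u$-indices, i.e.\ with center exponent of both signs, so $\alpha_{\rm min}<0<\alpha_{\rm max}$; that every $a\in(\alpha_{\rm min},\alpha_{\rm max})$ is realized follows from an interpolation argument (concatenate long orbit segments that are $(\mu_1,\delta)$- and $(\mu_2,\delta)$-typical for ergodic $\mu_1,\mu_2$ with $\chi^\c(\mu_1)<a<\chi^\c(\mu_2)$ in the right proportion, the heteroclinic connections being supplied by the minimality of $\cW^\ss,\cW^\uu$ and the blenders, and pass to an ergodic component of a weak$\ast$ limit); that $0$ is in the spectrum is \cite{BonZhan:19} (Remark~\ref{BM1almpa}), and the extrema are attained as noted after \eqref{eq:defalphamin}. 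For $\cL(\alpha)\neq\emptyset$: pick ergodic $\mu$ with $\chi^\c(\mu)=\alpha$; by Birkhoff's theorem $\mu$-a.e.\ point is $\varphi^\c$-regular with exponent $\alpha$.

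For the hyperbolic part ($\alpha\neq0$), I would construct, say on $[\alpha_{\rm min},0)$, an increasing sequence $\Lambda_1\subset\Lambda_2\subset\cdots$ of compact $f$-invariant uniformly hyperbolic sets of contracting center type (built from the stable blender-horseshoe plus finitely many heteroclinic connections coming from the minimal foliations) such that
\[
\cH(\alpha)=\sup_{n\in\bN}\,\sup\big\{\,h_\mu(f)\colon\mu\in\cM_{\rm erg}(f|_{\Lambda_n}),\ \chi^\c(\mu)=\alpha\,\big\},\qquad\alpha\in[\alpha_{\rm min},0),
\]
using the standard approximation of hyperbolic ergodic measures by basic sets (weak$\ast$ and in entropy), refined near $\alpha=0$ via \cite{DiaGelSan:20,YanZha:20}; an analogous expanding-type family is built on $(0,\alpha_{\rm max}]$. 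On each $\Lambda_n$ the continuous potential $\varphi^\c$ satisfies the classical conditional variational principle: $h_{\rm top}(f|_{\Lambda_n},\cL(\alpha)\cap\Lambda_n)$ equals the Legendre--Fenchel transform $\cF_n(\alpha)$ of $q\mapsto P(q\varphi^\c,f|_{\Lambda_n})$ on the interior of the relevant spectrum, and $\cF_n$ is there concave and continuous. Letting $n\to\infty$, $\cF_n\nearrow\cH$ on $[\alpha_{\rm min},0)$, so $\cH$ is concave there, with continuity on the interior by a standard convexity argument; symmetrically on $(0,\alpha_{\rm max}]$. The identity $h_{\rm top}(f,\cL(\alpha))=\cH(\alpha)$ then splits: ``$\ge$'' is Bowen's inequality ($\mu(\cL(\alpha))=1$ for ergodic $\mu$ with $\chi^\c(\mu)=\alpha$), while ``$\le$'' uses that on $\cL(\alpha)$ the finite-time averages of $\varphi^\c$ converge to $\alpha\neq0$, so a Pliss/Katok-type argument confines the relevant orbit pieces to the exhausting horseshoes and a Bowen-ball count weighted by $e^{q\sum_{k<n}\varphi^\c\circ f^k}$ against the pressure of $f|_{\Lambda_n}$ (which, being single-signed, only ``sees'' measures of that center sign) gives $h_{\rm top}(f,\cL(\alpha))\le\lim_n\cF_n(\alpha)=\cH(\alpha)$.

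For $\alpha=0$: $\cH(0)>0$ is \cite{BocBonDia:16}, and $\cH(0)\le h_{\rm top}(f,\cL(0))$ is again Bowen's inequality. Since every $x\in\cL(0)$ has $\limsup_n\tfrac1n\sum_{k<n}\varphi^\c(f^kx)\le\eta$ and $\liminf_n\tfrac1n\sum_{k<n}\varphi^\c(f^kx)\ge-\eta$ for all $\eta>0$, a weighted Bowen-ball covering gives $h_{\rm top}(f,\cL(0))\le\inf_{q\in\bR}P(q\varphi^\c)$, and the exhausting families let one identify this infimum with $\lim_{\alpha\nearrow0}\cH(\alpha)=\h^-(f)$ (and with $\h^+(f)$). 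The equality $\h^-(f)=\h^+(f)$ is obtained by a flip-flop transfer: a contracting ergodic measure with center exponent near $0$ and entropy near $\h^-(f)$ is turned, using a blender-horseshoe of opposite type and the minimal foliations, into an expanding ergodic measure with center exponent near $0$ and entropy at least $\h^-(f)-\e$, and symmetrically; so $\h^-(f)=\h^+(f)=:\h(f)$ and \eqref{outofmind} holds.

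\emph{Main obstacle.} The crux is the value $\alpha=0$: with no symbolic coding available (unlike \cite{DiaGelRam:19}) and no hyperbolicity at exponent $0$, identifying $\inf_q P(q\varphi^\c)$ with $\h^\pm(f)$ and proving $\h^-(f)=\h^+(f)$ --- without any a priori symmetry between the contracting and expanding regimes --- must be done entirely through the exhausting-family machinery and the blender-horseshoe/minimal-foliation connection; the complementary bound $h_{\rm top}(f,\cL(0))\ge\h(f)$ and the structure of the foliated set $\Lambda(0)$ belong to Theorem~\ref{theorem2}.
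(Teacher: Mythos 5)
Your overall architecture (exhausting families of basic sets, restricted pressures and their Legendre--Fenchel transforms, blender connections for $\h^-(f)=\h^+(f)$) matches the paper's, and the first two bullets and the ``$\ge$'' direction of the restricted variational principle follow essentially the same route. The gap is in the upper-bound directions, and it is sharpest at $\alpha=0$.

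You propose $h_{\rm top}(f,\cL(0))\le\inf_{q\in\bR}P(q\varphi^\c)$ (full topological pressure) and claim the exhausting families identify this infimum with $\h(f)$. That identification fails. The full pressure dominates both restricted pressures, $P(q\varphi^\c)\ge\max\{\cP_{<0}(q),\cP_{>0}(q)\}$. Now $\cP_{<0}$ is nonincreasing, convex, with $\inf_q\cP_{<0}(q)=\cE_{<0}(0)=\h^-(f)$, and by the estimate in Remark~\ref{remLFTransforFigures} (via Theorem~\ref{the:goingotherside}) this value is attained only on a ray $q\ge K(f)\h(f)$; symmetrically $\cP_{>0}$ attains $\h^+(f)$ only on the ray $q\le -K(f)\h(f)$. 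Since these rays are disjoint, there is no $q^*$ with $\max\{\cP_{<0}(q^*),\cP_{>0}(q^*)\}=\h(f)$, so $\inf_qP(q\varphi^\c)>\h(f)$ in general. In fact $\inf_qP(q\varphi^\c)$ equals the concave conjugate of $P$ at $0$, i.e. $\sup\{h_\mu(f)\colon\mu\text{ invariant},\ \int\varphi^\c\,d\mu=0\}$; averaging an expanding and a contracting measure of maximal entropy in the appropriate proportion shows this can equal $h_{\rm top}(f)$, and the paper explicitly does not know whether $h_{\rm top}(f,\cL(0))<h_{\rm top}(f)$ (Remark~\ref{rem:flipflop}), so your bound can be vacuous. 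The paper gets the sharp bound $h_{\rm top}(f,\cL(0))\le\h(f)$ by a genuinely different mechanism: Propositions~\ref{pro:skel} and~\ref{pro:skelhors} take any large $(m,\varepsilon)$-separated subset of $\cL(0)$ and, exploiting the minimal foliations and the blenders to concatenate these orbit segments, produce a single expanding-type basic set $\Lambda$ with $h_{\rm top}(f,\Lambda)\ge h-\gamma$ and all exponents in $(0,\varepsilon)$; taking the mme of $f|_\Lambda$ and invoking the already-proved restricted variational principle at that exponent yields $h_{\rm top}(f,\cL(0))-\gamma\le\cH(\alpha)$ for some $\alpha\in(0,\varepsilon)$, hence $\le\h^+(f)$.

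A milder version of the same gap affects your sketch for $\alpha\ne0$: ``a Pliss/Katok-type argument confines the relevant orbit pieces to the exhausting horseshoes'' cannot be taken literally, since the sets $\bX_i$ in the exhausting family are chosen to approximate measures, and a point of $\cL(\alpha)$ has no reason to shadow orbits inside any fixed $\bX_i$; and a weighted Bowen-ball count against the \emph{full} pressure only gives $h_{\rm top}(f,\cL(\alpha))\le\inf_q(P(q\varphi^\c)-q\alpha)$, which, for $\alpha$ close to $0^-$, can strictly exceed $\cE_{<0}(\alpha)=\cH(\alpha)$ because the optimal $q$ is large and there $\cP_{>0}$ dominates $P$. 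The paper instead builds a \emph{new} contracting-type basic set from an $(m,\varepsilon)$-separated subset of $\cL(\alpha)$ (again via Propositions~\ref{pro:skel} and~\ref{pro:skelhors}), with all center exponents in $(\alpha-\varepsilon,\alpha+\varepsilon)$, and concludes $h_{\rm top}(f,\cL(\alpha))\le\cE_{<0}(\alpha)+2\gamma$ from the continuity of $\cE_{<0}$. That pre-skeleton/blender construction is the essential extra ingredient your outline is missing.
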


Indeed, we will see in Theorem \ref{theorem2} that in \eqref{outofmind} we have equality $h_{\rm top}(f,\cL(0))=\h(f)$. 

Theorem \ref{theorem1} is proved in Section \ref{sec:proofThm1}. The proof involves the construction of so-called skeletons (see Section \ref{sepreskeleton}) and the Legendre-Fenchel transform of a restricted pressure function (see  Section \ref{sec:press}). 

\begin{figure}[h]
		\begin{overpic}[scale=.5]{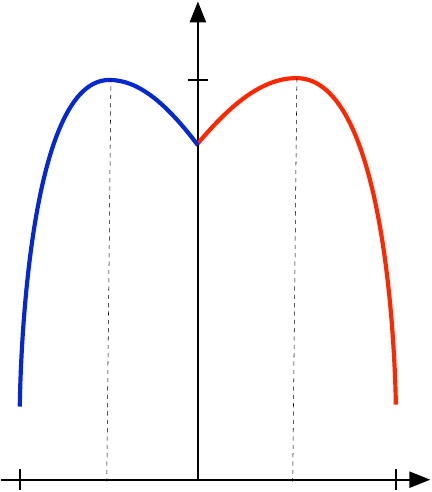}
			\put(43,93){\small{$h_{\rm top}(f,\cL(\alpha))$}}
			\put(0,90){\small{(a)}}
			\put(84,-5){\small{$\alpha$}}
			\put(16,-5){\small{$\alpha_{\rm max}^-$}}
			\put(56,-5){\small{$\alpha_{\rm max}^+$}}
		\end{overpic}\hspace{0.5cm}
		\begin{overpic}[scale=.5]{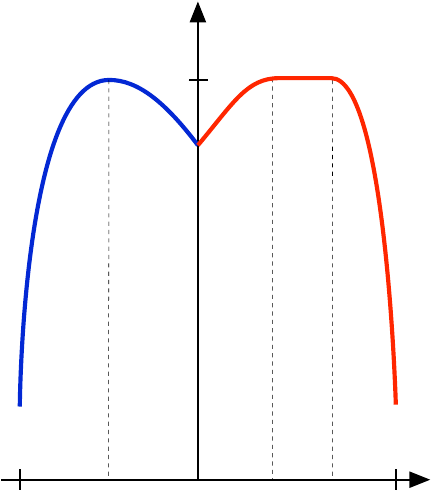}
			\put(43,93){\small{$h_{\rm top}(f,\cL(\alpha))$}}
			\put(0,90){\small{(b)}}
			\put(84,-5){\small{$\alpha$}}
			\put(16,-5){\small{$\alpha_{\rm max}^-$}}
			\put(52,-5){\small{$\alpha_1^+$}}
			\put(64,-5){\small{$\alpha_2^+$}}
		\end{overpic}\hspace{0.5cm}
		\begin{overpic}[scale=.5]{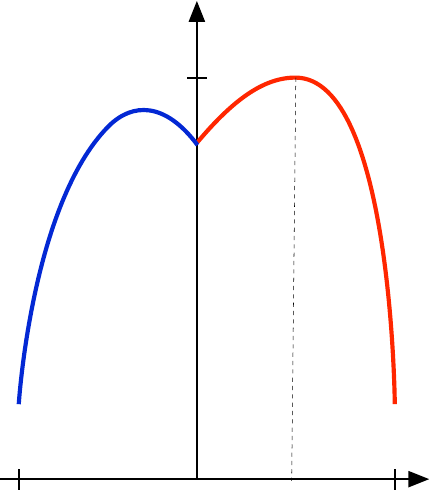}
			\put(43,93){\small{$h_{\rm top}(f,\cL(\alpha))$}}
			\put(0,90){\small{(c)}}
			\put(84,-5){\small{$\alpha$}}
			\put(56,-5){\small{$\alpha_{\rm max}^+$}}
		\end{overpic}\vspace{1cm}
		\begin{overpic}[scale=.5]{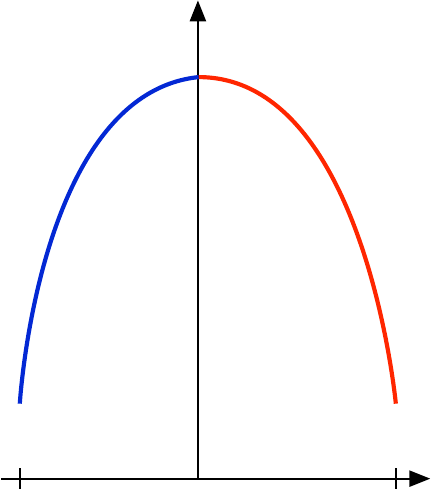}
			\put(43,93){\small{$h_{\rm top}(f,\cL(\alpha))$}}
			\put(0,90){\small{(d)}}
			\put(84,-5){\small{$\alpha$}}
		\end{overpic}\hspace{0.5cm}
		\begin{overpic}[scale=.5]{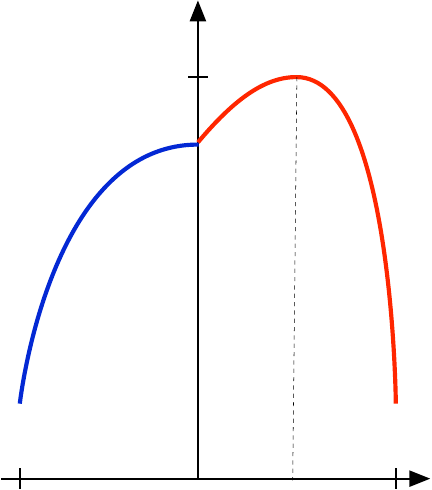}
			\put(43,93){\small{$h_{\rm top}(f,\cL(\alpha))$}}
			\put(0,90){\small{(e)}}
			\put(84,-5){\small{$\alpha$}}
			\put(56,-5){\small{$\alpha_{\rm max}^+$}}
		\end{overpic}
		\caption{Possible shapes of the function $\alpha\mapsto h_{\rm top}(f,\cL(\alpha))$}
		\label{fig.examples}
\end{figure}

\begin{remark}[Entropy of $\cL(0)$]\label{rem:flipflop}
	The fact $h_{\rm top}(f,\cL(0))>0$ for $f\in \BM^1(M)$ was obtained in \cite{BocBonDia:16} by constructing an $f$-invariant compact  set with positive topological entropy supporting only nonhyperbolic measures.  See Remark~\ref{f.flipblender} for further discussions. In our setting, the property $h_{\rm top}(f,\cL(0))>0$ can also be obtained by using the Legendre-Fenchel transform, see Remark~\ref{remLFTransforFigures}. 
	
		The methods in this paper are insufficient to show $h_{\rm top}(f,\cL(0))<h_{\rm top}(f)$ for $f\in \BM^1(M)$ (compare for example the discussion of general properties of Legendre-Fenchel transforms in \cite[Remark 2.2]{DiaGelRam:19}). Note that if $h_{\rm top}(f,\cL(0))<h_{\rm top}(f)$, then every ergodic measure with entropy larger than $h_{\rm top}(f,\cL(0))$ is hyperbolic. We discuss this further in Remark \ref{remshapesofHbis}. 
Moreover, the methods here are also insufficient to show that $h_{\rm top}(f,\cL(0))=\cH(0)$.
\end{remark}

\begin{remark}[Possible shapes of $\alpha\mapsto h_{\rm top}(f,\cL(\alpha))$ and measures of maximal entropy  for $f\in \BM^1(M)$]\label{remshapesofHbis}
Let us briefly discuss the possible shapes of the graphs in Figure \ref{fig.examples} (further details will be given in Remark \ref{remLFTransforFigures}). The different cases depend essentially on the nature of the measures of maximal entropy\footnote{An ergodic measure is called a \emph{measure of maximal entropy} if its metric entropy coincides with the topological entropy of the dynamics.} (\emph{mme}, for short). First note that such measures always exist in our setting (see \cite{CowYou:05}). More information is available when the diffeomorphism is $C^2$ which is assumed throughout this remark if not stated otherwise.  In general, for $f\in\BM^2(M)$, as all hyperbolic periodic points of the same index are homoclinically related (for details see Section \ref{sechomcla}), there is at most one mme with negative center Lyapunov exponent and at most one mme with positive center Lyapunov exponent. See the discussion in \cite[Section 1.6]{BuzCroSar:22} and  \cite[Theorem 1.5 ]{BenOva:16}. This rules out Figure \ref{fig.examples} (b). Note that we may \emph{a priori} have only nonhyperbolic mme's.

Figure \ref{fig.examples} (a)--(c) depicts the cases when any measure with large enough entropy must be hyperbolic. Such scenarios occur in our setting when the map belongs either to the class of
circle-fibered diffeomorphisms or to the class of flow-type diffeomorphisms discussed in Section~\ref{ssec:robusts}, see \cite{TahYan:19} and \cite{BuzFisTah:22,CroPol:22}, respectively. In both cases, there are two mme's of different types of hyperbolicity; hence we are in Figure \ref{fig.examples} (a). The invariance principle \cite{Led:84,AviVia:10} and its variations play a key role in the proof of those results. 

Under the assumption that there exists a semi-conjugacy with an Anosov diffeomorphism (only $C^1$ differentiability is required), in  \cite{Ure:12,BuzFisSamVas:12} it is shown that there is a unique mme and that it is hyperbolic, the sign of its Lyapunov exponent depends on the Anosov factor. DA diffeomorphisms discussed in Section \ref{ssec:robusts} belong to this category. In these cases, we are either in Figure \ref{fig.examples} (c) or in Figure \ref{fig.examples} (e).

In Figure \ref{fig.examples} (b), there are several mme's with different positive exponents (that, by the above remarks, cannot happen for $f\in\BM^2(M)$). We do not have examples where this occurs for $f\in \BM^1(M)$. 

In Figure \ref{fig.examples} (d), any mme is nonhyperbolic. As shown in \cite{RodRodTahUre:12,TahYan:19,CroPol:22}, this would occur in rigid cases. In particular, in the class of circle-fibered diffeomorphisms they must be of rotation type (see \cite{RodRodTahUre:12,TahYan:19}), though this case would contradict the existence of blenders. See \cite{UreViaYan:21} for further results in the same direction. Another case is the flow-type class where the diffeomorphism must be the time-one map of a topological Anosov flow in \cite{BuzFisTah:22,CroPol:22}, which we cannot rule out \emph{a priori}. 

In the anomalous class in \cite{BonGogHamPot:20}, nothing about the hyperbolicity and uniqueness of mme's is known so far. 

Finally, we do not know if the shape in Figure \ref{fig.examples} (e) can occur.
\end{remark}

Let us describe how the entropies of level sets vary when perturbing the map $f$ (below we use superscripts to indicate the dependence of the quantifiers on $f$). 

\begin{maincorollary}[Lower semi-continuity of the entropy spectrum]\label{cor:semiconti}
	Let $f\in\BM^1(M)$ and consider $\alpha\in(\alpha_{\rm min}^f,0)$. Then there is a neighborhood $\cV\subset \BM^1(M)$ of $f$ such that for every $g\in \cV$ it holds $\alpha\in(\alpha_{\rm min}^g,0)$. Moreover, for every $\delta>0$ (possibly after shrinking $\cV$) it holds that  
\[
	h_{\rm top}(g,\cL^g(\alpha))\ge h_{\rm top}(f,\cL^f(\alpha))-\delta
	\quad\text{ for every $g\in \cV$. }
\]	
	
	The analogous result holds for $\alpha\in(0,\alpha_{\rm max}^f)$.
\end{maincorollary}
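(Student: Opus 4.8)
The plan is to prove Corollary~\ref{cor:semiconti} by combining the restricted variational principle from Theorem~\ref{theorem1} with an approximation-and-continuation argument for the specific ergodic measures that nearly realize the entropy of $\cL^f(\alpha)$. First I would fix $\alpha\in(\alpha_{\rm min}^f,0)$ and $\delta>0$. By Theorem~\ref{theorem1}, $h_{\rm top}(f,\cL^f(\alpha))=\cH^f(\alpha)$, so there is an ergodic measure $\mu\in\cM_{\rm erg,<0}(f)$ with $\chi^\c(\mu)=\alpha$ and $h_\mu(f)>h_{\rm top}(f,\cL^f(\alpha))-\delta/2$. The idea is to capture $\mu$ (or an entropy-and-exponent approximation of it) inside a uniformly hyperbolic basic set of contracting type — this is where the skeleton construction of Section~\ref{sepreskeleton} enters — and then to use robustness of hyperbolic basic sets under $C^1$-perturbations. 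Since the blender-horseshoes and property~(M) persist in a $C^1$-neighborhood (Remark~\ref{r.defr.ourhypotheses}), we stay inside $\BM^1(M)$; in particular the spectrum interval varies and one first checks that $\alpha_{\rm min}^g<\alpha$ for $g$ close to $f$, which follows because $\alpha_{\rm min}^g\le\chi^\c(\mu_g)$ for the continuation $\mu_g$ of any hyperbolic measure supported on a basic set of contracting type with exponent $<\alpha$, and such measures exist near $f$ by the same skeleton/blender mechanism applied at a parameter slightly below $\alpha$.

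Next I would make the continuity quantitative. Given the basic set $\Gamma\subset M$ (hyperbolic of contracting type) carrying an ergodic measure $\nu$ with $\chi^\c(\nu)$ close to $\alpha$ and $h_\nu(f)$ close to $h_{\rm top}(f,\cL^f(\alpha))$, for $g$ $C^1$-close to $f$ there is a continuation $\Gamma_g$ with a conjugacy $h_g\colon\Gamma\to\Gamma_g$ close to the identity; the pushforward measure $\nu_g\eqdef(h_g)_*\nu$ is $g$-ergodic with $h_{\nu_g}(g)=h_\nu(f)$ (entropy is a conjugacy invariant) and $\chi^\c(\nu_g)\to\chi^\c(\nu)$ as $g\to f$ (continuity of $\varphi^\c$ in the map and in the point, plus weak$\ast$ convergence $\nu_g\to\nu$). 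If the exponent $\chi^\c(\nu_g)$ is not exactly $\alpha$, I would interpolate along the hyperbolic branch of the spectrum: inside a horseshoe of contracting type one can, by the usual thermodynamic argument (varying a Gibbs state for $t\varphi^\c$), produce for each target value $\alpha'$ in a subinterval an ergodic measure with exponent $\alpha'$ and entropy at least $\cH^g(\alpha')-\delta/4$; continuity and concavity of $\cH^g$ on $[\alpha_{\rm min}^g,0)$ (Theorem~\ref{theorem1} again) then give $h_{\nu_g'}(g)\ge h_{\top}(g,\cL^g(\alpha))-\delta$ once $g$ is close enough that $|\cH^g(\alpha')-\cH^g(\alpha)|<\delta/4$ for the relevant $\alpha'$. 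Finally, the Bowen-entropy inequality $h_{\top}(g,\cL^g(\alpha))\ge h_{\nu_g'}(g)$, valid because $\nu_g'$-a.e.\ point lies in $\cL^g(\alpha')$ and one may in fact arrange $\alpha'=\alpha$ exactly by a last small adjustment of the Gibbs-state parameter, yields the claim; the case $\alpha\in(0,\alpha_{\rm max}^f)$ is symmetric, using basic sets of expanding type.

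The main obstacle, and the step I expect to require the most care, is the first one: showing that the entropy of $\cL^f(\alpha)$ is genuinely approximated by \emph{uniformly hyperbolic} pieces that admit robust continuations — i.e., that the near-optimal measure $\mu$ can be replaced by one supported on a horseshoe without losing entropy or moving the exponent. This is exactly what the skeleton machinery of Section~\ref{sepreskeleton} is designed to provide, so the argument is really a matter of quoting that construction in a way that is uniform in the map. A secondary subtlety is that $\cL^g(\alpha)$ could in principle shrink as $g$ varies even while $\alpha$ stays in the open spectrum interval; but since $\alpha<0$ is separated from $0$, the relevant measures remain uniformly hyperbolic with exponents bounded away from zero, so no collapse onto the nonhyperbolic regime occurs and the horseshoe continuations behave uniformly. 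Upper semi-continuity, by contrast, genuinely can fail (entropy can jump up in the limit), which is why only the one-sided statement is asserted.
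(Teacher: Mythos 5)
Your overall framework matches the paper's — ergodic approximation of a near-optimal measure by a basic set, robustness of basic sets under $C^1$-perturbation, and then an appeal to the shape of the entropy spectrum — but you miss the one device that makes the argument close cleanly, and the replacement you propose has a gap.

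The paper does not work with a single basic set sitting at exponent $\alpha$. Instead, it chooses \emph{two} ergodic measures $\mu^\pm$ with exponents $\alpha\pm 3\delta$ (straddling $\alpha$) and high entropy, ergodically approximates each by a basic set $\Xi^\pm$ whose ergodic measures have exponents concentrated near $\alpha\pm 3\delta$, passes to the continuations $\Xi^{g,\pm}$, and observes that after shrinking the neighborhood the maximal-entropy measures $\mu^{g,\pm}_{\rm max}$ of $g|_{\Xi^{g,\pm}}$ still have exponents on opposite sides of $\alpha$, i.e.\ $\chi^\c(\mu^{g,-}_{\rm max})\le \alpha-\delta<\alpha<\alpha+\delta\le\chi^\c(\mu^{g,+}_{\rm max})$. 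Concavity of $\beta\mapsto h_{\rm top}(g,\cL^g(\beta))$ (Theorem~\ref{theorem1} applied to $g$) then gives the one-line bound
$h_{\rm top}(g,\cL^g(\alpha))\ge\min\big\{h_{\rm top}(g,\cL^g(\chi^\c(\mu^{g,-}_{\rm max}))),\,h_{\rm top}(g,\cL^g(\chi^\c(\mu^{g,+}_{\rm max})))\big\}\ge\min\{h_{\mu^{g,-}_{\rm max}}(g),\,h_{\mu^{g,+}_{\rm max}}(g)\}$, and the right-hand side is, by conjugacy of the continuations, $\min\{h_{\rm top}(f,\Xi^-),\,h_{\rm top}(f,\Xi^+)\}\ge h_{\rm top}(f,\cL^f(\alpha))-3\varepsilon$. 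No measure with exponent exactly $\alpha$ for $g$ is ever produced; the estimate is purely a trapping argument.

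Your version instead picks one basic set $\Gamma$ whose measures all have exponent within $\varepsilon$ of $\alpha$, continues it to $\Gamma_g$, and then tries to interpolate inside $\Gamma_g$ (equilibrium states for $t\varphi^\c$) to hit exponent $\alpha$. This step does not work in general, for two reasons. First, the interval of center exponents realized on a basic set can be arbitrarily short — in fact it can be a point if $\varphi^\c$ is cohomologous to a constant on $\Gamma_g$ — and nothing in the ergodic approximation forces $\alpha$ to remain inside that interval after perturbing $f$; so a $g$-ergodic measure supported on $\Gamma_g$ with $\chi^\c=\alpha$ need not exist. Second, even when it does exist, the restricted entropy function $\beta\mapsto\cH_{\cM_{\rm erg}(g|_{\Gamma_g})}(\beta)$ on that small interval can drop to zero at the endpoints, so the claim that you can realize exponent $\alpha'$ ``with entropy at least $\cH^g(\alpha')-\delta/4$'' confuses the restricted quantity with the global $\cH^g$ and is not justified. (You also have the inequality $h_{\nu'_g}(g)\ge h_{\rm top}(g,\cL^g(\alpha))-\delta$ reversed; it should read $h_{\rm top}(g,\cL^g(\alpha))\ge h_{\nu'_g}(g)$.) The fix is exactly the paper's straddling argument: take basic sets at $\alpha-3\delta$ and at $\alpha+3\delta$, not at $\alpha$, so that the continuations remain on both sides of $\alpha$ and concavity of the global spectrum — not any local interpolation inside one horseshoe — does the work. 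Your remarks about checking $\alpha_{\rm min}^g<\alpha$ and about upper semi-continuity failing are fine and consistent with the paper.
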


We prove Corollary \ref{cor:semiconti} in Section \ref{app2}.
For the next result, recall $\h(f)$ in \eqref{outofmind}.

\begin{maintheorem}\label{theorem2}
For every  $f\in \BM^1(M)$, one has that 
\[
	0 < \cH(0)\le h_{\rm top}(f,\cL(0))=\h(f).
\]
Moreover, there is a set $\Lambda(0)\subset\cL(0)$ formed by non-degenerate pairwise disjoint center curves such that 
\begin{itemize}[ leftmargin=0.7cm ]
\item[(1)] 
	$h_{\rm top}(f,\Lambda(0))
	=h_{\rm top}(f,\cL(0))
	=\h(f)$, 
\item[(2)] $\lim_{n\to\pm\infty}\frac{1}{n}\log\|D^{\c}f^n(x)\|=0$, uniformly on $x\in \Lambda(0)$.
\end{itemize}\end{maintheorem}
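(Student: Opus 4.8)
The plan is to combine the two pillars described in the introduction: a thermodynamic/pressure argument that produces hyperbolic horseshoes with center exponent close to $0$ and entropy close to $\h(f)$, and a concatenation mechanism that glues center curves to produce the set $\Lambda(0)$ on which the finite-time center exponents converge uniformly to $0$. First I would reduce the inequality $h_{\rm top}(f,\cL(0))=\h(f)$ to the construction of $\Lambda(0)$: the estimate $h_{\rm top}(f,\cL(0))\le\h(f)$ is already contained in Theorem~\ref{theorem1}, so it suffices to produce a subset $\Lambda(0)\subset\cL(0)$ with $h_{\rm top}(f,\Lambda(0))\ge\h(f)$. Since $\Lambda(0)\subset\cL(0)$, this will simultaneously upgrade the chain of inequalities in \eqref{outofmind} to an equality, give (1), and the uniform convergence in (2) will be built into the construction.

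The heart of the argument is the following scheme. Fix a small $\eps>0$. Using the restricted variational principle from Theorem~\ref{theorem1} and the fact that $\h^-(f)=\h^+(f)=\h(f)$, pick an ergodic measure $\mu^-$ with $\chi^\c(\mu^-)\in(-\eps,0)$ and $h_{\mu^-}(f)>\h(f)-\eps$, and likewise $\mu^+$ with $\chi^\c(\mu^+)\in(0,\eps)$ and $h_{\mu^+}(f)>\h(f)-\eps$. By the usual approximation of hyperbolic ergodic measures by basic sets (horseshoes homoclinically related to periodic saddles of the appropriate $\u$-index, via the skeleton construction of Section~\ref{sepreskeleton}), I obtain hyperbolic basic sets $\Gamma^-$ (of contracting center type) and $\Gamma^+$ (of expanding center type) with center exponents in $(-\eps,0)$ and $(0,\eps)$ and topological entropies $>\h(f)-\eps$. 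The minimality hypothesis (M) on the strong foliations, together with the blender-horseshoes in (B), lets me dynamically connect $\Gamma^-$ and $\Gamma^+$: one arranges heteroclinic connections so that orbits can pass from the center-contracting region to the center-expanding region and back. The key point is quantitative control — by spending a long time $N^-$ near $\Gamma^-$ and then a long time $N^+$ near $\Gamma^+$, with $N^\pm$ chosen so that $N^-\chi^\c(\mu^-)+N^+\chi^\c(\mu^+)\approx 0$, each "block" of the orbit accumulates essentially zero center displacement; the transition times are bounded, so they do not affect exponents or entropy in the limit. Concatenating such blocks along a symbolic coding (a subsystem of a shift whose two "large" alphabets encode orbit segments in $\Gamma^-$ and $\Gamma^+$) produces points whose \emph{asymptotic} center exponent is $0$, i.e.\ points in $\cL(0)$, and the entropy of this coded set is $\ge\h(f)-O(\eps)$. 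Letting $\eps\to0$ and taking an appropriate exhaustion/limit gives a subset of $\cL(0)$ of entropy $\ge\h(f)$.

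To get the center \emph{curves} and the \emph{uniform} convergence in (2), I would not work with orbits of points but with orbits of center curve segments, invoking the weak integrability ("weakly dynamically coherent") property from \cite[Proposition 3.4]{BriBurIva:04}: through each point there is a center curve, and this gives an (a priori non-unique, non-foliated) family of center curves to iterate. The concatenation/outbranching principle announced in Section~\ref{ss.concatenation} is precisely the tool that says these center curve segments can be glued along the heteroclinic connections above into longer center curves whose length stays uniformly bounded below and above, because along $\Gamma^\pm$ the center is respectively uniformly contracted/expanded at controlled rates and the concatenation compensates contraction by expansion. Carrying out the coding at the level of curves yields a compact set $\Lambda(0)$ which is a union of pairwise disjoint non-degenerate center curves, on which $\frac1n\log\|D^\c f^n(x)\|\to 0$ \emph{uniformly} (because the finite-time center exponents are controlled block-by-block by design, not just in the Birkhoff average), and with $h_{\rm top}(f,\Lambda(0))=\h(f)$ after the limit procedure. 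The monotonicity $h_{\rm top}(f,\Lambda(0))\le h_{\rm top}(f,\cL(0))\le\h(f)$ then forces all three to be equal, and $\cH(0)>0$ is already known from Remark~\ref{rem:flipflop}/\cite{BocBonDia:16}.

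The main obstacle I expect is the \emph{curve} part, not the entropy bookkeeping: without dynamical coherence there is no center foliation, so the center curves through different points need not be coherent, iterates of a center curve may bend or fail to be graphs, and one must show the concatenation produces genuinely non-degenerate (uniformly long) curves that remain tangent to $E^\c$ and are pairwise disjoint, while simultaneously keeping the entropy count intact and the finite-time exponents uniformly small. This requires delicate geometric control of how center curve segments sit inside the local blender-horseshoe structures and along the transitions, and a careful choice of the block lengths $N^\pm$ and of the heteroclinic "gluing" so that contraction and expansion of the center exactly cancel to leading order over each block — this balancing, done uniformly over the coded set, is the technically hardest step and is where the concatenation-and-outbranching machinery of Section~\ref{ss.concatenation} must do its work.
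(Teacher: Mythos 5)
Your reduction of the theorem to building a set $\Lambda(0)\subset\cL(0)$ of center curves with $h_{\rm top}(f,\Lambda(0))\ge\h(f)$ and uniform finite-time exponent control is the right framing, and you correctly identify the curve geometry (absent dynamical coherence) as the hard point. But your alternating scheme differs structurally from the paper's, and your limiting step has a genuine gap.

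The paper never balances contraction against expansion within one orbit segment. Its forward construction (Theorem~\ref{THETHEObisbis}) uses a single \emph{cascade} of contracting basic sets $\Lambda_k$ whose center exponents $\chi_k\nearrow 0$ and entropies $h_k\to\h(f)$ vary \emph{simultaneously}, concatenated through the stable blender-horseshoe; there is no expanding phase---the contraction strengths simply vanish along the cascade, so the forward finite-time exponents go to $0$ uniformly for free. The backward exponent is handled by a wholly separate step: each curve $\gamma_\infty$ of $L$ is replaced by a curve $\widehat\gamma_\infty\subset\cW^\ss(\gamma_\infty,2\delta)$ constructed by running the same cascade for $f^{-1}$ (expanding basic sets, unstable blender), and the entropy survives this replacement because of Theorem~\ref{theop.entropy-in-cu}: shearing a set along strong stable leaves preserves Bowen topological entropy. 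This decoupling avoids any bi-infinite coding across two types of hyperbolicity, and avoids having to quantify the cancellation between a contracting and an expanding phase.

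More seriously, your step ``let $\eps\to 0$ and take an appropriate exhaustion/limit'' does not yield the stated conclusion. For a fixed $\eps$, your construction gives a set $\Lambda_\eps(0)\subset\cL(0)$ of curves with $h_{\rm top}(f,\Lambda_\eps(0))\ge\h(f)-O(\eps)$ and finite-time exponents converging to $0$ uniformly at a rate that depends on $\eps$ (you need each block length $N^\pm_k$ small relative to the cumulative time $t_{k-1}$, and the per-block balancing error summable relative to $t_k$). Passing to $\bigcup_n\Lambda_{1/n}(0)$ recovers entropy $\h(f)$ by countable stability and stays inside $\cL(0)$, so part (1) survives, but the union destroys the \emph{uniformity} required in part (2): each $\Lambda_{1/n}(0)$ has its own convergence rate, and there is no single rate valid on the union. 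To get a single set satisfying both (1) and (2), the vanishing scales $\eps_k\to 0$ must be built into one nested construction, with the parameters $(\Lambda_k,\chi_k,\eps_k,T_k,m_k)_k$ chosen coherently so that $t_{k-1}/t_k\to 0$, $m_k/n_k\to 0$, etc.\ (Section~\ref{ssecquanti}). An abstract limit of fixed-$\eps$ constructions cannot substitute for this diagonal choice.
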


In Section \ref{ss.concatenation}, we explain the main heuristic underlying the proof of Theorem \ref{theorem2}: a concatenation of center curves. 

\subsection{Concatenation and outbranching of center curves}\label{ss.concatenation}

	 First, Theorem \ref{theorem1} guarantees that there are ergodic measures $\mu_k$ in $\cM_{\rm erg,<0}(f)$ whose center Lyapunov exponents converge to $0$. Each ergodic measure has a (plenty of) generic point $y_k$ with the corresponding exponent (the cardinality growing exponentially fast, according to its entropy). We fix a sequence $(y_k)_k$ of such points whose exponents tend to zero. We also choose for every $k$ some center curve $\zeta_k$ centered at $y_k$ and sufficiently small.
	 
	 Let us now start by concatenating two center curves, say $\zeta_1$ and $\zeta_2$.  We consider what we call a $\u$-strip $\Delta^\cu$ associated to $\zeta_1$, that is the union of local strong unstable leaves through $\zeta_1$. Analogously, we consider an $\s$-strip $\Delta^\cs$ associated to $\zeta_2$. Taking forward iterates of the former and backward iterates of the latter, we obtain images of them that enter the region of a stable blender-horseshoe. The number of iterates needed to enter only depends on the size of the local strong un-/stable leaves. Using the blender, after some further iterations, we can make both strips intersect through some center curve $\check\zeta$. Pulling back $\check\zeta$ to $\Delta^\cu$, we get a center curve $\widehat\zeta$ that stretches completely over the $\u$-strip of the initial curve $\zeta_1$. Moreover, $\widehat\zeta$ accompanies it during the initial iterations and eventually ends close to $\zeta_2$. Compare Figure \ref{fig.2} in Section \ref{sec:concat}.
	 
	 This concatenation can now be applied to the sequence of center curves  $(\zeta_k)_k$ from above. In the limit, we get a center curve $\zeta_\infty^+$ with forward exponent $0$.
	 
	 To produce \emph{many} center curves $\zeta_\infty^+$ as above (which form a set of positive topological entropy), at each step we choose between exponentially many curves from the next step. This technique is somewhat reminiscent of the one of ``outbranching sequences'' in \cite[Section 8]{DiaGelRam:19}. However, here we perform this in a much more efficient and transparent way. In this process, we get a fractal set produced as a limsup set, which is fibered by center curves of uniform length and entirely contained in some initial $\u$-strips. Its entropy is estimated by employing an entropy distribution principle. The underlying idea is that ``entropy is seen in the strong unstable leaves only''.  	
	 
	 The concatenation produces only a set with \emph{forward} exponent zero. Using once more a concatenation for $f^{-1}$ (this involves a sequence of curves with positive exponents tending to zero and an unstable blender-horseshoe), one gets the set $\Lambda(0)$.

\subsection{The set  $\BM^1(M)$}\label{ssec:robusts}
In this section, we discuss the definition of $\BM^1(M)$ and provide examples.
\subsubsection{Robustly transitive nonhyperbolic diffeomorphisms}\label{sss.rtnh}

The first examples of robustly transitive diffeomorphisms were obtained in \cite{Shu:71} on $\mathbb{T}^4$. Such examples are partially hyperbolic with a two-dimensional center. Thereafter, \cite{Man:78} provided the first examples in $\RTPH^1(M)$  (see Section \ref{ss.context}) falling in the category of so-called DA diffeomorphisms on  $\mathbb{T}^3$. New methods for constructing maps in $\RTPH^1(M)$ were provided in \cite{BonDia:96} using blenders (a precursor of the blender-horseshoes with less structure). Key ingredients in \cite{BonDia:96} are the coexistence of some saddles of different $\u$-indices and the density of their invariant manifolds (the latter is a correlative of the minimality of the strong foliations). These methods were used to prove that appropriate perturbations of product maps $F\times\mathrm{Id}$ on $M_1\times \mathbb{T}^1$, where $F$ is a $C^1$-Anosov diffeomorphism on $M_1$, belong to $\RTPH^1(M_1\times\mathbb{T}^1)$. Further classes are obtained in \cite{BonDia:96} considering manifolds $M$ supporting transitive Anosov flows $\varphi^t$. It is proved that suitable small $C^1$-perturbations of the time-$T$ map $\varphi^T$ belong to $\RTPH^1(M)$. All these examples are dynamically coherent. A common feature of the latter two sorts of examples is the existence of circles tangent to the center bundle. In the perturbations of product maps, there is a center foliation formed by circles. In the case of time-$T$ maps, the circles arise from the periodic orbits of the flow. We refer to these classes of examples as \emph{circle-fibered diffeomorphisms} and \emph{diffeomorphisms of flow-type} (following the terminology in \cite{BuzFisTah:22}), respectively. 

Examples of a rather different nature are the \emph{anomalous diffeomorphisms} obtained in \cite{BonGogHamPot:20} by considering maps in the unitary tangent bundle $M=T^1S$ of a surface $S$ of genus $g \ge 2$. For these maps the union of the circles that are tangent to the center direction is dense in $M$, however, none of those circles is periodic. These examples are not dynamically coherent. Another sort of examples can be found in \cite{Yan:21} perturbing accessible volume-preserving diffeomorphisms whose volume measure is hyperbolic. 

\subsubsection{Occurrence of  blender-horseshoes}\label{ssec:ocblen}

By the \emph{connecting lemmas} \cite{Hay:97,WX:00,BonCro:04}, robust transitivity implies 
that for any pair of saddles of different $\u$-indices of $f \in \RTPH^1(M)$, there is $g$ arbitrarily $C^1$-close to $f$ with a \emph{heterodimensional cycle} associated to them (i.e., the invariant manifolds of the saddles intersect cyclically). Therefore, the set of diffeomorphisms with heterodimensional cycles is $C^1$-dense in $\RTPH^1(M)$. These cycles are associated with saddles of $\u$-indices $d^\uu$ and $d^\uu+1$ (they have co-index $1$). Moreover, partial hyperbolicity implies that these saddles have real center eigenvalues (i.e., the eigenvalues associated with $E^\c$). After arbitrarily small $C^1$-perturbations, this sort of heterodimensional cycle yields blender-horseshoes (for some iterate of the map). They can be chosen as either stable or unstable, according to the type of perturbation, see  \cite[Theorems 3.3 and 4.1]{BonDia:08}. Even though the terminology blender-horseshoe was not used in \cite{BonDia:08}, the construction corresponds exactly to the blender-horseshoes in \cite[Section 5.1]{BonDia:12}. By \cite[Lemma 3.9]{BonDia:12}, the existence of (stable or unstable) blender-horseshoes is a $C^1$-open property. Therefore, property (B) in Definition \ref{defr.ourhypotheses} is $C^1$-open and -dense in  $\RTPH^1(M)$. 

\subsubsection{Minimality of the strong foliations}\label{ssec:minfoli}

The results in \cite{BonDiaUre:02,RodRodUre:07}
provide a criterion for a map $f\in\RTPH^1(M)$ having a minimal strong unstable foliation (respectively, minimal strong stable foliation), involving the existence of so-called complete $\u$-sections (respectively, complete $\s$-sections). This criterion can be reformulated using blender-horseshoes and also provides robust minimality of the strong foliations. A hyperbolic-like condition guaranteeing the minimality of a strong foliation can be found in \cite{PujSam:06}.
 
We are interested in settings where both foliations are simultaneously minimal. For that consider the subset $\RTPH^1_0(M)$ of diffeomorphisms in $\RTPH^1(M)$ which have some compact center leaf. By normal hyperbolicity, this set is open. It is shown in \cite{BonDiaUre:02,RodRodUre:07} that $\BM^1_0(M)$ is open and dense in $\RTPH^1_0(M)$. Note that the circle-fibered, the flow-type, and the anomalous diffeomorphisms belong to $\RTPH^1_0(M)$.
 
In the case of a three-dimensional DA diffeomorphism, by \cite{BonDiaUre:02,RodRodUre:07}, $C^1$-open and -densely at least one of the strong foliations is minimal. \cite[Theorem B]{RodUreYan:22} provides examples of DA diffeomorphism, where both strong foliations are minimal and therefore are in $\BM^1(\mathbb{T}^3)$.

Finally, observe that the list of examples  in $\BM^1(M)$ given above are mainly in dimension three, but this is not a requisite in our results. In Table \ref{defaulttable} we collect the properties of the known examples in $\BM^1(M)$ in dimension three.

\begin{tiny}
\begin{table}[h]\caption{Properties of known examples in $\BM^1(M)$}
\begin{center}
\begin{tabular}{|m{1.5cm}|c|c|c|m{2.5cm}|}
\hline
 & compact center leaves & periodic compact center leaves & dynamically coherent & some references \\
\hline
some DA 	& no & no & yes & \cite{RodUreYan:22} \\
\hline
 fibered by circles & yes & yes & yes & \cite{BonDiaUre:02,RodRodUre:07}\\
\hline
flow-type & yes & yes & yes & \cite{BonDiaUre:02,RodRodUre:07,BuzFisTah:22}\\
\hline
anomalous & yes & no & no & \cite{BonGogHamPot:20}\\
\hline
\end{tabular}
\end{center}
\label{defaulttable}
\end{table}%
\end{tiny}

\subsection{Organization of the paper}\label{ssec:organization}

This paper is organized as follows. 

In Section \ref{sepreskeleton}, we study pre-skeletons that collections of orbit segments which are useful to approximate by basic sets in terms of entropy and Lyapunov exponents. In Section \ref{sec:press}, we study pressure functions and their Legendre-Fenchel transforms. In particular, in Section \ref{sec:proofThm1}, we prove Theorem~\ref{theorem:1} concerning restricted variational principles for hyperbolic values in the interior of the spectrum for maps in $\BM^1(M)$. In Section~\ref{sec:proofThm1}, we obtain an upper bound for the entropy of $\cL(0)$ and prove Theorem \ref{theorem1}. Section \ref{s:centerdistortion} is dedicated to discussing integrability and weak integrability; we also state some distortion estimates of center derivatives.
In Section \ref{ss.bh}, we introduce blender-horseshoes and discuss their properties.
In Section \ref{sec:concat}, we prove a concatenation of center curves, which has an intrinsic interest (Proposition \ref{proLem:key}). Section \ref{secapproximating} collects some auxiliary results about basic sets. 
In Section \ref{secfractalset}, we construct a fractal set with large entropy on which the \emph{forward} Lyapunov exponent is equal to zero and which consists of non-degenerate compact center curves (see Theorem \ref{THETHEObisbis}). The proof of Theorems \ref{THETHEObisbis} is done in Sections \ref{secfractalset}--\ref{secproofTHEO}. In Section \ref{s.final-fractal}, we prove Theorem \ref{theorem2}. Appendix \ref{App:B} collects some auxiliary results on topological entropy. 

\section{Pre-skeletons}\label{sepreskeleton}

In this section, we construct basic sets based on so-called skeletons. Skeletons are a collection of orbit segments whose cardinality mimics entropy and whose finite-time-exponents ``approximate'' a given exponent. This idea is reminiscent of a common strategy (see, for example, \cite[Supplement S.5]{KatHas:95}) to ``ergodically approximate hyperbolic ergodic measures by horseshoes'' and plays a key role in this paper.
This discussion will be continued in Section \ref{ssec:ergappro}.

Here we introduce the \emph{pre-skeleton property} that is a reformulation of analogous concepts in skew-products and in a partially hyperbolic context (see \cite[Section 4]{DiaGelRam:17} and \cite[Section 5.2]{DiaGelSan:20}, respectively). The latter coincides with our setting. Note that the skeleton property in \cite[Section 5.2]{DiaGelSan:20} is associated with an ergodic measure and guaranteed entirely by ergodic properties (see \cite[Section 4.2]{DiaGelRam:17}, where it is called skeleton$\ast$ property). Here, we present a purely topological version, which requires the existence of sufficiently many ``approximating'' orbit segments of arbitrary length (we call it \emph{pre-skeleton}).

Given $m\in\bN$ and $\e>0$, two points $x,y$ are \emph{$(m,\e)$-separated} (with respect to $f\in\Homeo(M)$) if 
\begin{equation}\label{defseparated}
	d_n(x,y)>\varepsilon, \quad\text{ where }\quad
	d_n(x,y)\eqdef \max_{j=0,\ldots,n-1}d(f^j(x),f^j(y)).
\end{equation}
A subset $X\subset M$ is \emph{$(m,\e)$-separated} if it consists of points which are mutually $(m,\e)$-separated.
By compactness, any such set is finite. 

\begin{definition}[Pre-skeleton]
	Given $h>0$ and $\alpha\in\bR$, we say that $f\in\PH^1_{\c =1}(M)$ \emph{has the pre-skeleton property} relative to $h$ and $\alpha$, if for every $\varepsilon_H>0$ and $\varepsilon_E>0$ there exists  $\varepsilon_0>0$ such that for every $\varepsilon\in(0,\varepsilon_0)$ there are  constants $K_0\ge1$ and $n_0\in\bN$ such that for every $m\ge n_0$ there is a $(m,\varepsilon)$-separated set  $X=X(h,\alpha,\varepsilon_H,\varepsilon_E,m)=\{x_i\}\subset M$, called  \emph{pre-skeleton},   satisfying:
	\begin{itemize}
		\item[(1)] the set $X$ has cardinality $\card X\geq e^{m (h-\varepsilon_H)}$;
		\item[(2)] for every $\ell=0,\ldots,m$ and every $i$ one has 
		\[
			K_0^{-1}e^{\ell(\alpha-\varepsilon_E)}\leq
			\lVert D^\c f^\ell (x_i) \rVert 
			\leq K_0e^{\ell(\alpha+\varepsilon_E)}.
		\]
	\end{itemize}  
\end{definition}

\begin{proposition}\label{pro:skel}
	Let $f\in\PH^1_{\c =1}(M)$. Consider $\alpha\in\bR$ with $h_{\rm top}(f,\cL(\alpha))>0$. Then for every $h\in(0,h_{\rm top}(f,\cL(\alpha)))$,  the map $f$ has the pre-skeleton property relative to $h$ and $\alpha$.
\end{proposition}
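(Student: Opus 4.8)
The plan is to read off the pre-skeleton directly from the (assumed positive) Bowen entropy of $\cL(\alpha)$, in two steps. First, \emph{localise} $\cL(\alpha)$ to subsets on which $\frac1\ell\log\lVert D^\c f^\ell(x)\rVert\to\alpha$ \emph{uniformly in $x$}; this is what produces the uniform constant $K_0$ of item (2). Second, on such a subset extract, for every large length $m$, an $(m,\e)$-separated set of the required exponential size.

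Fix $h\in(0,h_{\rm top}(f,\cL(\alpha)))$ and $\e_H,\e_E>0$. For $N\in\bN$ put
\[
	\cL^+(\alpha,N)
	\eqdef \big\{x\in M\colon
	e^{\ell(\alpha-\e_E)}\le\lVert D^\c f^\ell(x)\rVert\le e^{\ell(\alpha+\e_E)}
	\text{ for all }\ell\ge N\big\}.
\]
Since the center bundle of a partially hyperbolic diffeomorphism is continuous, $x\mapsto\lVert D^\c f^\ell(x)\rVert$ is continuous and each $\cL^+(\alpha,N)$ is closed; these sets increase with $N$, and by the definition of $\chi^\c$ one has $\cL(\alpha)\subset\bigcup_{N}\cL^+(\alpha,N)$. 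By monotonicity and countable stability of Bowen's topological entropy, and since $h<h_{\rm top}(f,\cL(\alpha))$, I would fix $N_1=N_1(h,\e_E)$ with $h_{\rm top}(f,Z)>h$ for $Z\eqdef\cL^+(\alpha,N_1)$. On $Z$ the estimates of item (2) hold with constant $1$ for $\ell\ge N_1$, and for $0\le\ell\le N_1$ they hold with a uniform constant $K_0=K_0(N_1,f,\alpha)\ge1$: as $E^\c$ is one-dimensional, $\lVert D^\c f^\ell(x)\rVert=\prod_{j=0}^{\ell-1}\lVert D^\c f(f^j(x))\rVert$ stays in $[a^\ell,b^\ell]$ with $a\eqdef\min_M\lVert D^\c f\rVert>0$, $b\eqdef\max_M\lVert D^\c f\rVert<\infty$, so the finitely many ``short'' times are controlled trivially. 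Note $K_0$ is fixed once $\e_E$ and $h$ are, which the definition of pre-skeleton permits.

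It remains to produce the separated sets inside $Z$. I would use the classical comparison between Bowen's entropy and the growth of separated subsets: writing $s_m(Z,\e)$ for the largest cardinality of an $(m,\e)$-separated subset of $Z$,
\[
	h_{\rm top}(f,Z)\le\sup_{\e>0}\ \liminf_{m\to\infty}\ \frac1m\log s_m(Z,\e),
\]
which follows because a maximal $(m,\e)$-separated subset of $Z$ gives a cover of $Z$ by $(m,2\e)$-Bowen balls, and inserting these covers (at every scale) into the Carathéodory construction defining $h_{\rm top}$ forces the corresponding outer measure to vanish for any exponent above that supremum. As $h<h_{\rm top}(f,Z)$, there is $\e_0=\e_0(h,\e_E)>0$ with $\liminf_{m}\frac1m\log s_m(Z,\e_0)>h$, and by monotonicity of $s_m(Z,\cdot)$ the same strict inequality holds for every $\e\in(0,\e_0]$. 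Hence for each such $\e$ there is $n_0=n_0(\e)\ge N_1$ with $s_m(Z,\e)\ge e^{mh}\ge e^{m(h-\e_H)}$ for all $m\ge n_0$; taking $X$ to be a maximal $(m,\e)$-separated subset of $Z$ gives item (1), and item (2) (for all $0\le\ell\le m$) holds at every point of $X\subset Z$ with the constant $K_0$ above. This is the pre-skeleton property relative to $h$ and $\alpha$.

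I expect the genuine obstacle to be precisely this passage from ``$h_{\rm top}(f,\cL(\alpha))>h$'' to ``exponentially many $(m,\e)$-separated segments of a \emph{prescribed} length $m$ with a \emph{uniform} exponent control''. It forces the two-step scheme: on $\cL(\alpha)$ itself the time after which the finite-time exponent is $\e_E$-close to $\alpha$ depends on the point, so no uniform $K_0$ is available before localising to $\cL^+(\alpha,N_1)$; and because Bowen's entropy on a non-compact, non-invariant set is an infimum over \emph{all} covers, one must handle with care the order of the $\e\to0$ and $m\to\infty$ limits in the entropy/separated-set comparison, which is exactly why the statement naturally comes out in the ``there is a threshold $\e_0$, and for all $\e\in(0,\e_0)$\dots'' form demanded by the definition of pre-skeleton.
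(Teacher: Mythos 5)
Your proof is correct and takes essentially the same approach as the paper's: you localize to the increasing sets $\cL^+(\alpha,N)$ (the paper uses $\cL_N(\alpha)\eqdef\cL(\alpha)\cap\cL^+(\alpha,N)$, which changes nothing since the pre-skeleton does not require the orbit segments to start in $\cL(\alpha)$), pick $N_1$ by countable stability, take $K_0$ to absorb the finitely many short times, and then extract the separated sets from the inequality $h_{\rm top}(f,Z)\le\sup_{\e}\liminf_m\frac1m\log s_m(Z,\e)$. The only cosmetic difference is that the paper cites this inequality in the form $h_{\rm top}\le\underline{Ch}_{\rm top}$ via its Appendix (Lemma \ref{lem:entropy}(2) together with \eqref{eqjin}), whereas you derive it directly from the Carathéodory construction; the content is identical.
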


\begin{proof}
Fix $\varepsilon_H\in(0,h)$ and $\varepsilon_E\in(0,\alpha)$. Given $N\in\bN$, let
\[
	\cL_N(\alpha)
	\eqdef\big\{x\in\cL(\alpha)\colon e^{n(\alpha-\varepsilon_E)}\leq \lVert D^\c f^n(x)\rVert\leq e^{n(\alpha+\varepsilon_E)} \text{ for every } n\ge N\big\}.
\] 
Note that $\cL_N(\alpha)\subset\cL_{N+1}(\alpha)$ and 
\[
	\cL(\alpha)
	= \bigcup_{N\in\bN}\cL_N(\alpha).
\]
As topological entropy is countably stable and monotone (see Remark \ref{rem:monent}), one has 
\[
	h_{\rm top}\big(f,\cL(\alpha))
	= h_{\rm top}\big(f,\bigcup_{N\in\bN}\cL_N(\alpha)\big)
	= \sup_{N\in\bN}h_{\rm top}(f,\cL_N(\alpha))
	= \lim_{N\to\infty}h_{\rm top}(f,\cL_N(\alpha)).
\]
Hence, there is $N_1\in\bN$ such that  
\[
	h_{\rm top}(f,\cL_{N_1}(\alpha))
	> h-\frac13\varepsilon_H.
\]	
Note that, by item (2) of Lemma \ref{lem:entropy}, $\underline{Ch}_{\rm top}(f,\cL_{N_1}(\alpha)) \ge h_{\rm top}(f,\cL_{N_1}(\alpha))$. It follows from the definition in \eqref{eqjin} that for every $\varepsilon_0>0$ sufficiently small and $\varepsilon\in(0,\varepsilon_0)$, there exists $n_0\in\bN$ such that for every $m\ge n_0$, there exists a $(m,\varepsilon)$-separated set $\{x_i\}_{i\in I}$ in $\cL_{N_1}(\alpha)$ of cardinality
\[
	\card I
	\ge e^{m(h-\varepsilon_H)}.
\]
This implies property (1) of a pre-skeleton. 

To get property (2) of a pre-skeleton, it is enough to take 
\[
	K_0
	\eqdef \max_{\ell=0,\ldots,N_1-1}
	\left\{\lVert D^{\c}f^\ell\rVert^{-1} e^{\ell(\alpha-\varepsilon_E)},
	\lVert D^{\c}f^\ell\rVert  e^{-\ell(\alpha+\varepsilon_E)}, 1
	\right\}.
\]
This proves the proposition.
\end{proof}

For the next result, recall that a set is \emph{basic} if it is compact, $f$-invariant, locally maximal, topologically transitive, and hyperbolic. The \emph{$\u$-index} of a basic set is the $\u$-index of any of its saddles. Note that for $f\in\PH^1_{\c =1}(M)$, the $\u$-index of any basic set is either $d^\uu$ or $d^\uu+1$. Considering the action in the central bundle, we call such sets \emph{of contracting} or \emph{of expanding type}, respectively. The $\u$-index of a hyperbolic ergodic measure is, by definition, the number of its positive Lyapunov exponents counted with multiplicities. When $\mu$ is equidistributed on a hyperbolic periodic orbit, then its $\u$-index   coincides with the $\u$-index of the orbit. Given an $f$-invariant subset $\Lambda\subset M$, we denote by $\cM_{\rm erg}(f|_\Lambda)$ the corresponding subset of ergodic measures.

Proposition \ref{pro:skelhors} below is similar to \cite[Theorem 5.1]{DiaGelSan:20}, where $h=h_\mu(f)$ is the entropy of a nonhyperbolic ergodic measure $\mu$ and $\alpha=\chi^\c(\mu)=0$ is its center Lyapunov exponent. The proof of \cite[Theorem 5.1]{DiaGelSan:20} is based on essentially two ingredients: the pre-skeleton property relative to $h_\mu(f)$ and $\chi^\c(\mu)$ and the minimality of strong foliations. The proof of Proposition \ref{pro:skelhors} follows verbatim as in \cite[Section 5]{DiaGelSan:20}. We refrain from providing the details.

\begin{proposition}\label{pro:skelhors}
	Assume that $f\in\MB^1(M)$ has the pre-skeleton property relative to $h>0$ and $\alpha\ge0$. Then for every $\gamma\in(0,h)$ and $\varepsilon>0$, there is a basic set $\Lambda\subset M$ of expanding type such that
\begin{itemize}
	\item  $h_{\rm top}(f,\Lambda)\in(h-\gamma,h+\gamma)$;
	\item  $\chi^\c(\mu)\in(\alpha-\varepsilon,\alpha+\varepsilon)\cap\bR_+$,   for every $\mu\in\cM_{\rm erg}(f|_\Lambda)$.
\end{itemize}
	
	The analogous result holds when $\alpha\le0$.
\end{proposition}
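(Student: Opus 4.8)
The plan is to run the construction of \cite[Section~5]{DiaGelSan:20}, with the ``skeleton of an ergodic measure'' there replaced by the purely topological pre-skeleton supplied by the hypothesis. I will describe the expanding-type case $\alpha\ge0$; the case $\alpha\le0$ is the same argument applied to $f^{-1}$, using the stable blender-horseshoe in place of the unstable one and producing a basic set of contracting type. First I would fix small $\varepsilon_H,\varepsilon_E>0$ (to be pinned down at the end) and invoke the pre-skeleton property to get, for all large $m$, an $(m,\varepsilon)$-separated family $X=\{x_i\}$ with $\card X\ge e^{m(h-\varepsilon_H)}$ and $K_0^{-1}e^{\ell(\alpha-\varepsilon_E)}\le\lVert D^\c f^\ell(x_i)\rVert\le K_0e^{\ell(\alpha+\varepsilon_E)}$ for $0\le\ell\le m$; after passing to a subfamily I may assume $\card X=\lceil e^{m(h-\varepsilon_H)}\rceil$. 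To each $x_i$ I attach a short center curve $\zeta_i\ni x_i$ together with its $\u$-strip $\Delta^\cu_i$ (the union of local strong unstable leaves of a uniform size through $\zeta_i$) and its $\s$-strip $\Delta^\cs_i$.

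Next I would connect the blocks. Using the minimality of $\cW^\uu$ and $\cW^\ss$ together with a compactness argument, there is $T\in\bN$, independent of $m$ and of the indices, such that some forward iterate $f^t(\Delta^\cu_i)$ with $t\le T$ stretches across the region of the unstable blender-horseshoe $\blender$ from~(B), and likewise some backward iterate $f^{-t'}(\Delta^\cs_j)$ with $t'\le T$; the defining property of $\blender$ then produces, inside $\Delta^\cu_i$, a sub-$\u$-strip carried by boundedly many more iterates onto a strip crossing $\Delta^\cs_j$. This yields, for every ordered pair $(i,j)$, a connecting orbit segment of length $\le T$ from $\Delta^\cu_i$ to $\Delta^\cs_j$ along which the center derivative stays within a uniform factor $C^{\pm1}$. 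When $\alpha=0$ I would, in addition, have each connecting segment linger for $s=s(m)$ iterates---with $s$ of order $m\varepsilon_E/\lambda_q$---along a fixed periodic orbit $q$ of $\blender$, whose center exponent $\lambda_q>0$; this creates no entropy and forces a net per-period center expansion of at least $e^{m\varepsilon_E}$. When $\alpha>0$ I simply take $\varepsilon_E<\alpha$ and omit the sojourn.

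I would then let $\Lambda$ be the set of bi-infinite orbits obtained by freely concatenating blocks $x_i,\dots,f^m(x_i)$ through these connectors. Since any block may follow any other, the natural coding is a transitive subshift, so $\Lambda$ is compact, $f$-invariant, topologically transitive, and---being the maximal invariant set in a suitable neighborhood of the strips and connectors---locally maximal. A full period has bounded length $N$ with $m+T\le N\le m+T+s$ and per-period center growth at least $K^{-1}e^{m(\alpha-\varepsilon_E)}$ (if $\alpha>0$) or at least $e^{m\varepsilon_E}$ (if $\alpha=0$); hence, for $m$ large, $E^\c|_\Lambda$ is uniformly eventually expanded and $E^\ss|_\Lambda$ uniformly contracted, so $\Lambda$ is hyperbolic of $\u$-index $d^\uu+1$, i.e.\ of expanding type. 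Its entropy obeys $(m+T+s)^{-1}\log\card X\le h_{\rm top}(f,\Lambda)\le(m+T)^{-1}\log(\card X\cdot Q)$, where $Q=Q(T)$ is the bounded number of connecting options; since $\card X\approx e^{m(h-\varepsilon_H)}$ and $s/m\to0$ as $\varepsilon_E\to0$, choosing $\varepsilon_H,\varepsilon_E$ small and then $m$ large puts $h_{\rm top}(f,\Lambda)$ in $(h-\gamma,h+\gamma)$. Finally, every $\mu\in\cM_{\rm erg}(f|_\Lambda)$ has $\chi^\c(\mu)$ controlled by the same per-period bounds, hence in $(\alpha-\varepsilon,\alpha+\varepsilon)$, and $\chi^\c(\mu)>0$ automatically since $\Lambda$ is hyperbolic of expanding type; thus $\chi^\c(\mu)\in(\alpha-\varepsilon,\alpha+\varepsilon)\cap\bR_+$.

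The genuinely delicate point is the case $\alpha=0$: one must produce a bona fide hyperbolic, center-expanding horseshoe whose center exponent is nonetheless arbitrarily close to zero, with no loss of entropy. The tunable sojourn along a periodic orbit of the unstable blender-horseshoe is exactly what does this---it injects a fixed positive amount of center expansion per period at zero entropy cost, with the sojourn's relative length tending to $0$ as $\varepsilon_E\to0$. Apart from that, the things requiring care are the uniform-in-$m$ bound on the connecting time---this is where minimality of the strong foliations, hypothesis~(M), really enters, via a compactness argument---and the (standard) verification that the coded set is locally maximal; hypothesis~(B) is used both to realize the strip intersections and, when $\alpha=0$, to supply the center-expanding periodic orbit.
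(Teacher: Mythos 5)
Your reconstruction matches the argument the paper itself points to and declines to repeat: Proposition~\ref{pro:skelhors} is justified in the text only by the remark that its proof ``follows verbatim as in [DiaGelSan:20, Section~5]'', and the ingredients the paper names there --- the pre-skeleton property and minimality of both strong foliations, with the blender-horseshoe from hypothesis~(B) entering implicitly because $f\in\BM^1(M)$ --- are exactly the three pieces your construction uses. Building $\u$-/$\s$-strips over the pre-skeleton segments, using minimality and the unstable blender to realize all transitions $i\to j$ with a uniform connecting time, and taking the maximal invariant set is precisely the horseshoe construction being referenced; and the tunable sojourn inside the blender to make the set genuinely center-expanding when $\alpha=0$ is the right mechanism.

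Two small points would need tightening if you wrote this out. First, for the per-period center growth to be strictly positive (and bounded uniformly away from $1$) when $\alpha=0$ you need $s\lambda_q$ to dominate not just $m\varepsilon_E$ but $m\varepsilon_E+\log K_0+\log C$, where $C$ absorbs the distortion along the bounded-length connectors; the corresponding upper bound on the per-period exponent then forces $\varepsilon_E$ to be chosen as a small multiple of $\varepsilon$ before taking $m$ large. Second, the sojourn really does carry zero entropy only if one fixes a single itinerary in the blender for those $s$ iterates (this is what Lemma~\ref{l.iterations} gives, e.g.\ staying in the Markov rectangle $\mathbf C_{\mathbb A}$); allowing a free $\{\mathbb A,\mathbb B\}$-itinerary would contribute $\approx s\log2$ to $\log(\card X\cdot Q)$ --- still harmless since $s/m\to0$, but the fixed-path choice is what the phrase ``creates no entropy'' actually requires. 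Finally, as a matter of presentation, the forward-iterated $\u$-strip that enters the blender should be taken at the block endpoint $f^m(x_i)$ (equivalently, $f^m(\Delta^\cu_i)$), since the connector runs from the end of one block to the start of the next. None of these alter the structure of the argument.
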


Finally, we recall the following result (whose proof also uses skeletons).

\begin{theorem}[{\cite[Theorem 6.1]{DiaGelSan:20}}]\label{the:goingotherside}
	Let $f\in\MB^1(M)$. There exists $K(f)>0$ such that  for every positive numbers $\delta$ and $\gamma$, every $\alpha\in(\alpha_{\rm min},0)$, and $\mu\in\cM_{\rm erg,>0}(f)$, there is a basic set $\Lambda$ of contracting type such that 
	\[
	h_{\rm top}(f,\Lambda)
	\ge \frac{h_\mu(f)}{1+K(f)(\beta+|\alpha|)}-\gamma,
	\quad\text{ where }\quad
	\beta\eqdef\chi^\c(\mu)>0,
	\]	
	and every $\nu\in\cM_{\rm erg}(f|_\Lambda)\subset \cM_{\rm erg,<0}(f)$ satisfies
	\[
	\frac{\alpha}{1+K(f)(\beta+|\alpha|)}-\delta
	<\chi^\c(\nu)
	<\frac{\alpha}{1+\frac{1}{\log\,\lVert Df\rVert}(\beta+|\alpha|)}+\delta.
	\]
	
	There is an analogous result for measures in $\cM_{\rm erg,<0}(f)$.
\end{theorem}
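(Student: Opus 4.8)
Since this result restates \cite[Theorem 6.1]{DiaGelSan:20}, I only indicate the strategy I would follow. The plan is to combine the ``ergodic approximation by horseshoes'' philosophy (cf. \cite[Supplement S.5]{KatHas:95}) with a \emph{sign flip} of the center exponent obtained by concatenating positive-exponent orbit blocks with excursions through a stable blender-horseshoe. First, since $\mu\in\cM_{\rm erg,>0}(f)$ has entropy $h_\mu(f)$ and center exponent $\beta=\chi^\c(\mu)>0$, I would invoke that $\mu$ has the ergodic skeleton property relative to $h_\mu(f)$ and $\beta$ — the measure-theoretic counterpart of the pre-skeleton property of Section~\ref{sepreskeleton}, produced from the Birkhoff and Shannon--McMillan--Breiman theorems as in \cite[Section~4]{DiaGelRam:17}. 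Thus, for prescribed small $\varepsilon_H,\varepsilon_E>0$ and all large $n$, there is an $(n,\varepsilon)$-separated family $\{x_i\}_{i\in I}$ with $\card I\ge e^{n(h_\mu(f)-\varepsilon_H)}$ and $K_0^{-1}e^{\ell(\beta-\varepsilon_E)}\le\lVert D^\c f^\ell(x_i)\rVert\le K_0e^{\ell(\beta+\varepsilon_E)}$ for all $0\le\ell\le n$; these orbit segments will be the ``positive blocks''.

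Next I would fix a stable blender-horseshoe $D$ supplied by condition (B) of Definition~\ref{defr.ourhypotheses} (after replacing $f$ by a fixed power, which merely rescales all quantifiers). Using the minimality of $\cW^\uu$, a local $\cu$-strip through a positive block has a forward iterate that enters a neighborhood of $D$; the defining covering property of the blender together with the minimality of $\cW^\ss$ then lets the orbit leave $D$ and return, after an excursion of some length $t$, to a neighborhood of the initial family, with the center contracting along the excursion at a rate bounded below by a positive constant depending only on $D$ and bounded above by $\log\lVert Df^{-1}\rVert$. I would then take $\Lambda$ to be the locally maximal hyperbolic set carried by the subsystem whose orbits are periodic concatenations of one positive block (length $n$) with one such excursion (length $t$), the $e^{n(h_\mu(f)-\varepsilon_H)}$ blocks serving as symbols glued by the fixed transition through $D$. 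Choosing $n$ large enough that the positive blocks dominate the bounded number of transition iterates makes $\Lambda$ hyperbolic and transitive, hence basic; moreover the rigid periodic coding forces every orbit of $\Lambda$ to spend the fixed proportion $t/(n+t)$ of its time in the contracting excursions, so $\Lambda$ does not contain the positive horseshoe and is of contracting type.

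Finally I would pick the ratio $t/n$ and extract the two estimates from the concatenation arithmetic. Every $\nu\in\cM_{\rm erg}(f|_\Lambda)$ satisfies
\[
	\chi^\c(\nu)=\frac{n\beta+O(n\varepsilon_E)+(\text{excursion contribution})}{n+t},
\]
with the excursion contribution negative and comparable to $t$ times the excursion's contraction rate; tuning $t/n$ so that the typical value of the right-hand side matches the targeted exponent forces $t/n$ to be $(\beta+|\alpha|)$ times a factor governed by the (uniformly bounded) center rates, which together with the slacks $\varepsilon_E,\varepsilon_H,\delta$ pins $\chi^\c(\nu)$ inside the window of the statement — its endpoints reflecting the extreme admissible rates and featuring $\log\lVert Df\rVert$ and the constant $K(f)$, the latter chosen once and for all using that partial hyperbolicity yields $\varphi^\c>-\log\lVert Df^{-1}(\cdot)\rVert$, hence $|\alpha_{\rm min}^f|<\log\lVert Df^{-1}\rVert$, so that $t/n$ stays bounded for every $\alpha\in(\alpha_{\rm min}^f,0)$. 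With this ratio, $\Lambda$ carries $e^{n(h_\mu(f)-\varepsilon_H)}$ symbols along orbits of period $n+t$, whence $h_{\rm top}(f,\Lambda)\ge (h_\mu(f)-\varepsilon_H)/(1+t/n)$, which for $\varepsilon_H$ small gives the claimed lower bound. The assertion for measures in $\cM_{\rm erg,<0}(f)$ follows by running the same construction for $f^{-1}$, whose stable blender-horseshoe is the unstable blender-horseshoe of $f$. I expect the genuine difficulty to lie in the middle step: turning the connections furnished by minimality of the strong foliations and by the blender's covering property into an honestly hyperbolic, transitive, locally maximal set while retaining exact control of the time spent in each regime — this is precisely where blender-horseshoes, and not mere heteroclinic intersections, are indispensable.
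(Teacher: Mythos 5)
This result is quoted verbatim from \cite[Theorem 6.1]{DiaGelSan:20}; the paper gives no proof of its own, so there is no in-text argument to compare yours against. That said, your reconstruction of the strategy---ergodic (pre-)skeleton for $\mu$, concatenation of positive blocks with excursions through a stable blender-horseshoe, tuning the time ratio to control exponent and entropy---is the correct one, and the machinery you invoke corresponds to Propositions~\ref{pro:skel}--\ref{pro:skelhors} and Lemma~\ref{l.iterationsstrip} in the present paper.

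Two places in your sketch are slightly off and would matter if you wrote the argument in full. First, ``choosing $n$ large enough that the positive blocks dominate the bounded number of transition iterates'' would, if taken at face value, make the constructed set of \emph{expanding} type (the positive blocks carry exponent close to $\beta>0$). What must dominate is the negative contribution of the excursion; what becomes negligible are the uniformly bounded transition iterates needed to reach and leave the blender, and they are negligible relative to $n+t$, not relative to $n$ alone. For that, $t$ must scale proportionally with $n$. Second, and this is the more substantive point, you attribute the availability of a long excursion jointly to minimality of the strong foliations and to the blender's covering property, but minimality only delivers connecting orbit segments of \emph{uniformly bounded} length. The source of the free parameter $t$ is the blender's iteration property (Lemma~\ref{l.iterationsstrip} here, \cite[Proposition 2.3]{DiaGelSan:20}): a complete $\u$-strip in-between can be kept inside the reference box $\mathbf{C}$ for an arbitrarily long prescribed number of iterates while its image remains a complete $\u$-strip, each such iterate contributing center contraction at a rate pinned between two constants determined by $\blender^-$. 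This is precisely what makes $t/n$ tunable to $K(f)(\beta+|\alpha|)$ and forces every ergodic measure on $\Lambda$ to have exponent in the stated window; without it you only get a heteroclinic orbit, not a tunable excursion. The remaining step you flag yourself---converting the heteroclinic picture into an honestly locally maximal hyperbolic set with uniform exponent control---is the content of the horseshoe construction in \cite[Section 5]{DiaGelSan:20}, of which Proposition~\ref{pro:skelhors} is the present paper's avatar.
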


\section{Pressure functions and their Legendre-Fenchel transforms}\label{sec:press}

In this section, we study (restricted) pressure functions that are one of the main technical tools to prove the restricted variational principles in Theorem \ref{theorem1}. In that direction, our main result is Theorem \ref{theorem:1}, which almost implies Theorem \ref{theorem1}. Throughout this section, we assume that $f\in\PH^1_{\c =1}(M)$. The essential ingredients here are homoclinic relations, which provide an underlying framework.

Let us define the main tools. Let $\cN\subset\cM_{\rm erg}(f)$. Given $q\in\bR$ and $\varphi^\c$ as in \eqref{def:varphi}, consider the \emph{restricted variational pressure} of the potential $q\varphi^\c$  (with respect to $\cN$),
\[
	\cP_\cN(q)
	\eqdef \sup_{\mu\in\cN}\big(h_\mu(f)+q\int\varphi^\c\,d\mu\big).
\]
Consider its \emph{Legendre-Fenchel transform} defined by
\begin{equation}\label{def:EN}
	\cE_\cN(\alpha)
	\eqdef\inf_{q\in\bR}\big(\cP_\cN(q)-q\alpha\big)
\end{equation}
on its domain
\begin{equation}\label{defdomain}
D(\cE_\cN)
\eqdef \Big\{\alpha\in\bR\colon\inf_{q\in\bR}\big(\cP_\cN(q)-q\alpha\big)>-\infty\Big\}.
\end{equation}
Let us also consider
\begin{equation}\label{def:HN}
\cH_\cN(\alpha)
\eqdef \sup\big\{h_\mu(f)\colon \mu\in\cN,\chi^\c(\mu)=\alpha\big\}.
\end{equation}

\begin{notation}\label{notation}
Our main interest is when $\cN=\cM_{\rm erg,<0}(f)$ or $\cN=\cM_{\rm erg,>0}(f)$, in which case we simply write $\cP_{<0}$ and $\cP_{>0}$ and analogously $\cE_{<0}$ and $\cE_{>0}$, respectively. Also we simply write $\cH$ (the sign of $\alpha$ determines if $\cN=\cM_{\rm erg,<0}$ or $\cN=\cM_{\rm erg,>0}$).
\end{notation}

Recall the numbers $\alpha_{\rm min}$ and $\alpha_{\rm max}$ defined in \eqref{eq:defalphamin}.

\begin{figure}[h]
		\begin{overpic}[scale=.55]{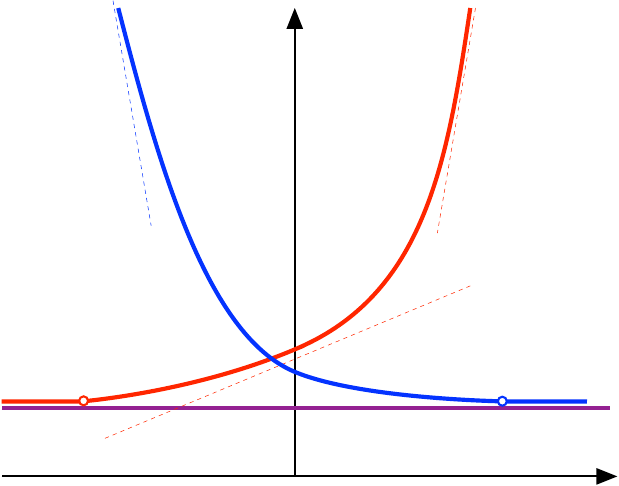}
			\put(96,-5){\small{$q$}}
			\put(101,11){\small{\textcolor{magenta}{$\cH(0)$}}}
			\put(58,69){\small{\textcolor{red}{$\cP_{>0}(q)$}}}
			\put(77,64){\small{\textcolor{red}{$\sim q\alpha_{\rm max}$}}}
			\put(-2,64){\small{\textcolor{blue}{$\sim q\alpha_{\rm min}$}}}
			\put(77,34){\small{\textcolor{red}{$h_\mu(f)+q\alpha^+_{\rm max}$}}}
			\put(0,-5){\small{\textcolor{red}{$-K(f)\h(f)$}}}
			\put(74,-5){\small{\textcolor{blue}{$K(f)\h(f)$}}}
			\put(22,69){\small{\textcolor{blue}{$\cP_{<0}(q)$}}}
			\put(46,-4){\small{$0$}}
		\end{overpic}\hspace{1.3cm}
		\begin{overpic}[scale=.6]{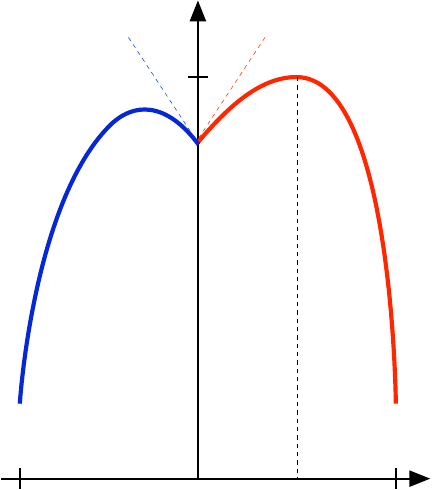} 
			\put(2,95){\small{$h_{\rm top}(f,\cL(\alpha))$}}
			\put(75,-5){\small{$\alpha_{\rm max}$}}
			\put(-4,-5){\small{$\alpha_{\rm min}$}}
			\put(56,-5){\small{$\alpha_{\rm max}^+$}}
			\put(17,86){\small{$h_{\rm top}(f)$}}
			\put(28,68){\small{$\h(f)$}}
			\put(54,95){\small{\textcolor{red}{$\h(f)+K(f)\alpha\h(f)$}}}
			\put(73,75){\small{\textcolor{red}{$\cP^\ast_{>0}(\alpha)$}}}
			\put(2,75){\small{\textcolor{blue}{$\cP^\ast_{<0}(\alpha)$}}}
		\end{overpic}\hspace{1cm}
		\caption{Possible shape of the restricted variational pressures (left figure) and their Legendre-Fenchel transforms (right figure) under the hypotheses of Theorem \ref{theorem:1}. See also Remark \ref{remLFTransforFigures}.}
		\label{figPstar1}
\end{figure}

\begin{theorem}[Restricted variational principle -- Hyperbolic parts]\label{theorem:1}
Let $f\in\BM^1(M)$. Then
\begin{equation}\label{nummer}
	D(\cE_{<0})=[\alpha_{\rm min},0].
\end{equation}
Moreover, for every $\alpha\in [\alpha_{\rm min},0)$, it holds $\cL(\alpha)\ne\emptyset$ and
\begin{equation}\label{resvarprinovo}
	h_{\rm top}(f,\cL(\alpha))
	= \cE_{<0}(\alpha)
		= \cH(\alpha).
\end{equation}
	
The analogous result holds for $\cE_{>0}$ and $\alpha\in(0,\alpha_{\rm max}]$.
\end{theorem}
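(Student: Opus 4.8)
\textbf{Proof plan for Theorem~\ref{theorem:1}.}
The plan is to establish the three equalities in \eqref{resvarprinovo} by a circle of inequalities, combined with a direct identification of the domain \eqref{nummer}. First I would handle the easy direction $h_{\rm top}(f,\cL(\alpha))\ge\cH(\alpha)$: for any ergodic $\mu$ with $\chi^\c(\mu)=\alpha$, the set of $\varphi^\c$-regular generic points of $\mu$ lies in $\cL(\alpha)$ and carries full $\mu$-measure, so by the variational-principle-type lower bound for Bowen entropy on sets of generic points (Brin--Katok / the entropy distribution principle collected in Appendix~\ref{App:B}) one gets $h_{\rm top}(f,\cL(\alpha))\ge h_\mu(f)$; taking the supremum over such $\mu$ gives $\ge\cH(\alpha)$. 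The inequality $\cE_{<0}(\alpha)\le\cH(\alpha)$ for $\alpha$ in the relevant range is also soft: for every $q\in\bR$ and every admissible $\mu$ one has $h_\mu(f)+q\int\varphi^\c\,d\mu\le\cP_{<0}(q)$, and restricting to $\mu$ with $\chi^\c(\mu)=\alpha$ gives $h_\mu(f)\le\cP_{<0}(q)-q\alpha$; optimizing over $\mu$ and then over $q$ yields $\cH_{<0}(\alpha)\le\cE_{<0}(\alpha)$, but since the sign of $\alpha<0$ forces all such measures into $\cM_{\rm erg,<0}(f)$, $\cH_{<0}(\alpha)=\cH(\alpha)$.

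The substantial direction is the reverse chain $h_{\rm top}(f,\cL(\alpha))\le\cE_{<0}(\alpha)$ together with $\cE_{<0}(\alpha)\le\cH(\alpha)$ being sharp, i.e.\ $\cE_{<0}(\alpha)\le h_{\rm top}(f,\cL(\alpha))$. For the upper bound on $h_{\rm top}(f,\cL(\alpha))$ I would use the pre-skeleton machinery: if $h_{\rm top}(f,\cL(\alpha))>0$, then by Proposition~\ref{pro:skel} the map has the pre-skeleton property relative to any $h<h_{\rm top}(f,\cL(\alpha))$ and $\alpha$, and by Proposition~\ref{pro:skelhors} (in its contracting-type version, applicable since $\alpha<0$) one extracts a basic set $\Lambda$ of contracting type with $h_{\rm top}(f,\Lambda)$ arbitrarily close to $h$ and all its ergodic measures having center exponent within $\varepsilon$ of $\alpha$. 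On a basic set the classical thermodynamic formalism applies, so $h_{\rm top}(f,\Lambda)\le\cP_{\Lambda}(q)-q(\alpha\mp\varepsilon)$ for every $q$, where $\cP_\Lambda$ is the usual topological pressure on $\Lambda$; since $\cM_{\rm erg}(f|_\Lambda)\subset\cM_{\rm erg,<0}(f)$ we have $\cP_\Lambda(q)\le\cP_{<0}(q)$, and passing to the infimum over $q$ (controlling the $q\varepsilon$ error using that $\cP_{<0}$ is finite and Lipschitz, by the a priori bound $|\varphi^\c|\le\log\lVert Df\rVert$) gives $h_{\rm top}(f,\Lambda)\le\cE_{<0}(\alpha)+o(1)$; letting $\varepsilon,\gamma\to0$ and $h\nearrow h_{\rm top}(f,\cL(\alpha))$ yields $h_{\rm top}(f,\cL(\alpha))\le\cE_{<0}(\alpha)$. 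For the matching lower bound $\cE_{<0}(\alpha)\le h_{\rm top}(f,\cL(\alpha))$, the idea is that the concave function $\cE_{<0}$ on $[\alpha_{\rm min},0)$ is, at each interior point $\alpha$, realized as a supporting-line value $\cP_{<0}(q_\alpha)-q_\alpha\alpha$; one then produces ergodic measures $\mu_n\in\cM_{\rm erg,<0}(f)$ with $h_{\mu_n}(f)+q_\alpha\chi^\c(\mu_n)\to\cP_{<0}(q_\alpha)$, approximates them by basic sets using the skeleton construction, and then \emph{perturbs the exponent back onto the value $\alpha$} — this is exactly the content of Theorem~\ref{the:goingotherside}, which lets one trade exponent for a controlled amount of entropy — obtaining basic sets with exponent exactly near $\alpha$ and entropy approaching $\cE_{<0}(\alpha)$; their saddles have dense invariant manifolds (homoclinic relations, used to glue), and a standard limsup/Moran-set construction inside $\cL(\alpha)$ then gives $h_{\rm top}(f,\cL(\alpha))\ge\cE_{<0}(\alpha)$.

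For the domain identification \eqref{nummer}, note $\cP_{<0}(q)=\sup_{\mu\in\cM_{\rm erg,<0}}(h_\mu(f)+q\chi^\c(\mu))$; since $\chi^\c(\mu)\in[\alpha_{\rm min},0)$ on this class and entropies are bounded by $h_{\rm top}(f)$, the asymptotic slopes of $\cP_{<0}$ are $\alpha_{\rm min}$ as $q\to+\infty$ and $0$ as $q\to-\infty$ (the latter needs that $\sup_{\mu}h_\mu(f)$ over measures with exponent close to $0$ equals $\h(f)>0$, which is Remark~\ref{BM1almpa}/\cite{BocBonDia:16}), whence $\cP_{<0}(q)-q\alpha$ stays bounded below precisely for $\alpha\in[\alpha_{\rm min},0]$, giving $D(\cE_{<0})=[\alpha_{\rm min},0]$; nonemptiness of $\cL(\alpha)$ for $\alpha\in[\alpha_{\rm min},0)$ follows from the basic sets produced above (any basic set of contracting type with exponents near $\alpha$ contains, after the exponent-adjustment of Theorem~\ref{the:goingotherside}, points of $\cL(\alpha)$, or more simply from the intermediate-value behavior of $\chi^\c$ along homoclinically related measures). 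The main obstacle, I expect, is the sharp lower bound $h_{\rm top}(f,\cL(\alpha))\ge\cE_{<0}(\alpha)$: making the Legendre duality effective requires simultaneously (i) selecting near-optimal measures for the linear functional $h+q_\alpha\chi^\c$, (ii) replacing them by basic sets without losing entropy, (iii) shifting their center exponents exactly onto $\alpha$ via Theorem~\ref{the:goingotherside} while keeping the entropy loss $o(1)$, and (iv) assembling a single subset of $\cL(\alpha)$ whose Bowen entropy sees all of this — the bookkeeping of the three small parameters ($\gamma$, $\delta$, the exponent window $\varepsilon$) through steps (ii)--(iv), and checking that the exponent-adjustment in Theorem~\ref{the:goingotherside} is compatible with the supporting slope $q_\alpha$ near the endpoint $\alpha\to\alpha_{\rm min}^-$, is where the real work lies.
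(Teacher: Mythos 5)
Your overall architecture — the closed chain of inequalities with the pre-skeleton machinery supplying $h_{\rm top}(f,\cL(\alpha))\le\cE_{<0}(\alpha)$ via Propositions~\ref{pro:skel} and~\ref{pro:skelhors} — matches the paper's proof. But there is a genuine gap in your route to the lower bound $\cE_{<0}(\alpha)\le h_{\rm top}(f,\cL(\alpha))$: you plan to take near-optimal measures $\mu_n\in\cM_{\rm erg,<0}(f)$ for the supporting functional $h+q_\alpha\chi^\c$, approximate by basic sets, and then ``perturb the exponent back onto $\alpha$'' via Theorem~\ref{the:goingotherside}. That theorem does not do what you want. Its hypothesis is $\mu\in\cM_{\rm erg,>0}(f)$; it is a sign-crossing device (positive exponent in, contracting-type basic set out), and the entropy loss is of multiplicative order $1/(1+K(f)(\beta+|\alpha|))$, not $o(1)$. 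It is invoked in the paper only to prove $\h^+(f)=\h^-(f)$ (Claim~\ref{clem:imits}), and it plays no role in Theorem~\ref{theorem:1}. The tool that actually closes the lower bound is the restricted variational principle on a single basic set: since $f|_{\bX_i}$ has the specification property and $\varphi^\c$ is continuous, by \cite{PfiSul:07} (via (Exh3) and Remark~\ref{remsimpleBowenarg}) one gets $\cE_i(\alpha)=\cH_i(\alpha)=h_{\rm top}(f,\cL(\alpha)\cap\bX_i)$ directly on the basic set, with no exponent shifting; the paper then passes to the limit through an increasing exhausting family (Lemma~\ref{lempro:exhaust}, Propositions~\ref{pro:useful} and~\ref{prop:approx1}), using that $\cP_i\to\cP_{<0}$ implies $\cE_i\to\cE_{<0}$.

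Two smaller points. You announce $\cE_{<0}(\alpha)\le\cH(\alpha)$ as the ``soft'' inequality but your displayed calculation proves $\cH(\alpha)\le\cE_{<0}(\alpha)$; the latter is indeed the trivial direction (property~(E5)), whereas the former is precisely the substance delivered by the exhausting-family argument. And the endpoint $\alpha=\alpha_{\rm min}$ is not merely a bookkeeping subtlety: the supporting-slope argument is not available there (the relevant slope is $-\infty$). The paper handles it separately: $\cE_{<0}(\alpha_{\rm min})=\cH(\alpha_{\rm min})$ is obtained by tracking a sequence $\mu_q$ as $q\to-\infty$ and invoking upper semi-continuity of metric entropy (available for one-dimensional center by \cite{LiViYa:13,DiFiPaVi:12}), and $h_{\rm top}(f,\cL(\alpha_{\rm min}))=\cH(\alpha_{\rm min})$ then follows from the general identity \cite[Lemma~5.2]{DiaGelRam:19} rather than from pre-skeletons.
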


To prove for appropriate $\alpha$ that
\[
	\cH(\alpha)
	= \cE_{<0}(\alpha)
	\le h_{\rm top}(f,\cL(\alpha)),
\]
one only requires $f\in\PH^1_{\c =1}(M)$ and certain ergodic approximation properties; see Proposition \ref{prop:approx1} and Section \ref{ssec:ergappro}. To prove the full equality in \eqref{resvarprinovo}, we require $f\in\BM^1(M)$. Note that Theorem \ref{theorem:1} already implies  Theorem \ref{theorem1} except for the part corresponding to the exponent zero in \eqref{outofmind}.

\begin{remark}[The general shape of the graph of $\alpha\mapsto h_{\rm top}(f,\cL(\alpha))$] \label{remLFTransforFigures}
	Figure \ref{figPstar1} illustrates the pressure functions and their Legendre-Fenchel transforms. 	
	Note that Theorem \ref{theorem:1} implies that $\cH$ is a piecewise concave (hence continuous) function. Further regularity properties (smoothness, analyticity) are unknown so far. In the following, we only discuss some properties of $\cP_{>0}$, the properties of $\cP_{<0}$ are analogous.
	
	The pressure $\cP_{>0}$ has asymptotic slope $\alpha_{\rm max}$ as $q\to\infty$. By definition, $\cP_{>0}$ has supporting lines $\{q\mapsto h_\mu(f)+q\alpha\}$, where $\alpha>0$ ranges over the Lyapunov exponents of ergodic measures $\mu\in\cM_{\rm erg,>0}(f)$. In particular, if there is a unique measure $\mu_{\rm max}$ of maximal entropy and if this measure has positive exponent $\alpha_{\rm max}^+$, then $\cP_{>0}$ is differentiable at $0$ with derivative $\alpha_{\rm max}^+$. 
	
	Moreover, $\cP_{>0}$ has slope zero on the interval $(-\infty,-K(f)\h(f)]$ for $K(f)>0$ as in Theorem \ref{the:goingotherside}. To see this, note that it follows from Theorem \ref{the:goingotherside}, that for every $\mu$ with $\beta\eqdef\chi^\c(\mu)>0$, there is a sequence of ergodic measures $(\mu_i)_i\subset\cM_{\rm erg,<0}(f)$ such that $\chi^\c(\mu_i)\nearrow0$ and
\begin{equation}\label{eqLF}\begin{split}
	\frac{h_\mu(f)}{1+K(f)\cdot\beta}
	&\le \limsup_{i\to\infty}h_{\mu_i}(f)\\
	\text{\small{(by \eqref{def:HN})}}\quad
	&\le \limsup_{i\to\infty}\cH(\chi^\c(\mu_i))\\
	\text{\small{(by Theorem \ref{theorem1})}}\quad
	&\le \lim_{\alpha\nearrow0} \cH(\alpha)=\lim_{\alpha\nearrow0}\cP^\ast_{>0}(\alpha)=\h(f).
\end{split}\end{equation}
Together with the fact that the measure $\mu$ with exponent $\beta>0$ was arbitrary, we get
\[
	\frac{\cH(\beta)}{1+K(f)\cdot\beta}
	= \frac{\cP_{>0}^\ast(\beta)}{1+K(f)\cdot\beta}
	\le \h(f)
	\quad\Longleftrightarrow\quad
	\frac{\cP_{>0}^\ast(\beta)-\h(f)}{\beta}
	\le K(f)\h(f).
\]
Taking limits $\beta\searrow0$, implies that the right hand-side derivative satisfies $D_R\cP_{>0}^\ast(0)\le K(f)\h(f)$. Hence, the assertion about the slope zero follows from classical properties of the Legendre-Fenchel transform.

	Note that taking $\mu$ in \eqref{eqLF} satisfying $h_\mu(f)>0$, one  immediately gets that $\h(f)\ge\cH(0)>0$.

	At the current state of the art, it is unknown if $\cH(\alpha_{\rm min})$ and $\cH(\alpha_{\rm max})$ are positive or zero or even equal to $h_{\rm top}(f)$; this type of question is studied in the context of \emph{ergodic optimization} (see, for example, \cite{Jen:19}). 
\end{remark}

This section is organized as follows. In Section \ref{ssec:genpro}, we collect some general properties of the thermodynamical quantifiers defined above. In Section \ref{sechomcla}, we recall the concepts of intersection and homoclinic classes. In Section \ref{ssec:ergappro} we recall some ergodic approximation properties. Section \ref{ssec:exhaus} explains how to approximate restricted pressure functions and their convex conjugates by means of so-called exhausting families.  In Sections \ref{ssec:final} and \ref{sec:proofThm1}, we prove Theorems \ref{theorem:1} and \ref{theorem1}, respectively. Section \ref{app2} contains the proof of the semi-continuity of the spectrum in  Corollary~\ref{cor:semiconti}.

\subsection{General properties of the restricted pressure}\label{ssec:genpro}

We recall some general properties of the restricted variational pressure and its Legendre-Fenchel transform (see \cite[Section 5]{DiaGelRam:19} for proofs and further properties, here we keep the numeration in \cite{DiaGelRam:19}).

Given $\cN\subset\cM_{\rm erg}(f)$, let
\begin{equation}\label{eqdefvarphicN}
	\varphi^\c_\cN
	\eqdef \Big\{\int\varphi^\c\,d\mu\colon\mu\in\cN\Big\},\quad
	\underline\varphi^\c_\cN
	\eqdef\inf\varphi^\c_\cN,\quad
	\overline\varphi^\c_\cN
	\eqdef\sup\varphi^\c_\cN.
\end{equation}
We have the following properties:
\begin{itemize}
	\item[(P1)] For every $\mu\in\cN$, the function $\cP_{\{\mu\}}$ is affine and satisfies $\cP_{\{\mu\}}\le\cP_\cN$.
	\item[(P2)]  $\cP_{\cN'}\le\cP_\cN$ for every $\cN'\subset\cN$.
	\item[(P4)] The function $\cP_\cN$ is uniformly Lipschitz continuous.
	\item[(P5)] The function $\cP_\cN$ is convex. 
	\item[(E1)] The function $\cE_\cN$ is concave (and hence continuous). Hence, it is differentiable at all but at most countably many $\alpha$ and the left and right derivatives are defined for all $\alpha\in D(\cE_\cN)$.
	\item[(E2)] $D(\cE_\cN)$ is an interval and $D(\cE_\cN)\supset(\underline\varphi^\c_\cN,\overline\varphi^\c_\cN)$.
	\item[(E4)] It holds 
	\[
	\max_{\alpha\in D(\cE_\cN)}\cE_\cN(\alpha)=\cP_\cN(0).
	\]	
	\item[(E5)] For every $\alpha\in D(\cE_\cN)$,
	\[
	\cH_\cN(\alpha)
	\le 	\cE_\cN(\alpha).
	\]
\end{itemize}

When in (E5) we have equality, then this is also called a \emph{variational principle restricted to $\cN$} or simply \emph{restricted variational principle}. For that, in general, some further properties are required, see Section \ref{ssec:exhaus}.

\subsection{Intersection and homoclinic classes}\label{sechomcla}
Homoclinic relations between saddles play a pivotal role in this paper. Let us recall some key facts. Denote by $\Per_{<0}(f)$ and $\Per_{>0}(f)$ the sets of all saddles of $\u$-index $d^\uu$ and $d^\uu+1$, respectively. 
Two saddles of the same $\u$-index are \emph{homoclinically related} if the unstable manifold of one saddle has non-empty transverse intersections with the stable manifold of the other saddle, and vice versa. Note that this defines an equivalence relation on the sets $\Per_{<0}(f)$ and $\Per_{>0}(f)$. We call \emph{intersection classes} the equivalence classes for this relation. 
The \emph{homoclinic class} of a saddle $P$ is the closure of its intersection class, denoted by $H(P,f)$. Note that if $P,Q$ are homoclinically related saddles, then $H(P,f)=H(Q,f)$. Note that $H(P,f)$ may contain saddles whose $\u$-index is different from the one of $P$. Indeed, we see below that this feature occurs for $f\in\BM^1(M)$, see Remark \ref{r.homorelated}.

\begin{remark}[Homoclinically related saddles]\label{r.homorelated}
	Given $f\in\BM^1(M)$, any pair of saddles with the same $\u$-index are homoclinically related. Furthermore, $\Per_{<0}(f)$ and $\Per_{>0}(f)$ are intersection classes and satisfy
	\[\overline{\Per_{<0}(f)}=\overline{\Per_{>0}(f)}= M.	\]
\end{remark}

Remark~\ref{r.homorelated} follows from the fact that 	for $f\in\BM^1(M)$, the strong stable manifold of a saddle of $\u$-index $d^\uu+1$   is its stable manifold, the strong unstable manifold of a  saddle of $\u$-index $d^\uu$   is its unstable manifold,  and the strong foliations  $\cW^\ss$ and $\cW^\uu$ both are, by hypothesis, minimal.

\subsection{Ergodic approximation property}\label{ssec:ergappro}

We now resume the ideas in Section \ref{sepreskeleton}. The goal is to ``gradually approximate'' the restricted pressure functions (and the corresponding Legendre-Fenchel transforms). 

\begin{definition}\label{defef}
	Given $f\in\PH^1_{\c =1}(M)$, a compact $f$-invariant set $\Lambda\subset M$ has the \emph{$\u$-index $d$-ergodic approximation property} or simply \emph{$d$-ergodic approximation property} if the following properties hold: 
	\begin{enumerate}[label=$\bullet$, leftmargin=0.5cm ]
		\item[(1)] Every pair of saddles of $\u$-index $d$ in $\Lambda$ are homoclinically related.
		\item[(2)] For every hyperbolic measure $\mu\in\cM_{\rm erg}(f|_\Lambda)$ of $\u$-index $d$ and every $\varepsilon>0$ there is a basic set $\Xi$ of $\u$-index $d$ such that 
		\begin{itemize}
			\item[(i)] $\Xi\subset\Lambda$,
			\item[(ii)] $\Xi$ is contained in the $\varepsilon$-neighborhood of the support of $\mu$,
			\item[(iii)] $h_{\rm top}(f,\Xi)
			\ge h_\mu(f) -\varepsilon$,
			\item[(iv)] $\lvert\chi^\c(\mu')-\chi^\c(\mu)\rvert
			\le\varepsilon\text{ for every }\mu'\in\cM_{\rm erg}(f|_\Xi)$.
		\end{itemize}
	\end{enumerate}
\end{definition}

For maps in $\PH^1_{\c =1}(M)$, the existence of ``ergodically approximating basic sets'', in the sense of satisfying properties (ii)--(iv), for hyperbolic ergodic measures was shown in \cite{Gel:16} (see also \cite[Proposition 1.4]{Cro:11} and compare \cite[Lemma 6.6]{DiaGelRam:19}). Below we explore contexts where the ergodic approximation property holds.

The following result is the main step towards the proof of Theorem \ref{theorem:1}. 

\begin{proposition}\label{prop:approx1}
	Let $f\in\PH^1_{\c =1}(M)$ and assume that $\Lambda\subset M$ is a compact $f$-invariant set having the $d^\uu$-ergodic approximation property. Letting 
	\[
	\cN
	\eqdef \cM_{\rm erg}(f|_\Lambda)\cap\cM_{\rm erg,<0}(f),
	\]
for every $\alpha\in (\underline\varphi^\c_\cN,\overline\varphi^\c_\cN)$, it holds $\cL(\alpha)\cap\Lambda\ne\emptyset$ and 
	\begin{equation}\label{resvarpribis}
		\cE_\cN(\alpha)
		= \cH_\cN(\alpha)
		\le h_{\rm top}(f,\cL(\alpha)\cap\Lambda).
\end{equation}\end{proposition}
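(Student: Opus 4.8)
The plan is to deduce Proposition~\ref{prop:approx1} from the general properties (P1)--(P5) and (E1)--(E5) of the restricted pressure together with the $d^\uu$-ergodic approximation property and the countable stability/monotonicity of topological entropy. The inequality $\cH_\cN(\alpha)\le\cE_\cN(\alpha)$ is already (E5), so the content is the reverse bound $\cE_\cN(\alpha)\le h_{\rm top}(f,\cL(\alpha)\cap\Lambda)$, together with nonemptiness of $\cL(\alpha)\cap\Lambda$ and the equality $\cE_\cN(\alpha)=\cH_\cN(\alpha)$.

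First I would fix $\alpha\in(\underline\varphi^\c_\cN,\overline\varphi^\c_\cN)$ and pick $q=q(\alpha)\in\bR$ realizing (or nearly realizing) the infimum in \eqref{def:EN}, so that $\cE_\cN(\alpha)=\cP_\cN(q)-q\alpha$ with a supporting line of slope $\alpha$ at the point $\alpha$ (possible since $\alpha$ is in the interior of $D(\cE_\cN)\supset(\underline\varphi^\c_\cN,\overline\varphi^\c_\cN)$ by (E2), and by concavity of $\cE_\cN$ from (E1)). Choose a sequence of ergodic measures $\mu_j\in\cN$ with $h_{\mu_j}(f)+q\int\varphi^\c\,d\mu_j\to\cP_\cN(q)$. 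The key sublemma is that one may take these measures so that additionally $\chi^\c(\mu_j)\to\alpha$: this is a standard convexity argument — if some limit measure had exponent bounded away from $\alpha$ on one side, the supporting line of slope $\alpha$ would be strictly beaten, contradicting that $q$ realizes the Legendre infimum at $\alpha$; more precisely, writing $h_{\mu_j}(f)=\cE_\cN(\alpha)+q\alpha-q\int\varphi^\c\,d\mu_j+o(1)$ and using $\cH_\cN(\chi^\c(\mu_j))\ge h_{\mu_j}(f)$ together with (E5) applied at $\chi^\c(\mu_j)$, one forces $\chi^\c(\mu_j)\to\alpha$ and $h_{\mu_j}(f)\to\cE_\cN(\alpha)$. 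In particular $\cH_\cN(\alpha)\ge\cE_\cN(\alpha)$, which with (E5) gives $\cH_\cN(\alpha)=\cE_\cN(\alpha)$.

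Next I would upgrade these ergodic measures to basic sets using the $d^\uu$-ergodic approximation property: each $\mu_j$ is hyperbolic of $\u$-index $d^\uu$ (since $\chi^\c(\mu_j)<0$ and $\mu_j\in\cM_{\rm erg}(f|_\Lambda)$), so for every $\varepsilon>0$ there is a basic set $\Xi_j=\Xi_j(\varepsilon)\subset\Lambda$ of $\u$-index $d^\uu$ with $h_{\rm top}(f,\Xi_j)\ge h_{\mu_j}(f)-\varepsilon$ and $|\chi^\c(\mu')-\chi^\c(\mu_j)|\le\varepsilon$ for all $\mu'\in\cM_{\rm erg}(f|_{\Xi_j})$. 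For a basic set (a locally maximal transitive hyperbolic set) one has the classical multifractal analysis of Birkhoff spectra: since $\varphi^\c$ is continuous and $\alpha$ lies in the interior of the interval of realizable averages over $\Xi_j$ (guaranteed by picking two ergodic measures in $\cN$ with exponents straddling $\alpha$, which exist because $\alpha\in(\underline\varphi^\c_\cN,\overline\varphi^\c_\cN)$, homoclinically relating them inside $\Lambda$ by property~(1) of the approximation property, and absorbing them into a single basic set — or, more simply, applying the classical conditional variational principle on $\Xi_j$ together with continuity), the level set $\{x\in\Xi_j:\chi^\c(x)=\alpha\}$ is nonempty with $h_{\rm top}(f,\{x\in\Xi_j:\chi^\c(x)=\alpha\})$ at least $h_{\rm top}(f,\Xi_j)-(\text{an error }o_\varepsilon(1))$; this uses that on $\Xi_j$ all realized exponents are within $\varepsilon$ of $\alpha$, so the full topological entropy of $\Xi_j$ is concentrated, up to an error controlled by the modulus of continuity of the entropy map on $\Xi_j$, on orbits with average exactly $\alpha$. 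Since $\{x\in\Xi_j:\chi^\c(x)=\alpha\}\subset\cL(\alpha)\cap\Lambda$, monotonicity of $h_{\rm top}(f,\cdot)$ yields $h_{\rm top}(f,\cL(\alpha)\cap\Lambda)\ge h_{\mu_j}(f)-\varepsilon-o_\varepsilon(1)$; letting $\varepsilon\to0$ and $j\to\infty$ gives $h_{\rm top}(f,\cL(\alpha)\cap\Lambda)\ge\cE_\cN(\alpha)$, and nonemptiness of $\cL(\alpha)\cap\Lambda$ falls out along the way.

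The main obstacle I anticipate is the concentration step: passing from ``a basic set $\Xi_j$ on which every ergodic exponent is within $\varepsilon$ of $\alpha$ and whose topological entropy is nearly $h_{\mu_j}(f)$'' to ``the level set $\cL(\alpha)\cap\Xi_j$ carries topological entropy nearly $h_{\mu_j}(f)$''. On a basic set this is exactly the content of the classical conditional variational principle for Birkhoff averages of continuous potentials (Takens--Verbitskiy / Barreira--Saussol--Schmeling type results), applied to the potential $\varphi^\c$ restricted to $\Xi_j$ — but one must check that $\alpha$ is genuinely in the interior of the spectrum of $\varphi^\c|_{\Xi_j}$, or otherwise argue directly with $(n,\varepsilon)$-separated sets inside $\Xi_j$ whose empirical averages are $\varepsilon$-close to $\alpha$ (which $\Xi_j$ supplies in abundance by transitivity and the uniform closeness of all its exponents to $\alpha$) and invoke the entropy distribution/Katok-type estimate to get a lower bound for $h_{\rm top}(f,\cL(\alpha)\cap\Xi_j)$. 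Either route is standard for hyperbolic basic sets, which is why the hypotheses are only $f\in\PH^1_{\c=1}(M)$ plus the ergodic approximation property; the genuinely new input of the paper enters only later, in establishing that interior spectral values $\alpha$ with $\cL(\alpha)\neq\emptyset$ actually lie in the advertised interval $[\alpha_{\rm min},0)$ and that the full space has the approximation property, which is where $f\in\BM^1(M)$ is used.
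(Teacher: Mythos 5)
Your strategy is close in spirit to the paper's — both ultimately go through basic sets inside $\Lambda$ obtained via the $d^\uu$-ergodic approximation property and use the classical conditional variational principle there — but there is a genuine gap in the way you try to establish $\cH_\cN(\alpha)=\cE_\cN(\alpha)$, and the gap propagates into the entropy bound as well.

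The ``standard convexity argument'' that you invoke to force $\chi^\c(\mu_j)\to\alpha$ and $h_{\mu_j}(f)\to\cE_\cN(\alpha)$ is not valid without extra input. What the concavity of $\cE_\cN$ (property (E1)) gives you, together with (E5) and your choice of $\mu_j$, is that $\cE_\cN(\chi^\c(\mu_j))-\bigl(\cE_\cN(\alpha)-q(\chi^\c(\mu_j)-\alpha)\bigr)\to 0$, i.e.\ the points $(\chi^\c(\mu_j),h_{\mu_j}(f))$ approach the supporting line of slope $-q$ at $\alpha$. This pins down $\chi^\c(\mu_j)\to\alpha$ only if the supporting line touches the graph of $\cE_\cN$ in the single point $\alpha$. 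But $\cE_\cN$ is a Legendre--Fenchel transform and is therefore automatically concave, while $\cH_\cN$ is not; if $\cE_\cN$ has an affine segment through $\alpha$ (which cannot be ruled out at this level of generality, and which the paper explicitly allows in some of its pictures of the entropy spectrum), your $\mu_j$ may have exponents accumulating at an endpoint of that segment, not at $\alpha$. In that situation you obtain no measure in $\cN$ with exponent near $\alpha$, the ergodic-approximation basic sets $\Xi_j$ have spectrum bounded away from $\alpha$, and the level set $\{x\in\Xi_j:\chi^\c(x)=\alpha\}$ is empty. The point is that the equality $\cH_\cN=\cE_\cN$ is \emph{not} a formal consequence of the convexity properties; it is exactly the nontrivial phenomenon one must prove, and it can fail for a general $\cN$. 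Your fallback of straddling $\alpha$ with two measures $\mu_1,\mu_2$, homoclinically relating them and absorbing them in a basic set $\Xi$, does give $\cL(\alpha)\cap\Xi\neq\emptyset$, hence nonemptiness; but the conditional variational principle on $\Xi$ yields $h_{\rm top}(f,\cL(\alpha)\cap\Xi)=\cE_{\cM_{\rm erg}(f|_\Xi)}(\alpha)$, and this Legendre transform interpolates only the two pieces you glued in, so there is no reason it should be close to $\cE_\cN(\alpha)$.

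The paper's mechanism that fixes exactly this issue is the exhausting-family apparatus (Lemma~\ref{lempro:exhaust} and Proposition~\ref{pro:useful}): one builds an increasing family $(\bX_i)_i$ of basic sets in $\Lambda$ that eventually absorbs every saddle of $\u$-index $d^\uu$, so that $\cP_i\to\cP_\cN$ pointwise (Exh4). By Wijsman's theorem (Remark~\ref{rem:continuity-of-LG-transformation}) this forces $\cE_i(\alpha)\to\cE_\cN(\alpha)$. On each $\bX_i$, specification gives the restricted variational principle $\cE_i(\alpha)=\cH_i(\alpha)=h_{\rm top}(f,\cL(\alpha)\cap\bX_i)$ (Exh3 and Remark~\ref{remsimpleBowenarg}), and since $\cM_i\subset\cN$ and $\bX_i\subset\Lambda$, one gets both $\cE_\cN(\alpha)=\lim_i\cE_i(\alpha)\le\cH_\cN(\alpha)$ and $\cE_\cN(\alpha)\le h_{\rm top}(f,\cL(\alpha)\cap\Lambda)$. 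This is more than ``one good basic set'': it is the convergence of the whole pressure function along the exhausting family that replaces the strict-concavity assumption your convexity argument implicitly makes. Your plan needs this extra step; without it both the equality $\cE_\cN(\alpha)=\cH_\cN(\alpha)$ and the lower bound on $h_{\rm top}(f,\cL(\alpha)\cap\Lambda)$ are unproved.
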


After collecting some ingredients, Proposition \ref{prop:approx1} will be proved at the end of Section \ref{ssec:exhaus}. The first step is the following.

\begin{lemma}\label{lem:approx1}
	Let $f\in\PH^1_{\c =1}(M)$ such that $\Per_{<0}(f)$ is a nonempty intersection class. Then 
	\begin{equation}\label{duuuu}
		\emptyset\ne\cM_{\rm erg,<0}(f) \subset \cM_{\rm erg}(f|_{\overline{\Per_{<0}(f)}}).
	\end{equation}
	Moreover, the sets $M$ and $\overline{\Per_{<0}(f)}$ have the $d^\uu$-ergodic approximation property. 	
	The analogous assertion holds when $\Per_{>0}(f)$ is an intersection class.
\end{lemma}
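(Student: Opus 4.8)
\textbf{Proof plan for Lemma \ref{lem:approx1}.}

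The plan is to verify the three assertions in turn, the heart of the matter being the verification of property (2) of the $d^\uu$-ergodic approximation property for the (a priori non-locally-maximal) set $\overline{\Per_{<0}(f)}$.

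First I would establish \eqref{duuuu}. The nonemptiness of $\cM_{\rm erg,<0}(f)$ follows because $\Per_{<0}(f)\ne\emptyset$ by hypothesis, so any periodic orbit in $\Per_{<0}(f)$ carries an ergodic measure of $\u$-index $d^\uu$, which has negative center Lyapunov exponent. For the inclusion, take $\mu\in\cM_{\rm erg,<0}(f)$. Since $\mu$ has center exponent $\le 0$ and $\u$-index exactly $d^\uu$ (the $d^\uu$ positive exponents come from $E^\uu$ and the center contributes a non-positive one), one applies the ergodic closing-type / approximation machinery of \cite{Gel:16} (see also \cite{Cro:11}) to approximate $\mu$ by ergodic measures supported on basic sets of $\u$-index $d^\uu$; since $\Per_{<0}(f)$ is an \emph{intersection class}, all such basic sets are homoclinically related to a fixed saddle $P\in\Per_{<0}(f)$, hence are contained in $H(P,f)=\overline{\Per_{<0}(f)}$. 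Taking a weak$\ast$ limit shows $\supp\mu\subset\overline{\Per_{<0}(f)}$, giving the inclusion. (Alternatively, one can argue that a generic point of $\mu$ is shadowed by periodic orbits in $\Per_{<0}(f)$ by the ergodic closing lemma, so its orbit closure lies in $\overline{\Per_{<0}(f)}$.)

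Next I would verify the $d^\uu$-ergodic approximation property for $\Lambda=\overline{\Per_{<0}(f)}$ (the case $\Lambda=M$ is then immediate since $\overline{\Per_{<0}(f)}\subset M$ and every basic set of $\u$-index $d^\uu$ already lies in $\overline{\Per_{<0}(f)}$ by the same homoclinic-relation argument). Property (1) — that any two saddles of $\u$-index $d^\uu$ in $\Lambda$ are homoclinically related — is exactly the hypothesis that $\Per_{<0}(f)$ is a single intersection class. For property (2), given a hyperbolic $\mu\in\cM_{\rm erg}(f|_\Lambda)$ of $\u$-index $d^\uu$, I invoke the main result of \cite{Gel:16}: there is a basic set $\Xi$ of $\u$-index $d^\uu$ contained in the $\varepsilon$-neighborhood of $\supp\mu$ with $h_{\rm top}(f,\Xi)\ge h_\mu(f)-\varepsilon$ and with all measures in $\cM_{\rm erg}(f|_\Xi)$ having center exponent within $\varepsilon$ of $\chi^\c(\mu)$; this gives (ii)--(iv). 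The only remaining point is (i), namely $\Xi\subset\Lambda$: but $\Xi$ is a basic set of $\u$-index $d^\uu$, so it contains a saddle of $\u$-index $d^\uu$, which lies in the intersection class $\Per_{<0}(f)$, whence $\Xi\subset H(P,f)=\overline{\Per_{<0}(f)}=\Lambda$ for any such $P$.

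The main obstacle is the last point just discussed: controlling that the approximating basic set $\Xi$ produced by \cite{Gel:16} actually lies \emph{inside} the non-locally-maximal set $\overline{\Per_{<0}(f)}$, rather than merely near $\supp\mu$. This is resolved precisely by the hypothesis that $\Per_{<0}(f)$ forms a single intersection class, so that "being a basic set of $\u$-index $d^\uu$" forces containment in $\overline{\Per_{<0}(f)}$. Finally, the statement for $\Per_{>0}(f)$ an intersection class is obtained by replacing $d^\uu$ by $d^\uu+1$, $f$ by $f$ acting with the opposite orientation on the center, or simply by running the identical argument with "$<0$" replaced by "$>0$" and "contracting" by "expanding" throughout.
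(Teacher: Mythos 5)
Your proof is correct and follows essentially the same route as the paper: invoke the ergodic approximation theorem of \cite{Gel:16} to produce basic sets $\Xi$ of $\u$-index $d^\uu$ approximating a given hyperbolic measure, then observe that since $\Per_{<0}(f)$ is an intersection class, the dense set of saddles in $\Xi$ lies in $\Per_{<0}(f)$, forcing $\Xi\subset\overline{\Per_{<0}(f)}$, which both gives the inclusion \eqref{duuuu} (via a weak$\ast$ limit) and settles property (i) of the approximation property for $\Lambda=\overline{\Per_{<0}(f)}$. (One small slip: you write that $\mu$ has center exponent ``$\le 0$''; for $\mu\in\cM_{\rm erg,<0}(f)$ it is strictly negative, which is what makes $\mu$ hyperbolic and \cite{Gel:16} applicable — the rest of your argument correctly uses this.)
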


\begin{proof}
	The hypothesis that $\Per_{<0}(f)$ is a nonempty intersection class implies $\cM_{\rm erg,<0}(f)\ne\emptyset$. Moreover, it immediately implies that property (1) in Definition \ref{defef} is satisfied by taking $\Lambda=M$. 	Since $f\in \PH^1_{\c =1}(M)$, for every $\mu\in\cM_{\rm erg,<0}(f)$ and for $\mu$-almost every $x$, the subspace $E^\ss(x)\oplus E^\c(x)$ contains all stable subspaces and $E^\uu(x)$ contains all unstable subspaces of its Oseledets splitting.  Hence, by Theorem 1 in \cite{Gel:16}, every measure in $\cM_{\rm erg,<0}(f)$ is weak$\ast$ approximated by periodic ones and  $M$ satisfies (ii)--(iv) in the property (2); while (i) is immediate. In particular, this implies the inclusion in \eqref{duuuu}.
	
	Let us argue that the $d^\uu$-ergodic approximation property also holds for $\overline{\Per_{<0}(f)}$. By definition, property (1) is immediate. To check property (2), consider any hyperbolic measure $\mu\in\cM_{\rm erg}(f|_{\overline{\Per_{<0}(f)}})$ of $\u$-index $d^\uu$. Invoking again \cite{Gel:16}, one gets basic sets $\Xi\subset M$ of $\u$-index $d^\uu$ ``approximating'' $\mu$ satisfying (ii)--(iv) in property (2). As $\overline{\Per_{<0}(f)}$ is an intersection class, every saddle in $\Xi$ is in $\Per_{<0}(f)$ and hence $\Xi\subset\overline{\Per_{<0}(f)}$. 
\end{proof}

\begin{remark}
	Under the hypotheses of Lemma \ref{lem:approx1}, $\overline{\Per_{<0}(f)}=H(P,f)$ for every $P\in \Per_{<0}(f)$. In general, the inclusion in Equation \eqref{duuuu} is proper. It can happen that the set $\cM_{\rm erg}(f|_{\overline{\Per_{<0}(f)}})$   intersects $\cM_{\rm erg,>0}(f)$. Indeed, this is what happens for $f\in\BM^1(M)$, see Remark \ref{r.homorelated}.
	The topology of the space of ergodic measures was further studied in \cite{GorPes:17} assuming that all saddles with the same index are in the same intersection class. 
\end{remark}

\subsection{Gradual approximation of pressure: Exhausting families}\label{ssec:exhaus}

Consider a compact $f$-in\-variant set $\Lambda\subset M$. Given a family $(\bX_i)_{i\in\bN}$ of compact $f$-invariant subsets of $\Lambda$, introducing a simplified version of the notation in Section \ref{sec:press}, for $q\in\bR$ let  
\begin{equation}\label{acceptcomments}
	\cP_i(q)
	\eqdef \cP_{\cM_i}(q)
	= \sup_{\mu\in\cM_i}\big(h_\mu(f)+q\int\varphi^\c\,d\mu\big), \quad 
	\text{ where }\quad
	\cM_i\eqdef\cM_{\rm erg}(f|_{\bX_i})
\end{equation}
and define
\[
\cE_i(\alpha)
\eqdef \cE_{\cM_i}(\alpha)
= \inf_{q\in\bR}\big(\cP_i(q)-q\alpha\big)
\] 
for every 
\[
	\alpha\in
	D(\cE_i)
	\eqdef \big\{\beta\in\bR\colon \inf_{q\in\bR}(\cP_i(q)-q\beta)>-\infty\big\}.
\]

\begin{definition}[Exhausting family]
Given a compact $f$-in\-variant set $\Lambda\subset M$ and $\cN\subset\cM_{\rm erg}(f|_\Lambda)$, we say that a family $(\bX_i)_i$ is \emph{$(\Lambda,\varphi^\c,\cN)$-exhausting} if for every $i$:
\begin{enumerate}
	\item[(Exh1)] $\cM_i\eqdef\cM_{\rm erg}(f|_{\bX_i})\subset\cN$;
	\item[(Exh2)] $f|_{\bX_i}$ has the specification property%
	\footnote{We refrain from giving this technical definition, see, for example, \cite{Sig:74}. Just recall that this property is satisfied for any basic set of $f$.};
	\item[(Exh3)] For every $\alpha\in\interior D(\cE_i)$ the \emph{restricted variational principle} holds:
	\[
	\cE_i(\alpha)
	= \cH_i(\alpha)
	\eqdef \cH_{\cM_i}(\alpha)
	= \sup\big\{h_\mu(f)\colon\mu\in\cM_i,\int\varphi^\c\,d\mu=\alpha\big\}.
	\]
	\item[(Exh4)] For every $q\in\bR$, 
	\[
	\lim_{i\to\infty}\cP_i(q)
	= \cP_\cN(q)
	= \sup_{\mu\in\cN}\big(h_\mu(f)+q\int\varphi^\c\,d\mu\big).
	\]	 
	\item[(Exh5)] Recalling notation in Equation \eqref{eqdefvarphicN}, it holds
	\[
	\underline\varphi^\c_\cN
	=\lim_{i\to\infty}\underline\varphi^\c_{\cM_i},\quad
	\overline\varphi^\c_\cN
	=\lim_{i\to\infty}\overline\varphi^\c_{\cM_i}.
	\]	
\end{enumerate}  
\end{definition}

\begin{remark}\label{rem:basic}
	When $\bX_i$ is a basic set (and in particular has the specification property), then (Exh2) holds. Property (Exh3) would follow immediately when $\varphi^\c$ is H\"older continuous. When $\varphi^\c$ is just continuous, compare the approximation arguments in \cite[Proposition 5.4]{DiaGelRam:19} in the skew product setting.%
	\footnote{Note that  the notation ${\mathcal E}_\cN(\cdot)=\cP_\cN^\ast(\cdot)$ is used in \cite{DiaGelRam:19}.} 
\end{remark}

\begin{remark}\label{rem:continuity-of-LG-transformation}
	By \cite[Theorem 6.2]{Wijsman:66}, (Exh4) implies that $\lim_{i\to\infty}\cE_i(\alpha)=\cE_{\cN}(\alpha)$.
\end{remark}

\begin{remark}\label{remsimpleBowenarg}
	As $\varphi^\c\colon M\to\bR$ is continuous, then 
	\[
	\cH_\cN(\alpha)
	\le 	\cH_{\cM_{\rm erg}(f|_\Lambda)}(\alpha)
	\le 	h_{\rm top}(f,\cL(\alpha)\cap\Lambda)
	\]
	(see, for instance, general results by Bowen \cite{Bow:73} or \cite[Lemma 5.2]{DiaGelRam:19}). 
Moreover, when $\bX_i$ is a basic set, then by \cite[Theorem 6.1]{PfiSul:07} we can apply \cite[Proposition 7.1]{PfiSul:07} to get that for every $\alpha\in\interior D(\cE_i)$ one has 
	\[
	 \cH_i(\alpha)
	=h_{\rm top}(f,\cL(\alpha)\cap \bX_i).
	\]
\end{remark}

A family $(\bX_i)_i$ is called \emph{increasing} if $\bX_i\subset \bX_{i+1}$ for every $i$. The following result gives the existence of exhausting families in our setting (a reformulation of \cite[Proposition 6.3]{DiaGelRam:19}).

\begin{lemma}\label{lempro:exhaust}
	Let $f\in\PH^1_{\c =1}(M)$ and   $\Lambda\subset M$ be a compact $f$-invariant set with the $d^\uu$-ergodic approximation property. Let 
\[
	\cN=\cM_{\rm erg}(f|_\Lambda)\cap\cM_{\rm erg,<0}(f).
\]	 
Then there exists an increasing $(\Lambda,\varphi^\c,\cN)$-exhausting family consisting of basic sets. The analogous assertion holds considering the $d^\uu+1$-ergodic approximation property and the set $\cM_{\rm erg}(f|_\Lambda)\cap\cM_{\rm erg,>0}(f)$. 
\end{lemma}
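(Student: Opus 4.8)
The plan is to follow the scheme of \cite[Proposition~6.3]{DiaGelRam:19}, adapting it to the merely continuous potential $\varphi^\c$. I treat only the case $\cN=\cM_{\rm erg}(f|_\Lambda)\cap\cM_{\rm erg,<0}(f)$, the other one being symmetric, and I may assume $\cN\ne\emptyset$. The first move is to reduce the verification of (Exh4) and (Exh5) to countably many ergodic measures. By (P4)--(P5) the function $\cP_\cN$ is finite, convex and continuous. For each $q\in\mathbb{Q}$ I pick a sequence $(\mu_{q,n})_n\subset\cN$ with $h_{\mu_{q,n}}(f)+q\int\varphi^\c\,d\mu_{q,n}>\cP_\cN(q)-1/n$, and let $\{\mu_k\}_{k\in\bN}$ enumerate $\{\mu_{q,n}\colon q\in\mathbb{Q},\,n\in\bN\}$. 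The function $q\mapsto\sup_k\big(h_{\mu_k}(f)+q\int\varphi^\c\,d\mu_k\big)$ is a supremum of affine maps, hence convex and lower semicontinuous; being finite and bounded above by $\cP_\cN$ it is continuous, and since it agrees with $\cP_\cN$ on $\mathbb{Q}$ it coincides with $\cP_\cN$ on all of $\bR$. Comparing the asymptotic slopes of $\cP_\cN$ as $q\to\pm\infty$ (computed from $\cN$ and from $\{\mu_k\}$) also gives $\sup_k\int\varphi^\c\,d\mu_k=\overline\varphi^\c_\cN$ and $\inf_k\int\varphi^\c\,d\mu_k=\underline\varphi^\c_\cN$.

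\emph{Construction of the exhausting family.} Each $\mu_k$ is a hyperbolic ergodic measure of $\u$-index $d^\uu$ in $\cM_{\rm erg}(f|_\Lambda)$, so the $d^\uu$-ergodic approximation property applied with $\e=1/j$ produces, for every $j$, a basic set $\Xi_{k,j}\subset\Lambda$ of $\u$-index $d^\uu$ with $h_{\rm top}(f,\Xi_{k,j})\ge h_{\mu_k}(f)-1/j$ and $|\chi^\c(\mu')-\chi^\c(\mu_k)|\le1/j$ for every $\mu'\in\cM_{\rm erg}(f|_{\Xi_{k,j}})$. Enumerate these basic sets as $\Xi^{(1)},\Xi^{(2)},\dots$ . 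By clause~(1) of the approximation property, any two saddles of $\u$-index $d^\uu$ contained in $\Lambda$ — in particular all saddles of the $\Xi^{(l)}$ — are homoclinically related, so finitely many of the $\Xi^{(l)}$ can be fused into a single basic set of $\u$-index $d^\uu$ contained in $\Lambda$. Define inductively $\bX_1$ to be such a fusion of $\Xi^{(1)}$, and $\bX_{i+1}$ a basic set of $\u$-index $d^\uu$ in $\Lambda$ containing $\bX_i\cup\Xi^{(i+1)}$; then $(\bX_i)_i$ is increasing and $\bX_i\supset\Xi^{(1)}\cup\dots\cup\Xi^{(i)}$. As $\bX_i$ is a hyperbolic basic set of $\u$-index $d^\uu$, every measure in $\cM_i\eqdef\cM_{\rm erg}(f|_{\bX_i})$ is hyperbolic of $\u$-index $d^\uu$, and since $E^\uu$ (of dimension $d^\uu$) already supplies $d^\uu$ positive Lyapunov exponents, its center exponent is negative; this gives (Exh1). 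Property (Exh2) is automatic for basic sets, and (Exh3) is the restricted variational principle on a basic set for the continuous potential $\varphi^\c$, which holds as in Remark~\ref{rem:basic} via \cite[Proposition~7.1]{PfiSul:07} together with a Hölder approximation of $\varphi^\c$ in the spirit of \cite[Proposition~5.4]{DiaGelRam:19}.

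\emph{Verification of (Exh4) and (Exh5).} Since $(\cM_i)_i$ is increasing and contained in $\cN$, the functions $\cP_i$ are non-decreasing in $i$ and bounded above by $\cP_\cN$. For the reverse inequality fix $q$, $k$ and $\e>0$; choose $j$ with $1/j<\e$ and $i$ large enough that $\Xi_{k,j}\subset\bX_i$, and let $\nu$ be an ergodic measure of maximal entropy of $\Xi_{k,j}$. Then $\nu\in\cM_i$, $h_\nu(f)\ge h_{\mu_k}(f)-\e$ and $|\int\varphi^\c\,d\nu-\int\varphi^\c\,d\mu_k|\le\e$, so $\cP_i(q)\ge h_{\mu_k}(f)+q\int\varphi^\c\,d\mu_k-(1+|q|)\e$; letting $i\to\infty$ and then taking the supremum over $k$, the first step gives $\lim_i\cP_i(q)=\cP_\cN(q)$, which is (Exh4), and then $\cE_i\to\cE_\cN$ by Remark~\ref{rem:continuity-of-LG-transformation}. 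The same approximation yields $\overline\varphi^\c_{\cM_i}\ge\int\varphi^\c\,d\mu_k-1/j$ whenever $\Xi_{k,j}\subset\bX_i$, and symmetrically for $\underline\varphi^\c$, so $\overline\varphi^\c_{\cM_i}\to\overline\varphi^\c_\cN$ and $\underline\varphi^\c_{\cM_i}\to\underline\varphi^\c_\cN$ by the first step, which is (Exh5).

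\emph{Main obstacle.} The core difficulty is to exhibit a \emph{single increasing} sequence of basic sets whose pressures converge to $\cP_\cN$ \emph{simultaneously for all} $q$; the device that makes this work is the reduction, via convexity of $\cP_\cN$, to countably many ergodic test measures, after which the ergodic approximation property and the fusion of basic sets finish the job. The delicate geometric point is the fusion of homoclinically related basic sets \emph{inside} $\Lambda$: this is precisely where clause~(1) of the $d^\uu$-ergodic approximation property is used, and in the cases we apply this to ($\Lambda=M$ or $\Lambda$ a homoclinic class) the heteroclinic connections needed for the fusion do lie in $\Lambda$. The only remaining subtlety is (Exh3) for a merely continuous potential, which is handled by the standard approximation and Pfister--Sullivan arguments for basic sets.
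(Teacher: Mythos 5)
Your proof is correct and follows essentially the same route as the paper's: approximate measures in $\cN$ by basic sets via the ergodic approximation property, fuse them through the homoclinic relations guaranteed by clause~(1) of Definition~\ref{defef} to make the family increasing, and then verify (Exh1)--(Exh5), with (Exh2)--(Exh3) handled by Remark~\ref{rem:basic} and the Pfister--Sullivan results. The only difference is one of exposition: the paper outsources the verification of (Exh4)--(Exh5) to \cite[Sections~5.2--5.3 and~6]{DiaGelRam:19} and builds its auxiliary sequence $\bX_i$ by eventually absorbing every saddle of $\u$-index $d^\uu$ in $\Lambda$, whereas you make the construction self-contained by first reducing to a countable set of test measures (via rational $q$ and continuity/convexity of $\cP_\cN$) and tying $\bX_i$ to the resulting $\Xi_{k,j}$; both devices deliver the same pointwise convergence $\cP_i\to\cP_\cN$. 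You also correctly flag the one genuinely delicate step — that the fused basic set must lie inside $\Lambda$ — which the paper likewise treats implicitly and which is harmless in the settings where the lemma is applied ($\Lambda=M$ or a homoclinic class).
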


\begin{proof}
	By the ergodic approximation property, every $\mu\in\cN$ can be arbitrarily well ``ergodically approximated'' by basic sets of $\u$-index $d^\uu$ in $\Lambda$. By property (1) in Definition~\ref{defef}, every pair of saddles in $\Lambda$ are homoclinically related. Therefore, given any two such basic sets $\Gamma_1,\Gamma_2\subset \Lambda$, their periodic points are homoclinically related and hence there is a basic set $\Gamma\supset\Gamma_1\cup\Gamma_2$ of $\u$-index $d^\uu$ in $\Lambda$. This will guarantee that the family can be chosen to be increasing.
		
	The proof then proceeds analogously to \cite[Proof of Proposition 6.3]{DiaGelRam:19} considering an auxiliary sequence of basic sets $\bX_i\subset \Lambda$ eventually containing every saddle of $\u$-index $d^\uu$ in $\Lambda$. Property (Exh1) is immediate. Properties (Exh2) and (Exh3) are satisfied for every $\bX_i$ by Remark \ref{rem:basic}. To show properties (Exh4) and (Exh5), one proceeds as in \cite[Sections 5.2--5.3 and 6]{DiaGelRam:19}. This proves the lemma.
\end{proof}

The following result is an immediate consequence of \cite[Lemma 6.1 and Proposition 6.2]{DiaGelRam:19} in our setting. Recall the notations in Equation \eqref{eqdefvarphicN}.

\begin{proposition}\label{pro:useful}
	Let $f\in\PH^1_{\c =1}(M)$. Assume that $\Lambda\subset M$ is a compact $f$-invariant set and that there is $\cN\subset\cM_{\rm erg}(f|_\Lambda)$ having an increasing  $(\Lambda,\varphi^\c,\cN)$-exhausting family $(\bX_i)_i$. Then $\varphi^\c_\cN$ is an interval and 
	\[
	\interior D(\cE_\cN)
	= (\underline\varphi^\c_\cN,\overline\varphi^\c_\cN)
	\subset\varphi^\c_\cN
	\subset[\underline\varphi^\c_\cN,\overline\varphi^\c_\cN].
	\]
	Moreover, for every $\alpha\in (\underline\varphi^\c_\cN,\overline\varphi^\c_\cN)$, it holds $\cL(\alpha)\cap\Lambda\neq\emptyset$ and 
	\[
		\cE_\cN(\alpha)
		= \lim_{i\to\infty}\cE_i(\alpha)
		=\lim_{i\to\infty}\cH_{\cM_{\rm erg}(f|_{\bX_i})}(\alpha)
		\le h_{\rm top}(f,\cL(\alpha)\cap\Lambda).
	\]
\end{proposition}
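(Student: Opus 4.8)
The plan is to deduce Proposition~\ref{pro:useful} from the structural results in \cite{DiaGelRam:19} (namely [Lemma 6.1 and Proposition 6.2] there) by verifying that the hypotheses of those results are met in the present setting. The only ``work'' is translating our abstract exhausting family $(\bX_i)_i$ into the framework of \cite{DiaGelRam:19}: there the objects are honest basic sets with the specification property and a continuous potential, and the conclusions are exactly the ones stated here. Since an increasing $(\Lambda,\varphi^\c,\cN)$-exhausting family by definition satisfies (Exh1)--(Exh5), in particular each $\bX_i$ has specification (Exh2), the restricted variational principle (Exh3), and the pressures/endpoints converge (Exh4)--(Exh5), the abstract input required by \cite[Lemma 6.1 and Proposition 6.2]{DiaGelRam:19} is in place.

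\begin{proof}
First I would recall, following \cite[Lemma 6.1]{DiaGelRam:19}, that for each $i$ the set $\varphi^\c_{\cM_i}=\{\int\varphi^\c\,d\mu\colon\mu\in\cM_i\}$ is an interval with $\interior D(\cE_i)=(\underline\varphi^\c_{\cM_i},\overline\varphi^\c_{\cM_i})$; this uses only (Exh2) (specification) together with continuity of $\varphi^\c$. Since the family is increasing, $\cM_i\subset\cM_{i+1}$, hence $\underline\varphi^\c_{\cM_i}\searrow$ and $\overline\varphi^\c_{\cM_i}\nearrow$, and by (Exh5) these limits are $\underline\varphi^\c_\cN$ and $\overline\varphi^\c_\cN$, respectively. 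Taking the (increasing) union $\bigcup_i\varphi^\c_{\cM_i}$ gives that $\varphi^\c_\cN\supset\bigcup_i\varphi^\c_{\cM_i}\supset(\underline\varphi^\c_\cN,\overline\varphi^\c_\cN)$; together with the trivial inclusion $\varphi^\c_\cN\subset[\underline\varphi^\c_\cN,\overline\varphi^\c_\cN]$ this yields that $\varphi^\c_\cN$ is an interval and the chain of inclusions
\[
	(\underline\varphi^\c_\cN,\overline\varphi^\c_\cN)
	\subset\varphi^\c_\cN
	\subset[\underline\varphi^\c_\cN,\overline\varphi^\c_\cN].
\]
That $\interior D(\cE_\cN)=(\underline\varphi^\c_\cN,\overline\varphi^\c_\cN)$ then follows from property (E2) in Section~\ref{ssec:genpro} (which gives $D(\cE_\cN)\supset(\underline\varphi^\c_\cN,\overline\varphi^\c_\cN)$) together with the elementary fact that $\cE_\cN$ cannot be finite outside $[\underline\varphi^\c_\cN,\overline\varphi^\c_\cN]$ since $\cP_\cN$ has asymptotic slopes $\overline\varphi^\c_\cN$ and $\underline\varphi^\c_\cN$.

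Next fix $\alpha\in(\underline\varphi^\c_\cN,\overline\varphi^\c_\cN)$. By the above, there is $i_0$ with $\alpha\in(\underline\varphi^\c_{\cM_i},\overline\varphi^\c_{\cM_i})=\interior D(\cE_i)$ for all $i\ge i_0$, so for such $i$ we may invoke (Exh3): $\cE_i(\alpha)=\cH_i(\alpha)=\cH_{\cM_{\rm erg}(f|_{\bX_i})}(\alpha)$, and this supremum is attained by some $\mu_i\in\cM_i$ with $\int\varphi^\c\,d\mu_i=\alpha$ (using upper semicontinuity of entropy on the basic set $\bX_i$). In particular $\mu_i$ is supported in $\bX_i\subset\Lambda$ and witnesses $\cL(\alpha)\cap\Lambda\neq\emptyset$. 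Now by (Exh4) and Remark~\ref{rem:continuity-of-LG-transformation} (i.e. \cite[Theorem 6.2]{Wijsman:66} applied to the convex functions $\cP_i\to\cP_\cN$, which gives pointwise convergence of the Legendre-Fenchel transforms on the interior of the limiting domain) we obtain $\lim_{i\to\infty}\cE_i(\alpha)=\cE_\cN(\alpha)$; combined with (Exh3) this is precisely
\[
	\cE_\cN(\alpha)
	= \lim_{i\to\infty}\cE_i(\alpha)
	=\lim_{i\to\infty}\cH_{\cM_{\rm erg}(f|_{\bX_i})}(\alpha).
\]
Finally, since each $\bX_i$ is a basic set, Remark~\ref{remsimpleBowenarg} gives $\cH_i(\alpha)=h_{\rm top}(f,\cL(\alpha)\cap\bX_i)\le h_{\rm top}(f,\cL(\alpha)\cap\Lambda)$ by monotonicity of topological entropy (Remark~\ref{rem:monent}), and passing to the limit in $i$ yields $\cE_\cN(\alpha)\le h_{\rm top}(f,\cL(\alpha)\cap\Lambda)$, as claimed.

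The analogous statement for the expanding side is obtained verbatim using the $d^\uu+1$-ergodic approximation property in place of the $d^\uu$ one.
\end{proof}

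The only potentially delicate point is the passage $\lim_i\cE_i(\alpha)=\cE_\cN(\alpha)$: epiconvergence of the convex functions $\cP_i$ to $\cP_\cN$ does not in general transfer to pointwise convergence of conjugates at boundary points of the domain, so one must keep $\alpha$ in the open interval $\interior D(\cE_\cN)$, which is exactly what the hypothesis on $\alpha$ ensures; there \cite[Theorem 6.2]{Wijsman:66} applies cleanly. Everything else is a bookkeeping reduction to \cite[Lemma 6.1 and Proposition 6.2]{DiaGelRam:19}, whose proofs in the skew-product case carry over without change since they only use the specification property of the $\bX_i$, continuity of $\varphi^\c$, and the convergence properties (Exh4)--(Exh5).
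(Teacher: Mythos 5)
Your proof is correct and follows the same route the paper takes: the paper states that Proposition~\ref{pro:useful} is ``an immediate consequence of \cite[Lemma 6.1 and Proposition 6.2]{DiaGelRam:19}'' without spelling out the translation, and you supply exactly that translation via (Exh1)--(Exh5) and the cited lemmas/remarks. The one place you over-specify is the final inequality, where you appeal to the second part of Remark~\ref{remsimpleBowenarg} and therefore introduce the hypothesis that each $\bX_i$ be basic; the proposition as stated only assumes $(\bX_i)_i$ is an exhausting family, and the first part of Remark~\ref{remsimpleBowenarg} (the general Bowen bound $\cH_{\cM_{\rm erg}(f|_{\bX_i})}(\alpha)\le h_{\rm top}(f,\cL(\alpha)\cap \bX_i)$, valid for any compact invariant $\bX_i$ with $\varphi^\c$ continuous) already gives the needed upper bound without the basic-set assumption. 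Your closing sentence about the ``expanding side'' is out of place here, as this proposition has no $d^\uu$-versus-$d^\uu+1$ distinction, but it does no harm.
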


\begin{remark}
	Note that under the general hypotheses of Proposition \ref{pro:useful}, the interval $\varphi^\c_\cN$ could be either open, half-open, or closed. For example, for $f\in\BM^1(M)$ with $\Lambda=M$ and $\cN=\cM_{\rm erg,<0}(f)$, it holds $\varphi^\c_\cN=[\alpha_{\rm min}^f,0)$.
\end{remark}

We finally are ready to give the proof of Proposition~\ref{prop:approx1}.

\begin{proof}[Proof of Proposition \ref{prop:approx1}]
Let $f\in\PH^1_{\c =1}(M)$ and assume that  $\Lambda\subset M$ is a compact $f$-invariant set with the $d^\uu$-ergodic approximation property. Let $\cN=\cM_{\rm erg}(f|_\Lambda)\cap\cM_{\rm erg,<0}(f)$. 
By Lemma \ref{lempro:exhaust} and Proposition \ref{pro:useful}, for every $\alpha\in(\underline\varphi^\c_\cN,\overline\varphi^\c_\cN)$, it holds $\cL(\alpha)\cap\Lambda\ne\emptyset$ and
\begin{equation}\label{almosssst}
	\cE_\cN(\alpha)
	\le h_{\rm top}(f,\cL(\alpha)\cap\Lambda).
\end{equation}

\begin{claim}\label{lemalmossss}
	For every $\alpha\in(\underline\varphi^\c_\cN,\overline\varphi^\c_\cN)$, $\cE_\cN(\alpha)= \cH_\cN(\alpha)$.
\end{claim}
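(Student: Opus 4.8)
The plan is to prove Claim~\ref{lemalmossss} by combining the exhausting family machinery with property (E5) and the restricted variational principle (Exh3) available on each basic set $\bX_i$. First I would recall that (E5) already gives $\cH_\cN(\alpha)\le\cE_\cN(\alpha)$, so only the reverse inequality $\cE_\cN(\alpha)\le\cH_\cN(\alpha)$ requires work.

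For the nontrivial inequality, fix $\alpha\in(\underline\varphi^\c_\cN,\overline\varphi^\c_\cN)$ and invoke Lemma~\ref{lempro:exhaust} to get an increasing $(\Lambda,\varphi^\c,\cN)$-exhausting family $(\bX_i)_i$ of basic sets. By (Exh5) we have $\underline\varphi^\c_{\cM_i}\to\underline\varphi^\c_\cN$ and $\overline\varphi^\c_{\cM_i}\to\overline\varphi^\c_\cN$, so for all sufficiently large $i$ the chosen $\alpha$ lies in $(\underline\varphi^\c_{\cM_i},\overline\varphi^\c_{\cM_i})$. By Proposition~\ref{pro:useful} (applied to $\bX_i$ in place of $\Lambda$, or directly from the fact that $\bX_i$ is a basic set with the specification property), this interval is contained in $\interior D(\cE_i)$, so that by (Exh3) the restricted variational principle $\cE_i(\alpha)=\cH_i(\alpha)$ holds for all large $i$. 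Now $\cH_i(\alpha)=\sup\{h_\mu(f)\colon\mu\in\cM_i,\ \int\varphi^\c\,d\mu=\alpha\}$ and, since $\cM_i=\cM_{\rm erg}(f|_{\bX_i})\subset\cN$ by (Exh1), every such $\mu$ is admissible in the supremum defining $\cH_\cN(\alpha)$; hence $\cH_i(\alpha)\le\cH_\cN(\alpha)$ for all large $i$. Finally, (Exh4) together with Remark~\ref{rem:continuity-of-LG-transformation} (i.e.\ \cite[Theorem 6.2]{Wijsman:66}) gives $\lim_{i\to\infty}\cE_i(\alpha)=\cE_\cN(\alpha)$. Chaining these, $\cE_\cN(\alpha)=\lim_{i\to\infty}\cE_i(\alpha)=\lim_{i\to\infty}\cH_i(\alpha)\le\cH_\cN(\alpha)$, which is the desired inequality; combined with (E5) this proves the claim.

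The one point deserving care is ensuring $\alpha\in\interior D(\cE_i)$ for large $i$, so that (Exh3) is actually applicable — this is where (Exh5) (convergence of the endpoints $\underline\varphi^\c_{\cM_i},\overline\varphi^\c_{\cM_i}$) is essential, since a priori $\alpha$ could sit at or beyond the boundary of $D(\cE_i)$ for small $i$. Once the endpoints converge, Proposition~\ref{pro:useful} identifies $\interior D(\cE_i)$ with $(\underline\varphi^\c_{\cM_i},\overline\varphi^\c_{\cM_i})$ and the argument closes. I expect this to be essentially a bookkeeping exercise rather than a genuine obstacle, since all the hard analytic content (existence of the exhausting family, the specification-based restricted variational principle on basic sets, continuity of the Legendre–Fenchel transform under pointwise convergence of convex functions) has already been imported in Lemma~\ref{lempro:exhaust}, Remark~\ref{rem:basic}, and Remark~\ref{rem:continuity-of-LG-transformation}.
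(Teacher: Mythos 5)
Your proof is correct and follows essentially the same route as the paper: apply (E5) for the easy inequality, get an exhausting family from Lemma~\ref{lempro:exhaust}, use (Exh5) to put $\alpha$ in $(\underline\varphi^\c_{\cM_i},\overline\varphi^\c_{\cM_i})\subset\interior D(\cE_i)$ for large $i$ so (Exh3) gives $\cE_i(\alpha)=\cH_i(\alpha)\le\cH_\cN(\alpha)$, then pass to the limit via (Exh4) and Remark~\ref{rem:continuity-of-LG-transformation}. Your explicit remark about needing $\alpha\in\interior D(\cE_i)$ before invoking (Exh3) is a point the paper leaves implicit, but your justification (via Proposition~\ref{pro:useful}) is exactly right.
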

\begin{proof}
	By (E5) in Section \ref{ssec:genpro}, it holds
	\[
	\cE_\cN(\alpha)
	\ge \cH_\cN(\alpha)
	\]
Hence, it remains to show $\cE_\cN(\alpha)
	\le \cH_\cN(\alpha)$.
	
		By Lemma \ref{lempro:exhaust}, there is an exhausting family $(\bX_i)_i$ of basic sets. Take $\alpha\in(\underline\varphi^\c_\cN,\overline\varphi^\c_\cN)$. Hence, by  (Exh5), there exists $i_0\in\bN$ such that $\alpha\in(\underline\varphi^\c_{\cM_i},\overline\varphi^\c_{\cM_i})$ for every $i\ge i_0$. 
Recalling the notation in \eqref{acceptcomments}, by (Exh3) for $\bX_i$, one has 
\[
	\cE_i(\alpha)=\sup\big\{h_\mu(f)|\mu\in\cM_i,\int\varphi^c\ud\mu=\alpha \big\}.
\]
As $\cM_i\subset\cN$, one has $\cE_i(\alpha)\leq \cH_\cN(\alpha).$ Combining with  (Exh4) and   Remark~\ref{rem:continuity-of-LG-transformation}, one has $\cE_\cN(\alpha)=\lim_{i\rightarrow+\infty}\cE_i(\alpha)\le \cH_\cN(\alpha)$, proving the claim. 
\end{proof}

Claim \ref{lemalmossss} together with Equation \eqref{almosssst} completes the proof of Proposition \ref{prop:approx1}.
\end{proof}

\subsection{Proof of Theorem \ref{theorem:1}}\label{ssec:final}

Recall Notation \ref{notation}. Note that for $f\in\BM^1(M)$, by Remark~\ref{r.homorelated}, the set $\Per_{<0}(f)$ is an intersection class and  $\overline{\Per_{<0}(f)}=M$. Hence, by Lemma \ref{lem:approx1},  one has 
\[
	\cM_{\rm erg,<0}(f)\cap\cM_{\rm erg}(f|_{\overline{\Per_{<0}(f)}})
	= \cM_{\rm erg,<0}(f)
	\eqdef\cN
\]	
and therefore $\cP_{<0}= \cP_\cN$ and $\cE_{<0}=\cE_\cN$.
Recall that $\alpha_{\rm min}\eqdef\alpha_{\rm min}^f=\inf_{\mu\in\cM_{\rm erg}(f)}\chi^\c (\mu)$. For coherence, we also write $\underline\varphi^\c_{<0}=\underline\varphi^\c_\cN$ and $\overline\varphi^\c_{<0}=\overline\varphi^\c_\cN$.

\begin{claim}\label{lc.lyapunov-interval}
	$\underline\varphi^\c_{<0}= \alpha_{\rm min}<0$ and 
	$\overline\varphi^\c_{<0}= 0$.
\end{claim}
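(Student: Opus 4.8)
The plan is to prove the two equalities separately, in each case reducing to facts already available in the excerpt. Throughout, recall that by \eqref{eqdefvarphicN} applied to $\cN=\cM_{\rm erg,<0}(f)$ we have $\underline\varphi^\c_{<0}=\inf_{\mu\in\cN}\chi^\c(\mu)$ and $\overline\varphi^\c_{<0}=\sup_{\mu\in\cN}\chi^\c(\mu)$.

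\emph{The identity $\underline\varphi^\c_{<0}=\alpha_{\rm min}<0$.} First I would use that, as observed right after \eqref{eq:defalphamin}, the infimum $\alpha_{\rm min}=\inf_{\mu\in\cM_{\rm erg}(f)}\chi^\c(\mu)$ is attained by some ergodic measure $\mu_{\rm min}$. Next I would argue $\alpha_{\rm min}<0$: by condition (B) the map $f^{n_-}$ has a stable blender-horseshoe, whose periodic points are saddles of $\u$-index $d^\uu$ for $f$ and hence carry ergodic measures with strictly negative center Lyapunov exponent; equivalently, one simply invokes Remark \ref{BM1almpa}, which records that $\{\chi^\c(\mu)\colon\mu\in\cM_{\rm erg}(f)\}$ contains negative values. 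Consequently $\chi^\c(\mu_{\rm min})=\alpha_{\rm min}<0$, so $\mu_{\rm min}\in\cM_{\rm erg,<0}(f)=\cN$, which gives $\underline\varphi^\c_{<0}\le\chi^\c(\mu_{\rm min})=\alpha_{\rm min}$. The reverse inequality $\underline\varphi^\c_{<0}\ge\alpha_{\rm min}$ is immediate since $\cN\subset\cM_{\rm erg}(f)$.

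\emph{The identity $\overline\varphi^\c_{<0}=0$.} The bound $\overline\varphi^\c_{<0}\le 0$ is immediate, since every $\mu\in\cN=\cM_{\rm erg,<0}(f)$ satisfies $\chi^\c(\mu)<0$. For the matching lower bound I would use that $0$ is accumulated by center exponents of contracting-type ergodic measures: by Remark \ref{BM1almpa} (quoting \cite[Theorem A]{BonZhan:19} together with \cite{DiaGelSan:20,YanZha:20}) there is a sequence $(\mu_k)_k\subset\cM_{\rm erg,<0}(f)$ with $\chi^\c(\mu_k)\nearrow 0$. Hence $\overline\varphi^\c_{<0}=\sup_{\mu\in\cN}\chi^\c(\mu)\ge\lim_k\chi^\c(\mu_k)=0$, and the two bounds give $\overline\varphi^\c_{<0}=0$ (not attained, consistently with $\varphi^\c_{<0}=[\alpha_{\rm min},0)$).

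This uses no new machinery; the three substantive inputs are that $\alpha_{\rm min}$ is attained, that a stable blender-horseshoe forces $\alpha_{\rm min}<0$, and that $0$ is accumulated by negative center exponents. The last is the only delicate point, but it is quoted from \cite{BonZhan:19,DiaGelSan:20,YanZha:20} rather than reproved here. The one thing to be careful about — and the closest thing to an obstacle — is circularity: one must not invoke Theorem \ref{theorem1} or Theorem \ref{the:goingotherside} here, since those rely on this claim; the accumulation of $0$ by negative exponents must be taken directly from the cited literature via Remark \ref{BM1almpa}.
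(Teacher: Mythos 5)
Your proof is correct and follows essentially the same route as the paper's: the upper bound $\overline\varphi^\c_{<0}=0$ from Remark~\ref{BM1almpa}, the fact that $\alpha_{\rm min}<0$ from the existence of contracting-type saddles (the paper phrases this as $\Per_{<0}(f)\ne\emptyset$), and the observation that since the infimum is negative and attained, it can be computed over $\cN$ alone. You spell out the last step a bit more explicitly via the attained measure $\mu_{\rm min}$, and your caution about avoiding circularity with Theorem~\ref{theorem1} is well placed, though Theorem~\ref{the:goingotherside} is in fact an external import from \cite{DiaGelSan:20} and would not have been circular to cite.
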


\begin{proof}
	The fact $\overline\varphi^\c_{<0}= 0$ follows from Remark \ref{BM1almpa}.
	As $\Per_{<0}(f)\ne\emptyset$, one has $\alpha_{\rm min}<0$. Recall that in the definition of $\alpha_{\rm min}$ it suffices to take the infimum over ergodic measures in $\cM_{\rm erg,<0}(f)$, and thus over $\cN$. 
\end{proof}

For the next, recall the definition of the domain $D(\cdot)$ in \eqref{defdomain}. As $\Per_{<0}(f)$ is an intersection class, Lemma \ref{lem:approx1} implies that $M$ has the $d^\uu$-ergodic approximation property. Hence, by Lemma~\ref{lempro:exhaust}, there exists an increasing $(M,\varphi^\c ,\cN)$-exhausting family. By Claim \ref{lc.lyapunov-interval}, we have $\underline\varphi^\c_{<0}= \alpha_{\rm min}$. Proposition~\ref{pro:useful} and property (E2) imply that 
\[
	(\alpha_{\rm min}, 0)\subset D(\cE_{<0})\subset[\alpha_{\rm min}, 0].
\]	 

\begin{claim}\label{nextlemm}
	$D(\cE_{<0})=[\alpha_{\rm min}, 0]$.
\end{claim}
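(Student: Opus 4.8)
The plan is to show the two endpoints $\alpha_{\rm min}$ and $0$ belong to $D(\cE_{<0})$, since we already know from Proposition~\ref{pro:useful} and (E2) that the open interval $(\alpha_{\rm min},0)$ is contained in $D(\cE_{<0})$, and trivially $D(\cE_{<0})\subset[\alpha_{\rm min},0]$ because the supporting lines coming from ergodic measures in $\cN$ have slopes in $[\alpha_{\rm min},0)$. Since $D(\cE_{<0})$ is an interval (property (E1)/(E2)), it then suffices to check that the two boundary points are not excluded.

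First I would handle $\alpha=0$. Recalling $\cE_{<0}(0)=\inf_{q\in\bR}\cP_{<0}(q)$, the issue is whether this infimum is finite. Since $\cP_{<0}$ is convex, Lipschitz (property (P4)), and has asymptotic slopes $\overline\varphi^\c_{<0}=0$ as $q\to+\infty$ and $\underline\varphi^\c_{<0}=\alpha_{\rm min}<0$ as $q\to-\infty$ (Claim~\ref{lc.lyapunov-interval}), the function $q\mapsto\cP_{<0}(q)$ is bounded below on $[0,\infty)$ because its slope there is nonnegative in the limit; the key point is that $\cP_{<0}(q)\ge h_\mu(f)+q\chi^\c(\mu)$ for any fixed $\mu\in\cN$, and by Remark~\ref{BM1almpa} we may pick a sequence of measures $\mu_n\in\cM_{\rm erg,<0}(f)$ with $\chi^\c(\mu_n)\nearrow 0$ and $h_{\mu_n}(f)$ bounded below by a positive constant (e.g.\ converging to $\h(f)>0$). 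This forces $\cP_{<0}(q)\ge \limsup_n(h_{\mu_n}(f)+q\chi^\c(\mu_n))\ge \h(f)$ for all $q\ge 0$, and for $q<0$ the term $q\chi^\c(\mu_n)\to 0^+$, so again $\cP_{<0}(q)\ge \h(f)>-\infty$. Hence $\cE_{<0}(0)\ge\h(f)>-\infty$, so $0\in D(\cE_{<0})$.

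Next I would handle $\alpha=\alpha_{\rm min}$. By \eqref{eq:defalphamin} and the remark after it, the infimum defining $\alpha_{\rm min}$ is attained by an ergodic measure $\mu_{\rm min}\in\cM_{\rm erg,<0}(f)=\cN$ with $\chi^\c(\mu_{\rm min})=\alpha_{\rm min}$. Then by property (P1), $\cP_{\{\mu_{\rm min}\}}(q)=h_{\mu_{\rm min}}(f)+q\alpha_{\rm min}\le\cP_{<0}(q)$ for all $q$, so
\[
	\cE_{<0}(\alpha_{\rm min})
	= \inf_{q\in\bR}\big(\cP_{<0}(q)-q\alpha_{\rm min}\big)
	\ge \inf_{q\in\bR}\big(h_{\mu_{\rm min}}(f)+q\alpha_{\rm min}-q\alpha_{\rm min}\big)
	= h_{\mu_{\rm min}}(f)
	\ge 0
	> -\infty,
\]
so $\alpha_{\rm min}\in D(\cE_{<0})$. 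Combining the three facts—$(\alpha_{\rm min},0)\subset D(\cE_{<0})$, $0\in D(\cE_{<0})$, $\alpha_{\rm min}\in D(\cE_{<0})$—and $D(\cE_{<0})\subset[\alpha_{\rm min},0]$ gives $D(\cE_{<0})=[\alpha_{\rm min},0]$.

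The main obstacle is the $\alpha=0$ endpoint: one must rule out that $\cP_{<0}(q)\to-\infty$ as $q\to+\infty$, which could in principle happen if the entropies of measures with exponent near zero degenerated to zero faster than the exponents. This is exactly where the input from Remark~\ref{BM1almpa} (and ultimately \cite{BocBonDia:16,DiaGelSan:20,YanZha:20}) that there are measures in $\cM_{\rm erg,0}(f)$ approximated in entropy by measures in $\cM_{\rm erg,<0}(f)$ with positive entropy is essential; the slope-$0$ asymptotics of $\cP_{<0}$ together with a uniform positive lower bound on entropy near exponent zero keeps $\cE_{<0}(0)$ finite (indeed bounded below by $\h(f)>0$). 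The $\alpha_{\rm min}$ endpoint is comparatively soft, relying only on attainment of the infimum by an ergodic measure and the affine lower bound (P1).
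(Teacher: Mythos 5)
Your proof is correct, and the $\alpha_{\rm min}$ endpoint is handled exactly as the paper does (attainment of the infimum by an ergodic measure, then the affine lower bound from (P1)). For the $\alpha=0$ endpoint, however, you take a genuinely different and somewhat heavier route than the paper. You invoke Remark~\ref{BM1almpa} to produce a sequence $\mu_n\in\cM_{\rm erg,<0}(f)$ with $\chi^\c(\mu_n)\nearrow 0$ and $h_{\mu_n}(f)$ bounded below by a positive constant, and then let $n\to\infty$ at each fixed $q$ to bound $\cP_{<0}(q)$ from below. The paper, by contrast, uses no entropy lower bound at all: it exploits only the interval structure of the exponent spectrum (from Proposition~\ref{pro:useful}, already used to get $(\alpha_{\rm min},0)\subset D(\cE_{<0})$) to pick, for each $q\ge 1/|\alpha_{\rm min}|$, a measure $\mu_q$ with exponent exactly $-1/q$; then $\cP_{<0}(q)\ge h_{\mu_q}(f)-1\ge -1$, and the other ranges of $q$ are handled by elementary bounds. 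Your approach gives a quantitatively sharper conclusion ($\cE_{<0}(0)>0$ rather than merely $>-\infty$), at the cost of importing the nontrivial input that nonhyperbolic measures of positive entropy exist and are approximable in entropy from $\cM_{\rm erg,<0}(f)$. One small presentational issue: you write the lower bound as $\h(f)$, but $\h(f)$ is only defined via the limits in \eqref{resvp0}, which are justified later in the proof of Theorem~\ref{theorem1}; at this stage of the argument you should phrase it as ``a positive constant $c$'' (e.g.\ $c=h_\nu(f)/2$ for some $\nu\in\cM_{\rm erg,0}(f)$ with $h_\nu(f)>0$) to avoid an apparent circularity. With that change the proof is sound.
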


\begin{proof}
	It is enough to see that $\alpha_{\rm min}$ and $0$ belong to this domain. We first consider $\alpha_{\rm min}$. Taking limits and ergodic decompositions, it is straightforward to check that $\{\mu\in\cN\colon \int\varphi^\c \,d\mu=\alpha_{\rm min}\}\ne\emptyset$. Thus, 
\[\begin{split}
	\inf_{q\in\bR}\big(\cP_{<0}(q)-q\alpha_{\rm min}\big)
	&=\inf_{q\in\bR}\big(\sup_{\mu\in\cN}(h_\mu(f)+q\int\varphi^\c\,d\mu)-q\alpha_{\rm min}\big)\\
	&\ge\inf_{q\in\bR}\Big(\sup_{\mu\in\cN\colon\int\varphi^\c\,d\mu=\alpha_{\rm min}}(h_\mu(f)+q\alpha_{\rm min})-q\alpha_{\rm min}\Big)\\
	&= \sup_{\mu\in\cN\colon \chi^\c(\mu)=\alpha_{\rm min}}h_\mu(f)
= \cH(\alpha_{\rm min})
\geq 0
>-\infty.
\end{split}\] 
This implies $\alpha_{\rm min}\in D(\cE_{<0})$. 

To consider $0$, note that for every $q\geq  1/|\alpha_{\rm min}|$, 
\[
\cN_q
\eqdef\big\{\mu\in\cN\colon \chi^\c(\mu)=-1/q \big\}
\ne\emptyset.
\]
Thus, for every $q\in (1/|\alpha_{\rm min}|,\infty)$, taking $\mu_q\in\cN_q$, it follows
\[
	\sup_{\mu\in\cN}\big(h_\mu(f)+q\chi^\c(\mu)\big)\geq h_{\mu_q}(f)+q\chi^\c(\mu_q)\geq  -1.
\]
Thus, we get
\[\begin{split}
	\inf_{r\geq  1/|\alpha_{\rm min}|}\cP_{<0}(r)
	&=	\inf_{r\geq  1/|\alpha_{\rm min}|}\,\,\sup_{\mu\in\cN}\big(h_\mu(f)+r\chi^\c(\mu)\big)
	\ge -1.
\end{split}\] 
On the other hand, 
\[	
\inf_{q\leq 0}\cP_{<0}(q)
\geq \sup_{\mu\in\cN }h_\mu(f)\geq 0
\quad\text{ and }\quad 
\inf_{0\leq q\le 1/|\alpha_{\rm min}|}\cP_{<0}(q)\geq \sup_{\mu\in\cN}h_\mu(f)-1>-\infty.
\]	 
Hence, one concludes that $0\in D(\cE_{<0})$.
\end{proof}

Claims \ref{lc.lyapunov-interval} and \ref{nextlemm} prove the assertion \eqref{nummer} in the theorem. Let us now prove the remaining assertion \eqref{resvarprinovo}.

\begin{claim}\label{lclstrange}
	For every $\alpha\in[\alpha_{\rm min},0)$, $\cE_{<0}(\alpha)= \cH(\alpha)$.
\end{claim}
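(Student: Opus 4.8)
The plan is to treat the open interval $(\alpha_{\rm min},0)$ and the left endpoint $\alpha_{\rm min}$ separately, the latter being the only point requiring a new argument. For $\alpha\in(\alpha_{\rm min},0)=(\underline\varphi^\c_{<0},\overline\varphi^\c_{<0})$ (Claim~\ref{lc.lyapunov-interval}) the equality is already available: it is Claim~\ref{lemalmossss} applied to $\cN=\cM_{\rm erg,<0}(f)$ and the increasing exhausting family of basic sets provided by Lemma~\ref{lempro:exhaust}, together with the identifications $\cE_{<0}=\cE_\cN$ and $\cH=\cH_\cN$.

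For $\alpha=\alpha_{\rm min}$ the inequality $\cE_{<0}(\alpha_{\rm min})\ge\cH(\alpha_{\rm min})$ is property (E5) (it also fell out of the computation in the proof of Claim~\ref{nextlemm}), so the content is the reverse inequality. The first step is to note that $\cE_{<0}$, being the pointwise infimum over $q$ of the affine maps $\alpha\mapsto\cP_{<0}(q)-q\alpha$, is upper semicontinuous on $\bR$, and it is concave by (E1); combined with $D(\cE_{<0})=[\alpha_{\rm min},0]$ (Claim~\ref{nextlemm}) this forces $\cE_{<0}$ to be continuous on the whole closed interval $[\alpha_{\rm min},0]$. Hence, by the equality already established on the open interval,
\[
	\cE_{<0}(\alpha_{\rm min})
	=\lim_{\alpha\searrow\alpha_{\rm min}}\cE_{<0}(\alpha)
	=\lim_{\alpha\searrow\alpha_{\rm min}}\cH(\alpha),
\]
and it suffices to prove $\lim_{\alpha\searrow\alpha_{\rm min}}\cH(\alpha)\le\cH(\alpha_{\rm min})$, that is, that $\cH$ has no upward jump at the bottom of the spectrum.

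To do this I would choose $\alpha_k\searrow\alpha_{\rm min}$ and ergodic measures $\mu_k$ with $\chi^\c(\mu_k)=\alpha_k$ and $h_{\mu_k}(f)\ge\cH(\alpha_k)-1/k$, and pass to a weak$\ast$ accumulation point $\mu_\infty$ of $(\mu_k)_k$. Since $\varphi^\c$ is continuous, $\int\varphi^\c\,d\mu_\infty=\alpha_{\rm min}$; as $\alpha_{\rm min}$ is the smallest center exponent of any ergodic measure, every ergodic component $\nu$ of $\mu_\infty$ satisfies $\chi^\c(\nu)=\alpha_{\rm min}$, hence $h_\nu(f)\le\cH(\alpha_{\rm min})$, and by affinity of entropy under ergodic decomposition $h_{\mu_\infty}(f)\le\cH(\alpha_{\rm min})$. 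On the other hand $\cH(\alpha_k)=\cE_{<0}(\alpha_k)\to\cE_{<0}(\alpha_{\rm min})$, so $\liminf_k h_{\mu_k}(f)\ge\cE_{<0}(\alpha_{\rm min})$; applying upper semicontinuity of the metric entropy $\mu\mapsto h_\mu(f)$ along the subsequence converging to $\mu_\infty$ gives $h_{\mu_\infty}(f)\ge\cE_{<0}(\alpha_{\rm min})$. Comparing the two bounds on $h_{\mu_\infty}(f)$ yields $\cE_{<0}(\alpha_{\rm min})\le\cH(\alpha_{\rm min})$, which completes the proof. The upper semicontinuity of $\mu\mapsto h_\mu(f)$ invoked here holds because $f\in\PH^1_{\c=1}(M)$ is entropy-expansive: a small bi-infinite dynamical ball is contained in a curve tangent to $E^\c$ by the weak integrability of the center, and this is also what underlies the existence of measures of maximal entropy in this setting.

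The main obstacle is exactly this endpoint: the interior of the spectrum is delivered verbatim by the exhausting-family machinery, whereas at $\alpha_{\rm min}$ one must rule out entropy being lost in a weak$\ast$-limit, namely invoke upper semicontinuity of the entropy. Should one wish to avoid it, an alternative would be to manufacture, using the near-optimal measures $\mu_k$ and the fact that all saddles of $\u$-index $d^\uu$ are homoclinically related (Remark~\ref{r.homorelated}), ergodic measures of center exponent exactly $\alpha_{\rm min}$ with almost the same entropy; but this route seems distinctly less transparent.
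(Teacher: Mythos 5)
Your proof is correct. On the interior $(\alpha_{\rm min},0)$ it coincides with the paper's, invoking Proposition~\ref{prop:approx1} (equivalently, Claim~\ref{lemalmossss} via the exhausting family of Lemma~\ref{lempro:exhaust}) with $\Lambda=M$. At the endpoint $\alpha=\alpha_{\rm min}$, you take a genuinely different route from the paper. The paper works directly with the pressure: for each $q<0$ it chooses a near-optimizer $\mu_q$ of $\cP_{<0}(q)-q\alpha_{\rm min}$, and extracts from the quantitative relations~\eqref{firsts}--\eqref{eq:variantional-principle-alphamin} that $\chi^\c(\mu_q)\to\alpha_{\rm min}$ as $q\to-\infty$, before passing to a weak$\ast$ limit. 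You instead observe that $\cE_{<0}$ is automatically continuous at $\alpha_{\rm min}$---being upper semicontinuous as an infimum of affine functions of $\alpha$, and concave with $\alpha_{\rm min}$ in the effective domain, which forces the one-sided liminf from the interior to be at least $\cE_{<0}(\alpha_{\rm min})$---and then feed in measures $\mu_k$ of exponent $\alpha_k\searrow\alpha_{\rm min}$ that nearly realize $\cH(\alpha_k)=\cE_{<0}(\alpha_k)$ on the interior. Both proofs converge to the identical final step: a weak$\ast$ accumulation point $\mu_\infty$ has $\int\varphi^\c\,d\mu_\infty=\alpha_{\rm min}$, hence by the defining minimality of $\alpha_{\rm min}$ almost every ergodic component of $\mu_\infty$ has center exponent $\alpha_{\rm min}$, so $h_{\mu_\infty}(f)\le\cH(\alpha_{\rm min})$ by affinity of entropy; then upper semicontinuity of $\mu\mapsto h_\mu(f)$ (from entropy-expansiveness via \cite{LiViYa:13,DiFiPaVi:12}) gives the reverse bound and closes the argument. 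Your version is arguably cleaner in that it isolates the continuity of the Legendre--Fenchel transform at the boundary of its domain as a soft, general fact rather than re-deriving it through the pressure estimates; it does, however, rely on having already proved $\cE_{<0}=\cH$ on the open interval and on Claim~\ref{nextlemm}, which your sketch uses appropriately.
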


\begin{proof}
We first consider $\alpha\in (\alpha_{\rm min},0)$, recall again that $M$ has the $d^\uu$-ergodic approximation property. Applying Proposition \ref{prop:approx1} to $\Lambda=M$, we conclude $\cH(\alpha)=\cE_{<0}(\alpha)$.

We now consider the case $\alpha_{\rm min}$. Note that, using property (E5) and definition \eqref{def:EN},
\begin{equation}\label{lazy}\begin{split}
	\cH(\alpha_{\rm min})	
	\le \cE_{<0}(\alpha_{\rm min})
	&= \inf_{q\in\bR}\big(\cP_{<0}(q)-q\alpha_{\rm min}\big)
	\leq  \inf_{q<0}\big(\cP_{<0}(q)-q\alpha_{\rm min}\big)\\
	&=\inf_{q<0}\sup_{\mu\in\cN}\big(h_\mu(f)+q(\chi^\c(\mu)-\alpha_{\rm min})\big).
\end{split}\end{equation} 
For each $q<0$, take $\mu_q\in\cN$ such that 
\begin{eqnarray}
	\sup_{\mu\in\cN}\big(h_\mu(f)+q(\chi^\c(\mu)-\alpha_{\rm min})\big)
	&\leq&  h_{\mu_q}(f)+q\big(\chi^\c(\mu_q)-\alpha_{\rm min}\big)-q^{-1}
		\label{firsts}\\
	&\leq& h_{\mu_q}(f)-q^{-1}.
	\label{eq:variantional-principle-alphamin}
\end{eqnarray}
Thus, \eqref{firsts} implies that  
\[ 
	h_{\mu_q}(f)+q\big(\chi^\c(\mu_q)-\alpha_{\rm min}\big)-q^{-1}
	\geq \cH(\alpha_{\rm min}),
\]
which gives (recall that $q<0$)
\[
	\chi^\c(\mu_q)-\alpha_{\rm min}
	\leq \frac{1}{q}\big(\cH(\alpha_{\rm min})-h_{\mu_q}(f)\big)+q^{-2}. 
\]
Up to taking a subsequence, one can assume that $\mu_q$ converges to an invariant measure $\mu_\infty$ for $q$ tending to $-\infty$. As $|\cH(\alpha_{\rm min})-h_{\mu_q}(f)|\leq 2 h_{\rm top}(f)<\infty$, then
 \[\int\varphi^\c\ud\mu_\infty-\alpha_{\rm min}\leq 0.\] 
 By the definition of $\alpha_{\rm min}$, one has $\int\varphi^\c\ud\mu_\infty=\alpha_{\rm min}$. Note that almost every ergodic component of $\mu_{\infty}$ also has center Lyapunov exponent $\alpha_{\rm min}$. By the affine property of the metric entropy,  one has $h_{\mu_\infty}(f)\leq  \cH(\alpha_{\rm min})$. As $f$ has one-dimensional center bundle, by \cite{LiViYa:13} (see also \cite{DiFiPaVi:12}), the metric entropy varies upper semi-continuously, and thus $\limsup_{q\to-\infty}h_{\mu_q}(f)\leq h_{\mu_\infty}(f)$. By inequalities~\eqref{eq:variantional-principle-alphamin} and \eqref{lazy}, one has 
\[\begin{split}
	\cH(\alpha_{\rm min})
	&\le \cE_{<0}(\alpha_{\rm min})
	\le \inf_{q<0}	\sup_{\mu\in\cN}\big(h_\mu(f)+q(\chi^\c(\mu)-\alpha_{\rm min})\big)\\
	&\leq\limsup_{q\to-\infty}h_{\mu_q}(f)-q^{-1}\leq  h_{\mu_\infty}(f)
	\leq  \cH(\alpha_{\rm min}) . 
\end{split}\]
Therefore, one has $\cE_{<0}(\alpha_{\rm min})=\cH(\alpha_{\rm min})$, proving the assertion for $\alpha=\alpha_{\rm min}$.
\end{proof}

It remains to show the first equality in \eqref{resvarprinovo}. 

\begin{claim}
	For every $\alpha\in[\alpha_{\rm min},0)$, $h_{\rm top}(f,\cL(\alpha))=\cE_{<0}(\alpha)$ and $\cL(\alpha)\ne\emptyset$.
\end{claim}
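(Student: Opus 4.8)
The plan is to prove, for each $\alpha\in[\alpha_{\rm min},0)$, the two inequalities $\cE_{<0}(\alpha)\le h_{\rm top}(f,\cL(\alpha))$ and $h_{\rm top}(f,\cL(\alpha))\le \cE_{<0}(\alpha)$ together with $\cL(\alpha)\ne\emptyset$, treating the interior values $\alpha\in(\alpha_{\rm min},0)$ and the endpoint $\alpha_{\rm min}$ separately. I will use freely the facts already established in this section: $\underline\varphi^\c_{<0}=\alpha_{\rm min}<0$ and $\overline\varphi^\c_{<0}=0$ (Claim~\ref{lc.lyapunov-interval}), $D(\cE_{<0})=[\alpha_{\rm min},0]$ (Claim~\ref{nextlemm}), and $\cE_{<0}(\alpha)=\cH(\alpha)$ on $[\alpha_{\rm min},0)$ (Claim~\ref{lclstrange}); in particular $\cE_{<0}$ is concave, hence continuous on $(\alpha_{\rm min},0)$.

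For the lower bound and non-emptiness, when $\alpha\in(\alpha_{\rm min},0)=(\underline\varphi^\c_{<0},\overline\varphi^\c_{<0})$ I would invoke Proposition~\ref{prop:approx1} with $\Lambda=M$ (which has the $d^\uu$-ergodic approximation property by Lemma~\ref{lem:approx1}) and $\cN=\cM_{\rm erg,<0}(f)$; this immediately yields $\cL(\alpha)=\cL(\alpha)\cap M\ne\emptyset$ and $\cE_{<0}(\alpha)\le h_{\rm top}(f,\cL(\alpha))$. For $\alpha=\alpha_{\rm min}$, there is an ergodic measure $\mu$ attaining $\chi^\c(\mu)=\alpha_{\rm min}$ (by taking weak$\ast$ limits and ergodic decompositions, as noted after \eqref{eq:defalphamin}); its set of $\varphi^\c$-regular points has full $\mu$-measure and is contained in $\cL(\alpha_{\rm min})$, so $\cL(\alpha_{\rm min})\ne\emptyset$, and the Bowen-type bound of Remark~\ref{remsimpleBowenarg} applied to $\Lambda=M$, together with Claim~\ref{lclstrange}, gives $\cE_{<0}(\alpha_{\rm min})=\cH(\alpha_{\rm min})\le h_{\rm top}(f,\cL(\alpha_{\rm min}))$.

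For the upper bound, fix $\alpha\in[\alpha_{\rm min},0)$. If $h_{\rm top}(f,\cL(\alpha))=0$ there is nothing to do, since $\cE_{<0}(\alpha)=\cH(\alpha)\ge0$ (the set of measures with exponent $\alpha$ is nonempty because $\varphi^\c_{<0}=[\alpha_{\rm min},0)$). So assume $h_{\rm top}(f,\cL(\alpha))>0$ and fix $h\in(0,h_{\rm top}(f,\cL(\alpha)))$. By Proposition~\ref{pro:skel}, $f$ has the pre-skeleton property relative to $h$ and $\alpha$, and since $f\in\BM^1(M)$ and $\alpha\le0$, the analogue of Proposition~\ref{pro:skelhors} produces, for any $\gamma\in(0,h)$ and $\varepsilon>0$, a basic set $\Lambda$ of contracting type with $h_{\rm top}(f,\Lambda)>h-\gamma$ and $\chi^\c(\nu)\in(\alpha-\varepsilon,\alpha+\varepsilon)\cap\bR_-$ for every $\nu\in\cM_{\rm erg}(f|_\Lambda)$. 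By the variational principle on the basic set $\Lambda$ there is $\mu\in\cM_{\rm erg}(f|_\Lambda)\subset\cM_{\rm erg,<0}(f)$ with $h_\mu(f)>h-2\gamma$, and necessarily $\chi^\c(\mu)\in[\alpha_{\rm min},0)\cap(\alpha-\varepsilon,\alpha+\varepsilon)$. For $\alpha\in(\alpha_{\rm min},0)$, choosing $\varepsilon$ small so that $(\alpha-\varepsilon,\alpha+\varepsilon)\subset(\alpha_{\rm min},0)$, on which $\cE_{<0}=\cH$ is continuous, gives $h-2\gamma<h_\mu(f)\le\cH(\chi^\c(\mu))=\cE_{<0}(\chi^\c(\mu))\le\max_{[\alpha-\varepsilon,\alpha+\varepsilon]}\cE_{<0}$; letting $\gamma\to0$ and then $\varepsilon\to0$ yields $h\le\cE_{<0}(\alpha)$, and since $h<h_{\rm top}(f,\cL(\alpha))$ was arbitrary, $h_{\rm top}(f,\cL(\alpha))\le\cE_{<0}(\alpha)$. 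Combined with the lower bound, this settles all interior $\alpha$.

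The endpoint $\alpha=\alpha_{\rm min}$ is the main obstacle, since $\cE_{<0}$ need not be (right-)continuous at the left endpoint of its domain, so the continuity argument above fails. Here I would instead pick sequences $\gamma_k\to0$, $\varepsilon_k\to0$ and corresponding measures $\mu_k\in\cM_{\rm erg,<0}(f)$ as above, with $h_{\mu_k}(f)>h-2\gamma_k$ and $\chi^\c(\mu_k)\in[\alpha_{\rm min},\alpha_{\rm min}+\varepsilon_k)$ (the left endpoint forced because $\alpha_{\rm min}$ is the infimum of all center exponents). Passing to a weak$\ast$-accumulation point $\mu_\infty$ of $(\mu_k)_k$, an $f$-invariant measure, continuity of $\varphi^\c$ gives $\chi^\c(\mu_\infty)=\alpha_{\rm min}$, and since $f$ has one-dimensional center bundle the metric entropy is upper semicontinuous by \cite{LiViYa:13}, so $h_{\mu_\infty}(f)\ge\limsup_k h_{\mu_k}(f)\ge h$. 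In the ergodic decomposition of $\mu_\infty$, almost every component has center exponent $\ge\alpha_{\rm min}$ with average $\alpha_{\rm min}$, hence equal to $\alpha_{\rm min}$, and lies in $\cM_{\rm erg,<0}(f)$; by affinity of the entropy, $\cH(\alpha_{\rm min})\ge h_{\mu_\infty}(f)\ge h$. Thus $h\le\cH(\alpha_{\rm min})=\cE_{<0}(\alpha_{\rm min})$, and letting $h\nearrow h_{\rm top}(f,\cL(\alpha_{\rm min}))$ gives $h_{\rm top}(f,\cL(\alpha_{\rm min}))\le\cE_{<0}(\alpha_{\rm min})$, which together with the lower bound completes the proof. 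The only delicate point is this endpoint analysis, replacing continuity of the Legendre--Fenchel transform by a compactness-plus-upper-semicontinuity argument; the rest is an assembly of Propositions~\ref{pro:skel}, \ref{pro:skelhors}, \ref{prop:approx1}, Remark~\ref{remsimpleBowenarg}, and Claim~\ref{lclstrange}.
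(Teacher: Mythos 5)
Your proof is correct. The treatment of $\alpha\in(\alpha_{\rm min},0)$---Proposition~\ref{prop:approx1} applied to $\Lambda=M$ for the lower bound and nonemptiness, then Propositions~\ref{pro:skel} and~\ref{pro:skelhors} combined with the continuity of $\cE_{<0}$ near $\alpha$ for the upper bound---is exactly the paper's argument. The difference is at the endpoint $\alpha=\alpha_{\rm min}$. The paper disposes of this case in one line by citing a general restricted variational principle for level sets of a continuous potential at extremal values of the spectrum (\cite[Lemma~5.2]{DiaGelRam:19}), which directly gives $h_{\rm top}(f,\cL(\alpha_{\rm min}))=\cH(\alpha_{\rm min})$, and then concludes with Claim~\ref{lclstrange}. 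You instead make this self-contained: for the lower bound you use Remark~\ref{remsimpleBowenarg} plus the fact that $\alpha_{\rm min}$ is attained by an ergodic measure (whose generic points populate $\cL(\alpha_{\rm min})$); for the upper bound you run the pre-skeleton machinery at $\alpha_{\rm min}$ to produce measures $\mu_k$ with $\chi^\c(\mu_k)\searrow\alpha_{\rm min}$ and $h_{\mu_k}(f)\to h$, pass to a weak$^*$-limit, invoke upper semicontinuity of entropy from \cite{LiViYa:13}, and reduce to ergodic components via affinity. This is a legitimate alternative; it avoids the citation but at the cost of reusing the compactness-plus-USC technique that the paper already deployed inside Claim~\ref{lclstrange}. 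One small bookkeeping point: when you extract the mme of the basic set $\Lambda_k$ from Proposition~\ref{pro:skelhors}, you get $h_{\mu_k}(f)>h-\gamma_k$ directly (the $2\gamma_k$ slack is unnecessary), and the constraint $\chi^\c(\mu_k)\ge\alpha_{\rm min}$ is indeed automatic, so the interval $[\alpha_{\rm min},\alpha_{\rm min}+\varepsilon_k)$ is the right one. No gaps.
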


\begin{proof}
	Let us first consider $\alpha=\alpha_{\rm min}$. Here, the restricted variational principle
\[
h_{\rm top}(f,\cL(\alpha_{\rm min}))
=  \sup\big\{h_\mu(f)\colon\mu\in\cM_{\rm erg}(f),\chi^\c(\mu)=\alpha_{\rm min}\big\}
= \cH(\alpha_{\rm min})
\]  
follows from \cite[Lemma 5.2]{DiaGelRam:19} (which holds in a general context of a continuous map and level sets for a continuous potential). Together with Claim \ref{lclstrange}, we get
\[
h_{\rm top}(f,\cL(\alpha_{\rm min}))
=\cE_{<0}(\alpha).
\]

Let us now assume $\alpha\in(\alpha_{\rm min},0)$. Here we use again that $M$ has the $d^\uu$-ergodic approximation property and apply Proposition \ref{prop:approx1} to $\Lambda=M$ to get $\cL(\alpha)\ne\emptyset$ and 
\begin{equation}\label{eqchuva}
	\sup\big\{h_\mu(f)\colon\mu\in\cN,\chi^\c(\mu)=\alpha\big\}
	= \cE_{<0}(\alpha)
	\le h_{\rm top}(f,\cL(\alpha)).
\end{equation}
It remains to prove equality in \eqref{eqchuva}. By concavity-continuity property (E1), for every $\gamma>0$,  there is $\varepsilon>0$ such that 
 for every $\beta\in(\alpha-\varepsilon,\alpha+\varepsilon)$, one has
$|\cE_{<0}(\beta)-\cE_{<0}(\alpha)|<\gamma.$
By Propositions \ref{pro:skel} and \ref{pro:skelhors}, there exists a basic set $\Lambda\subset M$ such that
\[
	h_{\rm top}(f,\cL(\alpha))-\gamma
	< h_{\rm top}(f,\Lambda)
	~\text{ and }~
	\chi^\c(\mu)\in(\alpha-\varepsilon,\alpha+\varepsilon)\text{ for all }\mu\in\cM_{\rm erg}(f|_\Lambda).
\]
Take $\nu\in \cM_{\rm erg}(f|_\Lambda)$ the measure of maximal entropy of $f|_\Lambda$, then one has 
\begin{align*}
	h_{\rm top}(f,\cL(\alpha))
	< h_{\nu}(f)+\gamma
	&\leq \sup\big\{h_\mu(f)\colon\mu\in\cN,\chi^\c(\mu)=\chi^c(\nu)\big\}+\gamma
	= \cE_{<0}(\chi^c(\nu))+\gamma\\
	&\leq \cE_{<0}(\alpha)+2\gamma,
\end{align*}
proving the claim.
\end{proof}

The proof of  Theorem \ref{theorem:1} is now complete.
\qed

\subsection{Proof of Theorem \ref{theorem1}}\label{sec:proofThm1}

First note that Theorem \ref{theorem:1} already provides the following facts about the entropy spectrum of  non-zero Lyapunov exponents:
\begin{itemize}[leftmargin=0.8cm ]
\item $\{\chi^\c(\mu)\colon\mu\in\cM_{\rm erg,<0}(f)\cup \cM_{\rm erg,>0}(f)\}\subset [\alpha_{\rm min},0)\cup(0,\alpha_{\rm max}]$;
\item for every $\alpha\in[\alpha_{\rm min},0)\cup(0,\alpha_{\rm max}]$, one has $\cL(\alpha)\ne\emptyset$;
\item for every $\alpha\in[\alpha_{\rm min},0)$ and $\beta\in(0,\alpha_{\rm max}]$, one has 
	\begin{equation}\label{newresvpproof}
		\cE_{<0}(\alpha)
		= \cH(\alpha)
		\quad\text{ and }\quad
		\cE_{>0}(\beta)
		= \cH(\beta)
	\end{equation}
	and
	\begin{equation}\label{resvpproof-minus}
		h_{\rm top}(f,\cL(\alpha))
		=\cH(\alpha) 	\quad\text{ and }\quad
		h_{\rm top}(f,\cL(\beta))
		= \cH(\beta).
	\end{equation}	
\end{itemize}
Moreover, it follows from property (E1) that $\cE_{<0}$ and $\cE_{>0}$ are both continuous functions. 

So what remains to study is the exponent zero. First recall that by Remark \ref{BM1almpa}, $0\in \{\chi^\c(\mu)\colon\mu\in\cM_{\rm erg}(f)\}$ and hence $\cL(0)\ne\emptyset$. Moreover, by Remark \ref{rem:flipflop}, $h_{\rm top}(f,\cL(0))>0$.
By Equations \eqref{newresvpproof}, \eqref{resvpproof-minus}, and property (E1), the function $\beta\mapsto h_{\rm top}(f,\cL(\beta))$ is  concave on $(0,\alpha_{\rm max})$, thus the following limit, as in \eqref{resvp0},
\[
\h^+(f)=\lim_{\beta\searrow0}h_{\rm top}(f,\cL(\beta))
\]	 
is well-defined. Analogously, $\h^-(f)$ is well-defined. 

To complete the proof, it remains to show the following two claims.

\begin{claim}\label{clem:imits}
	$\h(f)\eqdef\h^+(f)=\h^-(f)$.
\end{claim}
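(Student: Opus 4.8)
The plan is to show that the two one-sided limits $\h^-(f)$ and $\h^+(f)$ coincide by squeezing each of them from above and below using the ergodic approximation transfer result in Theorem~\ref{the:goingotherside}. First I would observe that by Theorem~\ref{theorem:1} and \eqref{resvpproof-minus}, the functions $\alpha\mapsto h_{\rm top}(f,\cL(\alpha))=\cH(\alpha)$ are concave on each of $[\alpha_{\rm min},0)$ and $(0,\alpha_{\rm max}]$, so both limits $\h^-(f)=\lim_{\alpha\nearrow0}\cH(\alpha)$ and $\h^+(f)=\lim_{\beta\searrow0}\cH(\beta)$ exist (this was already recorded above). The content of the claim is the equality $\h^-(f)=\h^+(f)$.

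The key step is to run the computation in \eqref{eqLF} from Remark~\ref{remLFTransforFigures} in both directions. Fix $\mu\in\cM_{\rm erg,>0}(f)$ with $\beta\eqdef\chi^\c(\mu)>0$. Theorem~\ref{the:goingotherside} produces, for every $\alpha\in(\alpha_{\rm min},0)$ and every $\delta,\gamma>0$, a basic set $\Lambda$ of contracting type with $h_{\rm top}(f,\Lambda)\ge \frac{h_\mu(f)}{1+K(f)(\beta+|\alpha|)}-\gamma$ and with all exponents of $\cM_{\rm erg}(f|_\Lambda)$ lying in a controlled window around $\frac{\alpha}{1+K(f)(\beta+|\alpha|)}$. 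Taking the measure of maximal entropy of $f|_\Lambda$, which has negative exponent close to that value, and using $h_{\rm top}(f,\cL(\cdot))=\cH(\cdot)$ together with the continuity (concavity) of $\cH$ on $[\alpha_{\rm min},0)$, letting first $\alpha\nearrow0$ (so the window shrinks toward $0^-$) and then $\gamma,\delta\searrow0$, one obtains
\[
\frac{h_\mu(f)}{1+K(f)\beta}\le \lim_{\alpha\nearrow0}\cH(\alpha)=\h^-(f).
\]
Since $\mu\in\cM_{\rm erg,>0}(f)$ was arbitrary, taking the supremum over such $\mu$ with $\beta$ close to $0$ and invoking that $\h^+(f)=\lim_{\beta\searrow0}\cH(\beta)=\sup\{h_\mu(f)\colon \mu\in\cM_{\rm erg,>0},\ \chi^\c(\mu)\text{ near }0\}$ gives $\h^+(f)\le\h^-(f)$. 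Then I would run the symmetric argument using the analogue of Theorem~\ref{the:goingotherside} for measures in $\cM_{\rm erg,<0}(f)$ (explicitly stated as ``There is an analogous result'' at the end of that theorem) to transfer negative-exponent measures to positive-exponent basic sets, yielding $\h^-(f)\le\h^+(f)$, and conclude equality.

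The main obstacle I expect is the careful bookkeeping of the two nested limits: one must ensure that as $\alpha\nearrow0$ the output exponent $\frac{\alpha}{1+K(f)(\beta+|\alpha|)}$ and its $\delta$-neighborhood genuinely stay on the negative side and converge to $0$, so that the relevant values of $\cH$ are being evaluated along the branch where the restricted variational principle \eqref{resvpproof-minus} applies, and that the entropy loss factor $\frac{1}{1+K(f)(\beta+|\alpha|)}$ tends to $\frac{1}{1+K(f)\beta}$ rather than something worse. A secondary subtlety is that $\h^+(f)$ is a supremum/limit of entropies of positive-exponent ergodic measures — one needs that measures $\mu\in\cM_{\rm erg,>0}(f)$ with $\chi^\c(\mu)$ arbitrarily close to $0$ and $h_\mu(f)$ arbitrarily close to $\h^+(f)$ actually exist, which follows from the concavity–continuity of $\cH$ on $(0,\alpha_{\rm max}]$ established in Theorem~\ref{theorem:1} combined with the restricted variational principle there. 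Modulo these points, the proof is a direct application of Theorem~\ref{the:goingotherside} and the already-proven hyperbolic spectrum results, exactly mirroring the computation \eqref{eqLF}.
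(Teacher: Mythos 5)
Your proposal is correct and takes essentially the same route as the paper: the paper's proof also hinges on applying Theorem~\ref{the:goingotherside} (and its stated ``analogous result'') to transport near-zero-exponent ergodic measures from one side of $0$ to the other with an entropy loss factor $\tfrac{1}{1+K(f)(\beta+|\alpha|)}$ that tends to $1$, yielding both inequalities $\h^{\pm}(f)\le\h^{\mp}(f)$. The only cosmetic difference is that you establish $\h^+(f)\le\h^-(f)$ first (starting from measures in $\cM_{\rm erg,>0}(f)$, as in the computation \eqref{eqLF}) whereas the paper starts with a sequence of negative-exponent measures and proves $\h^-(f)\le\h^+(f)$ first.
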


\begin{proof}
	By Equations \eqref{newresvpproof} and \eqref{resvpproof-minus}, for every sequence $\alpha_k\nearrow0$ there is a sequence $\mu_k$ of ergodic measures satisfying $\chi^\c(\mu_k)=\alpha_k$ and $h_{\mu_k}(f)\to \h^-(f)$ as $k\to\infty$. By Theorem \ref{the:goingotherside},  there is a sequence $\mu_k'$ of ergodic measures satisfying $\chi^\c(\mu_k')\searrow0$ and $h_{\mu_k'}(f)\to \h^-(f)$ as $k\to\infty$. This implies $\h^-(f)\le \h^+(f)$. An analogous argument gives $\h^+(f)\le \h^-(f)$. 
\end{proof}

\begin{claim}
	$h_{\rm top}(f,\cL(0))\le \h(f)$.
\end{claim}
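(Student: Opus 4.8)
The plan is to bound $h_{\rm top}(f,\cL(0))$ from above by approximating $\cL(0)$ with level sets $\cL(\alpha)$ for small $\alpha \neq 0$, whose entropy we already control by Theorem \ref{theorem:1}. The obstruction is that $\cL(0)$ is \emph{not} contained in $\bigcup_{|\alpha|<\e}\cL(\alpha)$ in any useful way — a point with zero exponent is genuinely different from a point with small nonzero exponent. So a direct inclusion-monotonicity argument cannot work, and we must instead produce, for each point of $\cL(0)$ (or enough of them, in a Bowen-capacity sense), orbit segments along which the finite-time center exponent is close to zero but which can be ``captured'' by a hyperbolic basic set of small exponent.

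First I would fix $\e>0$ and, arguing by contradiction, suppose $h_{\rm top}(f,\cL(0)) > \h(f) + 3\e$. I would then stratify $\cL(0)$ by the size of the ``transient'' behaviour: for $N\in\bN$ set
\[
\cL^N(0) \eqdef \Big\{x\in\cL(0)\colon K_0^{-1}e^{-n\e} \le \lVert D^\c f^n(x)\rVert \le K_0 e^{n\e}\ \text{ for all } |n|\ge N\Big\},
\]
for a suitable $K_0$. By countable stability and monotonicity of Bowen entropy (Remark \ref{rem:monent}) there is $N$ with $h_{\rm top}(f,\cL^N(0)) > \h(f)+3\e$. Passing to the lower capacity as in the proof of Proposition \ref{pro:skel}, one then extracts, for all large $m$, an $(m,\e)$-separated subset $X_m\subset\cL^N(0)$ of cardinality at least $e^{m(\h(f)+2\e)}$ all of whose points satisfy the uniform estimate $K_0^{-1}e^{-m\e}\le\lVert D^\c f^\ell(x)\rVert\le K_0 e^{m\e}$ for $0\le\ell\le m$. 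This is exactly the data of the pre-skeleton property relative to $h = \h(f)+2\e$ and $\alpha = 0$ (with $\e_H = \e_E = \e$, say), so by Proposition \ref{pro:skelhors} there is a basic set $\Lambda$ of expanding type with $h_{\rm top}(f,\Lambda) > \h(f)+\e$ and $\chi^\c(\mu)\in(0,2\e)$ for every $\mu\in\cM_{\rm erg}(f|_\Lambda)$.

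Now I would derive the contradiction: the measure of maximal entropy $\nu$ of $f|_\Lambda$ lies in $\cM_{\rm erg,>0}(f)$ with $\chi^\c(\nu)\in(0,2\e)$ and $h_\nu(f) > \h(f)+\e$. But then $\cH(\chi^\c(\nu)) \ge h_\nu(f) > \h(f)+\e$, and since $\chi^\c(\nu)$ can be taken arbitrarily small (shrink $\e$), this contradicts $\h^+(f) = \lim_{\beta\searrow 0}\cH(\beta) = \h(f)$ established in Claim \ref{clem:imits} together with \eqref{resvpproof-minus}. Hence $h_{\rm top}(f,\cL(0))\le\h(f)$.

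The main obstacle is the very first extraction step: one must verify that a set of points with \emph{exactly} zero Birkhoff limit, after the stratification by $N$, genuinely furnishes a pre-skeleton with $\alpha = 0$ — i.e.\ that the uniform finite-time bounds on $\lVert D^\c f^\ell(x)\rVert$ really do hold simultaneously for a set of near-maximal exponential cardinality, with the error exponent $\e_E$ as small as desired independently of $m$. This is precisely where the argument mirrors (and must be adapted from) the proof of Proposition \ref{pro:skel}; the subtlety is that, unlike the hyperbolic case where the bound $e^{n(\alpha\pm\e_E)}$ is automatic from regularity for large $n$, here one needs the definition of $\cL^N(0)$ to absorb the bounded transient part uniformly, which forces the constant $K_0$ into property (2) rather than into the exponential rate. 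Once that is in place, everything else is a routine application of Propositions \ref{pro:skel} and \ref{pro:skelhors} and Claim \ref{clem:imits}.
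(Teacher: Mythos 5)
Your proposal follows the same route as the paper: establish the pre-skeleton property at exponent zero, invoke Proposition~\ref{pro:skelhors} to manufacture basic sets of expanding type with large entropy and arbitrarily small positive center exponent, then compare with $\h(f)=\lim_{\beta\searrow0}\cH(\beta)$.

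Two remarks. The ``main obstacle'' you single out is not one: Proposition~\ref{pro:skel} is stated for every $\alpha$ with $h_{\rm top}(f,\cL(\alpha))>0$ (including $\alpha=0$, which is exactly how the paper applies it, using $h_{\rm top}(f,\cL(0))>0$ from Remark~\ref{rem:flipflop}), and its proof already carries out the $N$-stratification and the absorption of the bounded transient into the constant $K_0$, so you need not re-derive it. More substantively, the parenthetical ``(shrink $\e$)'' at the end does not close the contradiction as written. In your setup the same $\e$ controls both the entropy excess $h_\nu(f)>\h(f)+\e$ and the exponent bound $\chi^\c(\nu)\in(0,2\e)$, so sending $\e\to0$ degrades the entropy excess along with the exponent, and in the limit you only recover $\h^+(f)\ge\h(f)$ --- not a contradiction. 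The correct move (and the one the paper makes, working directly rather than by contradiction) is to fix $h\in(0,h_{\rm top}(f,\cL(0)))$ and $\gamma>0$, and let only the exponent tolerance $\varepsilon$ in Proposition~\ref{pro:skelhors} tend to zero, which the proposition permits since $\gamma$ and $\varepsilon$ are independent parameters there. This yields, via~\eqref{newresvpproof}, $\cH(\alpha)\ge h-\gamma$ for positive $\alpha$ arbitrarily close to zero, hence $\h(f)=\h^+(f)\ge h-\gamma$; then let $\gamma\to0$ and $h\to h_{\rm top}(f,\cL(0))$.
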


\begin{proof}
	By Remark \ref{rem:flipflop}, one has $h_\ast\eqdef h_{\rm top}(f,\cL(0))>0.$ By Proposition \ref{pro:skel},  $f$ has the pre-skeleton property relative to $h$ and $0$, for every $h\in(0,h_\ast)$. 
	Then  by Proposition \ref{pro:skelhors}, for every $h\in(0,h_\ast)$, $ \gamma\in(0,h)$, and $\varepsilon>0$, there exist $\alpha\in(0,\varepsilon)$ and an ergodic measure $\mu$ such that  $\chi^\c (\mu)=\alpha$  and $h_{\mu}(f)\ge h-\gamma$, thus $h_{\rm top}(f,\cL(\alpha))\ge h-\gamma$ due to the restricted variational principle  \eqref{newresvpproof}. By the arbitrariness of $\varepsilon$ and Claim \ref{clem:imits}, one has $\h(f) \ge h-\gamma$. By the arbitrariness of $h\in(0,h_\ast)$  and $\gamma\in(0,h)$, one has $\h(f) \ge h_\ast= h_{\rm top}(f,\cL(0))$, proving the claim 
\end{proof}

The proof of Theorem \ref{theorem1} is now complete.
\qed

\subsection{Lower semi-continuity of entropy spectrum: Proof of Corollary~\ref{cor:semiconti}}\label{app2} 

Consider $f\in\BM^1(M)$ and  $\alpha\in(0,\alpha_{\rm max})$. 
Take $\varepsilon\in(0,h_{\rm top}(f,\cL(\alpha))/3)$. As, by Theorem \ref{theorem1}, the function $\beta\mapsto h_{\rm top}(f,\cL(\beta))$ is continuous on $(0,\alpha_{\rm max})$, for every $\delta\in(0,\alpha/4)$ sufficiently small, one has
\begin{equation}\label{pff1}
\min_{\beta\in[\alpha-4\delta,\alpha+4\delta]}h_{\rm top}(f,\cL(\beta))
\ge h_{\rm top}(f,\cL(\alpha))-\varepsilon.
\end{equation}
Moreover, by Theorem \ref{theorem1}, it holds $\cL(\beta)\ne \emptyset$ for all $\beta\in[\alpha-4\delta,\alpha+4\delta]$, and there are $\mu^\pm\in\cM_{\rm erg}(f)$ such that  
\[	
	\chi^\c(\mu^\pm)=\alpha\pm3\delta
	\quad\text{ and }\quad
	h_{\mu^\pm}(f)\ge h_{\rm top}(f,\cL(\alpha\pm3\delta))-\varepsilon,
\]
respectively. 

By Remark \ref{r.homorelated}, $\Per_{>0}(f)$ is an intersection class and hence we can invoke Lemma \ref{lem:approx1}. In particular, $M$ has the $(d^\uu+1)$-ergodic approximation property and for every $\widetilde\delta\in(0,\delta)$ there exist $f$-basic sets $\Xi^\pm\subset M$ in the $\widetilde\delta$-neighborhood of the support of $\mu^\pm$ with 
\begin{equation}\label{pff2}
h_{\rm top}(f,\Xi^\pm)\ge h_{\mu^\pm}(f)-\varepsilon
	~\text{ and } ~	|\chi^\c(\widetilde\mu^\pm)-\chi^\c(\mu^\pm)|<\widetilde\delta,\textrm{ for every $\widetilde\mu^\pm\in\cM_{\rm erg}(f|_{\Xi^\pm})$.}
\end{equation}
Denote by  $\mu^-_{\rm max}$  and  $\mu^+_{\rm max}$ the measures  of maximal entropy of  $f|_{\Xi^-}$ and  $f|_{\Xi^+}$ respectively. Then one has 
 $\chi^c(\mu^-_{\rm max})\in [\alpha-4\delta,\alpha-2\delta]$ and $\chi^c(  \mu^+_{\rm max}) \in[\alpha+2\delta,\alpha+4\delta].$  
 
 Recall that $\BM^1(M)$ is a $C^1$-open set (recall Remark \ref{r.defr.ourhypotheses}). 
Consider now a sufficiently small neighborhood $\cV$ of $f$ in $\BM^1(M)$ such that for every $g\in \cV$ the continuations   $\Xi^{g,\pm}$ of $\Xi^\pm$ are well defined. Note that   $h_{\rm top}(g,\Xi^{g,\pm})=h_{\rm top}(f,\Xi^\pm)$ and the measures of maximal entropy $\mu^{g,\pm}_{\rm max}$ of $g|_{\Xi^{g,\pm}}$ are the continuations of    $\mu^\pm_{\rm max}$. Moreover, as the exponents are given by integral of continuous functions and the corresponding bundle $E^\c$ varies continuously with $g$, their Lyapunov exponents also vary continuously. Thus for $\cV$ small, one has $\chi^c(\mu^{g,-}_{\rm max})\le \alpha-\delta$ and $\alpha+\delta\leq \chi^c(\mu^{g,+}_{\rm max}).$
Applying Theorem~\ref{theorem1} to $g\in\cV$, one gets the concavity and continuity of the function $\beta\in(0,\alpha_{\rm max}^g]\mapsto h_{\rm top}(g,\cL(\beta))$. As $\alpha\in \big(\chi^c(\mu^{g,-}_{\rm max}), \chi^c(\mu^{g,+}_{\rm max})\big)$, by concavity, one has 
\begin{align*}
	h_{\rm top}(g,\cL(\alpha))
	&\geq \min\big\{h_{\rm top}(g,\cL(\chi^c(\mu^{g,-}_{\rm max}))),h_{\rm top}(g,\cL(\chi^c(\mu^{g,+}_{\rm max})))\big\}
	\\
	&\geq \min\{h_{\mu^{g,-}_{\rm max}}(g), h_{\mu^{g,+}_{\rm max}}(g)\}
	\\
	&=\min\big\{h_{\rm top}(f,\Xi^-), h_{\rm top}(f,\Xi^+)\big\}\geq h_{\rm top}(f,\cL(\alpha))-3\e
	\quad\text{\small (by \eqref{pff1} and \eqref{pff2})}.
\end{align*} 

The proof for values in $(\alpha_{\rm min},0)$ is analogous and hence omitted. \qed

\section{Weak integrability and center distortion}\label{s:centerdistortion} 

In this section, we introduce two objects (disks and strips) that will be used when introducing blender-horseshoes in Section \ref{ss.bh}. We also prove a distortion result along the center direction.
Throughout this section, we assume that $f\in \PH_{\c =1}^1(M)$ and consider its partially hyperbolic splitting
$E^\ss\oplus E^\c \oplus E^\uu$. Let $E^\cs\eqdef E^\ss\oplus E^\c$ and $E^\cu\eqdef E^\c \oplus E^\uu$. Recall that the strong stable and strong unstable bundles give rise to the strong stable and strong unstable foliations $\cW^\ss$ and $\cW^\uu$, respectively. Given $r>0$ and $\star\in\{\ss, \uu\}$, denote by $\cW^\star(x,r)$ the ball centered at $x$ and of radius $r$ (relative to the induced distance on $\cW^\star(x)$). 

The case for the  center bundle is delicate.  As $E^\c$ is one-dimensional, and hence defines a continuous vector field, by Peano's theorem, at each point, there are $C^1$-curves tangent to $E^\c$.
Note that these center curves \emph{a priori} do not form a foliation, see for instance \cite{BonGogHamPot:20}. The cases of the (higher-dimensional) bundles $E^\cu$ and $E^\cs$ are even more delicate. However, we have the following. 

\begin{remark}[Weak integrability]\label{remweaint}
Assume $f\in\PH^1_{\c=1}(M)$. By \cite{BriBurIva:04}, the bundles $E^\cu$ and $E^\cs$ are \emph{weakly integrable} in the sense that at each point $x\in M$ there exist two immersed complete $C^1$ sub-manifolds $\cW_{\rm loc}^\cu(x)$ and $\cW_{\rm loc}^\cs(x)$ that contain $x$ and are everywhere tangent to $E^\cu$ and $E^\cs$, respectively. Moreover, there exists $\varepsilon_0>0$ such that for every $C^1$-curve $\gamma$ tangent to $E^\c$ and with length at most $\varepsilon_0$, the sets
\begin{equation}\label{defWgamma}
	\cW^\uu(\gamma,\varepsilon_0)\eqdef\bigcup_{z\in\gamma}\cW^\uu(z,\varepsilon_0) \quad\textrm{~and~}  \quad
	\cW^\ss(\gamma,\varepsilon_0)\eqdef\bigcup_{z\in\gamma}\cW^\ss(z,\varepsilon_0) 
\end{equation}
are $C^1$-submanifolds tangent to $E^\cu$ and $E^\cs$, respectively (\cite[proof of Proposition 3.4]{BriBurIva:04}). In particular, for every basic set of contracting type $\Lambda$, there is $r>0$ such that for every $x\in\Lambda$ every center curve centered at $x$ and of radius $r$ is contained in the stable manifold of $x$ of inner radius $r$,  denoted as $\cW^\s(x,r)$. Analogously for basic sets of expanding type.
\end{remark}

The objects in \eqref{defWgamma} are particular cases of the ones in the following definition.

\begin{definition}[Disks]\label{defstrips}
Given $\star\in\{\ss,\cs,\c,\cu, \uu\}$, a \emph{$\star$-disk} $\Delta$ is a (connected) $C^1$-sub\-mani\-fold tangent to $E^{\star}$, which is diffeomorphic to a disk of dimension $\dim(E^\star)$. The {\em{inner radius}} of $\Delta$, denoted by $\inrad (\Delta)$, is greater than $\delta$ if $\Delta$ contains a ball of radius $\delta$ (in the corresponding metric of $\Delta$ as a submanifold). When $\star=\c$, we simply speak of \emph{center curves}. 
\end{definition}

\begin{definition}[Strips and complete center curves]\label{defstripsbis}
A \emph{$\u$-strip} $S$ is a $\cu$-disk  that is foliated by $\uu$-disks. More precisely, there is a center curve $\gamma$ such that 
\[
	S
	= \bigcup_{x\in \gamma}\Delta^\uu(x),
\]
where $\Delta^\uu(x)$ is a $\uu$-disk containing $x$ in its interior. The \emph{$\uu$-boundary} of $S$ is the set 
\[
	\partial^\uu S
	\eqdef  \bigcup_{x\in \partial\gamma}\Delta^\uu(x).
\]
A center curve $\beta$ contained in $S$ is \emph{$(\c,S)$-complete} if its end-points belong to $\partial^\uu S$. Analogously, we define an \emph{$\s$-strip $S'$}, its \emph{$\ss$-boundary}, and being \emph{$(\c,S')$-complete}. The \emph{inner center width} of a ($\s$- or $\u$-)strip $S$ is the infimum of the lengths of the $(\c,S)$-complete curves of $S$, denoted by $\mathrm{w}^\c(S)$. 
\end{definition}

The next lemma is an immediate consequence of the uniform expansion/contraction along un-/stable foliations (see, for example, \cite[Lemma 2.4]{BocBonDia:16}).

\begin{lemma}[Center distortion control-time]\label{lemsomecount}
	For every $\delta>0$ and $\varepsilon>0$ there exists $\ell^\flat=\ell^\flat(\delta,\varepsilon)\in\bN$ such that for every $i\geq \ell^\flat$, every $x\in M$ and every $y\in f^{-i}(\cW^\uu(f^i(x),\delta))$, one has 
\[
	\big|\log\|D^\c f^{i}(x)\|-\log\|D^\c f^{i}(y)\|\big|<i\,\varepsilon.
\]
\end{lemma}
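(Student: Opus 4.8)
\textbf{Proof plan for Lemma~\ref{lemsomecount}.} The plan is to exploit the fact that on a $\uu$-disk the dynamics contracts exponentially under backward iteration, so two points lying in the same local strong unstable plaque are forced to be exponentially close for most of the relevant time span; then a telescoping of the cocycle $\log\|D^\c f^i\|$ along the orbit together with the (uniform) continuity of $\varphi^\c$ turns this closeness into the claimed linear-in-$i$ bound. Concretely, fix $\delta>0$ and $\varepsilon>0$. Since $\varphi^\c=\log\|D^\c f\|$ is continuous on the compact manifold $M$, it is uniformly continuous; pick $\rho>0$ so that $d(p,q)<\rho$ implies $|\varphi^\c(p)-\varphi^\c(q)|<\varepsilon/2$. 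By partial hyperbolicity there are constants $C\ge 1$ and $\lambda\in(0,1)$ with $d(f^{-k}(a),f^{-k}(b))\le C\lambda^{k}\,d(a,b)$ whenever $a,b$ lie in a common local strong unstable leaf of radius $\le\delta$.

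First I would write, for $x\in M$ and $y\in f^{-i}(\cW^\uu(f^i(x),\delta))$, the telescoping identity
\[
	\log\|D^\c f^{i}(x)\|-\log\|D^\c f^{i}(y)\|
	= \sum_{k=0}^{i-1}\big(\varphi^\c(f^{k}(x))-\varphi^\c(f^{k}(y))\big).
\]
Set $x'=f^i(x)$ and $y'=f^i(y)$, so that $y'\in\cW^\uu(x',\delta)$ and $f^k(x)=f^{-(i-k)}(x')$, $f^k(y)=f^{-(i-k)}(y')$ lie in a common local strong unstable leaf; hence
\[
	d\big(f^{k}(x),f^{k}(y)\big)\le C\lambda^{\,i-k}\,d(x',y')\le C\lambda^{\,i-k}\delta.
\]
Choose $\ell^\flat=\ell^\flat(\delta,\varepsilon)\in\bN$ large enough that $C\lambda^{j}\delta<\rho$ for all $j\ge\ell^\flat$, and also $2\,\ell^\flat\,\|\varphi^\c\|_\infty < (\varepsilon/2)\,\ell^\flat$, i.e. $\|\varphi^\c\|_\infty<\varepsilon/4$ — \emph{this last condition cannot be arranged by enlarging $\ell^\flat$}, so instead I would split the sum and bound the short initial part by the length of the orbit segment, not by a uniform constant. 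Precisely, writing $N=i-\ell^\flat$ one has $d(f^k(x),f^k(y))<\rho$ for all $0\le k\le N-1$, so each of those $N$ terms is $<\varepsilon/2$, contributing at most $N\varepsilon/2\le i\varepsilon/2$; for the remaining $\ell^\flat$ terms (with $N\le k\le i-1$) one uses the trivial bound $|\varphi^\c(f^k(x))-\varphi^\c(f^k(y))|\le 2\|\varphi^\c\|_\infty$, contributing at most $2\ell^\flat\|\varphi^\c\|_\infty$. Since we may additionally enlarge $\ell^\flat$ — here the dependence is on $\varepsilon$ alone — so that $2\ell^\flat\|\varphi^\c\|_\infty < \tfrac{\varepsilon}{2}\,\ell^\flat$ is replaced by the workable demand that, for $i\ge\ell^\flat$, the fixed quantity $2\ell^\flat\|\varphi^\c\|_\infty$ be at most $i\,\varepsilon/2$ (which holds once $i\ge \ell^\flat\ge 4\|\varphi^\c\|_\infty/\varepsilon$), the two contributions together give
\[
	\big|\log\|D^\c f^{i}(x)\|-\log\|D^\c f^{i}(y)\|\big|
	< \frac{i\,\varepsilon}{2}+\frac{i\,\varepsilon}{2}=i\,\varepsilon,
\]
for every $i\ge\ell^\flat$, as claimed.

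The only mildly delicate point — and the one I would be careful about — is the bookkeeping in the choice of $\ell^\flat$: it must simultaneously absorb the short ``bad'' block of $\ell^\flat$ orbit steps (handled by the crude sup-norm bound, whose total cost is a \emph{constant} that becomes negligible compared with $i\varepsilon$ once $i$ is large) and guarantee that outside that block the points are within the uniform-continuity radius $\rho$ (handled by the exponential backward contraction on strong unstable leaves). Since all the ingredients — compactness of $M$, uniform continuity of $\varphi^\c$, and uniform exponential contraction along $\cW^\uu$ under $f^{-1}$ — are standard features of $f\in\PH^1_{\c=1}(M)$, and since the statement is explicitly attributed to \cite[Lemma 2.4]{BocBonDia:16}, I would simply record this telescoping-plus-uniform-continuity argument and refer to that source for the routine details.
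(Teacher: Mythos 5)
The overall strategy — telescope $\log\|D^\c f^i\|$ along the orbit, use uniform continuity of $\varphi^\c$ for the many indices where the two orbits are provably close, and absorb the short terminal block with the crude sup-norm bound — is exactly the paper's. But there is a real error in your choice of the split point, and it is precisely the kind of bookkeeping slip you flag yourself as the "mildly delicate point".

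You let $\ell^\flat$ play two roles simultaneously: it is both (a) the contraction threshold (the smallest $j$ with $C\lambda^j\delta<\rho$) and (b) the lower bound on $i$. You then split the sum at $N=i-\ell^\flat$, so the terminal "bad" block has length exactly $\ell^\flat$ and contributes $2\ell^\flat\|\varphi^\c\|_\infty$. Asking that this be $\le i\varepsilon/2$ for every $i\ge\ell^\flat$ forces, at $i=\ell^\flat$, the inequality $2\ell^\flat\|\varphi^\c\|_\infty\le\ell^\flat\varepsilon/2$, i.e. $\|\varphi^\c\|_\infty\le\varepsilon/4$ — which is false whenever $\varepsilon$ is small, and is unaffected by how large you take $\ell^\flat$. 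Your parenthetical "(which holds once $i\ge\ell^\flat\ge4\|\varphi^\c\|_\infty/\varepsilon$)" is simply incorrect: that lower bound on $\ell^\flat$ does nothing to close the gap at $i=\ell^\flat$, because both sides scale linearly in $\ell^\flat$. The error is that you enlarge $\ell^\flat$ and then also enlarge the bad block to match.

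The fix is the one the paper uses: keep two separate quantities. First fix $n_1\in\bN$ — depending only on $\delta$, $\rho$, and the backward contraction rate along $\cW^\uu$ — so that $d(f^{-j}(x'),f^{-j}(y'))<\rho$ for all $j\ge n_1$ whenever $y'\in\cW^\uu(x',\delta)$. Split the telescoping sum at $i-n_1$, so the bad block has \emph{fixed} length $n_1$ and contributes a \emph{fixed} constant $2n_1\|\varphi^\c\|_\infty$, independent of $\ell^\flat$. Only then choose $\ell^\flat\ge n_1$ large enough that $2n_1\|\varphi^\c\|_\infty<\tfrac{\varepsilon}{2}\ell^\flat$; for $i\ge\ell^\flat$ the bad block contributes less than $\tfrac{\varepsilon}{2}\ell^\flat\le\tfrac{\varepsilon}{2}i$, the good block contributes less than $(i-n_1)\tfrac{\varepsilon}{2}$, and the bound $i\varepsilon$ follows. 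With that repair your proof coincides with the paper's.
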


\begin{proof}
Given $\varepsilon>0$, by the uniform continuity of $\log\|D^\c f\|$, there exists $\eta>0$ such that 
\begin{equation}\label{more1}
	\big|\log\|D^{\c}f(x)\|-\log\|D^{\c}f(y)\|\big|<\frac\varepsilon2,\quad
\text{for every $x,y\in M$ with $d(x,y)<\eta$.}
\end{equation}
As $f^{-1}$ is uniformly contracting along the foliation $\cW^\uu$, there exists $n_1\in\bN$ such that
\[
	d(f^{-j}(x), f^{-j}(y))<\eta,
	\quad\textrm{for every $x\in M$, $y\in\cW^\uu(x,\delta)$, and  $j> n_1$.}  
\] 
Take $\ell^\flat\in\mathbb{N}$ such that 
\begin{equation}\label{more2}
	\frac{2n_1}{\ell^\flat}\max|\log\|D^\c f\||<\frac\e2.
\end{equation}
 Then for every $i\geq \ell^\flat$ and $x\in M$, $y\in f^{-i}(\cW^\uu(f^i(x),\delta))$, one has $f^j(y)\in\cW^\uu(f^j(x),\eta)$ for every $0\leq j<i-n_1$, and thus 
\[\begin{split}
	\Big|\log\frac{\|D^\c f^{i}(x)\|}{\|D^\c f^{i}(y)\|}\Big|
	&=\Big|\sum_{j=0}^{i-n_1-1}\log\frac{\|D^\c f(f^{j}(x))\|}{\|D^\c f(f^{j}(y))\|}
		+\sum_{j=i-n_1}^{i-1}\log\frac{\|D^\c f(f^{j}(x))\|}{\|D^\c f(f^{j}(y))\|}\Big|\\
	\text{\small{(by \eqref{more1})}}\quad	
	&\le (i-n_1)\frac\varepsilon2+2n_1\max|\log\|D^\c f\||\\	
	\text{\small{(by \eqref{more2})}}\quad	
	&< i\frac\varepsilon2+i\frac{2n_1}{\ell^\flat}\max|\log\|D^\c f\||
	<i\varepsilon,
\end{split}\]
proving the lemma.
\end{proof}

\section{Blender-horseshoes and minimality}\label{ss.bh}

In this section, we collect results about blender-horseshoes. We refrain from providing the complete definition and prefer to focus on their properties relevant here. The assumption $f\in \PH_{\c=1}^1(M)$ allows a less technical approach omitting cone fields. We will follow closely \cite[Section 2]{DiaGelSan:20} where blender-horseshoes are introduced in a context similar to the one here (partial hyperbolicity with one-dimensional center bundle). In Section \ref{ssdefblendhor}, we sketch their definition and properties. In Section \ref{secInteraction}, we discuss the interaction of blender-horseshoes of different types assuming minimality. Finally, in Section \ref{secauxstrip}, we investigate heteroclinic connections between strips.

\subsection{Unstable blender-horseshoes}\label{ssdefblendhor}

An \emph{unstable blender-horseshoe} $\blender^+=(\Lambda, \mathbf{C}, f)$  is a triple of a diffeomorphism $f$, a compact set $\mathbf{C}$, and a set $\Lambda\subset \mathrm{int} (\mathbf{C})$ which is the maximal invariant set of $f$ in $\mathbf{C}$ and which is hyperbolic, transitive, and has $\u$-index $\dim (E^\uu)+1$ (i.e., the center bundle $E^\c$ is an unstable direction of $\Lambda$).
The complete definition of an \emph{unstable blender-horseshoe} involves furthermore conditions (BH1)--(BH6) in \cite[Section 3.2]{BonDia:12} that we sketch below (in \cite{DiaGelSan:20} additional properties (BH7)--(BH9) are stated which follow from the former ones and from the constructions in \cite{BonDia:12}). 

\begin{remark}[Conditions (BH1)--(BH6)]
Conditions (BH1) and (BH3) state the existence of a Markov partition 
consisting of two disjoint ``sub-rectangles'' $\mathbf{C}_{\mathbb{A}}$  and  $\mathbf{C}_{\mathbb{B}}$  of $\mathbf{C}$ such that 
\[
\Lambda
= \bigcap_{i\in \mathbb{Z}} 
f^i(\mathbf{C}_{\mathbb{A}} \cup  \mathbf{C}_{\mathbb{B}} )
\]	  
and $f|_\Lambda$ is topologically conjugate to the full shift in two symbols (the symbols $\mathbb{A}$ and $\mathbb{B}$). In particular, $\Lambda$ contains two fixed points $P\in \mathbf{C}_{\mathbb{A}}$ and $Q\in \mathbf{C}_{\mathbb{B}}$, called the \emph{distinguished saddles} of the blender. 
Given $x\in \Lambda$, we denote by $\cW^\ss_{\mathbf{C}} (x)$ the connected component of $\cW^\ss(x) \cap \mathbf{C}$ containing $x$, analogously for $\cW^\uu_{\mathbf{C}}(x)$.  

The general definition of an unstable blender-horseshoe also involves cone fields. As $f\in  \PH_{\c =1}^1(M)$, condition (BH2) just means that $Df$ uniformly expands the vectors tangent to $E^\cu(x)$ for
$x\in\mathbf{C}_{\mathbb{A}}\cup\mathbf{C}_{\mathbb{B}}$.

The remaining conditions (BH4)--(BH6) involve the definitions of complete disks and strips and their iterations. A \emph{complete $\uu$-disk} $\Delta^\uu$ is a sufficiently large $\uu$-disk ``stretching over the entire set  $\mathbf{C}$''. Complete $\uu$-disks disjoint from $\cW^\ss_{\mathbf{C}}(P)$ are of two types:  they are either \emph{at the right} or \emph{at the left} of $\cW^\ss_{\mathbf{C}}(P)$. Analogously, for complete $\uu$-disks disjoint from  $\cW^\ss_{\mathbf{C}}(Q)$.
A complete $\uu$-disk is \emph{in-between} if it is disjoint from $\cW^\ss_{\mathbf{C}}(P)\cup \cW^\ss_{\mathbf{C}}(Q)$ and is at the right of $\cW^\ss_{\mathbf{C}}(P)$ and at the left of $\cW^\ss_{\mathbf{C}}(Q)$. This completes the description of the condition (BH4).
The key property of an unstable blender-horseshoe is that the image by $f$ of any complete $\uu$-disk in-between contains a complete $\uu$-disk in-between, see conditions (BH5)--(BH6). 
\end{remark}
 
A triple $\blender^-=(\Lambda, \mathbf{C}, f)$ is a \emph{stable blender-horseshoe} if $(\Lambda, \mathbf{C}, f^{-1})$ is an unstable blender-horseshoe.

The following is a consequence of properties (BH1)--(BH6).

\begin{lemma}[Iterations of complete $\uu$-disks in-between]\label{l.iterations}
Consider an unstable blender-horseshoe $\blender^+=(\Lambda, \mathbf{C},f)$ and a complete $\uu$-disk in-between $\Delta^\uu$. For every $n\ge 0$ there is $\Delta^{\uu}_n\subset \Delta^\uu$ such that
\begin{itemize}[leftmargin=0.8cm ]
\item $f^i (\Delta^\uu_n)$ is contained in some complete $\uu$-disk in-between for each $i=0, \dots, n$,
\item $f^n (\Delta^\uu_n)$ is a complete $\uu$-disk in-between.
\end{itemize}
\end{lemma}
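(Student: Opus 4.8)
The plan is a straightforward induction on $n$ using the key property of an unstable blender-horseshoe recorded in conditions (BH5)--(BH6): the $f$-image of any complete $\uu$-disk in-between \emph{contains} a complete $\uu$-disk in-between. The base case $n=0$ is trivial: set $\Delta^\uu_0\eqdef\Delta^\uu$, which is already a complete $\uu$-disk in-between, so both bullet points hold vacuously. Throughout, I would use that $f$ is a diffeomorphism and that $Df^{\pm1}$ preserves the bundle $E^\uu$, so that $f^{\pm1}$ sends $\uu$-disks to $\uu$-disks; in particular each $f(\Delta)$ appearing below is again a $\uu$-disk, which is what makes (BH5)--(BH6) applicable at each step.

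For $n\ge1$, the first step is to produce a forward chain of complete $\uu$-disks in-between. Since $\Delta^\uu$ is a complete $\uu$-disk in-between, $f(\Delta^\uu)$ contains a complete $\uu$-disk in-between $D_1$; since $D_1$ is one, $f(D_1)$ contains a complete $\uu$-disk in-between $D_2$; iterating, we obtain complete $\uu$-disks in-between $D_1,\dots,D_n$ with $D_1\subset f(\Delta^\uu)$ and $D_{i+1}\subset f(D_i)$ for $i=1,\dots,n-1$.

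The second step is to pull the last disk back. From $D_{i+1}\subset f(D_i)$ one gets $f^{-1}(D_{i+1})\subset D_i$, hence inductively $f^{-k}(D_n)\subset D_{n-k}$ for $k=1,\dots,n-1$, and finally $f^{-(n-1)}(D_n)\subset D_1\subset f(\Delta^\uu)$, so that $\Delta^\uu_n\eqdef f^{-n}(D_n)\subset\Delta^\uu$. As a diffeomorphic image of the $\uu$-disk $D_n$ under $f^{-n}$, the set $\Delta^\uu_n$ is again a $\uu$-disk, and it is contained in $\Delta^\uu$ by the computation just made. For $0\le i\le n$ we have $f^i(\Delta^\uu_n)=f^{-(n-i)}(D_n)$: for $i=n$ this is exactly $D_n$, a complete $\uu$-disk in-between (second bullet); for $1\le i\le n-1$ it is contained in $D_i$, a complete $\uu$-disk in-between; and for $i=0$ it is contained in $\Delta^\uu$, a complete $\uu$-disk in-between. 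This establishes the first bullet as well, completing the proof.

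There is no serious obstacle: the statement is essentially a bookkeeping consequence of (BH1)--(BH6). The only points needing a little care are (i) keeping the successively pulled-back disks inside $\Delta^\uu$, which is exactly guaranteed by the nesting $D_{i+1}\subset f(D_i)$ produced in the first step, and (ii) checking that $f^{-n}(D_n)$ is again a genuine $\uu$-disk and not merely some submanifold, which is immediate from $Df$-invariance of $E^\uu$ and the invertibility of $f$.
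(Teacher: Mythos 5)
Your proof is correct and is exactly the argument the paper implicitly has in mind when it presents Lemma~\ref{l.iterations} without proof as ``a consequence of properties (BH1)--(BH6)'': iterate the key property that $f$ maps any complete $\uu$-disk in-between to a set containing another, then pull back. The bookkeeping is clean and the two points you flag as needing care (nesting under pullback, and $f^{-n}(D_n)$ remaining a $\uu$-disk via $Df$-invariance of $E^\uu$) are indeed the right things to check.
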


Given a blender $\blender^+=(\Lambda, \mathbf{C},f)$, we now introduce complete $\u$-strips. A $\u$-strip $S$ is \emph{complete} if it is foliated by complete $\uu$-disks. A complete $\u$-strip is \emph{in-between} if it is foliated by complete $\uu$-disks in-between. The next lemma states the main property of iterations of $\u$-strips. We say that a complete $\u$-strip is \emph{$(\c,\blender^+)$-complete} if it intersects $\cW^\ss_{\mathbf{C}}(P)$ and $\cW^\ss_{\mathbf{C}}(Q)$, where $P$ and $Q$ are the distinguished saddles of the blender $\blender^+$. Note that there is $\mathrm{w}_{\blender^+}>0$ such that every $(\c,\blender^+)$-complete $\u$-strip $S$ has inner center width at least $\mathrm{w}_{\blender^+}$, that is, satisfies $\mathrm{w}^\c(S)\ge \mathrm{w}_{\blender^+}$.
 
 The following lemma is essentially \cite[Proposition 2.3]{DiaGelSan:20} reformulating \cite[Lemma 1.7]{BonDia:96}.

\begin{lemma}[Iterations of $\u$-strips in-between]\label{l.iterationsstrip}
Consider an unstable blender-horseshoe $\blender^+$. There are $r>0$ and a function $M_{\b} \colon (0,r)\to \mathbb{N}$ such that for every $\delta \in (0,r)$ and every complete $\u$-strip $S$ in-between with inner center width $\mathrm{w}^\c(S) \ge \delta$ the following holds:  for every $m \ge M_\b (\delta)$ there is $S_m \subset S$ such that
\begin{itemize}[leftmargin=0.8cm ]
\item $f^i(S_m)\subset \mathbf{C}$ for every $i=0,\dots, m$,\\[-0.3cm]
\item $f^m(S_m)$ is a $(\c,\blender^+)$-complete $\u$-strip.
\end{itemize}
\end{lemma}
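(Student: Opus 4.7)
The plan is to combine the fiberwise blender mechanism of Lemma \ref{l.iterations} with the uniform center expansion inside $\mathbf{C}$, iterating until the inner center width of the iterated sub-strip exceeds a threshold that forces it to cross both stable manifolds of the distinguished saddles. First I would fix $\lambda>1$ as a uniform lower bound for $\|Df|_{E^\c(x)}\|$ on $\mathbf{C}_\mathbb{A}\cup\mathbf{C}_\mathbb{B}$ (available from (BH2), since $E^\c$ is part of the expanding bundle of $\Lambda$), and let $L>0$ denote a uniform upper bound for the length of a center curve lying in the in-between region of $\mathbf{C}$. Then any complete $\u$-strip of inner center width exceeding $L$ must cross both $\cW^\ss_\mathbf{C}(P)$ and $\cW^\ss_\mathbf{C}(Q)$ and is therefore $(\c,\blender^+)$-complete. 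Accordingly, I would choose $r$ slightly larger than $L$ and define $M_\b(\delta)$ as the smallest integer $m$ with $\lambda^m\delta>r$, plus a bounded buffer to absorb geometric losses during the induction below.

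The core argument is an induction producing a nested sequence of sub-$\u$-strips $S=S^{(0)}\supset S^{(1)}\supset\cdots\supset S^{(m)}$ such that $f^i(S^{(i)})$ is a complete in-between $\u$-strip contained in $\mathbf{C}$, with inner center width at least $\min\{\lambda^i\mathrm{w}^\c(S),r\}$ (up to the buffer). The induction step applies (BH5)--(BH6) fiberwise: writing $f^i(S^{(i)})=\bigcup_{x\in\gamma_i}\Delta^\uu(x)$ with $\gamma_i$ a parameterizing center curve, for each $x\in\gamma_i$ the image $f(\Delta^\uu(x))$ contains a complete in-between $\uu$-disk lying entirely in, say, $\mathbf{C}_\mathbb{A}$. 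Selecting this $\mathbb{A}$-branch consistently across $x\in\gamma_i$ (using the Markov structure (BH1), (BH3)) produces the next sub-strip $S^{(i+1)}\subset S^{(i)}$ whose image $f^{i+1}(S^{(i+1)})$ is a complete in-between $\u$-strip parameterized by a sub-curve of $f(\gamma_i)$ of length at least $\lambda\,\length(\gamma_i)$ minus a bounded loss for the portion of $f(\gamma_i)$ that falls outside $\mathbf{C}_\mathbb{A}$. After $m\ge M_\b(\delta)$ iterations, $\mathrm{w}^\c(f^m(S^{(m)}))>L$, and setting $S_m=S^{(m)}$ yields the desired conclusion.

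The main technical obstacle I anticipate is the consistent fiberwise selection of sub-disks: near the endpoints of $\gamma_i$ the selected in-between $\uu$-disk may degenerate or cross into $\mathbf{C}_\mathbb{B}$, forcing $\gamma_i$ to be trimmed. I would handle this via a compactness argument on the space of complete in-between $\uu$-disks (parameterized by their intersection with a transversal center curve), showing that the trimmed portion at each step has uniformly bounded length independent of $i$. These bounded losses are then absorbed by the buffer in the definition of $M_\b(\delta)$, producing the required growth rate for the inner center width. This approach essentially follows the scheme of \cite[Proposition 2.3]{DiaGelSan:20}, itself a reformulation of \cite[Lemma 1.7]{BonDia:96}, the key observation being that the blender's overlapping Markov structure preserves the in-between property fiberwise with only a bounded boundary loss per step.
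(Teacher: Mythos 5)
The paper itself gives no proof of this lemma: it is stated as ``essentially \cite[Proposition 2.3]{DiaGelSan:20}, reformulating \cite[Lemma 1.7]{BonDia:96}'', so there is no internal argument to compare with. Your sketch reconstructs exactly the mechanism behind those references: uniform expansion of $E^\c$ on $\mathbf{C}_{\mathbb{A}}\cup\mathbf{C}_{\mathbb{B}}$ from (BH2), the covering property (BH5)--(BH6) (Lemma~\ref{l.iterations}) to keep extracting complete in-between sub-strips, and the bounded center extent of $\mathbf{C}$ as the stopping rule, with $M_\b(\delta)\sim \log(1/\delta)/\log\lambda$. That outline is the right one.

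Two steps, however, do not work as written. First, the stopping criterion: inner center width larger than $L$ (the maximal length of an in-between center curve) does not force a complete $\u$-strip to meet \emph{both} $\cW^\ss_{\mathbf{C}}(P)$ and $\cW^\ss_{\mathbf{C}}(Q)$; a complete center curve of the strip that meets the in-between region and has length greater than $L$ may cross only one of the two stable manifolds and then run toward the boundary of $\mathbf{C}$ on that side. You need either a threshold comparable to the full center extent of $\mathbf{C}$, or, once the strip crosses one distinguished stable manifold (say $\cW^\ss_{\mathbf{C}}(P)$), finitely many further iterates inside $\mathbf{C}_{\mathbb{A}}$ (where $\cW^\ss(P)$ is invariant and the center width keeps growing, the image of the rectangle stretching over the whole center extent of $\mathbf{C}$) until the strip also crosses $\cW^\ss_{\mathbf{C}}(Q)$. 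Second, the bookkeeping ``uniformly bounded trimming loss per step, absorbed by a buffer in $M_\b(\delta)$'' is not sound: an additive loss of fixed size per step destroys the multiplicative growth precisely when the width is small, so for small $\delta$ the first steps of your induction would already fail, and no increase in the number of iterates can repair that. What the blender conditions actually provide --- and what the cited proofs use --- is a loss-free dichotomy at each step: as long as the current sub-strip is strictly in-between, its image contains a complete in-between sub-strip whose inner center width is at least $\lambda$ times the previous one (fiberwise via Lemma~\ref{l.iterations}, the branch dictated by the Markov structure), and the first time this fails the image already meets one of the distinguished stable manifolds, at which point the amended stopping argument concludes. With these two corrections your proof is a faithful reconstruction of \cite[Lemma 1.7]{BonDia:96} and \cite[Proposition 2.3]{DiaGelSan:20}.
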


\subsection{Interaction between minimality and blender-horseshoes}\label{secInteraction}

 In this section, we additionally require that $f\in\BM^1(M)$, that is, $f\in\PH^1_{\c=1}(M)$, both strong foliations are minimal, and there are unstable and stable blender-horseshoes. The following is an immediate consequence of the minimality of the strong stable and the strong unstable foliations, the compactness of $M$, and the fact that $f$ uniformly expands (contracts) the vectors in $E^\uu$ (in $E^\ss$). 

\begin{lemma}\label{lem:minfol}
	Let $f\in \BM^1(M)$. For every $\delta>0$,  there is $\ell_0= \ell_0(\delta)\in\bN$ such that 	for every $\ell\geq \ell_0$  and for every $\uu$-disk $\Delta^{\uu}$, $\ss$-disk $\Delta^{\ss}$, $\cs$-disk $\Delta^{\cs}$, and $\cu$-disk $\Delta^{\cu}$, with
	\[
	\min\big\{\inrad (\Delta^\uu),
	\inrad (\Delta^\ss),
	\inrad (\Delta^\cs),
	\inrad (\Delta^\cu)\big\}>\delta,
	\]
it holds
	\[
	f^\ell(\Delta^\uu)\pitchfork \Delta^\cs\ne\emptyset,
	\quad
	f^{-\ell}(\Delta^\ss)\pitchfork \Delta^\cu \ne\emptyset,
	\quad\mbox{and}\quad
	f^{-\ell}(\Delta^\cu)\pitchfork \Delta^\cs \ne\emptyset,
	\]	 
	where the latter contains center curves.
\end{lemma}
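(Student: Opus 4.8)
The plan is to extract from minimality a single uniform ``large strong leaves cross small center-type disks'' statement and then push it around with uniform hyperbolicity and the $f\leftrightarrow f^{-1}$ symmetry.

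\textbf{Step 1: a uniform crossing statement.} First I would establish: for every $\delta>0$ there is $R_0=R_0(\delta)>0$ such that $\cW^\uu(z,R_0)$ has nonempty (automatically transverse) intersection with every $\cs$-disk of inner radius $>\delta$, for all $z\in M$; and symmetrically, $\cW^\ss(z,R_0)$ meets every $\cu$-disk of inner radius $>\delta$. This follows from a routine compactness argument combining: minimality of $\cW^\uu$ (every leaf is dense); the local product structure coming from the uniform transversality of $E^\uu$ and $E^\cs$ (so a strong unstable leaf passing near the center of a $\cs$-disk actually crosses it); continuity of the foliation $\cW^\uu$ \cite{HirPugShu:77} and of the splitting $E^\ss\oplus E^\c\oplus E^\uu$; and compactness of $M$, to make $R_0$ uniform — here one uses that a $\cs$-disk of inner radius $>\delta$ has uniformly controlled $C^1$-geometry on a ball of definite radius about its center (by continuity of the splitting), so one may pass to limits. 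Concretely, if $R_0$ failed to exist there would be $z_n$ and $\cs$-disks $D_n$ with $\inrad(D_n)>\delta$ and $\cW^\uu(z_n,n)\cap D_n=\emptyset$; passing to subsequences $z_n\to z_\infty$ and $D_n\to D_\infty$ (a $\cs$-disk of positive inner radius), minimality produces a point of $\cW^\uu(z_\infty)$ close to the center of $D_\infty$ whose plaque crosses $D_\infty$ transversally, and this transverse crossing survives the $C^1$-small perturbation to $\cW^\uu(z_n)$ and $D_n$, contradicting $\cW^\uu(z_n,n)\cap D_n=\emptyset$ for $n$ large. The symmetric statement is the same argument applied to $f^{-1}$, whose strong unstable foliation is $\cW^\ss$ (minimal by hypothesis).

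\textbf{Step 2: the first two intersections.} Fix $\sigma>1$ with $\|Df^{-1}|_{E^\uu}\|\le\sigma^{-1}$ everywhere. A $\uu$-disk is an open piece of a $\cW^\uu$-leaf, and $\inrad(\Delta^\uu)>\delta$ gives a leaf ball $\cW^\uu(w,c\delta)\subset\Delta^\uu$ with $c>0$ uniform; since $f^{-\ell}$ contracts distances inside $\cW^\uu$-leaves by at least $\sigma^{-\ell}$, one gets $f^\ell(\Delta^\uu)\supseteq\cW^\uu(f^\ell(w),c\sigma^\ell\delta)$. Choosing $\ell_0$ with $c\sigma^{\ell_0}\delta\ge R_0(\delta)$, Step~1 gives $f^\ell(\Delta^\uu)\cap\Delta^\cs\ne\emptyset$ for all $\ell\ge\ell_0$, and the intersection is automatically transverse since $Tf^\ell(\Delta^\uu)=E^\uu$ and $E^\uu\oplus E^\cs=TM$. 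The second assertion $f^{-\ell}(\Delta^\ss)\pitchfork\Delta^\cu\ne\emptyset$ is the same argument run for $f^{-1}$.

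\textbf{Step 3: the center-disk intersection and the main obstacle.} Since $E^\uu\subset E^\cu$ is uniquely integrable, its plaques inside a $\cu$-disk saturate it, so $\inrad(\Delta^\cu)>\delta$ together with continuity of the splitting yields a $\uu$-disk $\hat\Delta^\uu\subset\Delta^\cu$ with $\inrad(\hat\Delta^\uu)\ge\delta_1=\delta_1(\delta)>0$. Applying the first assertion to $\hat\Delta^\uu$, for $\ell\ge\ell_0(\delta_1)$ there is a point of $f^\ell(\hat\Delta^\uu)\cap\Delta^\cs\subset f^\ell(\Delta^\cu)\cap\Delta^\cs$; at such a point $Tf^\ell(\Delta^\cu)=E^\cu$, $T\Delta^\cs=E^\cs$, $E^\cu+E^\cs=TM$ and $E^\cu\cap E^\cs=E^\c$, so $f^\ell(\Delta^\cu)\cap\Delta^\cs$ is a transverse $1$-dimensional submanifold tangent to the center bundle, i.e.\ it contains center curves (equivalently, running the argument for $f^{-1}$, $f^{-\ell}(\Delta^\cs)\pitchfork\Delta^\cu\ne\emptyset$). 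The one genuinely non-formal point is the uniform crossing statement of Step~1 — upgrading leafwise density to a bound uniform over all $z$ and all disks — which is exactly where compactness of $M$, continuity of the strong foliations, the local product structure (uniform transversality of the strong and center-type bundles), and the uniform geometry of $\cs$-/$\cu$-disks of fixed inner radius are used; everything else is bookkeeping with the hyperbolicity rates and the $f\leftrightarrow f^{-1}$ symmetry.
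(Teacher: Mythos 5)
The paper gives no proof of this lemma; it is merely declared an ``immediate consequence'' of minimality, compactness, and the uniform expansion/contraction along $E^\uu/E^\ss$, so you are supplying the argument the authors regard as routine, not taking a different route.

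Your Steps~1 and~2 are the natural argument and are correct. The uniform crossing statement is exactly the right key point: minimality plus compactness of $M$ plus the uniform transversality of $E^\uu$ and $E^\cs$ yield an $R_0(\delta)$ so that every leafwise ball $\cW^\uu(z,R_0)$ transversally meets every $\cs$-disk of inner radius $>\delta$; the compactness argument, passing to a $C^1$-limit of $\cs$-disks using the uniformly Lipschitz local geometry forced by continuity of $E^\cs$, is slightly delicate but goes through (an alternative phrasing: for every $\varepsilon$ there is $R(\varepsilon)$ such that every $\cW^\uu(z,R(\varepsilon))$ is $\varepsilon$-dense, again by minimality plus upper semicontinuity plus compactness, and then uniform transversality converts $\varepsilon$-proximity into transverse crossing). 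The uniform expansion rate then gives $\ell_0(\delta)$ for the first two displayed intersections.

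There is, however, a mismatch in Step~3 that you should be explicit about. What you actually prove is $f^{\ell}(\Delta^\cu)\pitchfork\Delta^\cs\ne\emptyset$ (your parenthetical $f^{-\ell}(\Delta^\cs)\pitchfork\Delta^\cu\ne\emptyset$ is indeed the same statement, just rewritten). But the lemma as written asserts $f^{-\ell}(\Delta^\cu)\pitchfork\Delta^\cs\ne\emptyset$, which is a genuinely different claim — it is equivalent to $\Delta^\cu\cap f^{\ell}(\Delta^\cs)\ne\emptyset$. The ``stretch the disk until it crosses'' scheme cannot reach that: $f^{-1}$ contracts $E^\uu$, so $f^{-\ell}(\Delta^\cu)$ loses its $\uu$-thickness as $\ell$ grows, and the center direction has no uniform sign, so nothing forces $f^{-\ell}(\Delta^\cu)$ to become large. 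I believe the third inequality in the paper is a typographical error and should read $f^{\ell}(\Delta^\cu)\pitchfork\Delta^\cs\ne\emptyset$ (or, equivalently, $f^{-\ell}(\Delta^\cs)\pitchfork\Delta^\cu\ne\emptyset$), which is the statement you prove and is the one consistent with the way the first two assertions are used: extract a $\uu$-disk (resp.\ $\ss$-disk) from the $\cu$-disk (resp.\ $\cs$-disk) — using that $E^\uu$ (resp.\ $E^\ss$) is uniquely integrable and so saturates any submanifold tangent to $E^\cu$ (resp.\ $E^\cs$) — and apply the corresponding earlier assertion. If the literal statement were intended, your proof would have a gap, but I do not believe there is a correct proof of the literal statement, nor that the authors intended it.
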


Although it is not required in the definition of an unstable blender-horseshoe, for further use we introduce $\ss$-disks and $\s$-strips which are complete and in-between (recall Definition \ref{defstrips}). Although topologically their definitions are analogous to their $\u$-counterparts, we point out that \emph{a priori} Lemmas \ref{l.iterations} and \ref{l.iterationsstrip} do not extend to these disks and strips.

\begin{remark}[Complete and in-between $\ss$-disks and $\s$-strips]\label{r.strips}
	Let  $\blender^+=(\Lambda, \mathbf{C},f)$ be an unstable blender-horseshoe.
	A $\ss$-disk $\Delta^\ss$ is \emph{complete} if it is a sufficiently large $\ss$-disk containing a point of $\mathbf{C}$.  There are two types of complete $\ss$-disks disjoint from $\cW^\uu_{\mathbf{C}}(P)$:  \emph{at the right} and \emph{at the left} of $\cW^\uu_{\mathbf{C}}(P)$. Analogously, for complete $\ss$-disks disjoint from  $\cW^\uu_{\mathbf{C}}(Q)$. Thus, we can speak of \emph{complete $\ss$-disks in-between}.
	Similarly as above, we can introduce \emph{complete $\s$-strips} and \emph{complete $\s$-strips in-between}.
	
	Note that if $S=S^\cu$ is a $(\c,\blender^+)$-complete $\u$-strip then for every complete $\s$-strip $S^\cs$  in-between the sets $S^\cu$ and $S^\cs$ intersect transversally along a $(\c, S^\cs)$-complete curve.
\end{remark}

\begin{lemma}\label{lem:insideblenders}
	Let $f\in \BM^1(M)$ and $\blender^+=(\Lambda,\mathbf C,g=f^k)$ be  an unstable blender-horseshoe. Then for every $r>0$, there is $N(r)\in\bN$, satisfying $N(r)\to\infty$ as $r\to0$, such that 
\begin{itemize}[leftmargin=0.8cm ]
\item[(1)] for every $\ss$-disk $\Delta^\ss$ of inner radius $r$ and $\uu$-disk $\Delta^\uu$ of inner radius $r$  and  for every $N\ge N(r)$,
\[
	f^{-N}(\Delta^\ss)	
	\quad\text{ and }\quad 
	f^N(\Delta^\uu)
\]	
contain a complete in-between $\ss$-disk and $\uu$-disk, respectively;
\item[(2)]  for every $N\ge N(r)$ there is $R=R(N)>0$ such that for every center curve $\gamma$ of length less than $R$,  the sets
\[
	f^{-N}\big(\cW^\ss(\gamma,r)\big)
	\quad\text{ and }\quad
	f^N\big(\cW^\uu(\gamma,r)\big)
\] 
contain a complete $\s$-strip $S^\cs$ in-between  and a complete $\u$-strip  $S^\cu$ in-between respectively, such that 
\[
	\partial^\ss(S^\cs)\subset \cW^\ss(\partial(f^{-N}(\gamma)))
		\quad\text{ and }\quad
	\partial^\uu(S^\cu)\subset \cW^\uu(\partial(f^N(\gamma)));
\]
\item[(3)] for every  complete $\u$-strip  $S^\cu$ in-between and every $m\ge M_\b(\mathrm{w}^\c(S^\cu))$ there is a compact subset $S^\cu_m\subset S^\cu$ such that
	\begin{itemize}
		\item[--] $g^i(S^\cu_m)\subset\mathbf C$ for every $ i=0,\ldots,m$,
		\item[--] $g^m(S^\cu_m)$ is a $(\c,\blender^+)$-complete $\u$-strip and therefore has $\mathrm{w}^\c\big(g^m(S^\cu_m)\big)\ge\mathrm{w}^\c(\blender^+)$.
	\end{itemize}
	Moreover, $g^m(S^\cu_m)$ transversely intersects $S^\cs$ along a center curve which is $(\c,S^\cs)$-complete.
	\end{itemize}
	
The analogous statement holds for a stable blender-horseshoe. 
\end{lemma}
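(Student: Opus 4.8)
The plan is to establish the three items in turn. Item (3) is essentially a repackaging of results already proved for blender‑horseshoes, so I would dispatch it first; items (1) and (2) are the substance, and both rest on combining the uniform hyperbolicity of the strong bundles with the minimality hypothesis in $\BM^1(M)$.

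\emph{Item (3).} Apply Lemma~\ref{l.iterationsstrip} to the map $g=f^k$ and the complete $\u$-strip $S^\cu$ in-between with $\delta=\mathrm{w}^\c(S^\cu)$: for every $m\ge M_\b(\delta)$ it yields $S^\cu_m\subset S^\cu$ with $g^i(S^\cu_m)\subset\mathbf{C}$ for $i=0,\dots,m$ and $g^m(S^\cu_m)$ a $(\c,\blender^+)$-complete $\u$-strip, and the uniform bound $\mathrm{w}^\c(g^m(S^\cu_m))\ge\mathrm{w}^\c(\blender^+)$ is the lower estimate on inner center widths of $(\c,\blender^+)$-complete $\u$-strips recorded just before that lemma. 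The ``moreover'' clause, that $g^m(S^\cu_m)$ meets $S^\cs$ transversally along a $(\c,S^\cs)$-complete curve, is precisely the intersection property of a $(\c,\blender^+)$-complete $\u$-strip with a complete in-between $\s$-strip stated in Remark~\ref{r.strips}. So nothing new is required for (3).

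\emph{Item (1).} First I would record a uniform form of minimality: since $\cW^\uu$ is minimal, every leaf is dense, so by the continuous dependence of strong unstable leaves on the base point and the compactness of $M$, for every $\eta>0$ there is $\tau(\eta)>0$ such that $\cW^\uu(x,\tau(\eta))$ is $\eta$-dense in $M$ for \emph{every} $x\in M$; symmetrically for $\cW^\ss$ with $f^{-1}$. Next, fix $\eta_0>0$ small with respect to the geometry of $\blender^+$ (a fixed fraction of $\mathrm{w}^\c(\blender^+)$ and of the separation between $\cW^\ss_{\mathbf{C}}(P)$ and $\cW^\ss_{\mathbf{C}}(Q)$), chosen so that any $\uu$-disk which is $\eta_0$-dense in $M$ contains a complete in-between $\uu$-sub-disk. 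Since $f$ uniformly expands $E^\uu$, say by a factor $\lambda>1$, and $f^N(\Delta^\uu)$ is again a $\uu$-disk, its inner radius is at least $\lambda^N r$; hence once $\lambda^N r\ge\tau(\eta_0)$ the disk $f^N(\Delta^\uu)$ is $\eta_0$-dense and, in particular, contains a complete in-between $\uu$-disk. I would then define $N(r)$ as the least such $N$; clearly $N(r)\to\infty$ as $r\to0$. The statement for $f^{-N}(\Delta^\ss)$ is the same argument run for $f^{-1}$, using minimality of $\cW^\ss$ and the notions of complete and in-between $\ss$-disks from Remark~\ref{r.strips}.

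\emph{Item (2).} We may assume $r<\varepsilon_0$ (otherwise replace $r$ by $\varepsilon_0/2$), with $\varepsilon_0$ as in Remark~\ref{remweaint}, so that for a center curve $\gamma$ of length less than $\varepsilon_0$ the set $\cW^\uu(\gamma,r)$ is a genuine $\u$-strip, foliated by the $\uu$-disks $\cW^\uu(z,r)$, $z\in\gamma$, with $\uu$-boundary contained in $\cW^\uu(\partial\gamma,r)$. Fix $N\ge N(r)$ and an interior point $z_0$ of $\gamma$; by item (1), $f^N(\cW^\uu(z_0,r))$ contains a complete in-between $\uu$-disk $D_0$. Being complete and in-between is a $C^1$-open condition, so the $\uu$-disks obtained by sliding $D_0$ along the center direction of the $C^1$ $\cu$-submanifold $f^N(\cW^\uu(\gamma,r))$ to the nearby fibers $f^N(\cW^\uu(z,r))$ remain complete in-between, provided these fibers stay $C^1$-close to the one through $z_0$; since $\|D^\c f\|$ is bounded, $\length(f^N(\gamma))\le(\max\|D^\c f\|)^N\length(\gamma)$, so choosing $R=R(N)>0$ small enough (and $\le\varepsilon_0$) forces $f^N(\gamma)$ to be short and guarantees this closeness. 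The union over $z\in\gamma$ of these sub-disks is then a complete $\u$-strip $S^\cu$ in-between inside $f^N(\cW^\uu(\gamma,r))$, and by construction $\partial^\uu(S^\cu)\subset\cW^\uu(\partial(f^N(\gamma)))$. The assertion for $f^{-N}(\cW^\ss(\gamma,r))$ follows in the same way from item (1) for $f^{-1}$. Finally, the statement for a stable blender-horseshoe $\blender^-=(\Lambda,\mathbf{C},g)$ follows by applying everything above to $g^{-1}$, for which $(\Lambda,\mathbf{C},g^{-1})$ is an unstable blender-horseshoe, using that for $f\in\BM^1(M)$ \emph{both} strong foliations are minimal.

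I expect the main obstacle to be item (1): extracting from ``$f^N(\Delta^\uu)$ is long/dense'' the conclusion ``it contains a complete in-between $\uu$-sub-disk'' requires care with the geometry of the blender-horseshoe (position relative to $\cW^\ss_{\mathbf{C}}(P)$ and $\cW^\ss_{\mathbf{C}}(Q)$ and the quantitative content of ``in-between''), and the uniform-in-basepoint version of minimality must be set up cleanly. Once item (1) is in place, items (2) and (3) are continuity arguments layered on top of the already established blender lemmas.
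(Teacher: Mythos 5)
Your proof is correct and follows the same route as the paper, whose published argument for this lemma is only a brief sketch: it asserts that items (1) and (2) ``follow from uniform contraction and expansion of the strong foliations and the minimality of strong foliations'' and cites Lemma~\ref{l.iterationsstrip} and Remark~\ref{r.strips} for item (3); you are supplying exactly these details. One small quantitative imprecision in item (1): no $\eta_0>0$ has the property that \emph{every} $\eta_0$-dense $\uu$-disk contains a complete in-between $\uu$-sub-disk, because the points of the disk falling within $\eta_0$ of the reference in-between point could all lie near the disk's own boundary, leaving no room for the required fat sub-disk. The fix is to fix a point $y_0$ on some complete in-between $\uu$-disk and a radius $\rho_0>0$ such that $\cW^\uu(z,\rho_0)$ contains a complete in-between $\uu$-disk whenever $d(z,y_0)<\eta_0$, and then require $\lambda^{N}r\ge\tau(\eta_0)+\rho_0$ (the central ball of radius $\tau(\eta_0)$ in $f^N(\Delta^\uu)$ is then $\eta_0$-dense, yielding a point $z$ near $y_0$ with margin $\rho_0$ to the disk's boundary); since you already exploit the growth of the inner radius, this is a purely cosmetic adjustment to the definition of $N(r)$ and does not affect the rest of your argument.
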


\begin{proof}
	The assertion (1) and (2) follow from uniform contraction and expansion of the strong foliations and the minimality of strong foliations. 
	The first part of assertion (3) follows applying Lemma \ref{l.iterationsstrip} to the strip in-between $S^\cu$. The second part about the intersection of $g^m(S^\cu_m)$ with $S^\cs$ follows from the last assertion of Remark~\ref{r.strips}.
\end{proof} 

The following remark is a consequence of the continuity of the strong foliations.

\begin{remark}\label{r.etas}
Consider a center curve $\gamma$ of length $r^\c$ and the complete $\s$-strip $S^\cs(\gamma,r)$ in-between  and the complete $\u$-strip  $S^\cu(\gamma,r)$ in-between,
$$
S^{\cs}(\gamma,r) \subset f^{-N(r)}\big(\cW^\ss(\gamma,r)\big)
\quad
\mbox{and}
\quad 
S^\cu(\gamma,r)\subset f^{N(r)} \big(\cW^\uu(\gamma,r)\big),
$$
provided by Lemma~\ref{lem:insideblenders}. We let
\begin{equation}\label{eq:etas}
	\eta^\s (r^\c,r)
	\eqdef \inf \big\{ \mathrm{w}^\c\big(S^{\cs}(\gamma,r)\big)\colon
		\mbox{$\gamma$ is a center curve of length $r^\c$}\big\},
\end{equation}
analogously, define $\eta^\u(r^\c,r)$. Note that $\eta^\star (r^\c, r)>0$, $\star=\s,\u$.
\end{remark}

Now, for completeness, we show that the set $\BM^1(M)$ is $C^1$-open as stated in Remark~\ref{r.defr.ourhypotheses}. Here we largely follow the arguments in \cite[Theorem 3]{BonDiaUre:02} using an additional ingredient from \cite{BriBurIva:04}.

\begin{lemma}\label{l.robustness-of-BM}
	Let $f\in\BM^1(M)$. There exists a $C^1$-neighborhood $\mathcal{U}$ of $f$ such that $\mathcal{U}\subset\BM^1(M).$
\end{lemma}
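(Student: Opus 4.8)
The plan is to show that both defining properties of $\BM^1(M)$, namely minimality of the strong foliations (M) and the existence of a pair of blender-horseshoes of different types (B), persist under small $C^1$-perturbations. Property (B) is the easy half: by \cite[Lemma 3.9]{BonDia:12}, the existence of a stable (resp. unstable) blender-horseshoe for an iterate $f^{n_-}$ (resp. $f^{n_+}$) is a $C^1$-open condition, so there is a neighborhood $\mathcal{U}_1$ of $f$ on which (B) holds, with the blenders being the continuations of the original ones. The bulk of the argument is therefore the robustness of (M).

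For the minimality of, say, $\cW^{\uu}$, the strategy follows \cite[Theorem 3]{BonDiaUre:02}. First I would fix, using compactness of $M$ and the minimality of $\cW^{\uu}_f$, a finite collection of local strong unstable disks covering $M$, together with a uniform radius $\delta>0$ and iteration count $\ell_0$ such that any $\uu$-disk of inner radius $\delta$ has, under forward iteration by at most $\ell_0$, images meeting every disk in the collection (this is the ``complete $\u$-section'' mechanism; compare Lemma~\ref{lem:minfol}). Since strong unstable disks and the bundle $E^{\uu}$ vary continuously in the $C^1$-topology, and transverse intersections are open, there is a neighborhood $\mathcal{U}_2$ of $f$ on which the same finite family of disks (suitably continued) still forms such a section for $g$. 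The key additional point, and where \cite{BriBurIva:04} enters, is that we also need every $\uu$-disk of $g$ of a fixed size to be ``dynamically blended'' into this section — here one uses the unstable blender-horseshoe $\blender^+$: by Lemma~\ref{l.iterationsstrip} (which is itself $C^1$-robust since the blender is), forward iterates of any sufficiently large $\uu$-disk passing near $\blender^+$ produce a $(\c,\blender^+)$-complete $\u$-strip, hence a complete $\uu$-disk, for $g$. Chaining these two facts gives: for $g\in\mathcal{U}_2$, the $g$-forward orbit of any small $\uu$-disk is $\varepsilon$-dense in $M$ for a uniform $\varepsilon$; letting the disk size go to zero and using that unstable leaves are dense-on-themselves then yields density of every leaf, i.e. minimality of $\cW^{\uu}_g$. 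The symmetric argument with the stable blender-horseshoe $\blender^-$ and backward iteration gives minimality of $\cW^{\ss}_g$.

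Finally I would set $\mathcal{U}\eqdef\mathcal{U}_1\cap\mathcal{U}_2\cap\mathcal{U}_3$ (with $\mathcal{U}_3$ the neighborhood handling the stable foliation), which is a $C^1$-neighborhood of $f$ on which both (M) and (B) hold, so $\mathcal{U}\subset\BM^1(M)$, as desired. Since $\BM^1(M)\subset\PH^1_{\c=1}(M)$ and the latter is already $C^1$-open, shrinking $\mathcal{U}$ if necessary ensures $\mathcal{U}\subset\PH^1_{\c=1}(M)$ as well.

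I expect the main obstacle to be the uniformity in the blender step: one must verify that the ``size threshold'' above which a $\uu$-disk gets completed by the blender, and the number of iterates needed, can be chosen uniformly over a $C^1$-neighborhood of $f$ — this is exactly where the weak integrability of $E^{\cu}$ from \cite{BriBurIva:04} is used, guaranteeing that the relevant $\u$-strips $\cW^{\uu}(\gamma,\varepsilon_0)$ are genuine $C^1$-submanifolds with controlled geometry, a property that passes to nearby diffeomorphisms with a uniform $\varepsilon_0$. Combining this quantitative control with the $C^1$-continuity of the continuation of $\blender^+$ and with the complete-section argument is the delicate bookkeeping; everything else is a routine continuity-of-transverse-intersections argument.
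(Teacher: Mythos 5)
Your overall architecture matches the paper's: use $C^1$-openness of blender-horseshoes (paper cites \cite[Lemma 3.10]{BonDia:12}, close enough to your Lemma~3.9), invoke \cite{BriBurIva:04} so that the $\u$-strips $\cW_g^\uu(\gamma,\delta)$ are genuine $C^1$-submanifolds, and then try to propagate minimality. But the key step that closes the loop is missing. The paper first records four uniform-size properties (R1)--(R4) for $f$ (e.g.\ (R1): $\cW^\ss(x,L)$ transversely intersects the interior of every $(\c,\blender^+)$-complete $\u$-strip, for all $x\in M$), which hold for $f$ by compactness plus minimality, are manifestly $C^1$-open, and are therefore inherited by $g$. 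Then, given $x$ and $U$, it forward-iterates a small $\u$-strip in $U$ until, via (R4) and Lemma~\ref{l.iterationsstrip}, it contains a $(\c,\blender^+_g)$-complete $\u$-strip, and applies (R1) to conclude directly that $\cW^\ss_g(g^{k+l}(x),L)$ meets this iterate, hence $\cW^\ss_g(x)$ meets $U$. Your ``chaining'' step replaces this transversal-intersection bridge by the claim that becoming a $(\c,\blender^+)$-complete $\u$-strip makes the forward orbit ``$\varepsilon$-dense in $M$ for a uniform $\varepsilon$,'' and that is false: blender iterations keep the strip trapped in $\mathbf{C}^+$ (see Lemmas~\ref{l.iterations} and \ref{l.iterationsstrip}); they localize it rather than spread it. The ``let the disk size go to zero'' closing step is exactly the trap one must avoid, since the required $C^1$-neighborhood would shrink as $\varepsilon\to0$.

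Two more issues. First, the pairing of blender and foliation is reversed: to get minimality of $\cW^\ss_g$ one uses the \emph{unstable} blender $\blender^+$ and forward iteration (the object meeting the strip is a $\cW^\ss_g$-disk), and minimality of $\cW^\uu_g$ uses $\blender^-$ and backward iteration; you pair $\blender^+$ with $\cW^\uu_g$ under forward iteration, which does not produce the needed transversal intersection. Second, ``a finite collection of local strong unstable disks covering $M$'' is dimensionally impossible ($\uu$-disks have positive codimension); what you want, if you follow the ``complete $\u$-section'' route of \cite{BonDiaUre:02}, is a $\cs$-transversal section robustly embedded in the blender, or, as the paper does, the uniform-size properties (R1)--(R4). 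To repair the proof, replace the density heuristic with these robust properties and the single transversal intersection they yield.
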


\begin{proof}
Let $\blender^+=(\Lambda^{+}, \mathbf C^+,f^n)$ and $\blender^-=(\Lambda^{-}, \mathbf C^-,f^m)$ be unstable and stable blender-horseshoes, respectively. For simplicity, we will assume that $n=m=1$.

By the minimality of the strong stable and unstable foliations of $f$, there  exists $L>0$ such that for every $x\in M$, 
\begin{itemize}
\item[(R1)] $\cW^\ss(x,L)$ intersects every $(\c,\blender^+)$-complete $\u$-strip transversely in its interior;
\item[(R2)] $\cW^\uu(x,L)$ intersects every $(\c,\blender^-)$-complete $\s$-strip transversely in its interior;
\item[(R3)] $\cW^\ss(x,L)$ contains a complete $\ss$-disk in between for $\blender^-$;
\item[(R4)] $\cW^\uu(x,L)$ contains a complete $\uu$-disk in between for $\blender^+$.
\end{itemize}
By \cite[Lemma 3.10]{BonDia:12}, there is a $C^1$-neighborhood $\cU$ of $f$ such that every $g\in\cU$ has an unstable blender-horseshoe $\blender^+_g=(\Lambda_g^{+},\mathbf  C^+,g)$ and a stable blender-horseshoe $\blender^-_g=(\Lambda_g^{-},\mathbf  C^-,g)$, where $\Lambda^{+}_g$ and $\Lambda^{-}_g$ are the continuations of $\Lambda^{+}$ and $\Lambda^{-}$ of $g$ in $\mathbf C^+$ and in $\mathbf C^-$, respectively. By the uniform continuity of strong unstable and stable distributions and the fact that being partially hyperbolic is a $C^1$-open property, up to shrinking $\cU$, one can assume that each $g\in\cU$ is partially hyperbolic with a one-dimensional center bundle, and the corresponding properties (R1)--(R4) also hold for every $g\in\cU$.  We denote by $TM=E^\ss_g\oplus E^\c_g\oplus E^\uu_g$ the partially hyperbolic splitting for $g$ and by $\cW^\ss_g$ and $\cW^\uu_g$ the strong stable and unstable foliations of $g$, respectively. 

Now, we prove the minimality of the strong stable foliation $\cW^\ss_g$ of $g$. The proof for the minimality of  $\cW^\uu_g$ is analogous. Take   $x\in M$ and a nonempty open set $U\subset M$.  Let $y\in U$ and take  a center curve $\gamma\subset U$ centered at $y$. Take $\delta>0$ small such that  $\cW_g^\uu(\gamma,\delta)\eqdef \bigcup_{z\in\gamma}\cW_g^\uu(z,\delta)$ is contained in $U$. By \cite{BriBurIva:04}, $\cW_g^\uu(\gamma,\delta)$ is a $C^1$-submanifold tangent everywhere to $E_g^\c\oplus E_g^\uu$.  By the uniform expansion of $g$ along the strong unstable bundle, there exists $k\in\bN$ such that $\cW_g^\uu(g^k(y),L)\subset g^k(\cW_g^\uu(y,\delta/2))$. By property $({\rm R4})$, $g^{k}(\cW_g^\uu(\gamma,\delta))$ contains a complete $\u$-strip in between for $\blender_g^{+}$. By Lemma~\ref{l.iterationsstrip}, there exists $l\in\bN$ such that  $g^{k+l}(\cW_g^\uu(\gamma,\delta))$ contains a $(c,\blender_g^{+})$-complete $\u$-strip.
Applying property $({\rm R1})$ to $g^{k+l}(x)$, one gets that $\cW_g^\ss(g^{k+l}(x),L)$ intersects $g^{k+l}(\cW_g^\uu(\gamma,\delta))$ transversely. This implies that $\cW_g^\ss(x)$ intersects $\cW_g^\uu(\gamma,\delta)$ and hence $U$. By the arbitrariness of $x$ and $U$, one has that the strong stable foliation of $g$ is minimal.  
\end{proof}

We close this section recalling the results in \cite{BocBonDia:16} implying that $h_{\rm top}(f,\cL(0))>0$ for every $f\in \BM^1(M)$.

\begin{remark}[Flip-flop configurations and blender-horseshoes]\label{f.flipblender}
Let us briefly describe what is a {\em{split flip-flop configurations}} and explain why every $f\in \BM^1(M)$ displays such a configuration. By \cite[Theorem 5]{BocBonDia:16}, there exists a compact $f$-invariant set with positive topological entropy contained in the set $\cL(0)$.
	
In very rough terms, and in our partially hyperbolic setting, a \emph{flip-flop configuration} involves an 
unstable blender-horseshoe $(\Lambda,\mathbf C,f^n)$%
\footnote{More precisely, in \cite{BocBonDia:16} \emph{dynamical blenders} are used, see \cite[Definition 3.11]{BocBonDia:16}. Note that blender-horseshoes are dynamical blenders.}  and a saddle $Q$ of $\u$-index $d^\uu$. The required conditions are that the strong unstable manifold of $\Lambda$ intersects the stable manifold of $Q$ transversely and the unstable manifold of $Q$ contains a complete $\uu$-disk in-between. In our context, these conditions are guaranteed by the partial hyperbolicity and by the minimality of strong foliations. Further partial hyperbolicity-like conditions are automatically satisfied in our setting. Indeed, here a stronger form of a so-called \emph{split flip-flop configuration} holds, see \cite[Definition 4.7]{BocBonDia:16}. This means that the blender-horseshoe and the saddle have ``compatible'' partially hyperbolic splittings. 
\end{remark}

\subsection{Heteroclinic connections between strips}\label{secauxstrip}

The following is a key step in the final construction of the fractal set in  $\cL(0)$, see Section \ref{s.final-fractal}. It is a consequence of the minimality of the strong stable foliations.

Note that, for $f\in\BM^1(M)$, there are infinitely many points $q$ satisfying the hypothesis of the following lemma.

\begin{lemma}\label{remfirstLambda}
Let $f\in\BM^1(M)$. Let $q$ be a hyperbolic periodic point of $f$ of expanding type. There are positive constants $r^\c,r^\s,r^\u_1,r^\u_2>0$ such that
\begin{enumerate}[ leftmargin=0.5cm ]
\item[$\bullet$] $q$ has an unstable manifold $\cW^\u(q,r^\u_1)$ of inner radius $r^\u_1$ which is tangent  to $E^\c\oplus E^\uu$.
\item[$\bullet$] for every center curve $\gamma$ of  inner radius
 less than $r^\c$ and every  $(\c,\cW^\uu(\gamma,r^\u_2))$-complete center curve $\gamma'$, there exists a $(\c,\cW^\ss(\gamma',r^\s))$-complete center curve contained in  $\cW^\u(q,r^\u_1)$.
\end{enumerate}
\end{lemma}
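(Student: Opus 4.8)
The plan is to exploit the minimality of the strong stable foliation $\cW^\ss$, together with the structure of $q$ as a periodic point of expanding type, to manufacture the desired complete center curve inside a fixed-size unstable manifold of $q$. First I would fix the local data attached to $q$. Since $q$ is a hyperbolic periodic point of expanding type, its center bundle $E^\c$ is part of its unstable direction; by the weak integrability (Remark~\ref{remweaint}) applied to a basic set of expanding type containing the orbit of $q$ (say the homoclinic class, or just the orbit itself with a local product neighborhood), there is $r_1^\u>0$ such that $q$ has an unstable manifold $\cW^\u(q,r_1^\u)$ of inner radius $r_1^\u$ tangent to $E^\c\oplus E^\uu$, and moreover every center curve centered at $q$ of radius up to some $r^\c_0$ lies inside $\cW^\u(q,r_1^\u)$. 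Shrinking if necessary, I would also arrange that $\cW^\u(q,r_1^\u)$ is a $\cu$-disk of inner radius bounded below by a fixed constant, hence contains a $\u$-strip $S^\cu_q$ (foliated by $\uu$-disks of definite inner radius, with a $(\c,S^\cu_q)$-complete center curve through $q$ of definite length). This gives the first bullet and produces a ``target'' $\u$-strip of controlled geometry.

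Next I would use Lemma~\ref{lem:minfol} (or rather its ingredients: minimality of $\cW^\ss$ plus uniform expansion/contraction) to connect an arbitrary input strip to this target. Given a center curve $\gamma$ of inner radius less than $r^\c$ and a $(\c,\cW^\uu(\gamma,r^\u_2))$-complete center curve $\gamma'$, the set $\cW^\uu(\gamma,r^\u_2)$ is (by Remark~\ref{remweaint}, choosing $r^\c$ and $r^\u_2$ small enough) a $\cu$-disk, i.e.\ a $\u$-strip, and $\gamma'$ is a complete center curve in it, so $\gamma'$ itself has inner radius bounded below in terms of $r^\u_2$ (the strong unstable holonomy is uniformly bi-Lipschitz on small scales, so a complete curve cannot be too short). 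Now consider $\cW^\ss(\gamma',r^\s)$: again for $r^\s$ small this is an $\s$-strip $\Delta^\cs$, with inner radius of its center core $\gamma'$ bounded below by a fixed $\delta>0$, and its $\ss$-width bounded below. The point is that $\Delta^\cs$ is a $\cs$-disk of definite inner radius. I would then apply the third displayed conclusion of Lemma~\ref{lem:minfol}: for $\ell$ large (depending only on $\delta$, hence uniform over all admissible $\gamma,\gamma'$), $f^{-\ell}(S^\cu_q) \pitchfork \Delta^\cs \ne \emptyset$ and this intersection contains center curves. Equivalently, $S^\cu_q$ intersects $f^\ell(\Delta^\cs)=f^\ell(\cW^\ss(\gamma',r^\s))$ transversely along a center curve. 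Since $S^\cu_q\subset\cW^\u(q,r_1^\u)$ and $S^\cu_q$ is a complete $\u$-strip while $f^\ell(\Delta^\cs)$, being the $f^\ell$-image of an $\s$-strip, is a $\cs$-disk that crosses $S^\cu_q$ from one $\uu$-boundary to the other, this intersection curve is $(\c,S^\cu_q)$-complete. Pulling this curve back by $f^{-\ell}$ inside $\Delta^\cs=\cW^\ss(\gamma',r^\s)$ — note $\cW^\ss$ is $f$-invariant so $f^{-\ell}$ maps $\cs$-objects to $\cs$-objects and the center curve stays a center curve — I obtain a center curve contained in $\cW^\s(q,\cdot)$... wait, rather: the curve I want lives in $\cW^\u(q,r_1^\u)$ already (it is the intersection curve, which lies in $S^\cu_q$), and I must check it is $(\c,\cW^\ss(\gamma',r^\s))$-complete, i.e.\ its endpoints lie in $\partial^\ss(\cW^\ss(\gamma',r^\s))$. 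This is where the boundary-tracking refinement of Lemma~\ref{lem:insideblenders}(2) is the right tool: arranging that the $\uu$-boundary of $S^\cu_q$ sits inside $\cW^\uu$ of $\partial\gamma_q$ (the boundary of the core curve of $S^\cu_q$), and choosing $r^\u_2, r^\s$ so that $\cW^\ss(\gamma',r^\s)$ is ``thick enough'' transversally, forces the intersection curve to run all the way across $\cW^\ss(\gamma',r^\s)$ in the center direction and terminate on its $\ss$-boundary. I would make $r^\s$ (and a lower bound on the $\ss$-width of $\cW^\ss(\gamma',r^\s)$, which is automatic once $\gamma'$ has definite length) large relative to the transversal extent of $S^\cu_q$ so that completeness is guaranteed.

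Finally, I would collect the constants: $r^\c$ and $r_1^\u$ come from the local unstable manifold of $q$ and weak integrability; $r^\u_2$ is chosen small so that $\cW^\uu(\gamma,r^\u_2)$ is a genuine $\u$-strip and so that complete curves in it have inner radius $\ge\delta$ for a fixed $\delta$; $r^\s$ is then chosen so that $\cW^\ss(\gamma',r^\s)$ is a $\cs$-disk of inner radius $\ge\delta$ and wide enough that the connecting intersection is center-complete; and the number of iterates $\ell$ in Lemma~\ref{lem:minfol} depends only on $\delta$, hence the construction is uniform over all admissible $\gamma,\gamma'$. I expect the main obstacle to be precisely the last point — verifying \emph{completeness} of the resulting center curve with respect to $\cW^\ss(\gamma',r^\s)$, i.e.\ controlling where its endpoints land. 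This requires keeping careful track of how the $\ss$-boundary of $\cW^\ss(\gamma',r^\s)$ and the $\uu$-boundary of the target strip $S^\cu_q$ interleave under the iterate $f^\ell$, and is the reason one needs the sharpened boundary statements (as in Lemma~\ref{lem:insideblenders}(2) and Remark~\ref{r.strips}) rather than the bare transversality in Lemma~\ref{lem:minfol}. Everything else is a routine bookkeeping of inner radii under uniformly (bi-)Lipschitz strong holonomies and uniform hyperbolicity estimates.
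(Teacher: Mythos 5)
Your proposal takes a genuinely different route from the paper, and it contains a gap. The paper's own proof is a pure cascade of uniform-continuity arguments and involves no iteration by $f$ at all: by minimality of $\cW^\ss$ and compactness of $M$, there is a uniform $r^\s$ such that $\cW^\ss(x,r^\s)\cap\cW^\u(q,r^\u_1/4)\ne\emptyset$ for \emph{every} $x\in M$; then, since the intersection point depends continuously on the $\ss$-leaf, for $\gamma$ short enough the set $\cW^\ss(\gamma,r^\s)\cap\cW^\u(q,r^\u_1/2)$ is exactly the curve traced out by these intersection points as the base moves along $\gamma$, and that curve is automatically $(\c,\cW^\ss(\gamma,r^\s))$-complete because its endpoints correspond to $\partial\gamma$; one then passes from $\gamma$ to $\gamma'$ by one more continuity step, absorbing the perturbation in the slack between $r^\u_1/2$ and $r^\u_1$. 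No choice of iterate $\ell$ ever appears, and hence no uniformity of inner radii is needed.

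The gap in your route is the claim that $\gamma'$ ``has inner radius bounded below in terms of $r^\u_2$ (the strong unstable holonomy is uniformly bi-Lipschitz on small scales, so a complete curve cannot be too short).'' This is not correct. A $(\c,\cW^\uu(\gamma,r^\u_2))$-complete center curve $\gamma'$ runs across the strip \emph{in the center direction}: its endpoints lie on the two $\uu$-disks $\cW^\uu(\partial\gamma,r^\u_2)$, so its length is comparable to the length of $\gamma$, not to $r^\u_2$. Since the hypothesis only imposes an \emph{upper} bound ($\inrad(\gamma)<r^\c$), $\gamma$ and therefore $\gamma'$ can be arbitrarily short, and the $\cs$-disk $\cW^\ss(\gamma',r^\s)$ can have arbitrarily small inner radius. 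The constant $\ell_0(\delta)$ in Lemma~\ref{lem:minfol} blows up as the inner radius $\delta$ of $\cW^\ss(\gamma',r^\s)$ tends to $0$, so the iterate $\ell$ cannot be chosen uniformly over all admissible $\gamma'$, and the backward-iteration construction breaks down. (You also need $\ell$ to be a multiple of the period of $q$ so that $f^{-\ell}(\cW^\u(q,r^\u_1))$ stays in $\cW^\u(q,r^\u_1)$, but this is a minor matter.) Beyond this, you correctly flag but do not resolve the completeness issue: Lemma~\ref{lem:minfol} only gives nonempty transverse intersections containing center curves, not curves complete in $\cW^\ss(\gamma',r^\s)$. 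The paper's continuity argument avoids this entirely, since the completeness is built into the parametrisation of the intersection curve by $\gamma'$. In short: the machinery you invoke (Lemma~\ref{lem:minfol}, boundary tracking à la Lemma~\ref{lem:insideblenders}) belongs to the blender arguments and is both heavier and not uniform enough for this lemma; the intended proof is the elementary minimality-plus-continuity argument.
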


\begin{proof}
There exists $r^\u_1>0$ such that the unstable manifold of $q$ contains a disc $\cW^\u(q,r^\u_1)$ centered at $q$ and of inner radius $r^\u_1$. By the partial hyperbolicity of $f$, $\cW^\u(q,r^\u_1)$ is tangent to $E^\c\oplus E^\uu$ everywhere. 
	
 By the minimality of the strong stable foliation, there exists $r^\s>0$ such that $\cW^\ss(x,r^\s)$ has a nonempty transverse intersection with  $\cW^{\u}(q,r^\u_1/4)$ for any $x\in M$. By the uniform continuity of the strong stable bundle, there exists $r^\c>0$ such that for every center curve $\gamma$ of inner radius
  no more than $r^\c$, one has that $\cW^\ss(\gamma,r^\s)$ intersects $\cW^{\u}(q,r^\u_1/2)$ into a center curve which is $(\c,\cW^\ss(\gamma,r^\s))$-complete. By the uniform continuity of the strong stable and strong unstable bundles, there exists $r^\u_2>0$ such that for any $(\c,\cW^\uu(\gamma,r^\u_2))$-complete center curve $\gamma'$,  there exists a $(\c,\cW^\ss(\gamma',r^\s))$-complete center curve contained in $\cW^{\u}(q,r^\u_1).$  
\end{proof}

\begin{remark}[Choice of pivotal saddle point]\label{pivotal}
	In what follows, we fix some ``pivotal'' hyperbolic periodic point $q_0$ of expanding type and let $r^\c,r^\s,r^\u_1,r^\u_2$ be as in Lemma \ref{remfirstLambda}. 
\end{remark}

\section{Concatenation of center curves}\label{sec:concat}

In this section, we present a result concerning the concatenation of center curves, see Proposition~\ref{proLem:key}. Throughout this section, we assume $f\in\BM^1(M)$ and consider the stable blender-horsesho $\blender^-\eqdef(\Lambda^-, \mathbf{C}^-,f^{n_-})$ given by hypothesis (B).
For simplicity, assume $n_-=1$. 

We start with some preliminaries. Given $r>0$, let $N(r)\in\bN$  and $R(N(r))>0$ be as in Lemma \ref{lem:insideblenders} associated to $\blender^-$. Note that $N(r) \to \infty$ as $r\to 0$. Recall the definition of $M_\b$ in Lemma \ref{l.iterationsstrip}. Note that in the proof of Proposition \ref{proLem:key} below, we will use the versions of the results for backward iterations of $\s$-strips in the stable blender-horseshoe $\blender^-$. Recall also the notations $\cW^\ss(\gamma,r)$ and $\cW^\uu(\gamma,r)$  in Remark \ref{remweaint} and the numbers $\eta^{\s,\u}$ associated to these strips in \eqref{eq:etas}.

	\begin{figure}[h]
		\begin{overpic}[scale=.55]{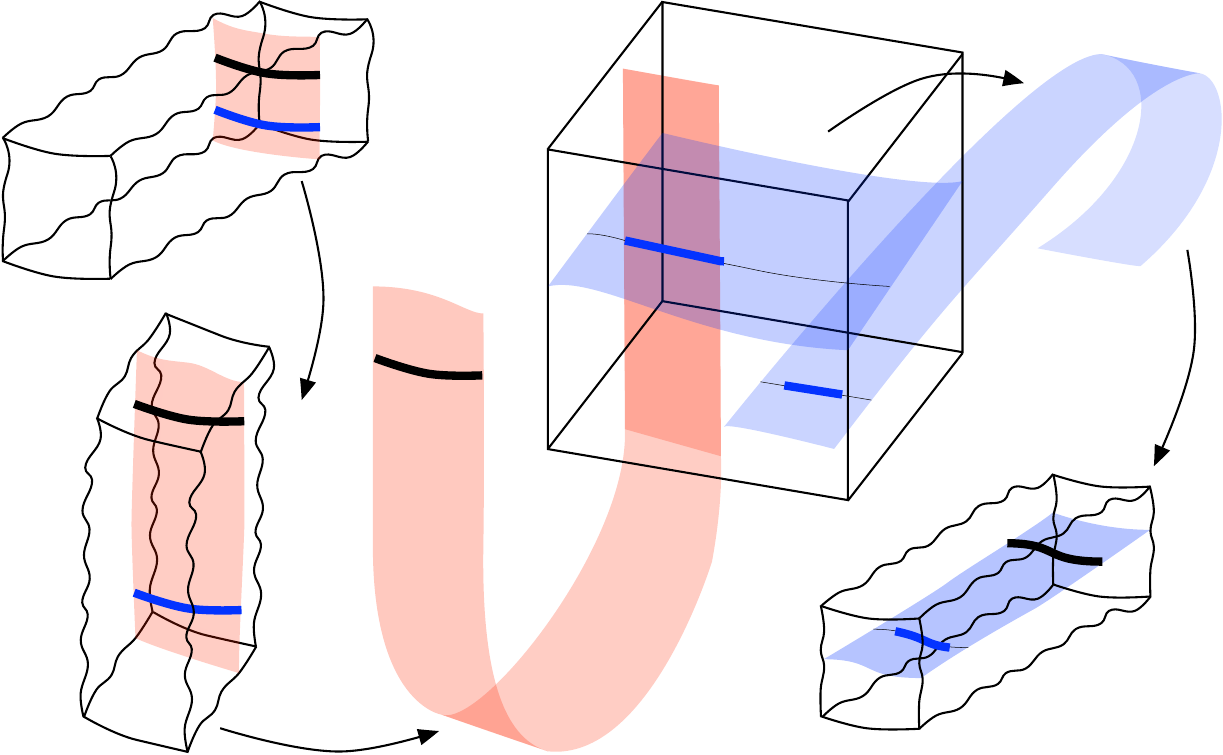}
			\put(52,63){\rotatebox{-10}{\small\text{stable blender-horseshoe }$\blender^-$} }
			\put(27,55){$\zeta_1$}
			\put(91.3,15){$\zeta_2$}
			\put(27,51){$\widehat\zeta$}
			\put(13,60){\small$\Delta^\cu$}
			\put(27,40){\small$f^T$}
			\put(41,38){\small$\widehat S^\cs$}
			\put(60,41){$\breve\zeta$}
			\put(56,55){\small$ S^\cu$}
			\put(27,2){\small$f^{N(r)}$}
			\put(98,31){\small$f^{N(r)}$}
			\put(66,55){\small$f^{M_\b}$}
			\put(62,7){\small$\Delta^\cs$}
			\put(80,7){$\widetilde\zeta$}
	\end{overpic}
		\caption{Proof of Proposition \ref{proLem:key}}
		\label{fig.2}
	\end{figure}
	
We present a ``concatenation procedure'' of contracting curves. 

\begin{proposition}[Concatenation of center curves]\label{proLem:key}\label{changed into proposition, write some hypothesis?}
Let $\zeta_1$ be a center curve of length $r^\c_1>0$ such that there exist $K>1$ and  $\chi<0$ so that for every $n\in\bN$ and $y\in\zeta_1$, 
\begin{equation}\label{eq:crescimento}
	\lVert D^\c f^n(y)\rVert
	\leq K e^{n\chi}.
\end{equation}
Let $r>0$ and $\zeta_2$ be a center curve of length $r^\c_2\in(0,R(N(r)))$. Then there are integers 
\begin{equation}\label{eq:mn}\begin{split}
	m=m(r^\c_2,r)&\eqdef 2N(r)+M_\b(\eta^\s(r^\c_2,r)) 
	\quad \mbox{and}\\
	T^\sharp=
	T^\sharp(r^\c_1,K,\chi,r)&
	\eqdef \left\lceil\frac{1}{\chi}\big(\log R(N(r))-\log K-\log r^\c_1\big)\right\rceil+1,	
\end{split}\end{equation}
such that for every $T\geq T^\sharp$, there exist center curves $\widehat\zeta$ and $\widetilde\zeta\eqdef f^{T+m}(\widehat\zeta)$ satisfying:
\begin{enumerate}[ leftmargin=0.9cm ]
	\item[(i)] $\widehat\zeta$ is $(\c,\cW^\uu(\zeta_1,r))$-complete;\\[-0.2cm]
	\item[(ii)] $f^i(\widehat\zeta)\subset \cW^{\uu}(f^i(\zeta_1),r)$ for every $i=0,\dots,T$;\\[-0.2cm]
	\item[(iii)] $\displaystyle \widetilde\zeta\subset \cW^\ss(\zeta_2,r)$; \\[-0.2cm]
	\item[(iv)]	$f^{-(T+m)}\big(\cW^\uu(\widetilde\zeta,2r))\big)\subset \cW^\uu(\widehat\zeta,r)$.
\end{enumerate}
\end{proposition}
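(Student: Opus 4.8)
The plan is to realize the concatenation inside the stable blender-horseshoe $\blender^-=(\Lambda^-,\mathbf{C}^-,f)$: push the $\u$-strip $\cW^\uu(\zeta_1,r)$ forward and the $\s$-strip $\cW^\ss(\zeta_2,r)$ backward until they cross inside $\blender^-$ along a center curve, then pull that curve back to $\zeta_1$. Throughout I will use the backward-iteration versions of Lemmas~\ref{l.iterationsstrip} and \ref{lem:insideblenders} and of Remark~\ref{r.strips} for $\blender^-$, obtained by applying the unstable-blender statements to $(\Lambda^-,\mathbf{C}^-,f^{-1})$ and translating via $f^{-1}$; concretely: for a center curve $\gamma$ of length less than $R(N(r))$ and every $N\ge N(r)$, the set $f^{N}(\cW^\uu(\gamma,r))$ contains a complete $\u$-strip in-between with $\uu$-boundary in $\cW^\uu(\partial(f^{N}(\gamma)))$, and $f^{-N}(\cW^\ss(\gamma,r))$ contains a complete $\s$-strip in-between with $\ss$-boundary in $\cW^\ss(\partial(f^{-N}(\gamma)))$; every complete $\s$-strip $\widehat S^\cs$ in-between contains, for $k\ge M_\b(\mathrm{w}^\c(\widehat S^\cs))$, a substrip $\widehat S^\cs_k$ with $f^{-i}(\widehat S^\cs_k)\subset\mathbf{C}^-$ for $0\le i\le k$ and $f^{-k}(\widehat S^\cs_k)$ a $(\c,\blender^-)$-complete $\s$-strip; and any $(\c,\blender^-)$-complete $\s$-strip meets every complete $\u$-strip in-between transversely along a center curve complete with respect to that $\u$-strip. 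I also set $\lambda\eqdef\max_{x\in M}\|Df^{-1}|_{E^\uu(x)}\|<1$, so that $f^{-j}$ contracts the internal metric of every strong unstable leaf by at most $\lambda^{j}$, and I enlarge $N(r)$ if needed (harmless, since the above holds for all $N\ge N(r)$) so that $2\lambda^{2N(r)}\le1$; finally I take $\zeta_1,\zeta_2$ and $r$ small enough that Remark~\ref{remweaint} applies.

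First I would fix $T\ge T^\sharp$ and note, using \eqref{eq:crescimento} and that $\zeta_1$ is tangent to $E^\c$, that $\length(f^T(\zeta_1))\le\sup_{\zeta_1}\|D^\c f^T\|\cdot r^\c_1\le Ke^{T\chi}r^\c_1\le Ke^{T^\sharp\chi}r^\c_1<R(N(r))$, the last step being exactly the defining inequality for $T^\sharp$ (recall $\chi<0$). Thus $\gamma_1\eqdef f^T(\zeta_1)$ and $\zeta_2$ are admissible inputs above. From $\gamma_1$ with $N=N(r)$ I get a complete $\u$-strip $S^\cu$ in-between with $S^\cu\subset f^{N(r)}(\cW^\uu(f^T(\zeta_1),r))\subset f^{T+N(r)}(\cW^\uu(\zeta_1,r))$ (the second inclusion because $\cW^\uu(f^T(z),r)\subset f^T(\cW^\uu(z,r))$ by backward contraction along $\cW^\uu$) and $\partial^\uu S^\cu\subset\cW^\uu(\partial(f^{T+N(r)}(\zeta_1)))$. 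From $\zeta_2$ with $N=N(r)$ I get a complete $\s$-strip $\widehat S^\cs$ in-between with $\widehat S^\cs\subset f^{-N(r)}(\cW^\ss(\zeta_2,r))$ and, by Remark~\ref{r.etas}, $\mathrm{w}^\c(\widehat S^\cs)\ge\eta^\s(r^\c_2,r)$. Setting $k\eqdef M_\b(\eta^\s(r^\c_2,r))$ (so $m=2N(r)+k$), I activate the blender on $\widehat S^\cs$ to obtain $\widehat S^\cs_k$ with $f^{-i}(\widehat S^\cs_k)\subset\mathbf{C}^-$ for $0\le i\le k$ and $\breve S^\cs\eqdef f^{-k}(\widehat S^\cs_k)$ a $(\c,\blender^-)$-complete $\s$-strip; by Remark~\ref{r.strips}, $\breve S^\cs$ meets $S^\cu$ transversely along a $(\c,S^\cu)$-complete center curve $\breve\zeta$. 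Finally I set $\widehat\zeta\eqdef f^{-(T+N(r))}(\breve\zeta)$ and $\widetilde\zeta\eqdef f^{T+m}(\widehat\zeta)=f^{N(r)+k}(\breve\zeta)$, which are center curves because $Df$ preserves $E^\c$.

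To check the conclusions: (iii) is immediate, since $\breve\zeta\subset\breve S^\cs$ gives $f^{k}(\breve\zeta)\subset\widehat S^\cs_k\subset f^{-N(r)}(\cW^\ss(\zeta_2,r))$, hence $\widetilde\zeta=f^{N(r)+k}(\breve\zeta)\subset\cW^\ss(\zeta_2,r)$. For (i)--(ii) the key is the containment $\widehat\zeta\subset f^{-T}(\cW^\uu(f^T(\zeta_1),r))$, which follows by applying $f^{-(T+N(r))}$ to $\breve\zeta\subset S^\cu\subset f^{N(r)}(\cW^\uu(f^T(\zeta_1),r))$; then for each $p\in\widehat\zeta$ there is $z\in\zeta_1$ with $d^\uu(f^T(p),f^T(z))<r$, and backward contraction under $f^{-(T-i)}$ gives $d^\uu(f^i(p),f^i(z))<r$ for $0\le i\le T$, i.e. $f^i(\widehat\zeta)\subset\cW^\uu(f^i(\zeta_1),r)$, which is (ii); the case $i=0$ together with $\partial^\uu S^\cu\subset\cW^\uu(\partial(f^{T+N(r)}(\zeta_1)))$ (pulled back by $f^{-(T+N(r))}$) places the endpoints of $\widehat\zeta$ on the two $\uu$-boundary leaves of $\cW^\uu(\zeta_1,r)$, giving (i). For (iv), if $y\in\cW^\uu(q,2r)$ with $q\in\widetilde\zeta$, put $x\eqdef f^{-(T+m)}(y)$ and $p\eqdef f^{-(T+m)}(q)\in\widehat\zeta$; invariance of the strong unstable foliation gives $x\in\cW^\uu(p)$, and backward contraction gives $d^\uu(x,p)\le\lambda^{T+m}\cdot2r\le2\lambda^{2N(r)}r\le r$ (using $T+m\ge m\ge2N(r)$), so $x\in\cW^\uu(p,r)\subset\cW^\uu(\widehat\zeta,r)$.

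The hard part will be the bookkeeping rather than any single estimate: transporting the blender machinery to backward iterates of $\s$-strips in $\blender^-$ while tracking where each strip lies relative to $\mathbf{C}^-$, so that the $(\c,\blender^-)$-complete $\s$-strip genuinely crosses $S^\cu$; and, more delicately, extracting (ii). The naive inclusion $\widehat\zeta\subset\cW^\uu(\zeta_1,r)$ does not suffice for (ii) since $f^i$ expands $\cW^\uu$; one needs the sharper $\widehat\zeta\subset f^{-T}(\cW^\uu(f^T(\zeta_1),r))$, which is precisely why $\breve\zeta$ must be extracted from the \emph{already expanded} strip $f^{N(r)}(\cW^\uu(f^T(\zeta_1),r))$ rather than from $f^{T+N(r)}(\cW^\uu(\zeta_1,r))$, so that pulling it back by $f^{T+N(r)}$ leaves it inside a thin tube around the orbit of $\zeta_1$. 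Once that inclusion is secured, properties (i), (ii) and (iv) all reduce to exponential backward contraction of strong unstable leaves, and (iii) is immediate.
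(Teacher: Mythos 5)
Your proof is correct and follows essentially the same route as the paper: pull $\cW^\ss(\zeta_2,r)$ backward to a $(\c,\blender^-)$-complete $\s$-strip, push $f^T(\cW^\uu(\zeta_1,r))$ forward through $N(r)$ iterates to a complete $\u$-strip in-between, intersect via Remark~\ref{r.strips}, and pull the resulting center curve back by $f^{-(T+N(r))}$. You are in fact slightly more careful than the paper on two points it leaves terse: the observation that $\breve\zeta$ must sit in the strip $f^{N(r)}(\cW^\uu(f^T(\zeta_1),r))$ (so that pulling back by $f^{T+N(r)}$ keeps it in the thin tube needed for (ii)), and the explicit estimate $\lambda^{T+m}\cdot 2r\le r$ for (iv), for which your enlargement of $N(r)$ is a harmless and legitimate fix.
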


\begin{proof}
Given $r>0$, consider the $\u$-strip and the $\s$-strip given by 
\[
	\Delta^\cu
	\eqdef \cW^\uu(\zeta_1,r)
	\quad\text{ and }\quad
	\Delta^\cs
	\eqdef \cW^\ss(\zeta_2,r),
\]
respectively.
By the results in Section \ref{s:centerdistortion}, $\Delta^\cu$ and $\Delta^\cs$ are tangent to $E^\c \oplus E^\uu$ and $E^\ss \oplus E^\c $ respectively. The proof has two parts. First, considering the backward iterates of $\Delta^\cs$ and the properties of the blender, we get a $(\c, \blender^-)$-complete $\s$-strip $\widehat S^\cs$. Second, considering the forward iterates of $\Delta^\cu$ we get a complete $u$-strip of $\blender^-$ in between. The arguments in this proof are illustrated in Figure \ref{fig.2}.

For the first part, as, by hypothesis, $r^\c_2\in(0,R(N(r)))$, we can apply Lemma \ref{lem:insideblenders} (in its version for the stable blender $\blender^-$) to $\Delta^\cs$, and  get from item (2) in Lemma \ref{lem:insideblenders} that
\begin{enumerate}[ leftmargin=0.9cm ]
\item $f^{-N(r)}(\Delta^\cs)$ contains a complete $\s$-strip $S^\cs$ in-between (relative to $\blender^-$),
\item the  $\ss$-boundary of $S^{\cs}$ is contained in $\cW^\ss(\partial(f^{-N(r)}(\zeta_2)))$.
\end{enumerate} 
By Remark~\ref{r.etas}, the  $\s$-strip $S^\cs$ has inner center width at least $\eta^\s(r^\c_2,r)>0$. Hence,  by item (3) in Lemma \ref{lem:insideblenders} applied to $S^\cs$, the set $f^{-M_\b(\eta^\s(r^\c_2,r))}(S^\cs)$ contains a $(\c,\blender^-)$-complete $\s$-strip $\widehat S^\cs$. 
This completes the first part.
	
For the second part, take $T^\sharp$ as in \eqref{eq:mn}. This choice implies that for every $T\ge T^\sharp$,
\begin{equation}\label{eq:length}
		K r^\c_1 e^{T\chi}
		<R(N(r)).
\end{equation}
It follows from our hypothesis \eqref{eq:crescimento} together with \eqref{eq:length} that for every $T\geq T^\sharp$, the length of the center curve$f^T(\zeta_1)$ is less than $R(N(r))$. Note that $f^T(\Delta^\cu)$ is a $\u$-strip that can be written as 
	\[
	f^T(\Delta^\cu)=\bigcup_{x\in\gamma}\Delta^\uu(x),	
	\quad\text{ where }\quad
	\gamma\eqdef f^T(\zeta_1)
	\]	 
and each of the sets $\Delta^\uu(x)$ is a $\uu$-disk. Note that, due to the expansion in the strong unstable manifold, each disk $\Delta^\uu(x)$ has inner radius bigger than $r$. Thus, we can apply Lemma \ref{lem:insideblenders}  to $f^T(\Delta^\cu)$, and get that
	\[
	f^{N(r)}\big(f^T(\Delta^\cu)\big)
	\] 
	contains a complete $\u$-strip $S^\cu$  in-between (relative to $\blender^-$) whose $\uu$-boundary is contained in 
	\[
	\cW^\uu\big(\partial (f^{N(r)+T}(\zeta_1))\big).
	\]	 
		By Remark \ref{r.strips}, $\widehat S^\cs\pitchfork S^\cu$ contains a center curve ${\breve \zeta}$ which is $(\c,S^\cu)$-complete. Let
	\[
	\widehat\zeta
	= f^{-(T+N(r))}\big(\breve \zeta \big).
	\]
	By construction, the center curve  $\widehat\zeta$ is $(\c,\Delta^\cu)$-complete and
\[
	f^i(\widehat\zeta)\subset \cW^{\uu}(f^i(\zeta_1),r)
	\quad\text{ for }\quad i=0,\dots,T.
\]	
Moreover, the choice of $T^\sharp$ also implies
\[
	\widetilde\zeta\eqdef f^{T+m}(\widehat\zeta)
		\subset \Delta^\cs
\]
where $m$ is as in \eqref{eq:mn}.
	This proves items (i), (ii), and (iii) of the proposition. The item (iv) follows from the uniform expansion of the foliation $\cW^{\uu}$.  	The proof of the proposition is now complete.
\end{proof}

\section{Approximation by basic sets}\label{secapproximating}

In this section, we start by stating some finer properties of basic sets. In particular, we state an approximation result of hyperbolic measures by basic sets (see Lemma \ref{l.horseshoe-with-large-entropy}). We also formalize a concatenation procedure for separated orbit segments which will be a key step to ``produce entropy'' in our constructions (see Proposition \ref{p.separation-in-symbol}). We point out that in this section we only require $f\in\PH^1_{\c =1}(M)$.

To start, with a slight abuse of notation, given two subsets $A,B\subset M$, denote their distance by
\[d(A,B)\eqdef\inf\big\{ d(x,y)|x\in A, y\in B \big\}.\]
For $n\in\bN$ and $\varepsilon>0$, recall the standard definition of an \emph{$(n,\e)$-Bowen ball centered at $x$}
\begin{equation}\label{defBowBal}
	B_n(x,\varepsilon)\eqdef\big\{y\in X\colon d_n(x,y)<\varepsilon\big\},
	\quad\text{ where }\quad
	d_n(x,y)\eqdef\max_{k=0,\ldots,n-1}d(f^k(x),f^k(y)).
\end{equation}

\begin{remark}[Constant of entropy expansiveness]\label{remexpansive}
For a partially hyperbolic diffeomorphism $f$ with one-dimensional center, by \cite{LiViYa:13,DiFiPaVi:12}, $f$ is $\delta_0$-entropy expansive for some $\delta_0>0$, that is, for every $x\in M$,
\[
	h_{\rm top}(f,B_\infty(x,\delta_0))=0, \quad\text{ where }\quad
		B_\infty(x,\delta_0)
	\eqdef\bigcap_{n\in\bN}B_n(x,\delta_0).
\]
 
\end{remark}

The following result we state without proof.

\begin{lemma}\label{l.local-stable-manifold}
	Let $f\in\PH^1_{\c =1}(M)$ and  $\delta_0>0$ be as in Remark \ref{remexpansive}. Let $\Lambda$ be a basic set of contracting type of $f$. There exist $r^\sharp=r^\sharp(\Lambda)\in(0,\delta_0/4)$ and $a^\sharp=a^\sharp(\Lambda)\in\bN$ such that
\begin{itemize}[ leftmargin=0.5cm ]
\item[(1)]  any center curve $\gamma$ centered at a point $x\in\Lambda$ of inner radius $r^\sharp$ is contained in the stable manifold $\cW^\s (x)$ of $x\in\Lambda$ and, in particular, in the stable manifold $\cW^\s (x,r^\sharp)$;
\item[(2)] for every $n\geq a^\sharp$, every $x\in \Lambda$ and every $r\in(0, r^\sharp)$, one has 
\begin{equation}\label{agree}
	f^n(\cW^\s(x,r))\subset \cW^\s(f^n(x), r).
\end{equation}	
\item[(3)] for every $x,x'\in\Lambda$, $r\in(0,r^\sharp)$, and center curves $\xi=\xi(x,r)$ and $\xi'=\xi(x',r)$ of inner radius $r$ and  centered at $x$ and $x'$ respectively, one has 
\begin{equation}\label{defdist}
	\text{$i\ge a^\sharp$}
	\quad\Longrightarrow\quad
	d(f^i(\xi),f^i(\xi'))
	\ge d(f^i(x),f^i(x'))-2r.
\end{equation}
\end{itemize}
\end{lemma}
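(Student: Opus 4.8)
The plan is to deduce all three items from two ingredients: the weak-integrability statement recorded in Remark~\ref{remweaint}, and the classical Hadamard--Perron description of the local stable manifolds of the basic set $\Lambda$. Since $\Lambda$ is of contracting type, its hyperbolic stable bundle over $\Lambda$ is $E^\cs=E^\ss\oplus E^\c$ (the unstable one being $E^\uu$), so the dynamically defined stable manifold $\cW^\s(x)$ of each $x\in\Lambda$ is a $C^1$ immersed submanifold tangent to $E^\cs$, and the stable-manifold theorem provides a uniform local size $r_0>0$ for which the family $\{\cW^\s(x,r_0)\}_{x\in\Lambda}$ is a continuous lamination enjoying the usual graph-transform properties. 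First I would set $r^\sharp\in(0,\delta_0/4)$ smaller than both $r_0$ and the constant furnished by Remark~\ref{remweaint} for $\Lambda$. With this choice, item~(1) is essentially a restatement of Remark~\ref{remweaint}: a center curve centered at $x\in\Lambda$ of inner radius $r^\sharp$ is a $C^1$ curve through $x$ tangent to the line field $E^\c\subset E^\cs$, so it lies in the integral manifold of $E^\cs$ through $x$, which for $x\in\Lambda$ is a piece of $\cW^\s(x)$; the bound on its inner radius then places it in $\cW^\s(x,r^\sharp)$.

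For item~(2) I would first record that, after possibly shrinking $r^\sharp$, there are constants $C\ge1$ and $\lambda\in(0,1)$ such that for every $y\in\Lambda$, all $z,w$ in $\cW^\s(y,r^\sharp)$, and all $n\ge0$,
\[
	d^\s\big(f^n(z),f^n(w)\big)\le C\lambda^n\,d^\s(z,w),
\]
where $d^\s$ denotes the intrinsic distance within $\cW^\s$; the multiplicative constant $C$ (rather than $1$) is needed because the metric adapted to the partial hyperbolicity of $f$ need not be adapted to the hyperbolicity of $\Lambda$, so genuine contraction along $E^\c$ over $\Lambda$ only sets in after finitely many iterates. Fixing $a^\sharp\in\bN$ with $C\lambda^{a^\sharp}\le1$, and using that $f$ carries stable manifolds into stable manifolds, for every $n\ge a^\sharp$, every $r\in(0,r^\sharp)$, every $x\in\Lambda$, and every $z\in\cW^\s(x,r)$ one gets $f^n(z)\in\cW^\s(f^n(x))$ and, since $\lambda^n\le\lambda^{a^\sharp}$,
\[
	d^\s\big(f^n(z),f^n(x)\big)\le C\lambda^n\,d^\s(z,x)\le C\lambda^{a^\sharp}\,d^\s(z,x)\le d^\s(z,x)\le r,
\]
that is $f^n(z)\in\cW^\s(f^n(x),r)$, which is~\eqref{agree}.

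Item~(3) then follows by combining (1) and (2). Given $x,x'\in\Lambda$, $r\in(0,r^\sharp)$, and center curves $\xi=\xi(x,r)$, $\xi'=\xi(x',r)$ of inner radius $r$ centered at $x$ and $x'$, item~(1) gives $\xi\subset\cW^\s(x,r)$ and $\xi'\subset\cW^\s(x',r)$, hence by item~(2), for every $i\ge a^\sharp$ one has $f^i(\xi)\subset\cW^\s(f^i(x),r)$ and $f^i(\xi')\subset\cW^\s(f^i(x'),r)$. Since the ambient distance is dominated by $d^\s$, every point of $f^i(\xi)$ lies at ambient distance at most $r$ from $f^i(x)$, and likewise for $\xi'$; thus for $z\in f^i(\xi)$, $z'\in f^i(\xi')$ the triangle inequality yields $d(z,z')\ge d(f^i(x),f^i(x'))-2r$, and taking the infimum over such $z,z'$ gives~\eqref{defdist}. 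The only genuinely delicate step is item~(2): one must invoke the right version of the stable-manifold theorem for a hyperbolic set whose ambient metric is not adapted to its hyperbolicity, and extract from it both the inclusion $f^n(\cW^\s(x,r))\subset\cW^\s(f^n(x),r)$ \emph{uniformly for all $n\ge a^\sharp$} and the comparison between intrinsic and ambient distances used in~(3); with that in hand items~(1) and~(3) are routine.
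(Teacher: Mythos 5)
The paper states this lemma without proof, so there is no argument of the authors' to compare against; your proof is correct on its own terms. Items~(2) and~(3) are handled cleanly: the constant $C\ge1$ in the stable-contraction estimate and the threshold $a^\sharp$ with $C\lambda^{a^\sharp}\le1$ are exactly what is needed since the paper's metric is adapted to the partial hyperbolicity of $f$, not to the hyperbolicity of $\Lambda$, and item~(3) is then a triangle inequality plus the inequality $d\le d^\s$ between ambient and intrinsic distances on a stable leaf. For item~(1) you rightly point to Remark~\ref{remweaint}, but your gloss on why that remark holds --- that the curve is \emph{tangent to $E^\c\subset E^\cs$, so it lies in the integral manifold of $E^\cs$ through $x$} --- is not a valid deduction: weak integrability gives the existence of integral manifolds of $E^\cs$ but not their uniqueness, so a curve tangent to $E^\c$ through $x$ need not lie in the particular leaf $\cW_{\rm loc}^\cs(x)$. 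The correct justification of the remark's ``in particular'' is dynamical: since $\Lambda$ is of contracting type, one has $\|D^\c f^N\|<1$ on a neighborhood of $\Lambda$ for some $N$, so a short enough center curve $\gamma$ through $x\in\Lambda$ has forward iterates of geometrically shrinking length, hence every $y\in\gamma$ is forward-asymptotic to the orbit of $x$ and therefore lies in $\cW^\s(x)$. Since you are citing the remark rather than re-proving it, your proof of the lemma itself stands; I flag the gloss only because that step would be a genuine gap if you needed to re-derive the containment from scratch.
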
 

The following one shows the importance of basic sets to study hyperbolic measures. 

\begin{lemma}\label{l.horseshoe-with-large-entropy}
Let $f\in\PH^1_{\c =1}(M)$. For every $\nu\in\cM_{\rm erg,<0}(f)$, $\varepsilon>0$ and $\delta>0$, there exist a basic set $\Lambda$ of contracting type and $K>0$ having the following properties: 
\begin{enumerate}[ leftmargin=0.7cm ]
\item\label{l.horseshoe-with-large-entropy-i} $h_{\rm top}(f,\Lambda)>h_\nu(f)-\varepsilon$,
\item\label{l.horseshoe-with-large-entropy-iii} given $r^\sharp=r^\sharp(\Lambda)$ as in Lemma~\ref{l.local-stable-manifold}, every center curve $\gamma$ centered at a point $x\in\Lambda$ of inner radius $r^\sharp$ is contained in the stable manifold of $x$,
\item\label{l.horseshoe-with-large-entropy-iv} for every center curve $\gamma$ centered at a point $x\in\Lambda$ of inner radius $r^\sharp$ and every $y\in \cW^{\ss}\big(\gamma,\delta\big)$, and $n\in\mathbb{N}$, one has 
\[
	K^{-1}\cdot e^{n(\chi^\c(\nu)-\e)}\leq\|D^\c f^n(y)\|
	\leq K\cdot e^{n(\chi^\c(\nu)+\e)}.
\]
\end{enumerate}
\end{lemma}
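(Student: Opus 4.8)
\textbf{Proof plan for Lemma~\ref{l.horseshoe-with-large-entropy}.}
The strategy is to first produce the basic set via the ergodic approximation machinery already developed, then upgrade it so that the geometric properties (2) and (3) hold by passing to a suitable iterate and using the weak integrability of $E^\cs$. To begin, apply Theorem~1 of \cite{Gel:16} (as invoked in Lemma~\ref{lem:approx1}) to the hyperbolic ergodic measure $\nu\in\cM_{\rm erg,<0}(f)$: since $f\in\PH^1_{\c=1}(M)$, the Oseledets splitting of $\nu$ has stable part contained in $E^\ss\oplus E^\c$ and unstable part equal to $E^\uu$, so $\nu$ has $\u$-index $d^\uu$ and can be ``ergodically approximated'' by a basic set $\Lambda_0$ of contracting type with $h_{\rm top}(f,\Lambda_0)>h_\nu(f)-\varepsilon/2$ and with $|\chi^\c(\mu')-\chi^\c(\nu)|<\varepsilon/2$ for every $\mu'\in\cM_{\rm erg}(f|_{\Lambda_0})$. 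By the uniform hyperbolicity of $\Lambda_0$ and uniform continuity of $\log\|D^\c f\|$, enlarging the error from $\varepsilon/2$ to $\varepsilon$ absorbs all additive constants; this gives property (1) and the exponent estimate along orbits \emph{in} $\Lambda_0$.

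Next I would extract the geometric statements. By Remark~\ref{remweaint} (weak integrability of $E^\cs$), since $\Lambda_0$ is a basic set of contracting type, there is $r^\sharp=r^\sharp(\Lambda_0)>0$ such that any center curve centered at $x\in\Lambda_0$ of inner radius $r^\sharp$ lies in $\cW^\s(x)$; shrinking $r^\sharp$ if necessary we may assume $r^\sharp<\delta_0/4$ and $r^\sharp<\delta$. This is exactly item~(1) of Lemma~\ref{l.local-stable-manifold}, hence property (2) of the present lemma. For property (3), the point is to control $\|D^\c f^n(y)\|$ for $y$ on a strong stable plaque $\cW^\ss(\gamma,\delta)$ through such a center curve $\gamma\subset\cW^\s(x)$, not just for $y\in\Lambda_0$. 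Write $y$'s forward orbit: since $\gamma\subset\cW^\s(x)$ and $y\in\cW^\ss(\gamma,\delta)$, the point $y$ lies in the stable manifold of $x$, so $d(f^n(y),f^n(x))\to 0$; more quantitatively, by the uniform contraction along $\cW^\ss$ and along center-stable plaques over the basic set, $f^n(y)$ stays within a fixed distance of $f^n(x)\in\Lambda_0$ for all $n\ge 0$, and converges exponentially. Then a standard bounded-distortion argument for the cocycle $\log\|D^\c f\|$ — telescoping $\log\|D^\c f^n(y)\|-\log\|D^\c f^n(x)\|$ as $\sum_{j=0}^{n-1}\big(\log\|D^\c f(f^j(y))\|-\log\|D^\c f(f^j(x))\|\big)$, bounding each term by uniform continuity of $\log\|D^\c f\|$ composed with the exponential decay of $d(f^j(y),f^j(x))$ — shows this sum is bounded by a constant independent of $n$ and $y$. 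Combining with the estimate $K_0^{-1}e^{n(\chi^\c(\nu)-\varepsilon)}\le\|D^\c f^n(x)\|\le K_0 e^{n(\chi^\c(\nu)+\varepsilon)}$ valid for $x\in\Lambda_0$ (already obtained in the first paragraph) yields property (4) after enlarging $K_0$ to the required $K$; here the distortion argument is close in spirit to Lemma~\ref{lemsomecount}, but run along the stable (not unstable) direction, and it is the analogue of Lemma~\ref{l.local-stable-manifold}(3) at the level of derivatives rather than distances.

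The main obstacle I expect is the last point: making the bounded-distortion estimate in property (4) genuinely \emph{uniform} over all $y\in\cW^\ss(\gamma,\delta)$ and all admissible center curves $\gamma$, and over all $n\in\bN$ including small $n$. Uniformity over $y$ and $\gamma$ requires that the geometric radii $r^\sharp$ and $\delta$ be chosen small enough, and that one has uniform exponential contraction of $d(f^n(y),f^n(x))$ with a rate and a pre-factor not depending on the chosen $x\in\Lambda_0$ — this follows from the compactness and uniform hyperbolicity of $\Lambda_0$ together with the uniform normal hyperbolicity of the center-stable plaques, but it must be set up carefully (in particular, one may first need to replace $f$ by a fixed power $f^{a^\sharp}$ as in Lemma~\ref{l.local-stable-manifold}(2), so that local stable plaques map inside local stable plaques, and then transfer the estimate back to $f$ at the cost of another bounded multiplicative constant). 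Handling the finitely many small values $n<a^\sharp$ is then routine: one simply enlarges $K$ to dominate the finitely many ratios $\|D^\c f^n(y)\|/e^{n(\chi^\c(\nu)\pm\varepsilon)}$, exactly as in the definition of $K_0$ in the proof of Proposition~\ref{pro:skel}.
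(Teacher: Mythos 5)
Your argument follows the same route as the paper: approximate $\nu$ by a basic set of contracting type via \cite{Gel:16}, use weak integrability (Lemma~\ref{l.local-stable-manifold}) for item~(2), and control the finite-time exponents at $y\in\cW^\ss(\gamma,\delta)$ by a bounded-distortion telescoping argument exploiting the exponential contraction of $d(f^j(y),f^j(x))$ for $x\in\Lambda$, absorbing the resulting constant into $K$. Your discussion of the uniformity issues (passing to $f^{a^\sharp}$, handling small $n$) is also consistent with how the paper sets things up, so this is essentially the paper's proof made somewhat more explicit at the distortion step.
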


\begin{proof}
The hyperbolic ergodic measure $\nu$ can be approached by basic sets in the weak$\ast$-topology and in entropy (see for instance \cite{KatHas:95,Gel:16}). Thus, there exists a basic set of contracting type $\Lambda$ such that 
\begin{itemize}[ leftmargin=0.7cm ]
\item[(a)] $|h_{\rm top}(f,\Lambda)-h_{\nu}(f)|<\e$;
\item[(b)] for every $\nu'\in\cM_{\rm erg}(f|_{\Lambda})$, one has
\[
	\Big|\int\log\|D^\c f\|\ud\nu'-\chi^\c(\nu)\Big|<\e/4.
\]
\end{itemize}
Item (a) implies item \eqref{l.horseshoe-with-large-entropy-i} of the assertion. Item (b) implies that there exists $\widetilde K>0$ such that for every $x\in\Lambda$ and every $n\in\bN$, one has 
\[
	\widetilde K^{-1}\cdot e^{n(\chi^\c(\nu)-\e/4)}
	\leq\|D^\c f^n(x)\|\leq \widetilde K\cdot e^{n(\chi^\c(\nu)+\e/4)}.
\]
By Lemma~\ref{l.local-stable-manifold}, every center curve centered at a point  $x\in\Lambda$ of inner radius $r^\sharp$ is contained in $\cW^\s(x,r^\sharp)$, proving item \eqref{l.horseshoe-with-large-entropy-iii}. It follows from the uniform continuity of the center bundle, that there exists $\widehat K\geq 1$ such that for every $x\in\Lambda$, $y\in \cW^\s(x,r^\sharp)$, and $n\in\bN$, one has
\[
	\widehat K^{-1}\cdot e^{n(\chi^\c(\nu)-\e/2)}
	\leq\|D^\c f^n(y)\|
	\leq\widehat K\cdot e^{n(\chi^\c(\nu)+\e/2)}.
\]
As $f$ is uniformly contracting along the strong stable foliation, there is $K\geq 1$ such that for every center curve $\gamma$ centered at a point $x\in\Lambda$ of inner radius $r^\sharp$, $y\in \cW^{\ss}\big(\gamma,\delta\big)$, and $n\in\mathbb{N}$, one has 
 \[ 
 	K^{-1}\cdot e^{n(\chi^\c(\nu)-\e)}
	\leq\|D^\c f^n(y)\|
	\leq K\cdot e^{n(\chi^\c(\nu)+\e)}.
\]
This proves \eqref{l.horseshoe-with-large-entropy-iv} and finishes the proof of the lemma.
\end{proof}

We now state the main result in this section.

\begin{proposition}\label{p.separation-in-symbol}
Let $f\in\PH^1_{\c =1}(M)$ and $\delta_0>0$ be as in Remark \ref{remexpansive}. Let $\Lambda$ be a basic set of contracting type of $f$ and let $r^\sharp=r^\sharp(\Lambda)>0$ and $a^\sharp=a^\sharp(\Lambda)\in\bN$ be as in Lemma~\ref{l.local-stable-manifold}. 

Then there exists $\fL=\fL(\Lambda)\in \bN$ with the following properties. 
For every $\e>0$, there exists $b^\sharp=b^\sharp(\Lambda,\varepsilon)>a^\sharp$ such that  for every $n>b^\sharp$, there exist an $(n,\delta_0)$-separated subset $\cS=\cS(n)\subset\Lambda$ and an integer $\ell\in\{0,\ldots,\fL-1\}$ such that 
\begin{itemize}[ leftmargin=0.5cm ]
\item[(a)] $\card(\cS)\geq e^{n(h_{\rm top}(f,\Lambda)-\e)},$
\item[(b)] 	for every $N\in\bN$, there exists an injective map
\[
	\Psi=\Psi^{n,N}\colon\cS^{N}\to\Lambda
\] 
having the following properties: given two points 
\[
	\sX=(x_1,\ldots,x_N),\quad
	\sX'=(x_1',\ldots,x_N')\in \cS^N,
\] 
let $i_0$ be the smallest integer such that $x_{i_0}\neq x_{i_0}'$, then  
\begin{itemize}
	\item[(b1)] for every $i\leq i_0-1$ and $(i-1)(n+\ell)\leq j<(i-1)(n+\ell)+n$, it holds 
	\[
d\big(f^j\circ\Psi(\sX),
	f^j\circ\Psi(\sX')\big)
	<\delta_0/5;
	\]
	\item[(b2)]  there exist $(i_0-1)(n+\ell)+a^\sharp<j_0<(i_0-1)(n+\ell)+n$ such that 
	\[
		d\big(f^{j_0}\circ\Psi(\sX),
			f^{j_0}\circ\Psi(\sX')\big)>4 \delta_0/5.
	\]
\end{itemize}
\item[(c)] for every pair of different points $x,y\in\Image(\Psi)$ and $r\leq r^\sharp$, one has
\[
		\cW^\s (x,r)\cap\cW^\s (y,r)=\emptyset.
\] 	
\end{itemize}
\end{proposition}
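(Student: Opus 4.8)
The plan is to realize $\Psi$ by gluing orbit segments with Bowen's specification property, where the segments are indexed by a suitably chosen $(n,\delta_0)$-separated set of almost full entropy. Write $h\eqdef h_{\rm top}(f,\Lambda)$; we may assume $h>0$, the case $h=0$ being vacuous (take $\cS$ a single point). First I would produce the separated set. Since $f$ is $\delta_0$-entropy expansive (Remark~\ref{remexpansive}), the scale $\delta_0$ already "sees" all of the entropy of $f|_\Lambda$, so for $n$ large there exist $(n,\delta_0)$-separated subsets of $\Lambda$ of cardinality at least $e^{n(h-\varepsilon/2)}$. To also meet the lower index constraint appearing in (b2), fix $a^\sharp=a^\sharp(\Lambda)\in\bN$ as in Lemma~\ref{l.local-stable-manifold}, take a maximal $(n-a^\sharp-1,\delta_0)$-separated set $\cS_0\subset\Lambda$ of cardinality at least $e^{n(h-\varepsilon)}$ (possible for $n$ large, as $a^\sharp$ is fixed), and set $\cS\eqdef f^{-(a^\sharp+1)}(\cS_0)\subset\Lambda$. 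Then $\cS$ is $(n,\delta_0)$-separated with $\card\cS\ge e^{n(h-\varepsilon)}$, and, crucially, any two distinct points of $\cS$ are $\delta_0$-apart at some time $k$ with $a^\sharp<k<n$. This already gives property (a).

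Next I would fix the gluing data. The set $\Lambda$ is basic, hence $f|_\Lambda$ has the specification property; let $N_\ast=N_\ast(\Lambda)\in\bN$ be a gap constant for precision $\delta_0/10$, so that any finite family of orbit segments of $f|_\Lambda$ can be $(\delta_0/10)$-shadowed by a single orbit of $f|_\Lambda$ with all consecutive transition times prescribed to equal $N_\ast$. Put $\fL\eqdef N_\ast+1$ and $\ell\eqdef N_\ast\in\{0,\dots,\fL-1\}$; note these depend only on $\Lambda$. Given $N\in\bN$ and $\sX=(x_1,\dots,x_N)\in\cS^N$, apply specification to the length-$n$ orbit segments of $x_1,\dots,x_N$, concatenated with all transition times equal to $\ell$, to obtain a point $\Psi(\sX)\eqdef y\in\Lambda$ with
\[
	d\big(f^{(i-1)(n+\ell)+j}(y),\,f^j(x_i)\big)<\tfrac{\delta_0}{10}
	\qquad\text{for all }1\le i\le N,\ 0\le j<n .
\]
This defines $\Psi=\Psi^{n,N}\colon\cS^N\to\Lambda$, for every $N$.

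The verification is then a bookkeeping of triangle inequalities. If $x_i=x_i'$ for all $i\le i_0-1$, then on the $i$-th block ($(i-1)(n+\ell)\le j<(i-1)(n+\ell)+n$) both $\Psi(\sX)$ and $\Psi(\sX')$ stay within $\delta_0/10$ of the common segment, so they are within $2\delta_0/10=\delta_0/5$ of each other, giving (b1). If $x_{i_0}\ne x_{i_0}'$, choose $k$ with $a^\sharp<k<n$ and $d(f^k x_{i_0},f^k x_{i_0}')>\delta_0$ (from the construction of $\cS$), and set $j_0\eqdef(i_0-1)(n+\ell)+k$; then $j_0$ obeys the index bounds of (b2) and $d(f^{j_0}\Psi(\sX),f^{j_0}\Psi(\sX'))>\delta_0-2\delta_0/10=4\delta_0/5$, proving (b2); in particular $\Psi$ is injective. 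For (c): if $x=\Psi(\sX)$ and $y=\Psi(\sX')$ are distinct points of $\Image(\Psi)$, pick $j_0>a^\sharp$ as above. Since $x,y\in\Lambda$ and $r\le r^\sharp<\delta_0/4$, Lemma~\ref{l.local-stable-manifold}(2) gives $f^{j_0}(\cW^\s(x,r))\subset\cW^\s(f^{j_0}(x),r)$ and likewise for $y$, so any common point of $\cW^\s(x,r)$ and $\cW^\s(y,r)$ would force $d(f^{j_0}(x),f^{j_0}(y))\le 2r<\delta_0/2$, contradicting $d(f^{j_0}(x),f^{j_0}(y))>4\delta_0/5$; hence $\cW^\s(x,r)\cap\cW^\s(y,r)=\emptyset$. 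Choosing $b^\sharp>a^\sharp$ larger than all thresholds used above finishes the proof.

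The main obstacle I anticipate is not conceptual but the careful matching of scales and times: one must produce a separated set realizing almost the full entropy of $\Lambda$ at precisely the entropy-expansiveness scale $\delta_0$ (this is the one place where $\delta_0$-entropy expansiveness is genuinely needed), and the small shift by $a^\sharp+1$ must be built in so that the separation time in (b2) falls strictly beyond $a^\sharp$, which is exactly what makes Lemma~\ref{l.local-stable-manifold}(2) applicable for (c). Once these are arranged, the uniform transition length $\ell$ is just Bowen's specification with prescribed constant gaps, and the constant $\fL=\fL(\Lambda)$ depends only on the specification function of $\Lambda$ at scale $\delta_0/10$.
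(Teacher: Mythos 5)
Your construction of $\cS$ (the $f^{-(a^\sharp+1)}$-shift trick to force late separation) and your verifications of (a), (b1), (b2), (c) from the shadowing estimate are sound, but there is a genuine gap at the step you describe as ``just Bowen's specification with prescribed constant gaps.'' A basic set $\Lambda$ is transitive but need not be topologically mixing, and a non-mixing hyperbolic basic set does \emph{not} satisfy specification in the strong form you invoke — namely, shadowing an arbitrary finite family of orbit segments with all transition times prescribed to be a fixed constant $N_\ast$. Under the spectral decomposition, $\Lambda$ is a cycle $\widehat\Lambda,f(\widehat\Lambda),\dots,f^{p^\sharp-1}(\widehat\Lambda)$ of pieces on which $f^{p^\sharp}$ is mixing, and one can only glue a length-$n$ segment starting in $\widehat\Lambda$ to another segment starting in $\widehat\Lambda$ if the total time $n+g$ between the two starting points is congruent to $0$ modulo $p^\sharp$ (up to a bounded correction by the specification constant). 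Thus the required gap $g$ depends on $n$, and a single $n$-independent value $N_\ast$ fails for most $n$. This is exactly why the statement only asserts $\ell\in\{0,\dots,\fL-1\}$ rather than fixing one $\ell$: in the paper's proof $\fL=\ell_1+p^\sharp$, and $\ell=\ell_1+j_1(n)$ where $j_1(n)\in\{0,\dots,p^\sharp-1\}$ is chosen so that $\cS(n)\cup f^{n+j_1}(\cS(n))\subset\widehat\Lambda$ (Claims~\ref{clfactergdec}, \ref{refclaimspec}, and \ref{clgetback}).

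To repair your argument you would need two additions. First, $\cS$ must be concentrated in a single piece of the spectral decomposition; your shift by $f^{-(a^\sharp+1)}$ handles the ``separation after time $a^\sharp$'' requirement but says nothing about which piece each point lies in, so a pigeonhole step (as in the paper's Claim~\ref{claimcS}, which uses both a single piece $f^j(\widehat\Lambda)$ and a single Bowen ball $B_{a^\sharp}(y_0,\delta_0/4)$) is needed. Second, specification must be applied to $f^{p^\sharp}$ on the mixing piece $\widehat\Lambda$, with the segment length extended from $n$ to $n+j_1(n)$ so that each segment both starts and ends in $\widehat\Lambda$; this produces the $n$-dependent $\ell$. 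Once these corrections are made, the remaining bookkeeping in your (b1), (b2), (c) matches the paper's argument and is correct.
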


\proof 
We collect two classical facts without proof.

\begin{claim}[Spectral decomposition]\label{clfactergdec}
There exist $p^\sharp\in\bN$ and a decomposition 
\[	\Lambda
	=\widehat\Lambda\cup f(\widehat\Lambda)\cup\ldots\cup f^{p^\sharp-1}(\widehat\Lambda)
\]
such that $\widehat\Lambda, f(\widehat\Lambda),\ldots, f^{p^\sharp-1}(\widehat\Lambda)$ are pairwise disjoint and $f^{p^\sharp}$-invariant compact sets, and for each $0\leq i<p^\sharp$, the map $f^{p^\sharp}\colon f^i(\widehat\Lambda)\to f^i(\widehat\Lambda)$ is topologically mixing.
\end{claim}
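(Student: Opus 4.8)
This is the topological-mixing refinement of Smale's spectral decomposition theorem applied to the basic set $\Lambda$, and the plan is to recall the standard argument. The inputs are three classical facts about a compact, $f$-invariant, locally maximal, topologically transitive, hyperbolic set $\Lambda$: (i) periodic points are dense in $\Lambda$ (shadowing lemma together with transitivity and local maximality); (ii) $\Lambda$ has a local product structure, i.e. there is $\delta>0$ such that for $x,y\in\Lambda$ with $d(x,y)<\delta$ the intersection $\cW^\s(x,\delta)\cap\cW^\u(y,\delta)$ is a single point lying in $\Lambda$; and (iii) $f|_\Lambda$ admits finite Markov partitions of arbitrarily small diameter, hence a finite-to-one semiconjugacy $\rho\colon(\Sigma,\sigma)\to(\Lambda,f)$ from a subshift of finite type which, since $f|_\Lambda$ is transitive, may be taken irreducible (see \cite{Bow:08}). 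I define $p^\sharp$ to be the period of the transition matrix $A$ of $\Sigma$, equivalently $p^\sharp=\gcd\{k\ge 1\colon (A^k)_{jj}>0\text{ for some symbol }j\}$.

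The first step is the well-known decomposition of an irreducible SFT: by Perron--Frobenius theory (or elementary directed-graph theory) the state space splits as $\Sigma=\Sigma_0\sqcup\dots\sqcup\Sigma_{p^\sharp-1}$ into nonempty clopen sets with $\sigma(\Sigma_i)=\Sigma_{i+1\bmod p^\sharp}$ and with $\sigma^{p^\sharp}|_{\Sigma_i}$ topologically mixing. I then set $\widehat\Lambda\eqdef\rho(\Sigma_0)$, so that $f^i(\widehat\Lambda)=\rho(\Sigma_i)$. Compactness of each $\Sigma_i$ and continuity of $\rho$ make these sets compact; the identity $f(\rho(\Sigma_i))=\rho(\sigma(\Sigma_i))=\rho(\Sigma_{i+1})$ gives the cyclic permutation, and in particular each $f^i(\widehat\Lambda)$ is $f^{p^\sharp}$-invariant; finally $f^{p^\sharp}|_{\widehat\Lambda}$ is a continuous factor of the topologically mixing map $\sigma^{p^\sharp}|_{\Sigma_0}$ and is therefore itself topologically mixing. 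Since $\rho$ is surjective this already yields $\Lambda=\widehat\Lambda\cup f(\widehat\Lambda)\cup\dots\cup f^{p^\sharp-1}(\widehat\Lambda)$; the one remaining point is that these $p^\sharp$ pieces are pairwise \emph{disjoint}.

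Disjointness is the delicate step, because $\rho$ is not injective, so a priori $\rho(\Sigma_i)$ and $\rho(\Sigma_j)$ might overlap for $i\neq j$. The plan is to first identify $p^\sharp$ with the greatest common divisor of the periods of all periodic orbits of $f|_\Lambda$: a periodic point of $\Sigma$ has period a multiple of $p^\sharp$ and is mapped by $\rho$ to an $f$-periodic point of the same period (away from the nowhere-dense boundary of the Markov partition, $\rho$ is injective on periodic points), while density of periodic points in $\Sigma$ shows that cycle lengths in the transition graph --- hence periods of periodic orbits of $f|_\Lambda$ --- generate $p^\sharp\bZ$. Granting this, if $x\in\rho(\Sigma_i)\cap\rho(\Sigma_j)$ then, approximating $x$ by periodic points and using the shadowing lemma in $\Lambda$, one produces $f$-periodic orbits arbitrarily close to $x$ whose itineraries meet the clopen pieces with ``phases'' forcing $i\equiv j\pmod{p^\sharp}$, a contradiction; I expect this bookkeeping of the phase mod $p^\sharp$ under the non-injective coding to be the main technical obstacle. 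A coding-free alternative is to define the pieces intrinsically: fix a periodic point $P\in\Lambda$ whose period is a multiple of $p^\sharp$ and let $\widehat\Lambda$ be the closure of the set of $x\in\Lambda$ joined to $P$ by a finite chain of transverse heteroclinic intersections (using the local product structure) of total length $\equiv 0\pmod{p^\sharp}$; the phase is then recorded by construction, so disjointness is automatic, transitivity of $f|_\Lambda$ shows the $p^\sharp$ translates cover $\Lambda$, and the mixing of $f^{p^\sharp}$ on each piece follows from transitivity together with the non-existence of a finer cyclic splitting. Either route is entirely standard; see \cite{Bow:08,KatHas:95} for complete proofs.
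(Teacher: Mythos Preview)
Your sketch is correct and is the standard argument for the mixing spectral decomposition of a basic set. The paper, however, gives no proof at all: it introduces this claim with the sentence ``We collect two classical facts without proof'' and simply states the decomposition as well known (the implicit references being \cite{Bow:08,KatHas:95}). So your write-up supplies strictly more than the paper does; there is nothing to compare beyond noting that you have recalled the classical proof that the authors chose to omit.
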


The following result, that we state without proof (see, for example, \cite{KatHas:95}), is a consequence of the fact that $f^{p^\sharp}\colon f^i(\widehat\Lambda)\to f^i(\widehat\Lambda)$ has the specification property for each $i$.

\begin{claim}[Specification property]\label{refclaimspec}
	There exists $\ell_1\in\bN$ so that every finite family of orbit segments which start and end in $\widehat\Lambda$ are $\delta_0/10$-shadowed by a true orbit segment in $\Lambda$ with spacing time $\ell_1$.  Analogous result holds  for orbit segments which start and end in $f^i(\widehat\Lambda)$, $i\in\{1,\ldots,p^\sharp-1\}$.
\end{claim}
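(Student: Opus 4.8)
The plan is to derive this from Bowen's specification theorem, applied not to $f$ directly but to the return power $g\eqdef f^{p^\sharp}$ restricted to the topologically mixing pieces of the spectral decomposition in Claim~\ref{clfactergdec}. Writing $\Lambda_i\eqdef f^i(\widehat\Lambda)$ for $i=0,\ldots,p^\sharp-1$ (so $\Lambda_0=\widehat\Lambda$), I would first note that each $\Lambda_i$ is a locally maximal hyperbolic set for $g$: the sets $\Lambda_i$ are pairwise disjoint compact subsets of the locally maximal set $\Lambda$, hence each is relatively open in $\Lambda$ and therefore locally maximal for $g$, while $g|_{\Lambda_i}$ is topologically mixing by Claim~\ref{clfactergdec}. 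Bowen's specification theorem (see \cite{Bow:08,Sig:74,KatHas:95}) then provides, for every $\e>0$, a gap constant $m_i(\e)\in\bN$ such that any finite family of $g$-orbit segments contained in $\Lambda_i$ and separated by $g$-gaps $\geq m_i(\e)$ is $\e$-shadowed by a genuine orbit of $g$ lying in $\Lambda_i$.

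Second, I would convert $g$-shadowing into $f$-shadowing by a modulus-of-continuity argument. Since the finitely many maps $f,f^2,\ldots,f^{p^\sharp-1}$ are uniformly continuous on the compact manifold $M$, there is $\e_0\in(0,\delta_0/10)$ with $d(u,v)<\e_0\Rightarrow d(f^t(u),f^t(v))<\delta_0/10$ for all $0\leq t<p^\sharp$. Hence, if a $g$-orbit $\e_0$-shadows (in the metric) another $g$-orbit, then, read as an $f$-orbit, it $\delta_0/10$-shadows the corresponding $f$-orbit, because $d(f^{sp^\sharp+t}(w),f^{sp^\sharp+t}(y))=d(f^t(g^s(w)),f^t(g^s(y)))$. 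I would then set $m^\sharp\eqdef\max_{0\leq i<p^\sharp}m_i(\e_0)$ and declare $\ell_1\eqdef p^\sharp\,m^\sharp$.

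Third comes the bookkeeping with the period $p^\sharp$. Because $f^{p^\sharp}(\widehat\Lambda)=\widehat\Lambda$ while $\widehat\Lambda,f(\widehat\Lambda),\ldots,f^{p^\sharp-1}(\widehat\Lambda)$ are pairwise disjoint, an orbit segment $(x,fx,\ldots,f^{n}x)$ with $x,f^{n}x\in\widehat\Lambda$ must have $p^\sharp\mid n$, so it is exactly an orbit segment of $g$ in $\Lambda_0$ of $g$-length $n/p^\sharp$. Thus a finite family of such $f$-orbit segments, concatenated with $f$-spacing $\ell_1=p^\sharp m^\sharp$, corresponds (after dividing all indices by $p^\sharp$) to a family of $g$-orbit segments in $\Lambda_0$ with $g$-spacing $m^\sharp\geq m_0(\e_0)$; Bowen's theorem for $g|_{\Lambda_0}$ yields a shadowing point $w\in\Lambda_0\subset\Lambda$, which by the conversion above $\delta_0/10$-shadows the original $f$-configuration, the whole orbit remaining in the $f$-invariant set $\Lambda$. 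The very same argument, with $\Lambda_i$ in place of $\Lambda_0$ — and using that $\e_0$ and $m^\sharp$, hence $\ell_1$, were chosen uniformly in $i$ — gives the stated analogue for orbit segments starting and ending in $f^i(\widehat\Lambda)$; larger spacings (multiples of $p^\sharp$ exceeding $\ell_1$) are handled verbatim.

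I do not expect a serious obstacle: the single non-formal point is the second step, namely keeping the shadowing error under control when passing from the metric of $g$ to that of $f$, and this is resolved by the standard device of absorbing the moduli of continuity of the finitely many iterates $f,\ldots,f^{p^\sharp-1}$ into the $\delta_0/10$ budget. Everything else is the spectral-decomposition bookkeeping recorded above.
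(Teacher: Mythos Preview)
Your argument is correct and is precisely the standard derivation the paper alludes to: the paper does not prove this claim at all but simply states it ``as a consequence of the fact that $f^{p^\sharp}\colon f^i(\widehat\Lambda)\to f^i(\widehat\Lambda)$ has the specification property for each $i$'' and cites \cite{KatHas:95}. You have supplied exactly those details---Bowen specification for the mixing return map, the uniform-continuity transfer from $g$-shadowing to $f$-shadowing, and the divisibility bookkeeping---so there is nothing to compare.
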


Let $a^\sharp=a^\sharp(\Lambda)$ as in Lemma~\ref{l.local-stable-manifold}. Let 
\begin{equation}\label{deflsharp}
\fL\eqdef \ell_1+p^\sharp.
\end{equation} 
The following claim provides the set $\cS(n)$ and proves property (a) in the proposition. 

\begin{claim}[The set $\cS(n)$]\label{claimcS}
	For every $\e>0$, there exists $b^\sharp>a^\sharp$ such that for every $n>b^\sharp$, there exists a $(n,\delta_0)$-separated set $\cS(n)$ which satisfies 
\begin{itemize}[ leftmargin=0.5cm ]
	\item  $\card(\cS(n))\geq e^{n(h_{\rm top}(f,\Lambda)-\e)}$;
	\item $\cS(n)\subset f^j(\widehat\Lambda)$ for some $0\leq j<p^\sharp$;
	\item  $\cS(n)$ is contained in  $B_{a^\sharp}(y_0,\delta_0/4)$ for some point $y_0\in\Lambda.$
\end{itemize}
\end{claim}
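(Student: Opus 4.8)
The plan is to obtain $\cS(n)$ from a single near‑maximal $(n,\delta_0)$‑separated subset of $\Lambda$ by passing twice to a subset via pigeonhole. Write $h\eqdef h_{\rm top}(f,\Lambda)$ and let $s_n(\delta_0,\Lambda)$ denote the largest cardinality of an $(n,\delta_0)$‑separated subset of $\Lambda$. The only delicate point is to show that $\lim_{n\to\infty}\tfrac1n\log s_n(\delta_0,\Lambda)=h$, so that for every $\e>0$ there is $b_0=b_0(\Lambda,\e)$ with $s_n(\delta_0,\Lambda)\ge e^{n(h-\e/2)}$ for all $n\ge b_0$. That the $\limsup$ of $\tfrac1n\log s_n(\delta_0,\Lambda)$ equals $h$ is precisely the content of $\delta_0$‑entropy expansiveness (Remark~\ref{remexpansive}); that this $\limsup$ is in fact a genuine limit follows by a standard argument from the topological mixing and the specification property of $f^{p^\sharp}|_{\widehat\Lambda}$ (Claims~\ref{clfactergdec} and~\ref{refclaimspec}): one concatenates, via specification with a fixed gap time, the orbit segments of a separated set realizing the $\limsup$ at some large time, thereby filling in the missing lower bounds at all intermediate times; entropy expansiveness guarantees that the small loss of scale in the shadowing step costs nothing in the exponential growth rate (see e.g.\ \cite{KatHas:95}). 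This is the step I expect to be the main obstacle — it is where both the mixing/specification structure of the basic set and the specific scale $\delta_0$ of entropy expansiveness are genuinely used; the remainder is bookkeeping.

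Next I would confine the separated set to one piece of the spectral decomposition. Since $\Lambda=\widehat\Lambda\cup f(\widehat\Lambda)\cup\cdots\cup f^{p^\sharp-1}(\widehat\Lambda)$ with the pieces pairwise disjoint (Claim~\ref{clfactergdec}), any $(n,\delta_0)$‑separated set $E_n\subset\Lambda$ with $\card(E_n)\ge e^{n(h-\e/2)}$ must meet some $f^j(\widehat\Lambda)$ in at least $e^{n(h-\e/2)}/p^\sharp$ points; once $e^{n\e/4}\ge p^\sharp$ this is $\ge e^{n(h-3\e/4)}$. The set $E_n^j\eqdef E_n\cap f^j(\widehat\Lambda)$ is again $(n,\delta_0)$‑separated and is contained in $f^j(\widehat\Lambda)$.

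Finally I would confine it further to a single Bowen ball of order $a^\sharp$. Because $d_{a^\sharp}$ is a metric on the compact set $\Lambda$, there are finitely many points $y_1,\dots,y_{L^\sharp}\in\Lambda$, with $L^\sharp=L^\sharp(\Lambda,a^\sharp,\delta_0)$ independent of $n$, whose Bowen balls $B_{a^\sharp}(y_k,\delta_0/4)$ cover $\Lambda$. Hence some $B_{a^\sharp}(y_0,\delta_0/4)$ contains at least $\card(E_n^j)/L^\sharp$ points of $E_n^j$, which for $e^{n\e/4}\ge L^\sharp$ is $\ge e^{n(h-\e)}$. Setting $\cS(n)\eqdef E_n^j\cap B_{a^\sharp}(y_0,\delta_0/4)$ and $b^\sharp\eqdef\max\{\,a^\sharp+1,\ b_0(\Lambda,\e),\ \lceil\tfrac4\e\log p^\sharp\rceil,\ \lceil\tfrac4\e\log L^\sharp\rceil\,\}$, for every $n>b^\sharp$ the set $\cS(n)$ is $(n,\delta_0)$‑separated, is contained in $f^j(\widehat\Lambda)\subset\Lambda$, is contained in $B_{a^\sharp}(y_0,\delta_0/4)$ with $y_0\in\Lambda$, and has cardinality $\ge e^{n(h-\e)}$. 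Note $b^\sharp$ depends only on $\Lambda$ and $\e$, since $p^\sharp$ and $L^\sharp$ do; this gives the asserted properties (a) and the two localization properties of the claim.
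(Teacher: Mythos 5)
Your proof is correct and follows the same two–pigeonhole strategy as the paper's argument: establish a lower bound $s_n(\delta_0,\Lambda)\ge e^{n(h-\e/2)}$ valid for \emph{all} large $n$, then cut down first by the finite spectral decomposition and then by a finite cover of $\Lambda$ by $(a^\sharp,\delta_0/4)$-Bowen balls. Where the paper simply asserts the two-sided count \eqref{maxcarsep} by citing Bowen's Corollary 2.5, you correctly flag that entropy expansiveness by itself only controls the $\limsup$, and that promoting it to a uniform-in-$n$ lower bound additionally uses the specification/mixing structure of the basic set; that is exactly the step the paper's citation is silently covering, and your sketch of the repair is the right one.
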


\begin{proof}
Fix some $(a^\sharp,\delta_0/4)$-separated set of maximal cardinality $\widetilde\cS\subset \Lambda$. The fact that $\widetilde\cS\subset\Lambda$ is an $(a^\sharp,\delta_0/4)$-separated set of maximal cardinality, together with Claim \ref{clfactergdec}, implies that
\begin{equation}\label{maxcardcons}
	\Lambda
	= \bigcup_{i=0}^{p^\sharp-1}f^i(\widehat\Lambda)
	\subset\bigcup_{y\in \widetilde\cS}B_{a^\sharp}(y,\delta_0/4).
\end{equation}
By the compactness of $M$, the set $\widetilde\cS$ is finite. 

As $f$ is $\delta_0$-entropy expansive, by \cite[Corollary 2.5]{Bow:72}, for every $\e>0,$ there exists $n_1\in\bN$ such that for every $n\geq n_1$, every  $(n,\delta_0/4)$-separated set $\widehat\cS(n)\subset\Lambda$ with maximal cardinality satisfies
\begin{equation}\label{maxcarsep}
	 e^{n(h_{\rm top}(f,\Lambda)-\e/2)}
	 \le \card\big( \widehat\cS(n)\big)
	 \le e^{n(h_{\rm top}(f,\Lambda)+\e/2)}.
\end{equation}	
Hence, it follows straightforwardly from \eqref{maxcarsep}, that there exists $n_2\in\bN$ such that for every $n\geq \max\{n_1,n_2\}$,  we have
\[
	e^{n(h_{\rm top}(f,\Lambda)-\e)}
	\le \frac{1}{p^\sharp}\frac{\card\big( \widehat\cS(n)\big)}{\card \big(\widetilde\cS\big)}
	\le e^{n(h_{\rm top}(f,\Lambda)+\e)}.	
\] 
Let 
\[
	b^\sharp\eqdef\max\{a^\sharp,n_1, n_2\}.
\]	 
Consider now $n\ge b^\sharp$ and any $(n,\delta_0)$-separated set with maximal cardinality $\widehat\cS(n)\subset\Lambda$. Hence, by the pigeonhole principle applied to the cover in \eqref{maxcardcons}, there exist $0\leq j<p^\sharp$ and $y_0\in\widetilde\cS$ such that
\[\begin{split}
	\card\big(\widehat\cS(n)\cap f^j(\widehat\Lambda)\cap B_{a^\sharp}(y_0,\delta_0/4)\big)
	&\ge \frac{1}{p^\sharp}\frac{1}{\card\big(\widetilde\cS\big)}\card(\widehat\cS(n))\\
	&\ge e^{n(h_{\rm top}(f,\Lambda)-\varepsilon)}.
\end{split}\]
Taking $\cS(n)\eqdef \widehat\cS(n)\cap f^j(\widehat\Lambda)\cap B_{a^\sharp}(y_0,\delta_0/4)$, the claim follows.
\end{proof}

Without loss of generality, we can assume that in Claim \ref{claimcS} it holds $j=0$ and hence $\cS(n)\subset\widehat\Lambda$. The following is a consequence of Claim \ref{clfactergdec}.

\begin{claim}\label{clgetback}
	For every $n>b^\sharp$, there exists $j_1=j_1(n)$ satisfying $0\leq j_1\leq p^\sharp-1$ and 
\[\cS(n)\cup f^{n+j_1}(\cS(n))\subset \widehat\Lambda.\] 
\end{claim}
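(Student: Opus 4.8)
The plan is to read off $j_1$ directly from the cyclic structure of the spectral decomposition in Claim \ref{clfactergdec}, so this will be a short bookkeeping argument. Recall that $\Lambda=\widehat\Lambda\cup f(\widehat\Lambda)\cup\cdots\cup f^{p^\sharp-1}(\widehat\Lambda)$ with the pieces pairwise disjoint and each $f^{p^\sharp}$-invariant; in particular $f^{p^\sharp}(\widehat\Lambda)=\widehat\Lambda$, and hence for every integer $m\ge 0$ one has $f^m(\widehat\Lambda)=f^{m\bmod p^\sharp}(\widehat\Lambda)$, which coincides with $\widehat\Lambda$ precisely when $p^\sharp$ divides $m$.

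First I would invoke the normalization made just before the claim, namely that we may assume $\cS(n)\subset\widehat\Lambda$. Then, given $n>b^\sharp$, I would take $j_1=j_1(n)\in\{0,\ldots,p^\sharp-1\}$ to be the unique residue with $n+j_1\equiv 0\pmod{p^\sharp}$, that is, $j_1=\bigl(p^\sharp-(n\bmod p^\sharp)\bigr)\bmod p^\sharp$. With this choice $p^\sharp\mid (n+j_1)$, so $f^{n+j_1}(\cS(n))\subset f^{n+j_1}(\widehat\Lambda)=\widehat\Lambda$; combining with $\cS(n)\subset\widehat\Lambda$ yields $\cS(n)\cup f^{n+j_1}(\cS(n))\subset\widehat\Lambda$, as claimed.

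There is essentially no obstacle here: the statement is a periodicity consequence of the cyclic spectral decomposition and uses nothing beyond Claim \ref{clfactergdec} together with the normalization $\cS(n)\subset\widehat\Lambda$. The only point worth a moment's care is the edge case $p^\sharp\mid n$, in which one simply takes $j_1=0$ (and indeed $f^n(\cS(n))\subset f^n(\widehat\Lambda)=\widehat\Lambda$ already); note also that the choice of $j_1$ depends on $n$ only through its residue modulo $p^\sharp$, which is consistent with the notation $j_1=j_1(n)$.
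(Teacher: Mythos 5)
Your proof is correct and takes exactly the route the paper intends (the claim is stated there without proof as an immediate consequence of Claim~\ref{clfactergdec}): after the normalization $\cS(n)\subset\widehat\Lambda$, one simply chooses $j_1\in\{0,\ldots,p^\sharp-1\}$ so that $p^\sharp$ divides $n+j_1$ and uses $f^{p^\sharp}$-invariance of $\widehat\Lambda$. Nothing to add.
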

 
Let us now prove property (b) in the proposition.

\begin{claim}
	For every $n>b^\sharp$ and $N\in\bN$, there exists an injective map $\Psi\colon (\cS(n))^{N}\to\Lambda$ satisfying properties (b1) and (b2).
\end{claim}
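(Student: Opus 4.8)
The plan is to build $\Psi(\sX)$, for $\sX=(x_1,\dots,x_N)\in(\cS(n))^{N}$, by concatenating the length-$n$ orbit segments of the coordinates $x_1,\dots,x_N$ — one after another, separated by a fixed gap — using the specification property of Claim~\ref{refclaimspec}, and then to read properties (b1)--(b2) off the resulting shadowing estimate and injectivity off (b2).

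First I would fix the integer $\ell$ of the proposition. By Claim~\ref{clgetback} there is $j_1=j_1(n)\in\{0,\dots,p^\sharp-1\}$ with $\cS(n)\cup f^{n+j_1}(\cS(n))\subset\widehat\Lambda$ (recall $\cS(n)\subset\widehat\Lambda$ from Claim~\ref{claimcS}), and I set $\ell\eqdef j_1+\ell_1$; by \eqref{deflsharp} this lies in $\{0,\dots,\fL-1\}$. Given $\sX=(x_1,\dots,x_N)$, I apply Claim~\ref{refclaimspec} to the finite family of orbit segments
\[
	\gamma_i\eqdef\big(x_i,f(x_i),\dots,f^{n+j_1}(x_i)\big),\qquad i=1,\dots,N,
\]
each of which starts and ends in $\widehat\Lambda$, with spacing time $\ell_1$. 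This produces a point $\Psi(\sX)$ whose orbit lies in $\Lambda$ (by local maximality of the basic set) and which, since each block has length exactly $(n+j_1)+\ell_1=n+\ell$, satisfies
\[
	d\big(f^{(i-1)(n+\ell)+t}(\Psi(\sX)),\,f^{t}(x_i)\big)<\frac{\delta_0}{10}
	\qquad\text{for all }i\le N\text{ and }0\le t\le n+j_1.
\]
This defines $\Psi=\Psi^{n,N}\colon(\cS(n))^{N}\to\Lambda$.

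Next I would verify (b1)--(b2). Let $i_0$ be the least index with $x_{i_0}\ne x_{i_0}'$. For $i<i_0$ one has $\gamma_i=\gamma_i'$, so writing $j=(i-1)(n+\ell)+t$ with $0\le t\le n-1\le n+j_1$, the triangle inequality in the displayed estimate (applied to $\Psi(\sX)$ and $\Psi(\sX')$, which shadow the same $x_i$) gives $d(f^{j}\Psi(\sX),f^{j}\Psi(\sX'))<2\delta_0/10=\delta_0/5$, which is (b1). For (b2): since $x_{i_0}\ne x_{i_0}'$ lie in the $(n,\delta_0)$-separated set $\cS(n)$, there is $k<n$ with $d(f^{k}(x_{i_0}),f^{k}(x_{i_0}'))>\delta_0$; and because $\cS(n)\subset B_{a^\sharp}(y_0,\delta_0/4)$ these two points stay $\delta_0/2$-close during the first $a^\sharp$ iterates, so necessarily $k>a^\sharp$. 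Setting $j_0\eqdef(i_0-1)(n+\ell)+k$, which satisfies $(i_0-1)(n+\ell)+a^\sharp<j_0<(i_0-1)(n+\ell)+n$, and applying the shadowing estimate with $i=i_0$, $t=k$ to both $\sX$ and $\sX'$, I obtain
\[
	d\big(f^{j_0}\Psi(\sX),f^{j_0}\Psi(\sX')\big)\ge d\big(f^{k}(x_{i_0}),f^{k}(x_{i_0}')\big)-\frac{2\delta_0}{10}>\delta_0-\frac{\delta_0}{5}=\frac{4\delta_0}{5},
\]
which is (b2). In particular $\Psi(\sX)\ne\Psi(\sX')$ whenever $\sX\ne\sX'$, so $\Psi$ is injective.

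The routine parts are the triangle-inequality bookkeeping and the arithmetic with the constants $\delta_0/10,\ \delta_0/5,\ 4\delta_0/5$. The point that requires care is that all concatenation blocks must have the \emph{same} length $n+\ell$ for a single $\ell$: this rests on Claim~\ref{clgetback} returning the \emph{same} return index $j_1(n)$ for every point of $\cS(n)$ (which holds because $\cS(n)\subset\widehat\Lambda$) and on invoking the specification property with a \emph{fixed} spacing time $\ell_1$ rather than a mere lower bound; the small index convention distinguishing $k>a^\sharp$ from $k\ge a^\sharp$ is absorbed into the choice of $a^\sharp$ in Lemma~\ref{l.local-stable-manifold}.
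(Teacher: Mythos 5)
Your proof is correct and follows essentially the same route as the paper: you fix the same $\ell = j_1(n)+\ell_1$, apply the specification claim to the orbit segments of length $n+j_1(n)$ with spacing $\ell_1$, and then derive (b1), (b2), and injectivity via the same triangle-inequality estimates and the $(n,\delta_0)$-separation combined with the $B_{a^\sharp}(y_0,\delta_0/4)$ containment. The one place where you over-claim slightly (``necessarily $k>a^\sharp$'' where the argument only yields $k\ge a^\sharp$) is the same minor off-by-one already present in the paper's proof and, as you note, can be absorbed into the choice of $a^\sharp$.
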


\begin{proof}	
We start with the definition of $\Psi$. Fix $N\in\bN$ and $n>b^\sharp$ and let 
\[
	\ell=\ell(n)\eqdef\ell_1+j_1(n)
	<\fL=\ell_1+p^\sharp.
\]	
Consider a tuple $(x_1,\ldots,x_{N})\in \cS(n)^{N}$. By construction and by Claim \ref{clgetback}, for every $1\le i\le N$, the point $x_i$ and the ``end-point'' $f^{n+j_1(n)}(x_i)$ are in $\widehat\Lambda$. Hence, by Claim \ref{refclaimspec} applied to $\widehat\Lambda$ and $\delta_0/10$ and the orbit segments
\[
	x_1,\ldots,f^{n+j_1(n)}(x_1)\quad
	x_2,\ldots,f^{n+j_2(n)}(x_2)\quad\ldots\quad 
	x_N,\ldots,f^{n+j_1(n)}(x_N),
\] 
there exists $x\in\Lambda$ such that for every $1\leq i\leq N$ and $0\leq j<n$, one has in particular
\begin{equation}\label{consspec}
	d\big(f^{j+(i-1)(n+\ell)}(x),f^j(x_i)\big)<\delta_0/10.
\end{equation}	
We define a map  $\Psi\colon(\cS(n))^N\to\Lambda$ by letting 
\[
	\Psi(x_1,\ldots,x_{N})\eqdef x.
\]

Let us now verify properties (b1) and (b2). Given two $N$-tuples
\[
	\sX=(x_1,\ldots,x_N), \quad
	\sX'=(x_1',\ldots,x_N')\in (\cS(n))^N,
\] 
let $i_0$ be the integer such that $x_{i_0}\neq x_{i_0}'$ and $x_{i}= x_{i}'$ for every $i<i_0$. By the definition of $\Psi$ and  \eqref{consspec}, for every $i\leq i_0-1$ and $0\leq j<n$,  one has that 
\[\begin{split}
	&d\big(f^{j+(i-1)(n+\ell)}\circ\Psi(\sX),f^{j+(i-1)(n+\ell)}\circ\Psi(\sX')\big)
	\\
	&\leq d\big(f^{j+(i-1)(n+\ell)}\circ\Psi(\sX),f^j(x_i)\big)
	+	d\big(f^{j}(x_i),f^{j+(i-1)(n+\ell)}\circ\Psi(\sX')\big)	\\
	&= d\big(f^{j+(i-1)(n+\ell)}\circ\Psi(\sX),f^{j}(x_i)\big)
	+	d\big(f^{j}( x_i'),f^{j+(i-1)(n+\ell)}\circ\Psi(\sX')\big)\\
	&<\delta_0/10+\delta_0/10=\delta_0/5.
\end{split}\]
This is property (b1). 

To prove (b2), by Claim \ref{claimcS}, $\cS(n)$ is an $(n,\delta_0)$-separated set. Hence, as $x_{i_0}$ and $ x_{i_0}'$ are distinct points in $\cS(n)$, there exists $0\le k_0<n$ such that
\begin{equation}\label{eq:separated-at-k0}
	d\big(f^{k_0}(x_{i_0}),f^{k_0}(x_{i_0}')\big)>\delta_0.
\end{equation}
Moreover, as Claim \ref{claimcS} also asserts that $\cS(n)\subset B_{a^\sharp}(y_0,\delta_0/4)$ for some point $y_0\in\Lambda$, for every $k=0,\ldots,a^\sharp-1$, we get
\[\begin{split}
	d\big(f^k(x_{i_0}),f^k( x_{i_0}')\big)
	&\le d(f^k(x_{i_0}),f^k(y_0))+d(f^k(y_0),f^k(x_{i_0}'))
	\le \delta_0/4+\delta_0/4\\
	&= \delta_0/2
	<\delta_0	.
\end{split}\]
Hence, by \eqref{eq:separated-at-k0}, we get $n>k_0\ge a^\sharp$. Then, we get	
\[\begin{split}
	&d\big(f^{(i_0-1)(n+\ell)+k_0}\circ\Psi(\sX),f^{(i_0-1)(n+\ell)+k_0}\circ\Psi(\sX')\big)\\
	&\geq d\big(f^{k_0}(x_{i_0}),f^{k_0}({x}_{i_0}')\big)
		-d\big(f^{(i_0-1)(n+\ell)+k_0}\circ\Psi(\sX),f^{k_0}(x_{i_0})\big)\\
	&\phantom{\ge}
		-d\big(f^{(i_0-1)(n+\ell)+k_0}\circ\Psi(\sX'),f^{k_0}( x_{i_0}')\big)\\
	\text{\small(by \eqref{eq:separated-at-k0} and \eqref{consspec})}\quad	
	&>\delta_0-\delta_0/10-\delta_0/10
	=4 \delta_0/5.
\end{split}\]
This proves property (b2). 
\end{proof}

It remains to prove property (c). Let $r\le r^\sharp$ and recall that $r^\sharp\in(0,\delta_0/4)$. For every two different points $x,y\in\im(\Psi)$, by property (b2), there exists $j_0> a^\sharp$ such that $d(f^{j_0}(x),f^{j_0}(y))>4\delta_0/5$. By contradiction, suppose that there is $z\in\cW^\s(x,r)\cap\cW^\s(y,r)$. Then, by \eqref{agree} for all $j\ge a^\sharp$ one has 
\[d(f^j(x),f^j(y))\leq d(f^j(x),f^j(z))+d(f^j(z),f^j(y))\le2r<\delta_0/2,\] 
which contradicts with $d(f^{j_0}(x),f^{j_0}(y))>4\delta_0/5$. 
This finishes the proof of Proposition~\ref{p.separation-in-symbol}.
\endproof

\section{A fractal set with large entropy and forward center exponent zero}\label{secfractalset}

The heart of this section is the construction of a limsup set with \emph{forward} zero center Lyapunov exponent  and large entropy, whose existence and properties are stated in Theorem \ref{THETHEObisbis}. This theorem contains combinatorial and fractal properties (item \eqref{THETHEObisbis-3}), proved in Section \ref{secproofTHEOBIS}, and ergodic ingredients (items \eqref{THETHEObisbis-1} and \eqref{THETHEObisbis-2}), shown in Sections \ref{proofitem1} and \ref{secproofTHEO}, respectively.  Theorem \ref{THETHEObisbis} is our main step towards the proof of Theorem \ref{theorem2}. 

We first introduce some terminology. In analogy to the classical definition of Bowen balls (recall  \eqref{defBowBal}), given $n\in\bN$, $\varepsilon>0$, and a center curve $\gamma$, let
\[
	B^\cu_n(\gamma,\varepsilon)
	\eqdef \bigcap_{i=0}^{n-1}f^{-i}\Big(\overline{\cW^\uu(f^i(\gamma),\varepsilon)}\Big).	
\]
Given $\delta>0$, sequences of natural numbers $(t_k)_k$ and $(T_k)_k$, and a sequence  $(\mcE_k)_k$  of families of center curves, let 
\begin{equation}\label{defLL}
	L=L\big(\delta,(t_k)_k,(T_k)_k,(\mcE_k)_k\big)
	\eqdef\bigcap_{j\in\bN} L_j,\quad\text{ where }\quad
	L_j
	\eqdef \bigcup_{\gamma_j\in \mcE_j}B^{\cu}_{t_{j-1}+T_j}\big(\gamma_j,\delta/2^{j-1}\big). 
\end{equation}
We will see in the proof of the following theorem that $(L_k)_k$ is a decreasing nested sequence of compact sets and therefore $L$ is compact nonempty.

\begin{theorem}\label{THETHEObisbis}
	Let $f\in\BM^1(M)$. For every $\delta>0$ sufficiently small, there are increasing sequences $(t_k)_k$ and $(T_k)_k$, and a sequence  $(\mcE_k)_k$ of families of center curves such that the set $L=L\big(\delta,(t_k)_k,(T_k)_k,(\mcE_k)_k\big)$ in \eqref{defLL}
has the following properties:
\begin{enumerate}[ leftmargin=0.7cm ]
\item\label{THETHEObisbis-3} $L$ is a compact nonempty set that consists of pairwise disjoint non-degenerate compact center curves $\gamma_\infty$; each $\gamma_\infty$ is $(\c,\cW^\uu(\xi,2\delta))$-complete for some $\xi\in \mcE_1$,
\item\label{THETHEObisbis-1} $h_{\rm top}(f,L)= \h(f)$, where $\h(f)$ is given in \eqref{outofmind},
\item\label{THETHEObisbis-2} for every $x\in L$, $\lim_{n\to+\infty}\frac{1}{n}\log\|D^{\c}f^n(x)\|=0$ and this convergence is uniform on $L$.
\end{enumerate}	
\end{theorem}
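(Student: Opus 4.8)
The plan is to realize $L$ as the nested intersection in \eqref{defLL} of sets built by an iterated concatenation of center curves which, at each stage, branches into exponentially many choices governed by separated sets inside contracting basic sets. First I would fix the data. Since $\h(f)=\h^-(f)=\lim_{\alpha\nearrow0}\cH(\alpha)$ by Theorem~\ref{theorem1}, I take a sequence $\e_k\searrow0$ and ergodic measures $\mu_k\in\cM_{\rm erg,<0}(f)$ with $\chi^\c(\mu_k)\nearrow0$ and $h_{\mu_k}(f)\to\h(f)$. For each $k$, Lemma~\ref{l.horseshoe-with-large-entropy} provides a basic set $\Lambda_k$ of contracting type with $h_{\rm top}(f,\Lambda_k)>h_{\mu_k}(f)-\e_k$, a radius $r^\sharp_k$ as in Lemma~\ref{l.local-stable-manifold}, and a constant $K_k\ge1$ such that every center curve of inner radius $r^\sharp_k$ centered in $\Lambda_k$ lies in a stable manifold and, for every $y$ in its strong stable $\delta$-saturation, $K_k^{-1}e^{n(\chi^\c(\mu_k)-\e_k)}\le\|D^\c f^n(y)\|\le K_ke^{n(\chi^\c(\mu_k)+\e_k)}$ for all $n\ge0$. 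Applying Proposition~\ref{p.separation-in-symbol} to each $\Lambda_k$ yields $\fL_k$ and, for every large block length $n_k$, an $(n_k,\delta_0)$-separated set $\cS_k\subset\Lambda_k$ with $\card\cS_k\ge e^{n_k(h_{\rm top}(f,\Lambda_k)-\e_k)}$, a gap $\ell_k<\fL_k$, and, for each $N$, an injective gluing map $\Psi_k\colon\cS_k^N\to\Lambda_k$ whose images have pairwise disjoint local stable manifolds (property (c)) and whose separation structure is recorded by (b1)--(b2). I keep the stable blender $\blender^-$ fixed, together with the attached numbers $N(r),R(N(r)),M_\b,\eta^\s,\eta^\u$ (Lemmas~\ref{l.iterationsstrip} and \ref{lem:insideblenders}, Remark~\ref{r.etas}) and the distortion time $\ell^\flat$ of Lemma~\ref{lemsomecount}. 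The remaining free parameters --- $\delta$, the $\e_k$, the block lengths $n_k$, the blender-strip radii $r_k$, the accompanying times $T_k$, and the block counts $N_k$ --- will be fixed inductively so that at each stage the accumulated blender-transit time is negligible against $\sum N_in_i$ and Proposition~\ref{proLem:key} applies.

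Next I would carry out the inductive construction. At stage $1$, for $\mathbf{x}_1\in\cS_1^{N_1}$ put $p(\mathbf{x}_1)=\Psi_1(\mathbf{x}_1)\in\Lambda_1$ and let $\mcE_1$ be the family of center curves $\xi_{\mathbf{x}_1}$ of inner radius $r^\sharp_1$ centered at these points; each lies in $\cW^\s(p(\mathbf{x}_1))$ and satisfies \eqref{eq:crescimento} with $K=K_1$, $\chi=\chi^\c(\mu_1)+\e_1<0$. Inductively, suppose a word $w=(\mathbf{x}_1,\dots,\mathbf{x}_j)$ carries a center curve $\gamma_w$ satisfying \eqref{eq:crescimento} for $(K_j,\chi^\c(\mu_j)+\e_j)$, of length bounded below by a positive $\eta^\u$-width, nested in the $\u$-strip of the previous curve. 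For an extension by $\mathbf{x}_{j+1}\in\cS_{j+1}^{N_{j+1}}$ I apply Proposition~\ref{proLem:key} (for $f$ and $\blender^-$) with $\zeta_1=\gamma_w$, with $\zeta_2$ the center curve of inner radius $\le r^\sharp_{j+1}$ and length $<R(N(r_{j+1}))$ centered at $\Psi_{j+1}(\mathbf{x}_{j+1})\in\Lambda_{j+1}$, with radius $r_{j+1}$, choosing $T_{j+1}\ge T^\sharp$ large: this produces a center curve $\widehat\zeta_{w,\mathbf{x}_{j+1}}$ that is $(\c,\cW^\uu(\gamma_w,r_{j+1}))$-complete, accompanies $\gamma_w$ inside $\cW^\uu(f^i(\gamma_w),r_{j+1})$ for $i\le T_{j+1}$, and whose image $f^{T_{j+1}+m_{j+1}}(\widehat\zeta_{w,\mathbf{x}_{j+1}})$ lies in $\cW^\ss(\zeta_2,r_{j+1})$ --- hence, by weak integrability (Remark~\ref{remweaint}) and the strong stable part of Lemma~\ref{l.horseshoe-with-large-entropy}, satisfies \eqref{eq:crescimento} for $(K_{j+1},\chi^\c(\mu_{j+1})+\e_{j+1})$. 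Pulled back to the time origin, these curves form $\mcE_{j+1}$. Reading $t_{j-1}$ off as the total time used by stages $1,\dots,j-1$ (block times $\approx N_i(n_i+\ell_i)$ plus blender-transit times) and $T_j$ as the stage-$j$ accompanying time, the construction gives $B^\cu_{t_j+T_{j+1}}(\gamma_{j+1},\delta/2^{j})\subset B^\cu_{t_{j-1}+T_j}(\gamma_j,\delta/2^{j-1})$ for the matching $\gamma_{j+1}\in\mcE_{j+1}$, $\gamma_j\in\mcE_j$, because $\gamma_{j+1}$ shadows $\gamma_j$ in the strong unstable leaves during those iterates. Thus $(L_j)_j$ is a decreasing sequence of nonempty compacts and $L=\bigcap_jL_j$ is nonempty and compact.

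Then I would verify the three conclusions. For \eqref{THETHEObisbis-3}: along a fixed infinite branch $(\mathbf{x}_k)_k$ the curves $\gamma_{(\mathbf{x}_1,\dots,\mathbf{x}_j)}$ are nested inside the shrinking strong unstable neighborhoods of width $\delta/2^{j-1}$ and have length bounded below, so they converge to a non-degenerate center curve $\gamma_\infty$ which is $(\c,\cW^\uu(\xi_{\mathbf{x}_1},2\delta))$-complete with $\xi_{\mathbf{x}_1}\in\mcE_1$; $L$ is the union of these $\gamma_\infty$, and two of them coming from distinct branches are disjoint because, letting $j$ be the first stage where the branches differ, their common forward iterate at the end of stage $j$'s transit lands them in the stable manifolds of the distinct points $\Psi_j(\mathbf{x}_j)\neq\Psi_j(\mathbf{x}_j')$, which are disjoint by property (c) of Proposition~\ref{p.separation-in-symbol}. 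For \eqref{THETHEObisbis-2}: for $x\in L$ lying on such a $\gamma_\infty$ I split $\{0,\dots,n-1\}$ into the accompanying/block stretches of the successive stages $k$ --- where, by \eqref{eq:crescimento}, the construction, and Lemmas~\ref{l.horseshoe-with-large-entropy} and \ref{lemsomecount}, $\log\|D^\c f^{\cdot}(x)\|$ grows within $2\e_k$ of $\chi^\c(\mu_k)$ --- and the blender-transit stretches, of cumulative length $\sum_{k\le j}O(N(r_k)+M_\b)$ contributing at most $\max|\log\|D^\c f\||$ per step; choosing the $N_k$ so large that these transit totals are $o(\sum_{k\le j}N_kn_k)$ makes $\frac1n\log\|D^\c f^n(x)\|$ a weighted average of the $\chi^\c(\mu_k)$ plus a vanishing error, hence $\to0$, and all bounds being uniform in $x$ this is uniform on $L$. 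For \eqref{THETHEObisbis-1}: the upper bound $h_{\rm top}(f,L)\le\h(f)$ follows by covering $L\subset L_j$ by the finitely many sets $B^\cu_{t_{j-1}+T_j}(\gamma_j,\delta/2^{j-1})$, each of zero topological entropy by entropy expansiveness (Remark~\ref{remexpansive}, since orbits of points of $L$ stay forever in shrinking strong unstable neighborhoods), so $h_{\rm top}(f,L)\le\limsup_j\frac{\log\card\mcE_j}{t_{j-1}+T_j}=\lim_k\bigl(h_{\rm top}(f,\Lambda_k)+O(\e_k)\bigr)=\h(f)$ (using $\ell_k\ll n_k$ and the negligibility of transit times); the lower bound $h_{\rm top}(f,L)\ge\h(f)$ comes from an entropy distribution principle (Appendix~\ref{App:B}): the Bernoulli-type measure $\nu$ on the branch tree satisfies, thanks to (b1)--(b2), $\nu(B_n(x,\delta_0/10))\le Ce^{-n(\h(f)-o(1))}$, hence $h_{\rm top}(f,L)\ge\h(f)$.

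The main obstacle, as I see it, is the joint calibration of the auxiliary parameters ($\e_k$, $n_k$, $r_k$, $T_k$ and especially the block counts $N_k$) so that three competing requirements hold at once: (i) Proposition~\ref{proLem:key} keeps applying at each step with compatible radii and with curve lengths staying between a fixed positive lower bound and $R(N(r_k))$; (ii) the accumulated blender-transit cost stays sublinear in the elapsed time, so that the weighted average of the $\chi^\c(\mu_k)$ governs the center exponent and it is \emph{exactly} zero on $L$; and (iii) the branching rate is asymptotically exactly $\h(f)$, both as an upper bound (so the $N_k$ must not grow too wildly relative to the $n_k$) and, via the entropy distribution principle, as a lower bound (so the separation produced by Proposition~\ref{p.separation-in-symbol} must be exploited at the scale $\delta_0$). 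Making this last point rigorous --- precisely, that all of the entropy is created in the strong unstable transversals while the center curves themselves carry none --- is the technically delicate core of the argument.
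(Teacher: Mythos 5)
Your proposal captures the overall strategy of the paper's proof quite faithfully: the same ingredients (Lemma~\ref{l.horseshoe-with-large-entropy} to produce basic sets $\Lambda_k$ with controlled exponents and large entropy, Proposition~\ref{p.separation-in-symbol} for separated sets and the gluing maps $\Psi_k$, Proposition~\ref{proLem:key} for the inductive concatenation through $\blender^-$), the same inductive calibration of $n_k,N_k,r_k,T_k$ against the transit and spacing times, the same block-decomposition argument for the uniform forward exponent, and for the entropy lower bound the same entropy distribution principle applied to a limit measure concentrated on $L$. The paper's measure $\mu$ is constructed explicitly as the weak$\ast$ limit of $\mu_k=\frac1{\card\mcE_k}\sum_{\gamma_k\in\mcE_k}\Leb_{\gamma_k}$ (Proposition~\ref{prol.convergence-of-probability-measure}), which is essentially the curve-fibered version of your ``Bernoulli-type measure on the branch tree,'' and the separation needed for the local entropy estimate is worked out carefully in Section~\ref{ssecprocentercurves}; your sketch is looser here but points at the right mechanism.

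The one genuine gap is your direct upper bound $h_{\rm top}(f,L)\le\h(f)$. You assert that each covering set $B^\cu_{t_{j-1}+T_j}\big(\gamma_j,\delta/2^{j-1}\big)$ has ``zero topological entropy by entropy expansiveness.'' That is false for finite-time Bowen tubes around a curve, and if it were true then countable stability would give $h_{\rm top}(f,L)=0$, contradicting the lower bound you then invoke. Entropy expansiveness (Remark~\ref{remexpansive}) controls $B_\infty(x,\delta_0)$, not finite-time tubes. What your argument would actually need is that each $B^\cu_n(\gamma_j,\varepsilon)$ can be covered by a uniformly bounded number of ordinary Bowen $(n,C\delta)$-balls, so that $r_n(L,C\delta)\lesssim\card\mcE_j$; this is plausible (the forward iterates of the center curve stay short because they shadow stable manifolds of the $\Lambda_k$) but is not what you wrote, and it is not trivial to close. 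The paper avoids this issue entirely: in Section~\ref{proofitem1} it only proves $h_{\rm top}(f,L)\ge\h(f)$, and the reverse inequality is obtained indirectly in Section~\ref{s.final-fractal} via the backward concatenation producing $\widehat L\subset\cL(0)$ with $h_{\rm top}(f,\widehat L)=h_{\rm top}(f,L)$ (Proposition~\ref{finalprop} and Theorem~\ref{theop.entropy-in-cu}) together with $h_{\rm top}(f,\cL(0))\le\h(f)$ from Theorem~\ref{theorem1}. So either repair your covering argument as above, or replace it by the paper's indirect route.
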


Note that Theorem \ref{THETHEObisbis} does not yet provide any estimate of the entropy of $\cL(0)$, as $L$ only contains points whose \emph{forward} center exponent is zero. The estimate of the entropy of $\cL(0)$ will be completed in Section \ref{s.final-fractal}. 

The following result is an immediate consequence of the constructions in the proof of Theorem \ref{THETHEObisbis}, see Section \ref{secSchol} for brief arguments. Compare \cite{Tah:21} for further references on the concept of unstable entropy and related results.

\begin{scholium}\label{schol2}
	Assume the hypotheses of Theorem \ref{THETHEObisbis}. Given $\delta>0$ sufficiently small, let $L=L\big(\delta,(t_k)_k,(T_k)_k,(\mcE_k)_k\big)$ as provided by Theorem \ref{THETHEObisbis}. Then for every $\xi\in\mcE_1$ and $x\in\xi$,
\[
	h_{\rm top}\big(f,\cW^\uu(x,2\delta)\cap L\big)=\h(f).
\]	
\end{scholium}

This section is devoted to proving item \eqref{THETHEObisbis-3} in Theorem \ref{THETHEObisbis}. For that, in Section \ref{sectkTk}, we choose the sequences $(t_k)_k$ and $(T_k)_k$ and the sequence of center curves $(\mcE_k)_k$ having controlled behavior, see Proposition \ref{prolempropos}.  Its proof is given in Section \ref{secproof}.  In Section \ref{ssecprocentercurves} we investigate separation properties of the curves in $(\mcE_k)_k$. In Section \ref{secproofTHEOBIS}, we complete the proof of item \eqref{THETHEObisbis-3} in Theorem \ref{THETHEObisbis}.   Throughout this section, we let $f\in \BM^1(M)$ with a stable blender-horseshoe $\blender^-$.

\subsection{Families of center curves with prescribed behavior}\label{sectkTk}

In this section, we construct families of curves $(\mcE_k)_k$.  The concatenation method in Proposition~\ref{proLem:key} plays a key role. 

Let $\delta_0>0$ be as in Remark \ref{remexpansive}. Recall the choice of the quantifier $r^\u_2$ in Remark \ref{pivotal}.

\begin{proposition}[Families of center curves]\label{prolempropos}
Let $f\in\BM^1(M)$. For every 
\begin{equation}\label{fixdelta}
	\delta\in\Big(0,\min\big\{\frac{1}{20}\delta_0,\frac12r^\u_2\big\}\Big),
\end{equation}	
there are sequences $(t_k)_k$ and $(T_k)_k$ of natural numbers, sequences of families of center curves $(\DD_k)_k$ and $(\mcE_k)_k$, and bijections 
\[
	\phi_k\colon \mcE_k\times \DD_{k+1}\to \mcE_{k+1}
\]	 
such that for every $\gamma_k\in\mcE_k$, $\xi_{k+1}\in \DD_{k+1}$, and $\gamma_{k+1}\eqdef\phi_k(\gamma_k,\xi_{k+1})$,
\begin{enumerate}[ leftmargin=0.9cm ]
\item[(1)] $\gamma_{k+1}$ is $(\c,\cW^\uu(\gamma_k,\delta/2^k))$-complete,\\[-0.3cm]
\item[(2)] $\cW^\uu(f^i(\gamma_{k+1}),\delta/2^k)
	\subset \cW^\uu(f^i(\gamma_k),\delta/2^{k-1})$ for  $ i=0,\ldots, t_{k-1}+T_k$,\\[-0.3cm]
\item[(3)] $f^{t_k}(\gamma_{k+1})\subset \cW^\ss(\xi_{k+1},\delta/2^{k})$,\\[-0.3cm]
\item[(4)] $f^{t_{k-1}}(\gamma_{k+1})$ is $\big(\c,\cW^\uu(f^{t_{k-1}}(\gamma_k),\delta/2^k)\big)$-complete.
\end{enumerate}
Moreover, 
\begin{enumerate}[ leftmargin=0.9cm ]
\item[(5)] $\gamma_k$ is $(\c,\cW^\uu(\xi,\delta))$-complete for some $\xi\in \DD_1$.
\end{enumerate}
\end{proposition}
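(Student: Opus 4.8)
The plan is to construct $(\DD_k)_k$, $(\mcE_k)_k$, the integers $(t_k)_k$, $(T_k)_k$ and the maps $\phi_k$ recursively on $k$, the only engine being the concatenation procedure of Proposition~\ref{proLem:key}. \emph{Setup:} fix the pivotal hyperbolic periodic point $q_0$ of expanding type with constants $r^\c,r^\s,r^\u_1,r^\u_2$ as in Remark~\ref{pivotal}, fix $\delta$ as in \eqref{fixdelta}, and set $C_0\eqdef\sup_{x\in M}\|D^\c f(x)\|<\infty$. By Theorem~\ref{theorem1}, $\h(f)=\lim_{\alpha\nearrow0}\cH(\alpha)$, so we may pick $\nu_k\in\cM_{\rm erg,<0}(f)$ with $\chi^\c(\nu_k)\nearrow0$ and $h_{\nu_k}(f)\to\h(f)$. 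Applying Lemma~\ref{l.horseshoe-with-large-entropy} to each $\nu_k$ (with some $\varepsilon_k\to0$ and $\delta$ as above) yields a basic set $\Lambda_k$ of contracting type, a number $\chi_k<0$ and a constant $K_k>1$ with $h_{\rm top}(f,\Lambda_k)>h_{\nu_k}(f)-\varepsilon_k$, $|\chi_k-\chi^\c(\nu_k)|<\varepsilon_k$, and $\|D^\c f^n(y)\|\le K_k e^{n\chi_k}$ for every $n\in\bN$ and every $y$ on a center curve centered at a point of $\Lambda_k$ of inner radius at most $r^\sharp(\Lambda_k)$ (Lemma~\ref{l.horseshoe-with-large-entropy}, item~\eqref{l.horseshoe-with-large-entropy-iv}, together with Remark~\ref{remweaint} and Lemma~\ref{l.local-stable-manifold}). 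Since $\delta$ is fixed, all scales $\delta/2^k$ and hence the quantities $N(\delta/2^k)$, $R(N(\delta/2^k))$, $m(\cdot,\delta/2^k)$, $\eta^\s(\cdot,\delta/2^k)$ attached to $\blender^-$ are fixed in advance; choose for each $k$ a length
\[
0<r^\c_k<\min\{R(N(\delta/2^{k-1})),\;r^\sharp(\Lambda_k),\;r^\c\}
\]
and let $\DD_k$ be a family of center curves, each of length $r^\c_k$ and centered at a point of $\Lambda_k$, of cardinality as large as we please (this freedom will later be used for the entropy estimate). Finally put $\mcE_1\eqdef\DD_1$ and $t_0\eqdef0$.

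\emph{Inductive step.} Assume $\mcE_k$, the time $t_{k-1}$, and, for every $\gamma_k\in\mcE_k$, a target $\xi_k\in\DD_k$ with $f^{t_{k-1}}(\gamma_k)\subset\cW^\ss(\xi_k,\delta/2^{k-1})$ are already built (for $k=1$ take $\gamma_1=\xi_1\in\DD_1$, $t_0=0$). First I would promote each $\gamma_k$ to a curve verifying the growth hypothesis \eqref{eq:crescimento}: for $y\in\gamma_k$ and $n\ge t_{k-1}$,
\[
\|D^\c f^n(y)\|\le C_0^{\,t_{k-1}}\cdot\|D^\c f^{\,n-t_{k-1}}(f^{t_{k-1}}(y))\|\le K_k\,C_0^{\,t_{k-1}}\,e^{(n-t_{k-1})\chi_k},
\]
using Lemma~\ref{l.horseshoe-with-large-entropy}, item~\eqref{l.horseshoe-with-large-entropy-iv}, for the last factor (as $f^{t_{k-1}}(y)\in\cW^\ss(\xi_k,\delta/2^{k-1})$); absorbing the finitely many $n<t_{k-1}$ into the constant gives $\|D^\c f^n(y)\|\le K^{(k)}e^{n\chi_k}$ for all $n$, with $K^{(k)}$ uniform over the finite family $\mcE_k$. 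Now apply Proposition~\ref{proLem:key} with $\zeta_1=\gamma_k$ (data $K^{(k)},\chi_k$, length $r^\c_k$), $r=\delta/2^k$, and $\zeta_2=\xi_{k+1}\in\DD_{k+1}$ (length $r^\c_{k+1}\in(0,R(N(\delta/2^k)))$ by construction). Let $T^\sharp_k$ be the maximum over $\gamma_k\in\mcE_k$ of the threshold $T^\sharp$ in \eqref{eq:mn}, let $m_k\eqdef m(r^\c_{k+1},\delta/2^k)$ as in \eqref{eq:mn}, pick a natural number $T_k\ge\max\{T^\sharp_k,\,T_{k-1}+1\}$ and put $t_k\eqdef t_{k-1}+T_k+m_k$ (so $(T_k)_k$ and $(t_k)_k$ increase). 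Applying Proposition~\ref{proLem:key} with $T\eqdef t_{k-1}+T_k\ge T^\sharp_k$ produces, for each pair $(\gamma_k,\xi_{k+1})$, a center curve $\widehat\zeta$; set $\gamma_{k+1}\eqdef\widehat\zeta$, $\phi_k(\gamma_k,\xi_{k+1})\eqdef\gamma_{k+1}$ and $\mcE_{k+1}\eqdef\phi_k(\mcE_k\times\DD_{k+1})$. The map $\phi_k$ is injective: by items~(ii) and (iii) of Proposition~\ref{proLem:key}, $\gamma_{k+1}$ determines the $\u$-strip $\cW^\uu(\gamma_k,\delta/2^k)$ and the target $\xi_{k+1}$, and the strips $\cW^\uu(\gamma_k,\delta/2^k)$, $\gamma_k\in\mcE_k$, are pairwise distinct (see Section~\ref{ssecprocentercurves}).

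\emph{Verifying (1)--(4).} Item~(1) is Proposition~\ref{proLem:key}(i). Item~(3) is Proposition~\ref{proLem:key}(iii), since $T+m_k=t_{k-1}+T_k+m_k=t_k$. For item~(2), Proposition~\ref{proLem:key}(ii) gives $f^i(\gamma_{k+1})\subset\cW^\uu(f^i(\gamma_k),\delta/2^k)$ for $i=0,\dots,t_{k-1}+T_k$, and the triangle inequality for leaf distances, $\cW^\uu(\cW^\uu(x,s),s')\subset\cW^\uu(x,s+s')$ with $s=s'=\delta/2^k$, upgrades this to $\cW^\uu(f^i(\gamma_{k+1}),\delta/2^k)\subset\cW^\uu(f^i(\gamma_k),\delta/2^{k-1})$. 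For item~(4): item~(2) at $i=t_{k-1}$ places $f^{t_{k-1}}(\gamma_{k+1})$ inside $\cW^\uu(f^{t_{k-1}}(\gamma_k),\delta/2^k)$; since $f$ expands $\cW^\uu$ one has $\cW^\uu(f^{t_{k-1}}(x),\delta/2^k)\subset f^{t_{k-1}}(\cW^\uu(x,\delta/2^k))$, so pushing the completeness in item~(1) forward by $f^{t_{k-1}}$ lands the endpoints of $f^{t_{k-1}}(\gamma_{k+1})$ in the two boundary $\uu$-disks of $\cW^\uu(f^{t_{k-1}}(\gamma_k),\delta/2^k)$, i.e. $f^{t_{k-1}}(\gamma_{k+1})$ is $(\c,\cW^\uu(f^{t_{k-1}}(\gamma_k),\delta/2^k))$-complete.

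\emph{Item~(5) and the main obstacle.} By item~(1) at levels $2,\dots,k$, each endpoint of $\gamma_j$ lies on the $\uu$-leaf through the corresponding endpoint of $\gamma_{j-1}$ at leaf distance at most $\delta/2^{j-1}$ from it; these leaves coincide all the way down to $\gamma_1\in\mcE_1=\DD_1$, and summing $\sum_{j=2}^{k}\delta/2^{j-1}<\delta$ along them shows the endpoints of $\gamma_k$ lie in $\cW^\uu(\partial\gamma_1,\delta)$, hence $\gamma_k$ is $(\c,\cW^\uu(\gamma_1,\delta))$-complete with $\xi\eqdef\gamma_1\in\DD_1$; this is item~(5). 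The main obstacle is bookkeeping rather than any single hard estimate. One must check that the recursion is well founded: $\delta$ and all of $\Lambda_k,\chi_k,K_k,r^\c_k$ are fixed before the induction, so at stage $k$ the accumulated time $t_{k-1}$, hence $K^{(k)}$, hence $T^\sharp_k$ and $m_k$, are already determined and only $T_k$ remains free; and the ``uniform over $\mcE_k$'' clauses are legitimate, each $\mcE_k$ being finite. The one genuinely delicate step is the triangle-inequality control of the nested strip radii $\delta/2^k$ in items~(2), (4), (5) along the strong unstable foliation, which underlies both the preservation of completeness under iteration and the telescoping in item~(5).
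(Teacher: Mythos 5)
Your construction follows the paper's proof quite closely: both build $(\mcE_k)_k$ inductively by concatenation through Proposition~\ref{proLem:key}, use Lemma~\ref{l.horseshoe-with-large-entropy}(iv) (together with Lemma~\ref{l.local-stable-manifold}) for the derivative control \eqref{eq:crescimento}, and verify items~(1)--(5) by the same nesting/telescoping along $\cW^\uu$. One genuine variant: you apply Proposition~\ref{proLem:key} to $\zeta_1=\gamma_k$ directly with $T=t_{k-1}+T_k$, which forces you to manufacture a growth estimate for all of $\gamma_k$ by paying $C_0^{t_{k-1}}$ in the constant $K^{(k)}$. The paper instead applies it to the \emph{shifted} curve $f^{t_{k-1}}(\gamma_k)$ (which sits in $\cW^\ss(\xi_k,\delta/2^{k-1})$, so the clean constant $K_k$ suffices) with $T=T_k$, and then defines $\gamma_{k+1}\eqdef f^{-t_{k-1}}(\widehat\zeta_{k+1})$; the backward contraction of $\cW^\uu$ then gives items~(1)--(2) for free. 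Both routes work, but the paper's keeps $T^\sharp_k$ independent of $t_{k-1}$, which is why its \eqref{implythat} is so cheap; your version makes $T^\sharp_k$ grow linearly in $t_{k-1}$, which is still harmless since $T_k$ may be taken large.

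Two things should be fixed. First, when feeding $\zeta_1=\gamma_k$ into Proposition~\ref{proLem:key} you declare ``length $r^\c_k$''. That is wrong: $\gamma_k$ does \emph{not} have length $r^\c_k$, only the target curve $\xi_k\in\DD_k$ does. By item~(5), $\gamma_k$ is $(\c,\cW^\uu(\xi,\delta))$-complete with $\xi\in\DD_1$, so $\length(\gamma_k)$ is uniformly comparable to $r^\c_1$, bounded away from $0$. Since $T^\sharp(r^\c_1,K,\chi,r)$ is increasing in the $\zeta_1$-length, substituting $r^\c_k\to 0$ would yield a threshold $T^\sharp_k$ that is too small, possibly failing \eqref{eq:length}. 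Use a uniform bound on $\length(\gamma_k)$ instead. Second, your $\DD_k$ is declared ad hoc (``a family\ldots of cardinality as large as we please''). For the bare statement that is not needed, but you then invoke Section~\ref{ssecprocentercurves} to conclude that $\phi_k$ is injective, and the separation lemmas there (Lemma~\ref{lemDsep}, Lemma~\ref{induzsepar}, Proposition~\ref{prooooop}) depend crucially on the disjointness and $(n_k,\delta_0)$-separation built into $\DD_k=\{\xi(x,r_k):x\in\im(\Psi_k)\}$ via Proposition~\ref{p.separation-in-symbol}(b)--(c) and Lemma~\ref{lemclaDk}. Deferring injectivity is fine (the paper does so too), but only if $\DD_k$ is defined with that structure; an arbitrary family of center curves at points of $\Lambda_k$ need not give disjoint $\u$-strips, and then $\phi_k$ may collapse.
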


Before proving this proposition, 
let us first introduce some terminology. 

\begin{definition}
	Given $\gamma_{k+1}\in\phi_k(\{\gamma_k\}\times \DD_{k+1})$, we call $\gamma_k$ a \emph{predecessor} of $\gamma_{k+1}$ and $\gamma_{k+1}$  a \emph{successor} of $\gamma_k$. Given $\ell>k$, we call $\gamma_k\in \mcE_k$ a \emph{$(\ell-k)$-predecessor} of $\gamma_\ell\in \mcE_\ell$ if there exist curves $\{\gamma_{i}\in \mcE_i\colon k+1\leq i\leq \ell-1\}$ such that $\gamma_{i}$ is a predecessor of $\gamma_{i+1}$ for every $k\leq i\leq \ell-1$. In this case, $\gamma_\ell$ is a \emph{$(\ell-k)$-successor} of $\gamma_k$.  
\end{definition}

\subsection{Proof of Proposition \ref{prolempropos}}\label{secproof}

In Section \ref{ssecquanti}, we fix some quantifiers. In Section \ref{secdefDk}, we define the families $\DD_k$. The proof is completed in Section \ref{secchuvv}.

\subsubsection{Choice of quantifiers}\label{ssecquanti}

Let $\delta>0$ be as in \eqref{fixdelta}. Fix a sequence $(\varepsilon_k)_k$ monotonically decreasing to zero. 

Recall that, by Theorem~\ref{theorem1}, one has 
\[\lim_{\beta\nearrow0}\sup\big\{h_\mu(f)\colon\mu\in\cM_{\rm erg}(f),\chi^\c (\mu)=\beta\big\}=\h(f).\] 
Thus, there exist ergodic measures $\nu\in\cM_{\rm erg}(f)$ such that $\chi^\c (\nu)$ is negative and arbitrarily close to zero and $h_{\nu}(f)$ is arbitrarily close to $\h(f)$. Hence, together with Lemma \ref{l.horseshoe-with-large-entropy}, this implies that there exist a sequence $(\Lambda_k)_k$ of basic sets of contracting type and sequences of real numbers $\chi_k<0$, $r_k\in(0, r^\sharp(\Lambda_k))$, $\varepsilon_k>0$, and $K_k\ge1$ such that 
\begin{enumerate}[ leftmargin=0.7cm ]
\item[(i)] $h_k\eqdef h_{\rm top}(f,\Lambda_k)\ge\h(f)-\varepsilon_k$,
\item[(ii)] $\chi_k$, $\varepsilon_k/\chi_k$, and $r_k$ are strictly monotonic and tend to zero as $k\to\infty$,
\item[(iii)] every center curve centered at  $x\in\Lambda_k$ of inner radius  less than $r_k$ is contained in the stable manifold of $x$, 
\item[(iv)] for every center curve $\gamma$ centered at $x\in\Lambda_k$ of inner radius less than $ r_k$, for every $y\in \cW^{\ss}\big(\gamma,\delta\big)$ and  $n\in\mathbb{N}$,  one has 
\[
	K_k^{-1}\cdot e^{-n(\chi_k-\varepsilon_k)}\leq\|D^\c f^n(y)\|\leq K_k\cdot e^{n(\chi_k+\varepsilon_k)}.
\]
\end{enumerate}

Let 
\[\begin{split}	
	&a_k\eqdef a^\sharp(\Lambda_k)\quad\text{ as in Lemma \ref{l.local-stable-manifold}},\\
	&\fL_k \eqdef\fL(\Lambda_k)\text{ and }b^\sharp_k=b^\sharp(\Lambda_k,\varepsilon_k)
	\quad\text{ as in Proposition \ref{p.separation-in-symbol},}\\
	&\ell^\flat_k\eqdef \ell^\flat(\delta,\varepsilon_k)\quad\text{ the distortion-control time in Lemma \ref{lemsomecount}}.
\end{split}\]	 
Recall the constants $N(\cdot)$ and $R(\cdot)$ in Lemma~\ref{lem:insideblenders}, and $r^\sharp(\cdot)$ in Lemma \ref{l.local-stable-manifold}. 
Up to shrinking $r_k$, we can assume that for every $k$
\begin{equation}\label{eq5V}
	r_k\le r^\sharp(\Lambda_k),\quad
	2r_k<R(N(\delta/2^{k-1})),
	\quad \text{ and }\quad
	\sum_k r_k<\delta_0/20.
\end{equation}	
Recall the constant $\eta^\s(\cdot)$ in Remark \ref{r.etas}. For every $k\in\bN$, consider the corresponding numbers as in \eqref{eq:mn},
\begin{equation}\label{EQrem:the-time-iterate-first-curve}\begin{split}
	&m_k
	\eqdef m(2r_{k+1},\delta/2^{k})
	= 2N(\delta/2^{k})+M_\b\big(\eta^\s(2r_{k+1},\delta/2^{k})\big),\\
	&T^\sharp_k
	\eqdef T^\sharp(2r_k,K_k,\varepsilon_k,\delta/2^k)
	= \left\lceil\frac{1}{\chi_{k}+\varepsilon_{k}}\big(\log R(N(\delta/2^{k}))-\log K_{k}-\log{(2r_{k})}\big)\right\rceil+1.
\end{split}\end{equation}	
Let
\begin{equation}\label{defCmax}
	C_{\rm max}
	\eqdef 2\cdot \sup_{x\in M}\big|\log\|D^{\c}f(x)\|\big|.
\end{equation}

We choose a sequence of natural numbers $(n_k)_k$ such that
\begin{eqnarray}
	& \max\big\{\ell^\flat_k,b^\sharp_k,T^\sharp_k\big\}<n_k, \label{eq:condnk-1}\\
	&\displaystyle\frac{m_k}{n_k}C_{\rm max}<\varepsilon_k\label{onemore},\\
	&\displaystyle \frac{n_k}{n_k+\ell_k^\sharp+m_k} \geq 1-\varepsilon_k,\label{cucucu}	\\
	&\displaystyle\frac{\log K_{k+1}}{n_k}<\varepsilon_k,\label{eqexp1bis}\\
	&\displaystyle\frac{1}{n_k}\big(\log  K_{k+1}+(m_{k+1}+\ell^\flat_{k+1})\cdot C_{\rm max}\big)<\varepsilon_k.\label{eq:condnk-5}
\end{eqnarray} 
Inequalities \eqref{onemore} and \eqref{cucucu} are used to control the entropy in Section \ref{seccentr}. Inequalities \eqref{eqexp1bis} and \eqref{eq:condnk-5} are used to control the exponents in Section \ref{secproofTHEO}.

Let $(N_k)_k$ be a sufficiently fast growing sequence of natural numbers (to be specified in \eqref{extra-cond-tsbis}--\eqref{eqexp1bisbis}).
By \eqref{eq:condnk-1}, we can apply Proposition~\ref{p.separation-in-symbol} to $\Lambda_k$ and $\varepsilon_k$ and obtains an integer $\ell_k\in\{0,\ldots,\ell_k^\sharp\}$, an $(n_k,\delta_0)$-separated set 
\begin{equation}\label{eqcSkcardinality}
	\cS_k\eqdef\cS(n_k)\subset \Lambda_k,
	\quad\text{ with }\quad
	\card(\cS_k)\geq e^{n_k(h_k-\varepsilon_k)},
\end{equation}
 and an injective map 
\begin{equation}\label{defPsik}
	\Psi_k\eqdef\Psi^{n_k,N_k}\colon\cS_k^{N_k}\to\Lambda_k.
\end{equation}

\begin{notation}\label{deftkTk}
For every $k\in\bN$, let
\begin{equation}\label{deftk}
	\begin{split}
	t_0&\eqdef0\\
	t_k&\eqdef t_{k-1}+(T_k+m_k) = \sum_{i=1}^k\big(T_i+m_i\big),
	\quad\text{ where }\quad 
	T_i\eqdef N_i(n_i+\ell_i).
\end{split}\end{equation}
\end{notation}

Choosing $(N_k)_k$ in the way that it is growing fast enough, not affecting the choices in \eqref{eq:condnk-1}--\eqref{eq:condnk-5}, we can assume that 
\begin{eqnarray}
	&\displaystyle\frac{n_{k+1}+\ell_{k+1}^\sharp}{t_k}<\varepsilon_k,\label{extra-cond-tsbis}\\ 
	&\displaystyle\frac{t_k}{t_{k+1}}<\frac{1}{C_{\rm max}}\varepsilon_{k+1}.	\label{extra-cond-ts}\\
	&\displaystyle\frac{\log K_k}{N_k(n_k+\ell_k)}<\varepsilon_k,\label{eqexp1bisbis}	
\end{eqnarray}

For further reference, note that \eqref{deftk} and \eqref{eq:condnk-1} imply
\begin{equation}\label{implythat}
	T_k> T_k^\sharp.
\end{equation}

We provide an informal description of the above-defined numbers.

\begin{remark}
The number $n_k$, chosen large enough, enables to compensate distortion and enables uniform separation. 
The number $\ell_k$ is a ``bridging time'' for $\Lambda_k$ associated to the ``spacing time'' in the specification property of $f$ on $\Lambda_k$ and to ``return to a mixing piece of the spectral decomposition of $\Lambda$'' (see Proposition \ref{p.separation-in-symbol}). 
During the time interval $\{0,\ldots,T_k\}$ the orbit of any point in the image of $\Psi_k$ stays close to the hyperbolic set $\Lambda_k$ ``catching entropy and Lyapunov exponent from the hyperbolic set''. The time $m_k$ is the one to ``get into the blender $\blender^-$, stay there, and get out again''.  
\end{remark}

\subsubsection{Definition of the families $\DD_k$}\label{secdefDk}

\begin{lemma}\label{lemclaDk}
For every $k\in\bN$ and $x\in \im(\Psi_k)\subset\Lambda_k$, there is a center curve $\xi(x,r_k)$ centered at $x$ of inner radius $r_k$ and contained in the stable manifold of $x$ of size $r_k$. Moreover,   the family
\[
	\DD_k
	\eqdef\big\{\xi(x,r_k)\colon x\in \im(\Psi_k)\big\}
\]	
consists of pairwise disjoint curves. 
\end{lemma}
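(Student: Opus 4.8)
\textbf{Proof plan for Lemma~\ref{lemclaDk}.}

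The plan is to produce the curves $\xi(x,r_k)$ directly from the weak integrability of $E^\c$ together with the properties of the basic set $\Lambda_k$ already recorded, and then to obtain pairwise disjointness from item~(c) of Proposition~\ref{p.separation-in-symbol}. First, fix $k$ and a point $x\in\im(\Psi_k)\subset\Lambda_k$. Since $E^\c$ is a one-dimensional continuous bundle, Peano's theorem (as recalled in Section~\ref{s:centerdistortion}) gives a $C^1$-curve tangent to $E^\c$ through $x$; restricting it to a suitable parameter interval yields a center curve centered at $x$ of inner radius exactly $r_k$. By the choice $r_k\le r^\sharp(\Lambda_k)$ made in \eqref{eq5V} and property~(iii) of the basic sets in Section~\ref{ssecquanti} (equivalently, item~(1) of Lemma~\ref{l.local-stable-manifold} applied to $\Lambda_k$), this center curve is automatically contained in the stable manifold $\cW^\s(x)$ of $x$, and since its inner radius is at most $r_k\le r^\sharp(\Lambda_k)$ it lies in $\cW^\s(x,r_k)$. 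Call this curve $\xi(x,r_k)$; this establishes the first assertion.

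For the disjointness of the family $\DD_k=\{\xi(x,r_k)\colon x\in\im(\Psi_k)\}$, let $x,y\in\im(\Psi_k)$ with $x\ne y$. Because $\Psi_k$ is injective (see \eqref{defPsik}), the points $x$ and $y$ are genuinely distinct points of $\im(\Psi_k)$. By construction $\xi(x,r_k)\subset\cW^\s(x,r_k)$ and $\xi(y,r_k)\subset\cW^\s(y,r_k)$, with $r_k\le r^\sharp=r^\sharp(\Lambda_k)$. Item~(c) of Proposition~\ref{p.separation-in-symbol} states precisely that for every pair of distinct points $x,y\in\im(\Psi)=\im(\Psi_k)$ and every $r\le r^\sharp$ one has $\cW^\s(x,r)\cap\cW^\s(y,r)=\emptyset$; applying it with $r=r_k$ gives $\xi(x,r_k)\cap\xi(y,r_k)\subset\cW^\s(x,r_k)\cap\cW^\s(y,r_k)=\emptyset$. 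Hence the curves in $\DD_k$ are pairwise disjoint, which completes the proof.

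\textbf{Main obstacle.} The only subtle point is making sure the hypotheses of Proposition~\ref{p.separation-in-symbol}(c) are literally met: one needs the center curves to be honest \emph{local stable} curves of size at most $r^\sharp$, which is exactly what Lemma~\ref{l.local-stable-manifold}(1) provides once $r_k\le r^\sharp(\Lambda_k)$ — and that inequality has been arranged in \eqref{eq5V}. So this lemma is essentially a bookkeeping statement assembling Peano's theorem, Lemma~\ref{l.local-stable-manifold}(1), the injectivity of $\Psi_k$, and Proposition~\ref{p.separation-in-symbol}(c); there is no genuine difficulty beyond tracking the radii.
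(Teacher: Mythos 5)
Your proposal matches the paper's proof exactly: existence of the center curve $\xi(x,r_k)$ inside $\cW^\s(x,r_k)$ comes from weak integrability (Remark~\ref{remweaint}, i.e.\ Peano's theorem plus Lemma~\ref{l.local-stable-manifold}(1)) together with the choice $r_k\le r^\sharp(\Lambda_k)$ in \eqref{eq5V}, and pairwise disjointness is exactly item~(c) of Proposition~\ref{p.separation-in-symbol}. The only cosmetic point is that invoking injectivity of $\Psi_k$ is unnecessary here, since $\DD_k$ is parametrized by the set $\im(\Psi_k)$ and Proposition~\ref{p.separation-in-symbol}(c) already applies to any pair of distinct points of that image.
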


\begin{proof}
Given $x\in \im(\Psi_k)$, the existence of the center curves follows from Remark \ref{remweaint}. By \eqref{eq5V}, we have $r_k<r^\sharp(\Lambda_k)$. Hence, by  item (c) in Proposition~\ref{p.separation-in-symbol}, the center curves in $\DD_k$ are pairwise disjoint. 
\end{proof}

\begin{remark}\label{remcardDk}
	We have $\card \DD_k = \big(\card \cS_k\big)^{N_k}$.
\end{remark}

\subsubsection{End of the proof}\label{secchuvv}

Given $(\DD_k)_k$ as in Lemma \ref{lemclaDk}, we will construct the families of curves $(\mcE_k)_k$ and the maps $(\phi_k)_k$. For that, we proceed inductively using Proposition~\ref{proLem:key} to concatenate the center curves in $(\DD_k)_k$. 
Set 
\[
	\mcE_1\eqdef \DD_1
	=\big \{\xi(x,r_1)\colon x\in\im(\Psi_1)\big\}.
\]	 
By property (iv) in Section \ref{ssecquanti}, every center curve $\gamma_1=\xi_1\in\mcE_1=\DD_1$ satisfies the hypothesis \eqref{eq:crescimento} of Proposition~\ref{proLem:key}. By definition of $\DD_k$ in Lemma~\ref{lemclaDk},  every $\xi_2\in \DD_2$ has length $2r_2<R(N(\delta/2))$, recall \eqref{eq5V}. By  \eqref{implythat}, we have $T_1>T_1^\sharp$. We now apply Proposition~\ref{proLem:key} to the center curves $\gamma_1$ and $\xi_2$ (the latter has length $\length(\zeta_2)=2r_2$) and to the number $\delta/2$ and get the associated center curves $\widehat\zeta_2$ and $\widetilde\zeta_2$ and the number $m_1$ as in \eqref{EQrem:the-time-iterate-first-curve}. Let 
\[
	\phi_1(\gamma_1,\xi_2)\eqdef \gamma_2,
	\quad\text{ where }\quad
	\gamma_2\eqdef\widehat\zeta_2
	\quad\text{ and }\quad
	f^{T_1+m_1}(\gamma_2)=\widetilde\zeta_2.
\]	  
	\begin{figure}[t]
		\begin{overpic}[scale=.45]{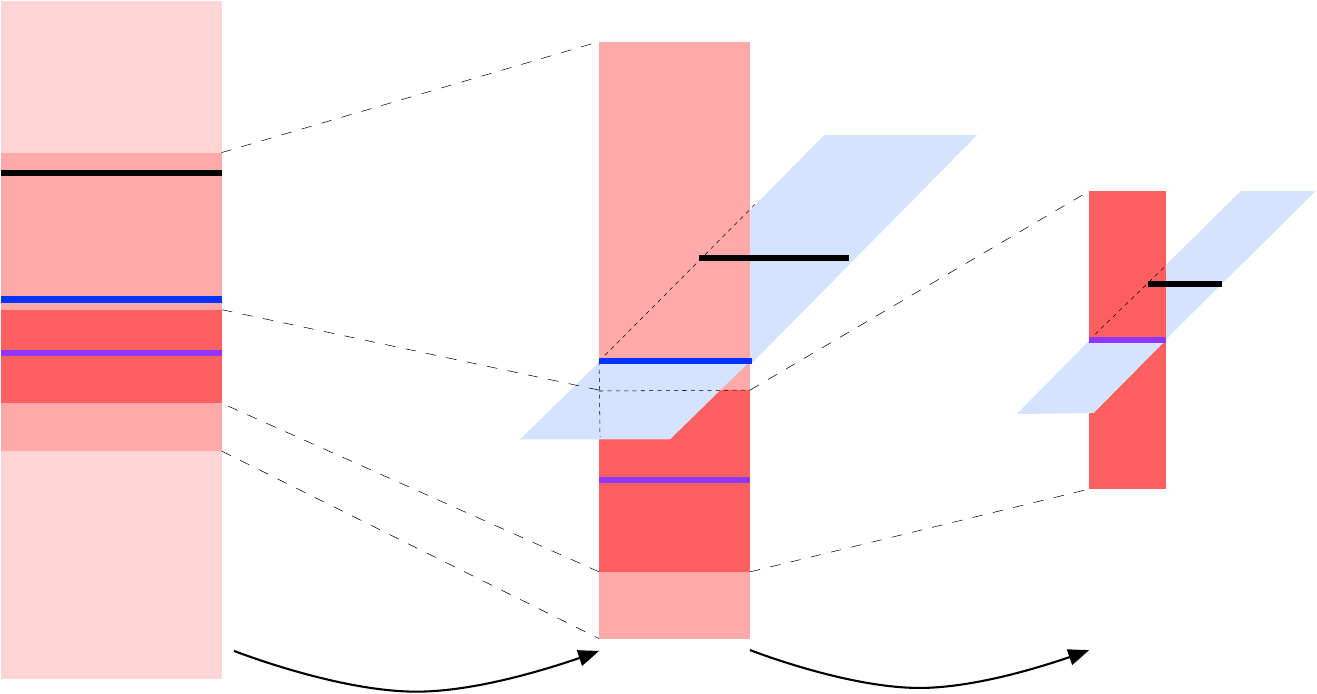}
			\put(64,3){\small$f^{T_2+m_2}$}
			\put(22,3){\small$f^{T_1+m_1}=f^{t_1}$}
			\put(18,25.5){\small$\gamma_3$}
			\put(18,29.5){\small$\gamma_2=\widehat\zeta_2$}
			\put(18,39){\small$\gamma_1=\xi_1$}
			\put(58,25){\small$f^{T_1+m_1}(\gamma_2)=\widetilde\zeta_2$}
			\put(58,16){\small$\widehat\zeta_3$}
			\put(66,32.3){\small$\xi_2$}
			\put(18,50){\small$\cW^\uu(\xi_1,\delta)$}
			\put(62,44){\small$\cW^\ss(\xi_2,\delta/2)$}
			\put(92,39.5){\small$\cW^\ss(\xi_3,\delta/4)$}
			\put(94,30){\small$\xi_3$}
			\put(90,26){\small$\widetilde\zeta_3$}
		\end{overpic}
		\caption{Proof of Proposition \ref{prolempropos}}
		\label{fig.cascade}
	\end{figure}	
Moreover,
\begin{itemize}[ leftmargin=0.7cm ]
\item[(i)] $\gamma_2$ is $\big(\c,\cW^{\uu}(\gamma_1,\delta/2)\big)$-complete,\\[-0.2cm]
\item[(ii)] $f^i(\gamma_2)\subset \cW^{\uu}\big(f^i(\gamma_1),\delta/2\big)$ for $i=0,\dots,T_1$,\\[-0.2cm]
\item[(iii)] $f^{T_1+m_1}(\gamma_2)\subset\cW^\ss(\xi_2,\delta/2)$,\\[-0.2cm]
\item[(iv)] $f^{-(T_1+m_1)}(\cW^\uu(f^{T_1+m_1}(\gamma_2),\delta))\subset\cW^\uu(\gamma_2,\delta/2)$. 
\end{itemize} 
Denote by $\mcE_2$ the set of all curves $\gamma_2$ constructed in this way,
\[
	\mcE_2
	\eqdef \big\{\phi_1(\gamma_1,\xi_2)\colon\gamma_1\in\mcE_1=\DD_1,\xi_2\in \DD_2\big\}.
\]
Recalling \eqref{deftk}, properties (i)--(iv) above prove properties (1), (2), (3), and (4) of the assertion for $k=1$ of Proposition \ref{prolempropos}. Moreover, property (i) is property (5). 

Let us now proceed inductively. Assume that $k\ge2$ and that the families $\mcE_1,\ldots,\mcE_k$ of center curves and the maps $\phi_1,\ldots,\phi_{k-1}$ are already defined with the numbers in \eqref{deftk} and have the properties (1)--(4) of the proposition. Let us define $\mcE_{k+1}$ and $\phi_k$. Given any $\gamma_k\in \mcE_k$, by property (3) there is $\xi_{k}\in \DD_{k}$ such that  
\[
f^{t_{k-1}}(\gamma_k)\subset \cW^{\ss}(\xi_{k}, \delta/2^{k-1}).
\]
Hence, by item (iv) in Section \ref{ssecquanti}, the center curve $f^{t_{k-1}}(\gamma_k)$ satisfies the hypotheses \eqref{eq:crescimento} of  Proposition~\ref{proLem:key}. By \eqref{implythat}, we have $T_{k+1}>T_{k+1}^\sharp$. Given any $\xi_{k+1}\in \DD_{k+1}$ and applying Proposition \ref{proLem:key} to the center curves $\xi_{k+1}$ and $\xi_{k+1}$ and to the number $\delta/2^{k}$, we get associated center curves $\widehat\zeta_{k+1}$ and $\widetilde\zeta_{k+1}$ and the number $m_{k+1}$ as in \eqref{EQrem:the-time-iterate-first-curve} having the posited properties.
 Let
\[
	\phi_k(\gamma_k,\xi_{k+1})
	\eqdef \gamma_{k+1}
	\eqdef f^{-t_{k-1}}(\widehat\zeta_{k+1}).
\]
Finally, let $\mcE_{k+1}$ be the set of all center curves constructed in this way,
\[
\mcE_{k+1}
\eqdef \big\{\phi_k(\gamma_k,\xi_{k+1})\colon \gamma_k\in\mcE_k,\xi_{k+1}\in \DD_{k+1}\big\}.
\]	 
As for the case $k=1$, Proposition~\ref{proLem:key} implies that the properties (1)--(4) of the assertion hold for $k+1$. This proves the first part of the proposition.

It remains to show property (5). Recall that $\DD_1=\mcE_1$.  By construction, there exist center curves $\gamma_{i}\in\mcE_i$ for $1\leq i\leq k-1$  such that $\gamma_{i+1}$ is $(\c,\cW^\uu(\gamma_{i},\delta/2^{i}))$-complete for $1\leq i\leq k-1.$ Therefore, for any point $x_k\in\gamma_k$, there exist $x_i\in\gamma_i$, for $1\leq i\leq k-1 $, such that $x_{i+1}\in\cW^\uu(x_i,\delta/2^i)$ for $1\leq i\leq k-1$. Furthermore, if $x_{k}$ is an endpoint of $\gamma_k$, then $x_i$ is an endpoint of $\gamma_i$ for $1\leq i\leq k-1$, due to the definition of being $(\c,\cW^\uu(\gamma_{i},\delta/2^{i}))$-complete. This implies that $x_k\in\cW^\uu(x_1,\delta)$. By the arbitrariness of $x_k$, one has $\gamma_k\subset \cW^\uu(\gamma_1,\delta)$. Furthermore, if $x_k$ is an endpoint $\gamma_k$, then $x_1$ is an endpoint of $\gamma_1$. Thus, $\gamma_k$ is $(\c,\cW^\uu(\gamma_1,\delta))$-complete. This proves the proposition.
\qed

\begin{remark}\label{rembox}
	With $\Psi_k$ in \eqref{defPsik} and $\phi_k$ in Proposition \ref{prolempropos}, let
	\[
	\Phi_k\colon \mcE_k\times \cS_{k+1}^{N_{k+1}}\to \mcE_{k+1},\quad
	\Phi_k(\gamma_k,\sX)
	\eqdef\phi_k\big(\gamma_k,\xi(\Psi_{k+1}(\sX),r_{k+1})\big).
	\]
	By definition, $\Phi_k$ and $\phi_k$ are both onto.
\end{remark}

\medskip

The following result follows from the proof of Proposition \ref{prolempropos} replacing each $\DD_k$ by just one curve $\{\xi_k\}$. It will be used in Section \ref{s.final-fractal}.

\begin{scholium}\label{schol}
	Consider the sequence $(\Lambda_k)_k$ in Section~\ref{ssecquanti}. Let $(\xi_k)_k$ be sequence of center curves  $\xi_k=\xi(x_k,r_k)$ centered at $x_k\in\Lambda_k$ and of inner radius  $r_k$. There is a sequence $(\gamma_k)_k$ satisfying the assertions of Proposition \ref{prolempropos}.
\end{scholium}

\subsection{Separated properties of the center curves $(\mcE_k)_k$.}\label{ssecprocentercurves}

We start by studying some properties of the families $\mcE_{k}$ and the maps $\Phi_k$. In particular, we study the ``separation'' of the center curves. In analogy to the usual definition for points (recall  \eqref{defseparated}), two curves $\gamma,\gamma'$ are \emph{$(n,\varepsilon)$-separated} if for every $x\in\gamma$ and $x'\in\gamma'$, one has $d_n(x,x')>\e$.

The main aim of this subsection is to prove the following result.

\begin{proposition}\label{prooooop}
	 For every $k$,  one has 
\begin{itemize}[ leftmargin=0.7cm ]
\item $\mcE_k$ is a family of $\big(t_{k-1}+T_k,\delta_0/2\big)$-separated center curves,
\item the maps $\Phi_k$ and $\phi_k$ (see Remark \ref{rembox} and Proposition \ref{prolempropos}) are bijective.
\end{itemize}
\end{proposition}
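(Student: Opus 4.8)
\textbf{Proof plan for Proposition \ref{prooooop}.}

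The plan is to prove both assertions simultaneously by induction on $k$, keeping track of a slightly stronger inductive hypothesis that also records where the curves sit inside iterates of $\uu$-strips. The base case $k=1$ is essentially Proposition \ref{p.separation-in-symbol}: recall $\mcE_1=\DD_1=\{\xi(x,r_1)\colon x\in\im(\Psi_1)\}$, and the curves $\xi(x,r_1)$ are centered at the mutually $(n_1,\delta_0)$-separated points of $\cS_1$ (via the injective map $\Psi_1$ — here I should be careful: actually $\mcE_1$ is indexed by $\im(\Psi_1)$, so distinct curves correspond to distinct points of $\im(\Psi_1)$, and property (b2) of Proposition \ref{p.separation-in-symbol} applied with $N=N_1$ gives a time $j_0$ at which the two base-point orbits are more than $4\delta_0/5$ apart). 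Since each $\xi(x,r_1)$ has inner radius $r_1 \le r^\sharp(\Lambda_1) < \delta_0/4$, for any $x'\in\xi(x,r_1)$ and $y'\in\xi(y,r_1)$ we have $d_{j_0+1}(x',y')\ge d(f^{j_0}(x),f^{j_0}(y))-2r_1 > 4\delta_0/5-\delta_0/2 > \delta_0/2$ using Lemma \ref{l.local-stable-manifold}(3) (valid since $j_0>a^\sharp$), and $j_0 < n_1 \le t_0+T_1$. So $\mcE_1$ consists of $(t_0+T_1,\delta_0/2)$-separated curves; bijectivity of $\phi_1,\Phi_1$ onto $\mcE_2$ is then immediate since by definition $\mcE_2=\phi_1(\mcE_1\times\DD_2)$ and distinct pairs give distinct $(t_0+T_1,\delta_0/2)$-separated predecessor data (for different predecessors $\gamma_1$) or, for the same predecessor, distinct successors $\xi_2$ coming from distinct points of $\im(\Psi_2)$, which by property (2) of Proposition \ref{prolempropos} remain $(t_1,\delta_0/2)$-separated after shrinking-neighborhood tracking — see the inductive step below.

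For the inductive step, assume $\mcE_k$ is $(t_{k-1}+T_k,\delta_0/2)$-separated and $\phi_{k-1},\Phi_{k-1}$ are bijective. Take two distinct curves $\gamma_{k+1}=\phi_k(\gamma_k,\xi_{k+1})$ and $\gamma_{k+1}'=\phi_k(\gamma_k',\xi_{k+1}')$ in $\mcE_{k+1}$. \emph{Case 1: $\gamma_k\ne\gamma_k'$.} By the inductive hypothesis, $\gamma_k$ and $\gamma_k'$ are $(t_{k-1}+T_k,\delta_0/2)$-separated, and by property (2) of Proposition \ref{prolempropos}, $f^i(\gamma_{k+1})\subset\cW^\uu(f^i(\gamma_k),\delta/2^{k-1})$ for $i=0,\dots,t_{k-1}+T_k$, likewise for the primed curves; since $\delta<\delta_0/20$ and $\sum_j r_j<\delta_0/20$, the extra diameter $2\delta/2^{k-1}\le 2\delta \le \delta_0/10$ is small enough that $\gamma_{k+1},\gamma_{k+1}'$ remain $(t_{k-1}+T_k,\delta_0/2-\delta_0/10)$-separated, hence a fortiori $(t_{k-1}+T_k,\delta_0/4)$-separated; but actually I want $\delta_0/2$, so I should instead argue that at the critical time $j_0$ the base-orbit separation is $>4\delta_0/5$ (this is what (b2) of Proposition \ref{p.separation-in-symbol} and the inductive strengthening give, not just $\delta_0/2$) and subtract only $2\delta\le\delta_0/10$, yielding $>\delta_0/2$; this forces me to carry ``$>4\delta_0/5$ at some time'' as the actual inductive hypothesis rather than the weaker ``$>\delta_0/2$''. \emph{Case 2: $\gamma_k=\gamma_k'$ but $\xi_{k+1}\ne\xi_{k+1}'$.} Then $\xi_{k+1},\xi_{k+1}'$ are distinct curves in $\DD_{k+1}$, so by Proposition \ref{p.separation-in-symbol}(b2) applied to $\Lambda_{k+1}$, $\varepsilon_{k+1}$, $N=N_{k+1}$ there is a time $j_0'$ with $(i_0-1)(n_{k+1}+\ell_{k+1})+a_{k+1}<j_0'<(i_0-1)(n_{k+1}+\ell_{k+1})+n_{k+1}\le T_{k+1}$ at which the orbits of the base points of $\xi_{k+1},\xi_{k+1}'$ (lying in $\Lambda_{k+1}\subset\im(\Psi_{k+1})$) are $>4\delta_0/5$ apart; by property (4) of Proposition \ref{prolempropos}, $f^{t_{k-1}}(\gamma_{k+1})$ is $(\c,\cW^\uu(f^{t_{k-1}}(\gamma_k),\delta/2^k))$-complete and by property (3), $f^{t_k}(\gamma_{k+1})\subset\cW^\ss(\xi_{k+1},\delta/2^k)$; combining property (2) (tracking during $[t_{k-1},t_{k-1}+T_k]$ — wait, I actually need tracking during $[t_{k-1}, t_k=t_{k-1}+T_{k+1}+m_{k+1}]$, which is the content of the construction via Proposition \ref{proLem:key}(ii) for the freshly concatenated piece) gives $f^{t_{k-1}+j}(\gamma_{k+1})\subset\cW^\uu(f^j(\xi_{k+1}),\delta/2^k)$-ish during the tracking window, so at the time $t_{k-1}+j_0'$ the curves $\gamma_{k+1},\gamma_{k+1}'$ have points $>4\delta_0/5 - 2\delta/2^k > \delta_0/2$ apart, and $t_{k-1}+j_0'<t_{k-1}+T_{k+1}\le t_k+T_{k+1}=t_k+T_{k+1}$; since $t_k+T_{k+1}\le t_k + T_{k+1}$, and we want the separation time $\le t_k+T_{k+1}$, this is fine as $t_{k-1}+j_0' < t_{k-1}+T_{k+1} \le t_k + T_{k+1}$. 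Injectivity of $\phi_k$ (hence of $\Phi_k$, since $\Phi_k=\phi_k\circ(\mathrm{id}\times\xi\circ\Psi_{k+1})$ and $\Psi_{k+1}$ is injective while $x\mapsto\xi(x,r_{k+1})$ is injective by Lemma \ref{lemclaDk}) follows because in both cases $\gamma_{k+1}\ne\gamma_{k+1}'$; surjectivity is by definition of $\mcE_{k+1}$.

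The main obstacle I anticipate is bookkeeping the time windows and the accumulated neighborhood errors precisely enough to land the separation constant at exactly $\delta_0/2$ (not merely at some small constant), and in particular identifying the right inductive statement: one must carry forward ``there is a time $j\le t_{k-1}+T_k$ at which the two curves have points more than $4\delta_0/5$ apart'' rather than the nominal conclusion ``$(t_{k-1}+T_k,\delta_0/2)$-separated,'' because the $2\delta_0/20 = \delta_0/10$ slack eaten by the nested $\uu$-strip tracking (constraints $\delta<\delta_0/20$ and $\sum r_j<\delta_0/20$ from \eqref{fixdelta} and \eqref{eq5V}) must be subtracted from the larger $4\delta_0/5$ buffer supplied by Proposition \ref{p.separation-in-symbol}(b2), and one has to verify the tracking survives composition across the concatenation steps $j=0,\dots,t_k$ rather than only $0,\dots,T_k$ of a single step — for which I will invoke that the center curves in $\gamma_{k+1}$ are by construction (Proposition \ref{proLem:key}(ii) and the inductive property (2)) contained in successively nested $\uu$-strips over the whole orbit segment $[0,t_{k-1}+T_k]$, so the total accumulated error over all scales telescopes as $2\sum_{j\ge 1}\delta/2^{j-1}\le 4\delta<\delta_0/5$, which is still comfortably below $4\delta_0/5-\delta_0/2$. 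A secondary technical point is the use of Lemma \ref{l.local-stable-manifold}(3) to pass from base-point separation to curve separation; this requires that the relevant time exceed $a^\sharp(\Lambda_\bullet)$, which is guaranteed because Proposition \ref{p.separation-in-symbol}(b2) always places $j_0>a^\sharp$.
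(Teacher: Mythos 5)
Your plan is essentially the paper's own argument (Lemmas \ref{lemDsep}, \ref{induzsepar}, \ref{lemcorcl.cardinality}): an induction carrying the stronger $4\delta_0/5$-minus-accumulated-error separation bound, split into the two cases according to whether the predecessors coincide, with injectivity then deduced from the separation. Two slips to clean up: in the base case you need $r_1<\delta_0/20$ from \eqref{eq5V}, giving $4\delta_0/5-2r_1>7\delta_0/10>\delta_0/2$ (your stated $4\delta_0/5-\delta_0/2$ equals $3\delta_0/10$, which is \emph{less} than $\delta_0/2$); and in Case~2 the separation time is $t_k+j_0'$, not $t_{k-1}+j_0'$, since it is only at time $t_k$ that property~(3) of Proposition~\ref{prolempropos} places $f^{t_k}(\gamma_{k+1})$ in the $\ss$-strip $\cW^\ss(\xi_{k+1},\delta/2^k)$, after which the uniform contraction along $\cW^\ss$ (not $\cW^\uu$ as you wrote) transports the separation inherited from $\xi_{k+1}$ and $\xi_{k+1}'$.
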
 

We postpone the proof of the above result to the end of this section.

The following result is now immediate.

\begin{corollary}\label{garb}
For every $\ell>k$,
\[
	\card \mcE_{\ell}
	= \card \mcE_k \cdot \prod_{j=k+1}^{\ell}\card \DD_{j}.
	\quad\text{ In particular, }\quad
		\card \mcE_{\ell}
	= \prod_{j=1}^\ell\DD_j.
\]
\end{corollary}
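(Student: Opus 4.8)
The statement to prove is Corollary~\ref{garb}, which counts the center curves in $\mcE_\ell$ in terms of the families $\DD_j$. This follows almost immediately from Proposition~\ref{prooooop} together with the structural facts established in Proposition~\ref{prolempropos}.

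My plan is the following. Proposition~\ref{prolempropos} asserts that for each $k$ there is a \emph{bijection} $\phi_k\colon\mcE_k\times\DD_{k+1}\to\mcE_{k+1}$; the bijectivity is exactly what Proposition~\ref{prooooop} supplies in its second bullet (the first bullet, $(t_{k-1}+T_k,\delta_0/2)$-separation of $\mcE_k$, is what powers the injectivity part). Granting this, one has immediately
\[
\card\mcE_{k+1}=\card\mcE_k\cdot\card\DD_{k+1}\qquad\text{for every }k\ge1.
\]
Iterating this identity from $k$ up to $\ell-1$ telescopes to
\[
\card\mcE_\ell=\card\mcE_k\cdot\prod_{j=k+1}^\ell\card\DD_j,
\]
which is the first assertion of the corollary. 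For the ``in particular'' clause, recall that by construction $\mcE_1=\DD_1$ (set at the start of Section~\ref{secchuvv}), so $\card\mcE_1=\card\DD_1$; applying the telescoped identity with $k=1$ gives $\card\mcE_\ell=\card\DD_1\cdot\prod_{j=2}^\ell\card\DD_j=\prod_{j=1}^\ell\card\DD_j$, as claimed.

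So there is essentially no obstacle at the level of Corollary~\ref{garb} itself — it is a one-line induction once Proposition~\ref{prooooop} is in hand. The genuine work lies in Proposition~\ref{prooooop}, whose proof is deferred. The hard part there will be the separation statement: one must show that two distinct curves $\gamma_k,\gamma_k'\in\mcE_k$ are $(t_{k-1}+T_k,\delta_0/2)$-separated, i.e.\ that along the first $t_{k-1}+T_k$ iterates their orbits never come closer than $\delta_0/2$. The natural strategy is induction on $k$, using the decomposition $\gamma_k=\phi_{k-1}(\gamma_{k-1},\xi_k)$ and $\gamma_k'=\phi_{k-1}(\gamma_{k-1}',\xi_k')$: if the predecessors $\gamma_{k-1},\gamma_{k-1}'$ already differ one invokes the inductive separation on the time window $\{0,\dots,t_{k-2}+T_{k-1}\}$, combined with property~(2) of Proposition~\ref{prolempropos} which keeps $f^i(\gamma_k)$ inside a $\delta/2^{k-1}$-neighborhood of $f^i(\gamma_{k-1})$ (and $\delta/2^{k-1}\le\delta_0/20$ is small, so the perturbation does not destroy separation); if the predecessors agree but $\xi_k\ne\xi_k'$, one uses the $(n_k,\delta_0)$-separation of $\cS_k$ delivered by Proposition~\ref{p.separation-in-symbol}, transported through property~(4) of Proposition~\ref{prolempropos} and the distance comparison in item~(3) of Lemma~\ref{l.local-stable-manifold}, to produce a separation time inside $\{t_{k-1},\dots,t_{k-1}+T_k\}$. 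Bijectivity of $\phi_k$ (hence of $\Phi_k$, via Remark~\ref{rembox} and bijectivity of $\Psi_k$ on its image) then follows: surjectivity is built into the definition of $\mcE_{k+1}$, and injectivity is a direct consequence of the separation property, since distinct pairs $(\gamma_k,\xi_{k+1})$ yield curves whose orbits separate by more than $\delta_0/2$ and are therefore distinct. But all of this belongs to the proof of Proposition~\ref{prooooop}; for Corollary~\ref{garb} we simply quote it.
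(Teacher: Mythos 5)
Your argument is exactly the paper's: the paper simply declares Corollary~\ref{garb} ``immediate'' after Proposition~\ref{prooooop}, and your spelled-out telescoping of $\card\mcE_{k+1}=\card\mcE_k\cdot\card\DD_{k+1}$ (from bijectivity of $\phi_k$) together with $\mcE_1=\DD_1$ is precisely what ``immediate'' means there. Your longer digression into how Proposition~\ref{prooooop} itself will be proved is extraneous to this corollary, but you correctly flag it as such and it does not affect the validity of the proof.
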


The following refines Corollary \ref{garb}, we prove it at the end of this subsection.

\begin{corollary}\label{corlem:unique-predecessor}
	For every $\ell>k$, each  $\gamma_\ell\in\mcE_\ell$ has a unique $(\ell-k)$-predecessor in $\gamma_k\in\mcE_k$, and $\gamma_\ell$ is $(\c,\cW^\uu(\gamma_k,\delta/2^{k-1}))$-complete.
\end{corollary}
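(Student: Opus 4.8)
The plan is to deduce both assertions purely from Proposition~\ref{prooooop} (bijectivity of the maps $\phi_j$) together with properties (1) and (2) of Proposition~\ref{prolempropos}; no new geometric input is needed beyond what was already used in the proof of Proposition~\ref{prolempropos}.

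First I would establish uniqueness of the predecessor by a downward induction on the index. By Proposition~\ref{prooooop}, the map $\phi_{\ell-1}\colon\mcE_{\ell-1}\times\DD_\ell\to\mcE_\ell$ is a bijection; hence every $\gamma_\ell\in\mcE_\ell$ equals $\phi_{\ell-1}(\gamma_{\ell-1},\xi_\ell)$ for a \emph{unique} pair $(\gamma_{\ell-1},\xi_\ell)$, so $\gamma_{\ell-1}$ is the unique predecessor of $\gamma_\ell$. Iterating this step from $\ell$ down to $k+1$ produces a unique chain $\gamma_{\ell-1},\dots,\gamma_{k}$ with $\gamma_j$ a predecessor of $\gamma_{j+1}$ for $k\le j\le\ell-1$. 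By the definition of $(\ell-k)$-predecessor this shows that $\gamma_k$ is the unique $(\ell-k)$-predecessor of $\gamma_\ell$ in $\mcE_k$.

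Next I would prove the completeness claim, following verbatim the telescoping argument used for property (5) in the proof of Proposition~\ref{prolempropos}. Fix $x_\ell\in\gamma_\ell$. By property (1) of Proposition~\ref{prolempropos} applied successively to $\gamma_\ell,\gamma_{\ell-1},\dots,\gamma_{k+1}$, there are points $x_j\in\gamma_j$ for $k\le j\le\ell-1$ with $x_{j+1}\in\cW^\uu(x_j,\delta/2^j)$; moreover, if $x_\ell$ is an endpoint of $\gamma_\ell$ then each $x_j$ is an endpoint of $\gamma_j$, since being $(\c,\cW^\uu(\gamma_j,\delta/2^j))$-complete forces the endpoints of $\gamma_{j+1}$ to lie in $\partial^\uu\cW^\uu(\gamma_j,\delta/2^j)$, hence over the endpoints of $\gamma_j$. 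Concatenating along the strong unstable leaf and using
\[
\sum_{j=k}^{\ell-1}\frac{\delta}{2^j}<\frac{\delta}{2^{k-1}},
\]
one gets $x_\ell\in\cW^\uu(x_k,\delta/2^{k-1})$, so $\gamma_\ell\subset\cW^\uu(\gamma_k,\delta/2^{k-1})$; and when $x_\ell$ is an endpoint, $x_k$ is an endpoint of $\gamma_k$, so the endpoints of $\gamma_\ell$ lie in $\partial^\uu\cW^\uu(\gamma_k,\delta/2^{k-1})$. This is precisely what it means for $\gamma_\ell$ to be $(\c,\cW^\uu(\gamma_k,\delta/2^{k-1}))$-complete.

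I do not expect a serious obstacle here. The only points requiring care are the telescoping distance estimate along $\cW^\uu$ and the ``endpoints map to endpoints'' observation, and both are already carried out in the proof of property (5) of Proposition~\ref{prolempropos}; the remainder is bookkeeping with the bijections $\phi_j$. One should only note that $\delta/2^{k-1}\le\delta$, with $\delta$ fixed as in \eqref{fixdelta}, so that $\cW^\uu(\gamma_k,\delta/2^{k-1})$ is indeed a $\u$-strip in the sense of Remark~\ref{remweaint}.
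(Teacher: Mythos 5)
Your proposal is correct and follows essentially the same route as the paper: the uniqueness of the $(\ell-k)$-predecessor is read off from the bijectivity of $\phi_j$ (Proposition~\ref{prooooop}), and the completeness statement is obtained by iterating item~(1) of Proposition~\ref{prolempropos} exactly as in the paper's proof of property~(5). The only cosmetic difference is that the paper's proof cites item~(2) of Proposition~\ref{prolempropos} (the $i=0$ nesting $\cW^\uu(\gamma_{j+1},\delta/2^j)\subset\cW^\uu(\gamma_j,\delta/2^{j-1})$) and then inducts, while you re-derive that nesting by hand via the triangle inequality along the strong unstable leaf and the geometric bound $\sum_{j\ge k}\delta/2^j<\delta/2^{k-1}$ — the two are logically equivalent, and your explicit treatment of the ``endpoints map to endpoints'' step is a correct and slightly more detailed record of what the paper compresses into ``one can inductively show.''
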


Recall our notation \eqref{defdist} for distance between curves.

\begin{lemma}[Separation of curves in $\DD_k$]\label{lemDsep}
	For every $k\in\bN$ and distinct curves $\xi,\xi'\in \DD_k$ there are numbers $N$ and $j$, 
\begin{equation}\label{asin}
	1\leq N\leq N_k
	\quad\text{ and }\quad
	(N-1)(n_k+\ell_k)+a^\sharp_k<j<(N-1)(n_k+\ell_k)+n_k,
\end{equation}
such that for every $x\in\xi$ and $x'\in\xi'$, one has 
\[
	d(f^j(x),f^j(x'))>4\delta_0/5-2r_k.
\]	 
Moreover, the curves $\xi,\xi'$ are $(T_k,4\delta_0/5-2r_k)$-separated. 
\end{lemma}

\begin{proof}
Consider the curves $\xi=\xi(x_k,r_k)$ and $\xi'=\xi(x_k',r_k)$ in $\DD_k$, where $x_k,x_k'\in\im(\Psi_k)$. By item (b2) in Proposition~\ref{p.separation-in-symbol}, there exist $N$ and $j$ as in \eqref{asin}  such that 
\[
	d(f^j(x_k),f^j(x_k'))>4\delta_0/5.
\]
Let $x\in\xi$ and $x'\in\xi'$. Note that \eqref{asin} implies $j>a^\sharp_k$ and hence, as we assume $r_k\in(0,r^\sharp_k)$, by Lemma~\ref{l.local-stable-manifold} item (2), one has 
\[
	d(f^j(x),f^jx_k))<r_k \quad\text{~and~}\quad
	d(f^j(x'),f^j(x_k'))<r_k.
\]
Therefore, one has 
\begin{equation}\label{eq111}\begin{split}
	d(f^j(x),f^j(x'))
	&\geq d(f^j(x_k),f^j(x_k'))-d(f^j(x),f^j(x_k))-d(f^j(x'),f^j(x_k'))\\
	&>4\delta_0/5-2r_k, 
\end{split}\end{equation}
that is, $\xi,\xi'$ are $(j,4\delta_0/5-2r_k)$-separated.

Finally, recall that $T_k=N_k(n_k+\ell_k)$ and hence $N\le N_k$ implies $T_k\ge j$. This together with   \eqref{eq111} implies that $\xi,\xi'$ are $(T_k,4\delta_0/5-2r_k)$-separated. This proves the lemma.
\end{proof}

Now, we turn to the separated properties of curves in $\mcE_k.$
\begin{lemma}[Separation of curves in $\mcE_k$]\label{induzsepar}
For every $k\ge1$, the curves in $\mcE_k$ are pairwise
\[\big(t_{k-1}+T_k,4\delta_0/5-\sum_{j=0}^{k-2}\delta/2^j-2\sum_{j=1}^{k}r_{j}\big)\text{-separated},\]
where for $k=1$ the first sum is disregarded. In particular, the curves in  $\mcE_k$ are pairwise
$(t_{k-1}+T_k,3\delta_0/5)$-separated.
\end{lemma}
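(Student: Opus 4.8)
\textbf{Proof strategy for Lemma~\ref{induzsepar}.}
The plan is to induct on $k$. The base case $k=1$ is immediate from Lemma~\ref{lemDsep}: there $\mcE_1=\DD_1$ and $t_0=0$, so the curves in $\mcE_1$ are pairwise $(T_1,4\delta_0/5-2r_1)$-separated, which is the claimed bound with the empty first sum. For the inductive step, suppose the curves in $\mcE_k$ are pairwise $(t_{k-1}+T_k,c_k)$-separated with $c_k\eqdef 4\delta_0/5-\sum_{j=0}^{k-2}\delta/2^j-2\sum_{j=1}^k r_j$, and take two distinct curves $\gamma_{k+1},\gamma_{k+1}'\in\mcE_{k+1}$. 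Write $\gamma_{k+1}=\phi_k(\gamma_k,\xi_{k+1})$ and $\gamma_{k+1}'=\phi_k(\gamma_k',\xi_{k+1}')$. By Proposition~\ref{prooooop} the map $\phi_k$ is bijective, so either $\gamma_k\neq\gamma_k'$ (the predecessors differ) or $\gamma_k=\gamma_k'$ and $\xi_{k+1}\neq\xi_{k+1}'$ (only the new building block differs). These two cases are handled separately and constitute the heart of the argument.

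\emph{Case 1: $\gamma_k\neq\gamma_k'$.} By the induction hypothesis these are $(t_{k-1}+T_k,c_k)$-separated, so there is a time $j\in\{0,\ldots,t_{k-1}+T_k-1\}$ at which every point of $\gamma_k$ is $c_k$-far from every point of $\gamma_k'$. By property (2) of Proposition~\ref{prolempropos}, for $i=0,\ldots,t_{k-1}+T_k$ one has $\gamma_{k+1}\subset\cW^\uu(\gamma_k,\delta/2^{k-1})$ pointwise in the sense that each $f^i(\gamma_{k+1})$ lies in $\cW^\uu(f^i(\gamma_k),\delta/2^{k-1})$, and similarly for $\gamma_{k+1}'$. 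Hence for every $x\in\gamma_{k+1}$, $x'\in\gamma_{k+1}'$, choosing their projections $y\in\gamma_k$, $y'\in\gamma_k'$ along the strong unstable leaves, at time $j$ we get
\[
	d\big(f^j(x),f^j(x')\big)
	\ge d\big(f^j(y),f^j(y')\big)-d\big(f^j(x),f^j(y)\big)-d\big(f^j(x'),f^j(y')\big)
	> c_k-2\cdot\frac{\delta}{2^{k-1}}.
\]
Since $c_k-2\delta/2^{k-1}=c_k-\delta/2^{k-2}\ge c_{k+1}$ (the next sum $\sum_{j=0}^{k-1}\delta/2^j$ adds exactly the term $\delta/2^{k-1}$, and the factor of $2$ is absorbed because $2\cdot\delta/2^{k-1}=\delta/2^{k-2}$; one checks the bookkeeping precisely using that $r_{k+1}>0$ only strengthens what is needed), this gives the desired separation with $j<t_{k-1}+T_k\le t_k+T_{k+1}$.

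\emph{Case 2: $\gamma_k=\gamma_k'$ but $\xi_{k+1}\neq\xi_{k+1}'$.} By Lemma~\ref{lemDsep} the curves $\xi_{k+1},\xi_{k+1}'\in\DD_{k+1}$ are $(T_{k+1},4\delta_0/5-2r_{k+1})$-separated, witnessed by some time $j'<T_{k+1}$. By property (3) of Proposition~\ref{prolempropos}, $f^{t_k}(\gamma_{k+1})\subset\cW^\ss(\xi_{k+1},\delta/2^k)$ and likewise for $\gamma_{k+1}'$; since $f$ contracts along $\cW^\ss$, at time $t_k+j'$ the images $f^{t_k+j'}(\gamma_{k+1})$ and $f^{t_k+j'}(\gamma_{k+1}')$ stay in the $\delta/2^k$-strong-stable neighborhoods of $f^{j'}(\xi_{k+1})$ and $f^{j'}(\xi_{k+1}')$. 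Arguing as above with the triangle inequality, for every $x\in\gamma_{k+1}$, $x'\in\gamma_{k+1}'$ we obtain
\[
	d\big(f^{t_k+j'}(x),f^{t_k+j'}(x')\big)
	> \big(4\delta_0/5-2r_{k+1}\big)-2\cdot\frac{\delta}{2^k}
	\ge c_{k+1},
\]
and $t_k+j'<t_k+T_{k+1}$, so this is the required separation time. In both cases the separation constant is at least $c_{k+1}$. Finally, the uniform bound $3\delta_0/5$ follows from \eqref{fixdelta} (which gives $\delta<\delta_0/20$, so $\sum_{j\ge0}\delta/2^j=2\delta<\delta_0/10$) and from \eqref{eq5V} (which gives $\sum_j r_j<\delta_0/20$, so $2\sum_j r_j<\delta_0/10$), whence $c_{k+1}\ge 4\delta_0/5-\delta_0/10-\delta_0/10=3\delta_0/5$. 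This completes the induction.

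\textbf{Main obstacle.} The delicate point is the bookkeeping of the separation constants: one must verify that after peeling off the strong-unstable errors in Case~1 (respectively the strong-stable errors in Case~2), the loss $2\delta/2^{k-1}$ (resp.\ $2\delta/2^k$ plus the extra $2r_{k+1}$) is exactly accounted for by the difference $c_k-c_{k+1}$, so that the induction closes with the stated telescoping sums. The geometric estimates themselves are routine applications of the triangle inequality combined with the leafwise containments in Proposition~\ref{prolempropos}(2)--(3) and the uniform contraction/expansion of the strong foliations; the only genuine care needed is ensuring the time witnessing the separation at stage $k+1$ lies in the admissible range $\{0,\ldots,t_k+T_{k+1}-1\}$, which is automatic since in Case~1 it is inherited from a time $<t_{k-1}+T_k$ and in Case~2 it is of the form $t_k+j'$ with $j'<T_{k+1}$.
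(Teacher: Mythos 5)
Your overall strategy is the same as the paper's: induct on $k$, with the inductive step split into the two cases where either the predecessors $\gamma_k,\gamma_k'$ differ or they coincide and the new building blocks $\xi_{k+1},\xi_{k+1}'$ differ. The base case and the $\xi$-case are handled correctly. But there is a genuine arithmetic error in the $\gamma_k\neq\gamma_k'$ case that breaks the induction.

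In that case you invoke the containment $f^i(\gamma_{k+1})\subset\cW^\uu\big(f^i(\gamma_k),\delta/2^{k-1}\big)$, which gives a triangle-inequality loss of $2\cdot\delta/2^{k-1}=\delta/2^{k-2}$. But the budget
\[
	c_k-c_{k+1}
	=\Big(4\delta_0/5-\sum_{j=0}^{k-2}\delta/2^j-2\sum_{j=1}^{k}r_j\Big)
	-\Big(4\delta_0/5-\sum_{j=0}^{k-1}\delta/2^j-2\sum_{j=1}^{k+1}r_j\Big)
	=\delta/2^{k-1}+2r_{k+1}
\]
only allows a loss of $\delta/2^{k-1}+2r_{k+1}$. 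Your claimed inequality $c_k-\delta/2^{k-2}\ge c_{k+1}$ is equivalent to $2r_{k+1}\ge\delta/2^{k-1}$, which is not guaranteed by the choices in Section~\ref{ssecquanti} (the $r_j$ are chosen small, subject only to $\sum_j r_j<\delta_0/20$). The parenthetical remark about the factor of $2$ being ``absorbed'' is precisely where the slip occurs: it isn't absorbed, it simply overruns the budget.

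The fix is to use the tighter containment $f^i(\gamma_{k+1})\subset\cW^\uu\big(f^i(\gamma_k),\delta/2^k\big)$ for $i=0,\dots,t_{k-1}+T_k-1$, which comes from the construction in the proof of Proposition~\ref{prolempropos} (it is Proposition~\ref{proLem:key}(ii) applied with $r=\delta/2^k$ at the $k$-th step, carried back to time $0$). With that, the loss is $2\cdot\delta/2^k=\delta/2^{k-1}$, matching exactly the new $\delta$-term in the sum, and the induction closes with the $2r_{k+1}$ term to spare. This is what the paper actually uses; Proposition~\ref{prolempropos}(2) as literally stated (nesting of the tubular neighborhoods) gives you only the looser $\delta/2^{k-1}$, so you do need to look at the construction to justify the pointwise bound.

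One further remark: invoking Proposition~\ref{prooooop} to justify the case split risks circularity, since that proposition's first assertion is deduced from the present lemma. Bijectivity of $\phi_k$ is in fact not needed here: if $\gamma_{k+1}=\phi_k(\gamma_k,\xi_{k+1})$ and $\gamma_{k+1}'=\phi_k(\gamma_k',\xi_{k+1}')$ are distinct, then since $\phi_k$ is a function we must have $(\gamma_k,\xi_{k+1})\neq(\gamma_k',\xi_{k+1}')$, which already yields the dichotomy you use.
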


\begin{proof}

By definition, $\mcE_1=\DD_1$. Then the case for $k=1$ follows directly from Lemma~\ref{lemDsep}.

Assume that the assertion holds for $\mcE_1,\ldots,\mcE_k$.  In order to check the assertion for $k+1$, consider curves $\gamma_{k+1}= \phi_k(\gamma_k,\xi_{k+1})$ and $\gamma_{k+1}'=\phi_k(\gamma_k',\xi_{k+1}')$ in  $\mcE_{k+1}$, where
\[
	(\gamma_k,\xi_{k+1}) ,(\gamma_k',\xi_{k+1}' )\in \mcE_k\times \DD_{k+1}, 
\]	 
There the following two cases to be considered.\medskip

\noindent\textbf{Case $\xi_{k+1}\neq\xi_{k+1}'$.}
By Lemma \ref{lemDsep}, there exist $N,j$ with
\[
	1\leq N\leq N_{k+1}
	\quad\text{ and }\quad
	(N-1)(n_{k+1}+\ell_{k+1})+a^\sharp_{k+1} <j<(N-1)(n_{k+1}+\ell_{k+1})+n_{k+1}
\]	 
so that for every $y\in\xi_{k+1}$ and $y'\in\xi_{k+1}'$ and
\begin{equation}\label{iolat}
	d(f^j(y),f^j(y'))
	>4\delta_0/5-2r_{k+1}.
\end{equation}
Item (3) in Proposition~\ref{prolempropos} gives that 
\[
	f^{t_{k}}(\gamma_{k+1})\subset \cW^\ss(\xi_{k+1},\delta/2^k) 
	\quad\textrm{~and~}\quad 
	f^{t_{k}}(\gamma_{k+1}')\subset \cW^\ss(\xi_{k+1}',\delta/2^k).
\]
For any $z\in\gamma_{k+1}$ and $z'\in\gamma_{k+1}'$, take $y\in\xi_{k+1}$ and $y'\in\xi_{k+1}'$ such that 
$f^{t_{k}}(z)\subset \cW^\ss(y,\delta/2^k)$  and  $f^{t_{k}}(z')\subset \cW^\ss(y',\delta/2^k).$
By the uniform contraction along the strong stable foliation $\cW^\ss$, one has 
\begin{align*}
	d(f^{t_k+j}(z),f^{t_k+j}(z'))
	&\geq d(f^j(y),f^j(y'))-d(f^{t_k+j}(z),f^{j}(y))-d(f^{t_k+j}(z'),f^{j}(y'))\\
	\text{\small{(by \eqref{iolat})}}\quad
	&\geq 4\delta_0/5-2r_{k+1}-2\delta/2^{k}
	= 4\delta_0/5-2r_{k+1}-\delta/2^{k-1}.
\end{align*}
Note that
\[
	4\delta_0/5-2r_{k+1}-\delta/2^{k-1}
	>4\delta_0/5-\sum_{j=0}^{k-1}\delta/2^j-2\sum_{j=1}^{k+1}r_{j}.
\]
This proves the lemma in this case. 
\medskip

\noindent\textbf{Case $\gamma_k\neq\gamma_k'$.} 
By the induction hypothesis, the curves $\gamma_k$ and $\gamma_k'$ belong to $\mcE_k$ and are $\big(t_{k-1}+T_k,4\delta_0/5-\sum_{j=0}^{k-2}\delta/2^j-2\sum_{j=1}^{k}r_{j}\big)$-separated. 
By item (2) in Proposition \ref{prolempropos}, for any $0\leq i\leq t_{k-1}+T_{k}-1$, one has 
\[
	f^i(\gamma_{k+1})\subset \cW^\uu(f^i(\gamma_k),\delta/2^k)
	\quad\text{ and }\quad
	f^i(\gamma_{k+1}')\subset \cW^\uu(f^i(\gamma_k'),\delta/2^k) .
\]
Hence, $\gamma_{k+1}$ and $\gamma_{k+1}'$ are $\big(t_{k-1}+T_{k},4\delta_0/5-\sum_{j=0}^{k-1}\delta/2^j-2\sum_{j=1}^{k}r_{j}\big)$-separated. This proves the lemma in this case. 

Note that
\[\begin{split}
	4\delta_0/5-\sum_{j=0}^{k-1}\delta/2^j-2\sum_{j=1}^{k}r_{j}
	&\geq 4\delta_0/5-\sum_{j=0}^{\infty}\delta/2^j-2\sum_{j=1}^{\infty}r_{j}\\
	\text{\small{(by \eqref{eq5V} and \eqref{fixdelta})}}\quad
	&>4\delta_0/5-2\delta-\delta_0/10
	>3\delta_0/5.
\end{split}\]	
This finishes the proof of Lemma~\ref{induzsepar}.
\end{proof}

\begin{lemma}\label{lemcorcl.cardinality}
	Given two points $\sX=(x_1,\ldots,x_{N_k}), \sX'=(x_1',\ldots,x_{N_k}')\in \cS_k^{N_k}$, let $N$ be the smallest integer such that $x_N\neq x_N'$. Given $\gamma_{k-1}\in \mcE_{k-1}$, let 
\[
	\gamma_k
	\eqdef\Phi_{k-1}(\gamma_{k-1},\sX)
	,\quad
	\gamma_k'
	\eqdef\Phi_{k-1}(\gamma_{k-1},\sX').
\]
Then there exists $t_{k-1}+(N-1)(n_k+\ell_k)+a^\sharp_k<j<t_{k-1}+(N-1)(n_k+\ell_k)+n_k$ such that 
\[
	d(f^j(\gamma_k),f^j(\gamma_k'))>3\delta_0/5.
\] 
\end{lemma}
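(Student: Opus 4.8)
The plan is to transport the separation of the tuples $\sX,\sX'$ at level $k$ (which is the content of Proposition~\ref{p.separation-in-symbol}(b2) applied to $\Lambda_k$) down to a separation of the concatenated curves $\gamma_k,\gamma_k'$, paying the small price of the strong-unstable/strong-stable accompanying that was set up in Proposition~\ref{prolempropos}. Recall first how $\gamma_k=\Phi_{k-1}(\gamma_{k-1},\sX)$ is built: one forms the center curve $\xi(\Psi_k(\sX),r_k)\in\DD_k$, and then $\gamma_k=\phi_{k-1}(\gamma_{k-1},\xi(\Psi_k(\sX),r_k))$, so by item~(3) of Proposition~\ref{prolempropos} one has $f^{t_{k-1}}(\gamma_k)\subset\cW^{\ss}\big(\xi(\Psi_k(\sX),r_k),\delta/2^{k-1}\big)$, and similarly for $\gamma_k'$. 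Thus at time $t_{k-1}$ the whole curve $f^{t_{k-1}}(\gamma_k)$ sits within $\delta/2^{k-1}$ along strong stable leaves of the curve $\xi(\Psi_k(\sX),r_k)$, whose endpoints are at strong-stable distance $r_k$ from the point $\Psi_k(\sX)\in\Lambda_k$.

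First I would invoke Proposition~\ref{p.separation-in-symbol}(b2) for the basic set $\Lambda_k$: since $N$ is the first index with $x_N\neq x_N'$, there is $j_0$ with
\[
(N-1)(n_k+\ell_k)+a^\sharp_k<j_0<(N-1)(n_k+\ell_k)+n_k
\]
such that $d\big(f^{j_0}(\Psi_k(\sX)),f^{j_0}(\Psi_k(\sX'))\big)>4\delta_0/5$. Set $j\eqdef t_{k-1}+j_0$; this clearly lies in the claimed range. Next I would control the four small error terms. Since $j_0>a^\sharp_k$ and $r_k<r^\sharp_k$, Lemma~\ref{l.local-stable-manifold}(2) gives $f^{j_0}\big(\xi(\Psi_k(\sX),r_k)\big)\subset\cW^{\s}\big(f^{j_0}(\Psi_k(\sX)),r_k\big)$, so every point of $f^{j_0}$ of the curve $\xi(\Psi_k(\sX),r_k)$ is within $r_k$ of $f^{j_0}(\Psi_k(\sX))$; the same for the primed data. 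Meanwhile, $f^{t_{k-1}}(\gamma_k)\subset\cW^{\ss}\big(\xi(\Psi_k(\sX),r_k),\delta/2^{k-1}\big)$ together with uniform contraction along $\cW^{\ss}$ under forward iteration shows that for any $z\in\gamma_k$, the point $f^{j}(z)=f^{j_0}\big(f^{t_{k-1}}(z)\big)$ is within $\delta/2^{k-1}$ of $f^{j_0}$ of the corresponding point on $\xi(\Psi_k(\sX),r_k)$, hence within $\delta/2^{k-1}+r_k$ of $f^{j_0}(\Psi_k(\sX))$; again likewise for $z'\in\gamma_k'$.

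Assembling by the triangle inequality, for all $z\in\gamma_k$ and $z'\in\gamma_k'$,
\[
d\big(f^j(z),f^j(z')\big)\ge d\big(f^{j_0}(\Psi_k(\sX)),f^{j_0}(\Psi_k(\sX'))\big)-2\big(\delta/2^{k-1}+r_k\big)>4\delta_0/5-2\delta/2^{k-1}-2r_k.
\]
By \eqref{fixdelta} we have $2\delta<\delta_0/10$ hence $2\delta/2^{k-1}\le 2\delta<\delta_0/10$, and by \eqref{eq5V} $2r_k<\delta_0/10$; therefore the right-hand side exceeds $4\delta_0/5-\delta_0/10-\delta_0/10=3\delta_0/5$, which is exactly the claimed bound $d(f^j(\gamma_k),f^j(\gamma_k'))>3\delta_0/5$ in the notation \eqref{defdist}. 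The main obstacle, if any, is purely bookkeeping: making sure that the curve produced by $\phi_{k-1}$ really does land in a $\cW^{\ss}$-neighborhood of the correct $\DD_k$-curve at the correct time $t_{k-1}$ (this is item~(3) of Proposition~\ref{prolempropos}, valid because $T_k>T_k^\sharp$ by \eqref{implythat}), and that the time index $j_0$ coming from Proposition~\ref{p.separation-in-symbol}(b2), which is measured relative to the $N_k$-fold concatenation inside $\Lambda_k$, matches the shift by $t_{k-1}$ in the ambient dynamics — both of which are built into the constructions quoted above.
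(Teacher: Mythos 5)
Your proposal is correct and follows the same route as the paper's (very terse) proof: the paper simply points to the proofs of Lemma~\ref{lemDsep} and Lemma~\ref{induzsepar} (case $\xi_{k+1}\neq\xi_{k+1}'$), which together amount to exactly your chain of estimates — Proposition~\ref{p.separation-in-symbol}(b2) for the separation of $\Psi_k(\sX),\Psi_k(\sX')$ at some $j_0>a^\sharp_k$, then Lemma~\ref{l.local-stable-manifold}(2) (error $r_k$), then Proposition~\ref{prolempropos}(3) plus $\cW^\ss$-contraction (error $\delta/2^{k-1}$), and finally the numerical bounds from \eqref{fixdelta} and \eqref{eq5V} to pass from $4\delta_0/5$ to $3\delta_0/5$. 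The only slip to flag is in the paper itself: it writes ``$\Psi_k(\sX)\in\DD_k$'' where it means $\xi(\Psi_k(\sX),r_k)\in\DD_k$, which you correctly identify.
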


\begin{proof}
By items (b2) and (c) in Proposition \ref{p.separation-in-symbol},   $\Psi_k(\sX),\Psi_k(\sX')\in\DD_k$ are distinct center curves as in the assumption of Lemma~\ref{lemDsep}. 
 Combining with the proof of  Lemma~\ref{lemDsep} and arguing as in the proof of Lemma \ref{induzsepar} for the case where $\xi_{k+1}\neq\xi_{k+1}'$, the assertion follows.  
\end{proof}

\begin{proof}[Proof of Proposition \ref{prooooop}]
	The first assertion follows immediately from Lemma \ref{induzsepar}.
		
	By construction, the maps $\Phi_k$ and $\phi_k$ are onto (recall Remark \ref{rembox}). So it is enough to prove that they are injective. By items (b) and (c) in Proposition \ref{p.separation-in-symbol} and the definition of $\DD_k$, the map $\Psi_k:\cS_k^{N_k}\to\DD_k$ is bijective. Then it follows from Lemma~\ref{lemcorcl.cardinality} that $\Phi_k$ and $\phi_k$ are injective. 
\end{proof}

\begin{proof}[Proof of Corollary \ref{corlem:unique-predecessor}]
The first assertion is an immediate consequence of  Proposition \ref{prooooop}.
By Proposition \ref{prolempropos} item (2),  
 \[
 	\cW^\uu\big(f^i(\gamma_{k+1}),\delta/2^k\big)
	\subset \cW^\uu\big(f^i(\gamma_k),\delta/2^{k-1}\big), \textrm{~for every $0\leq i\leq t_{k-1}+T_k$.}
\]
By item (1) in Proposition~\ref{prolempropos}, $\gamma_{\ell}$ is $(\c, \cW^\uu(\gamma_{\ell-1},\delta/2^{\ell-1})$-complete. Then one can inductively show that  $\gamma_\ell\in\mcE_\ell$  is $(\c, \cW^\uu(\gamma_k,\delta/2^{k-1})$-complete. 
\end{proof} 

\subsection{Proof of item \eqref{THETHEObisbis-3} in Theorem \ref{THETHEObisbis}}\label{secproofTHEOBIS}

Proposition \ref{prolempropos} gives the sequences $(t_k)_k$, $(T_k)_k$, and $(\mcE_k)_k$. This defines the set $L$. Let us show the properties claimed in the theorem. 

Assume that $\gamma_{k+1}$ is a successor of $\gamma_k$.
By Proposition \ref{prolempropos} item (2), one has
\[
	f^{-i}\big(\cW^\uu(f^i(\gamma_{k+1}),\delta/2^k)\big)
	\subset f^{-i}\big(	\cW^\uu(f^i(\gamma_k),\delta/2^{k-1})\big),
	 \textrm{~for every $0\leq i\leq t_{k-1}+T_k$.}
\] 
Thus, one obtains 
\[
	B^{\cu}_{t_{k}+T_{k+1}}\big(\gamma_{k+1},\delta/2^{k}\big) 
	\subset  B^{\cu}_{t_{k-1}+T_k}\big(\gamma_k,\delta/2^{k-1}\big). 
\]
This proves that $L_{k+1}\subset L_k$ for all $k\in\mathbb{N}$. Hence, $L$ is a compact nonempty set.
 
By Proposition~\ref{prooooop}, the curves in $\mcE_k$ are $(t_{k-1}+T_k,\delta_0/2)$-separated. As by  \eqref{fixdelta} we have $\delta<\delta_0/20$, it follows that
\[
	\big\{B^{\cu}_{t_{k-1}+T_k}\big(\gamma_k,\delta/2^{k-1}\big)\colon \gamma_k\in \mcE_k\big\}
\]	 
is a collection of pairwise disjoint compact sets. By property (5) in Proposition~\ref{prolempropos},  each $\gamma_k$ is $(\c,\cW^\uu(\xi,\delta))$-complete for some $\xi\in \DD_1$. By the uniform contraction of $f^{-1}$ along the foliation $\cW^\uu$, one deduces that $L$ consists of non-degenerate compact center curves which are $(\c,\cW^\uu(\xi,2\delta))$-complete. This proves the theorem.
\qed

\section{Proof of Theorem \ref{THETHEObisbis}: Entropy estimates}\label{secentropy}

Throughout this section, we assume the hypotheses of Theorem \ref{THETHEObisbis}. 
Consider $L=L\big(\delta,(t_k)_k,(T_k)_k,(\mcE_k)_k\big)$ as in \eqref{defLL}, where $\delta$, $(t_k)_k$, $(T_k)_k$, and $(\mcE_k)_k$ are as in Proposition \ref{prolempropos}.
In Section \ref{proofitem1}, we complete the proof of item \eqref{THETHEObisbis-1} in Theorem \ref{THETHEObisbis}.  To accomplish this, we employ the entropy distribution principle in Appendix \ref{App:B}. For that, we need the auxiliary Proposition \ref{prol.convergence-of-probability-measure} below.  

For each $k\in\bN$, define the probability measure $\mu_k$ as 
\begin{equation}\label{defmuk}
	\mu_k
	\eqdef \frac{1}{\card \mcE_k} \sum_{\gamma_k\in \mcE_k}\Leb_{\gamma_k},
\end{equation}
where $\Leb_{\gamma_k}$ is the normalized Lebesgue measure on $\gamma_k$.
By definition, $\mu_k(L_k)=1$. 

\begin{proposition}\label{prol.convergence-of-probability-measure}
The sequence $(\mu_k)_k$ defined in \eqref{defmuk} converges in the weak$\ast$-topology to a probability measure $\mu$ such that $\mu(L)=1$. Moreover, for every $\theta>0$ there exists $n_0\in\bN$ such that for every $n> n_0$ and  every $x\in M$ satisfying $B_n(x,\delta/2)\cap L\neq\emptyset$, one has 
\begin{equation}\label{eqlocent}
	\mu(B_n(x,\delta/2))\leq e^{-n(\h(f)-\theta)}.
\end{equation}
\end{proposition}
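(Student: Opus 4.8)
The plan is to establish weak$\ast$ convergence of $(\mu_k)_k$ first, and then to derive the local entropy bound \eqref{eqlocent} by carefully counting how many curves of $\mcE_k$ can possibly meet a single Bowen ball $B_n(x,\delta/2)$. For the convergence, note that each $\mu_k$ is supported on the compact set $L_k$, and the nested structure $L_{k+1}\subset L_k$ (established in Section \ref{secproofTHEOBIS}) together with the fact that every $\gamma_{k+1}\in\mcE_{k+1}$ has a unique predecessor $\gamma_k\in\mcE_k$ with $f^i(\gamma_{k+1})\subset\cW^\uu(f^i(\gamma_k),\delta/2^{k-1})$ for $0\le i\le t_{k-1}+T_k$ (Corollary \ref{corlem:unique-predecessor} and Proposition \ref{prolempropos}(2)) will give that the sequence is ``Cauchy'' in a suitable sense: for any continuous test function, the difference $|\int\psi\,d\mu_{k+1}-\int\psi\,d\mu_k|$ is controlled by the modulus of continuity of $\psi$ on a scale tending to $0$ (since curves shrink under backward iteration by uniform expansion of $\cW^\uu$ and $t_k\to\infty$). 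More precisely, decompose $\mu_{k+1}$ according to predecessors: $\mu_{k+1}=\frac{1}{\card\mcE_{k+1}}\sum_{\gamma_k}\sum_{\gamma_{k+1}\text{ succ.\ of }\gamma_k}\Leb_{\gamma_{k+1}}$, and compare each block with $\Leb_{\gamma_k}$; since both $\gamma_k$ and all its successors lie within a $\cu$-tube of exponentially small transverse size around a common backward orbit, the measures are close. Taking the limit $\mu$, we get $\mu(L_k)\ge\limsup_j\mu_j(L_k)=1$ for each $k$ (as $L_k$ is closed and $\mu_j(L_k)=1$ for $j\ge k$), hence $\mu(L)=1$.

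For the main estimate \eqref{eqlocent}, fix $n$ large and $x$ with $B_n(x,\delta/2)\cap L\ne\emptyset$. Choose $k=k(n)$ so that $t_{k-1}+T_k\le n<t_k+T_{k+1}$ (or a similar window adapted to the scales $\delta/2^{k-1}$ versus $\delta/2$; since $\delta/2^{k-1}<\delta/2$ for $k\ge2$, Bowen balls at radius $\delta/2$ are coarser than the defining cylinders $B^\cu_{t_{k-1}+T_k}(\gamma_k,\delta/2^{k-1})$). The key point is that two curves $\gamma_k,\gamma_k'\in\mcE_k$ are $(t_{k-1}+T_k,3\delta_0/5)$-separated (Lemma \ref{induzsepar}), and $3\delta_0/5>\delta$ by \eqref{fixdelta}, so a single Bowen ball $B_n(x,\delta/2)$ of order $n\ge t_{k-1}+T_k$ can meet at most one set $B^\cu_{t_{k-1}+T_k}(\gamma_k,\delta/2^{k-1})$, hence intersects at most one curve $\gamma_k\in\mcE_k$. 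Therefore $\mu(B_n(x,\delta/2))\le\mu_\infty$-mass of that single block, which by construction equals (up to a constant, after passing to the limit and using $\mu(L)=1$) at most $1/\card\mcE_k$ times the number of level-$\ell$ descendants of $\gamma_k$ that can simultaneously lie in $B_n(x,\delta/2)$ for the appropriate larger $\ell$; but within that $\cu$-block the finer separation of descendants $\gamma_{k+1},\gamma_{k+2},\dots$ over the remaining time $n-(t_{k-1}+T_k)$ means only a controlled sub-collection survives. Rather than tracking descendants, it is cleaner to directly bound $\mu(B_n(x,\delta/2))\le\liminf_j\mu_j\big(\overline{B_n(x,\delta/2)}\big)$ and estimate, for $j\ge k$, the proportion of curves $\gamma_j\in\mcE_j$ meeting $B_n(x,\delta/2)$: all such $\gamma_j$ share a common level-$k$ predecessor, so the proportion is at most $1/\card\mcE_k$. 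Since $\card\mcE_k=\prod_{i=1}^k\card\DD_i$ and $\card\DD_i=(\card\cS_i)^{N_i}\ge e^{N_i n_i(h_i-\varepsilon_i)}$ with $h_i\ge\h(f)-\varepsilon_i$, and $t_{k-1}+T_k=\sum_{i=1}^{k}T_i+\sum_{i=1}^{k-1}m_i$ with $T_i=N_i(n_i+\ell_i)$, inequalities \eqref{onemore}, \eqref{cucucu}, and \eqref{extra-cond-tsbis} give $\log\card\mcE_k\ge(t_{k-1}+T_k)(\h(f)-\theta)\ge n(\h(f)-\theta)-(\text{lower-order})$ once $n$ (hence $k$) is large; combining, $\mu(B_n(x,\delta/2))\le e^{-n(\h(f)-\theta)}$.

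The main obstacle I anticipate is matching the scales correctly: the cylinders defining $L$ use the shrinking radii $\delta/2^{k-1}$, whereas \eqref{eqlocent} is stated for the fixed radius $\delta/2$, and the Bowen-ball order $n$ in \eqref{eqlocent} need not align with any $t_{k-1}+T_k$. One must choose $k$ as the largest index with $t_{k-1}+T_k\le n$ and then argue that a single $\delta/2$-Bowen ball of order $n$ still meets at most one curve in $\mcE_k$ — this uses $n\ge t_{k-1}+T_k$ together with the $(t_{k-1}+T_k,3\delta_0/5)$-separation and $\delta_0/5>\delta$ — and simultaneously that $n$ is not so much larger than $t_{k-1}+T_k$ that the bound $\log\card\mcE_k\ge n(\h(f)-\theta)$ fails; this is exactly what \eqref{extra-cond-tsbis} (controlling $n_{k+1}/t_k$) and the growth conditions on $(N_k)_k$ are designed to guarantee, since $n<t_k+T_{k+1}\le t_k(1+o(1))$ forces $n$ and $t_{k-1}+T_k$ to be comparable up to factors tending to $1$. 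A secondary technicality is justifying $\mu(B_n(x,\delta/2))\le\liminf_j\mu_j(\overline{B_n(x,\delta/2)})$ for the (closed) Bowen ball, which follows from the portmanteau theorem applied to the closed set $\overline{B_n(x,\delta/2)}$, possibly after replacing $\delta/2$ by a slightly smaller radius to absorb boundary effects — a harmless adjustment since $\theta>0$ gives room.
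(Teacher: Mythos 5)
Your convergence argument (Section \ref{ssecconv}-style, via predecessors and modulus of continuity) and the $\mu(L)=1$ conclusion match the paper's; also, the paper uses the open Bowen ball and the portmanteau inequality $\mu(U)\le\liminf_\ell\mu_\ell(U)$ for open $U$ directly, so your worry about closed balls and shrinking the radius is unnecessary. The problem is in the entropy bound.

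You bound $\mu(B_n(x,\delta/2))$ by $1/\card\mcE_k$, where $k$ is chosen with (roughly) $t_k\le n<t_{k+1}$, and you claim this is enough because ``$n<t_k+T_{k+1}\le t_k(1+o(1))$''. That inequality is false, and in fact it is the reverse of the truth: by condition~\eqref{extra-cond-ts}, $t_k/t_{k+1}<\varepsilon_{k+1}/C_{\rm max}\to0$, so $t_{k+1}/t_k\to\infty$, hence $T_{k+1}=t_{k+1}-t_k-m_{k+1}$ is much \emph{larger} than $t_k$, not $o(t_k)$. The condition you cite, \eqref{extra-cond-tsbis}, controls only $(n_{k+1}+\ell_{k+1})/t_k$, i.e.\ the length of a \emph{single} one of the $N_{k+1}$ blocks in $T_{k+1}=N_{k+1}(n_{k+1}+\ell_{k+1})$; it says nothing about $T_{k+1}/t_k$, because the paper deliberately takes $N_{k+1}$ very large. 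Consequently $\log\card\mcE_k\approx t_k\,\h(f)$ can be much smaller than $n(\h(f)-\theta)$ when $n$ is near $t_{k+1}$, and your bound $1/\card\mcE_k$ yields only $e^{-t_k(\h(f)-\theta)}\gg e^{-n(\h(f)-\theta)}$.

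The paper closes exactly this gap by the finer counting you briefly considered and then discarded as unnecessary: it fixes $N\in\{0,\dots,N_{k+1}\}$ maximal with $t_k+N(n_{k+1}+\ell_{k+1})\le n$ (Equation~\eqref{eq:choice-of-j0}) and shows (Lemma~\ref{lemcl.upper-mass-bound}, via Claim~\ref{claimlunch}) that at most $(\card\cS_{k+1})^{N_{k+1}-N}$ curves of $\mcE_{k+1}$ can meet $B_n(x,\delta)$ — because the first $N$ coordinates of the $\cS_{k+1}^{N_{k+1}}$-symbol of any such $\gamma_{k+1}$ are pinned down by separation, using Lemma~\ref{lemcorcl.cardinality}. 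This gives $\mu_\ell(B_n(x,\delta/2))\le\bigl(\card\mcE_k\cdot(\card\cS_{k+1})^N\bigr)^{-1}$, whose logarithm scales like $t_k+Nn_{k+1}\approx n$ rather than like $t_k$, and the inequalities \eqref{eq:choice-of-eprime}, \eqref{eq:choice-of-kepsilon}, \eqref{eqcSkcardinality} then yield $\ge n(\h(f)-\theta)$. Without this extra factor $(\card\cS_{k+1})^N$, the argument does not close; so the ``cleaner'' shortcut of discarding descendants is not merely a stylistic choice — it is where the proof breaks.
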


The convergence of $(\mu_k)_k$ and the ``local entropy estimate'' of the limit measure $\mu$ in Proposition \ref{prol.convergence-of-probability-measure} are proven in Sections \ref{ssecconv} and \ref{seccentr}, respectively.

\subsection{Convergence of the sequence of measures $(\mu_k)_k$}\label{ssecconv}

To show convergence, it suffices to prove that for any continuous function $\varphi\colon M\to\bR$, the limit $\lim_{k\to\infty}\int\varphi \,d\mu_k$ exists. In what follows, we fix $\varphi$ and prove this assertion.
 
By item (5) in Proposition \ref{prolempropos}, each curve $\gamma_k\in \mcE_k$ is $(\c,\cW^\uu(\xi,\delta))$-complete for some $\xi\in \DD_1$. In particular, all the curves in $\bigcup_k\mcE_k$ have lengths uniformly bounded away from zero and also from above. By the uniform continuity of the center and the strong unstable bundles, and the uniform continuity of $\varphi$, for every $\omega>0$, there exists $\eta>0$ such that for every pair of $(\c,\cW^\uu(\xi,\delta))$-complete curves $\gamma,\gamma'$ (for some $\xi\in \DD_1$) with $\cW^\uu(\gamma,\eta)\cap\gamma'\ne\emptyset$, one has 
\begin{equation}\label{refertoothiis}
	\Big|\int\varphi \,d\Leb_\gamma-\int\varphi \,d\Leb_{\gamma'}\Big|<\omega.
\end{equation}

Given integers $\ell>k$, by Corollary~\ref{corlem:unique-predecessor}, for any curve  $\gamma\in \mcE_\ell$,  there exists a $\gamma'\in \mcE_k$ such that $\gamma$ is $(\c,\cW^\uu(\gamma',\delta/2^{k-1}))$-complete. 
On the other hand, inductively applying Lemma~\ref{lemcorcl.cardinality}, it follows that every curve $\gamma'\in \mcE_k$ gives rise to exactly $(\prod_{j=k+1}^\ell\card \DD_j)$ distinct curves in $\mcE_\ell\cap \cW^\uu(\gamma',\delta/2^{k-1})$. 

Consider $k\in\bN$ sufficiently large such that $\delta/2^{k-1}<\eta$. Thus, for every  $\ell>k$, by the choice of $\eta$, one has
\[\begin{split}
	&\Big|\int\varphi \,d\mu_\ell-\int\varphi \,d\mu_k\Big| \\
	&=\Big|\frac{1}{\card \mcE_\ell}
		\sum_{\gamma\in \mcE_\ell}\int\varphi \,d\Leb_{\gamma}
		-\frac{1}{\card \mcE_k}
		\sum_{\gamma'\in \mcE_k}\int\varphi \,d\Leb_{\gamma'}\Big|\\
	&\leq \frac{1}{\card \mcE_k}
		\sum_{\gamma'\in \mcE_k}
			\frac{\card \mcE_k}{\card \mcE_\ell}
	\left(\sum_{\textrm{$\gamma$ is $(\ell-k)$ successor of $\gamma'$}}
		\Big|\int\varphi \,d\Leb_{\gamma}-\int\varphi \,d\Leb_{\gamma'}\Big|\right)\\
	\text{\small{(by \eqref{refertoothiis})}}\quad	
	&\leq \omega.
\end{split}\]
This proves that the limit $\mu\eqdef\lim_k\mu_k$ indeed exists.

Recall that  $(L_k)_k$ is a nested sequence of compact sets and $\mu_k(L_k)=1$, thus for any $\ell>k$, one has $\mu_\ell(L_k)=1$. Taking the limit $\ell\to\infty$, one gets $\mu(L_k)\geq 1$. As $k$ was arbitrary, this gives $\mu(L)=1$. This proves the first part of Proposition~\ref{prol.convergence-of-probability-measure}.
\qed 

\subsection{Estimates of the ``local entropies'' of $\mu$}\label{seccentr}

We now estimate the $\mu$-measure of Bowen balls intersecting the set $L$. We start by some preliminary estimates.

Given $\theta>0$, as $\h(f)>0$, we fix $\e'\in(0,\theta)$ sufficiently small such that
\begin{equation}\label{eq:choice-of-eprime} 
(\h(f)-\e')(1-\e')(1-2\e')>\h(f)-\theta.
\end{equation}
By the choice in \eqref{extra-cond-ts}, \eqref{cucucu}, and \eqref{extra-cond-tsbis}, and by $0\le\ell_i\le\ell_i^\sharp$ for every $i\in\bN$,  there exists $k_0>0$ so that for all $i\geq k_0$
\begin{equation}\label{eq:choice-of-kepsilon}
	\max\Big\{|h_i-\varepsilon_i-\h(f)|, 
	\frac{\ell_i+m_i}{n_i+\ell_i+m_i},
	 \frac{t_{i-1}}{t_i},
	 \frac{n_{i+1}+\ell_{i+1}}{t_i}\Big\}
	 <\e'	.
\end{equation}
Fix now $n> t_{k_0+1}$. Consider the associated index $k\geq k_0+1$ such that $t_k\le n<t_{k+1}$ and let $N\in\{0,\ldots, N_{k+1}\}$ be the largest number such that 
\[
	t_{k}+N(n_{k+1}+\ell_{k+1})\leq n.
\]
Hence we get
\begin{equation}\label{eq:choice-of-j0}\begin{split}
	&t_{k_0}< t_{k}\leq  n< t_{k+1},\\
	&t_{k}+N(n_{k+1}+\ell_{k+1})\leq n< t_{k}+(N+1)(n_{k+1}+\ell_{k+1}).
\end{split}\end{equation}
In what follows, $k$ and $N$ are fixed with these properties.

\begin{lemma}
For every $x\in M$, there is at most one curve in $\mcE_{k}$ intersecting $B_n(x,\delta)$.
\end{lemma}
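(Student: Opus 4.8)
The plan is to show that distinct curves in $\mcE_k$ have images under $f^j$ that stay far apart throughout the relevant time window, so that no single point $x$ can be $\delta$-close (in the $(n,\delta)$-Bowen sense) to points on two of them simultaneously. First I would recall from Proposition~\ref{prooooop} (or directly from Lemma~\ref{induzsepar}) that the curves in $\mcE_k$ are pairwise $(t_{k-1}+T_k,3\delta_0/5)$-separated, meaning that for any two distinct $\gamma_k,\gamma_k'\in\mcE_k$ and any $z\in\gamma_k$, $z'\in\gamma_k'$, there is some $j_0\in\{0,\ldots,t_{k-1}+T_k-1\}$ with $d(f^{j_0}(z),f^{j_0}(z'))>3\delta_0/5$. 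Since $t_{k-1}+T_k\le t_k\le n$ by \eqref{eq:choice-of-j0}, this index $j_0$ lies in $\{0,\ldots,n-1\}$.

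Next I would argue by contradiction: suppose there were two distinct curves $\gamma_k,\gamma_k'\in\mcE_k$ with $B_n(x,\delta)\cap\gamma_k\ne\emptyset$ and $B_n(x,\delta)\cap\gamma_k'\ne\emptyset$. Pick $z\in\gamma_k\cap B_n(x,\delta)$ and $z'\in\gamma_k'\cap B_n(x,\delta)$. Then for every $0\le j<n$ we have $d(f^j(z),f^j(x))<\delta$ and $d(f^j(z'),f^j(x))<\delta$, so by the triangle inequality $d(f^j(z),f^j(z'))<2\delta$ for all $0\le j<n$. But taking $j=j_0$ from the separation statement above gives $d(f^{j_0}(z),f^{j_0}(z'))>3\delta_0/5$, and since $\delta<\delta_0/20$ by \eqref{fixdelta} we have $2\delta<\delta_0/10<3\delta_0/5$, a contradiction. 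Hence at most one curve in $\mcE_k$ meets $B_n(x,\delta)$.

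This is essentially a bookkeeping argument and I do not anticipate a serious obstacle; the only point requiring a little care is making sure the separation time $j_0$ really falls within the Bowen-ball window $\{0,\ldots,n-1\}$, which is guaranteed precisely by the inequality $t_{k-1}+T_k\le t_k\le n$ from \eqref{eq:choice-of-j0}, and that the separation constant $3\delta_0/5$ comfortably exceeds $2\delta$, which follows from the choice $\delta<\delta_0/20$ in \eqref{fixdelta}. I would therefore write the proof as a short contradiction argument invoking Lemma~\ref{induzsepar} and \eqref{eq:choice-of-j0}, with the numerical comparison $2\delta<3\delta_0/5$ spelled out explicitly.

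\begin{proof}
Suppose, by contradiction, that there are two distinct curves $\gamma_k,\gamma_k'\in\mcE_k$ both intersecting $B_n(x,\delta)$. Take $z\in\gamma_k\cap B_n(x,\delta)$ and $z'\in\gamma_k'\cap B_n(x,\delta)$. Then, for every $0\le j<n$,
\[
	d(f^j(z),f^j(z'))
	\le d(f^j(z),f^j(x))+d(f^j(x),f^j(z'))
	<2\delta.
\]
On the other hand, by Lemma~\ref{induzsepar}, the curves $\gamma_k$ and $\gamma_k'$ are $(t_{k-1}+T_k,3\delta_0/5)$-separated, so there exists $j_0\in\{0,\ldots,t_{k-1}+T_k-1\}$ with $d(f^{j_0}(z),f^{j_0}(z'))>3\delta_0/5$. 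By \eqref{eq:choice-of-j0}, $t_{k-1}+T_k\le t_k\le n$, hence $0\le j_0<n$. Since $\delta<\delta_0/20$ by \eqref{fixdelta}, we have $2\delta<\delta_0/10<3\delta_0/5$, which contradicts the two displayed estimates at $j=j_0$. This proves the lemma.
\end{proof}
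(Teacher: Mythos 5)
Your proof is correct and follows essentially the same contradiction argument as the paper: two curves meeting $B_n(x,\delta)$ would have orbit points staying within $2\delta$ for $n$ iterates, contradicting the $(t_{k-1}+T_k,\cdot)$-separation from Proposition~\ref{prooooop}/Lemma~\ref{induzsepar} since $t_{k-1}+T_k<t_k\le n$ and $2\delta<\delta_0/10$. The only cosmetic difference is that you invoke the slightly sharper separation constant $3\delta_0/5$ from Lemma~\ref{induzsepar} whereas the paper quotes $\delta_0/2$ from Proposition~\ref{prooooop}; both comfortably exceed $2\delta$.
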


\proof
Arguing by contradiction, suppose that there are two curves $\gamma,\gamma'\in \mcE_{k}$ both intersecting $B_n(x,\delta)$. Let $z\in\gamma\cap B_n(x,\delta)$ and $z'\in\gamma'\cap B_n(x,\delta)$, then  
\[
	d(f^i(z),f^i(z'))\leq 	d(f^i(z),f^i(x))+	d(f^i(x),f^i(z')) <2\delta
	\quad\text{for $i=0,\ldots,n-1$.}
\] 
Recall $\delta\in(0,\delta_0/20)$. By  Proposition~\ref{prooooop}, the curves in $\mcE_{k}$ are $(t_{k-1}+T_{k},\delta_0/2)$-separated. Noting that $n\geq t_{k}>t_{k-1}+T_{k}$, one gets a contradiction. 
\endproof 

\begin{lemma}\label{lemcl.same-predecessor}
Let $\ell>k$ and consider $\gamma_\ell,\gamma_\ell'\in \mcE_\ell$. Let $\gamma_{k}$ and $ \gamma_{k}'$ be the $(\ell-k)$-predecessors of $\gamma_\ell$ and $\gamma_\ell'$, respectively. If $\gamma_\ell$ and $\gamma_\ell'$ intersect $B_n(x,\delta)$ for some $x\in M$, then  $\gamma_{k}=\gamma_{k}'$.
\end{lemma}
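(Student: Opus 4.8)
The plan is to argue by contradiction. Suppose that the $(\ell-k)$-predecessors satisfy $\gamma_k\neq\gamma_k'$. I will show that $\gamma_\ell$ and $\gamma_\ell'$ are then $(t_{k-1}+T_k,\,c)$-separated for some constant $c>2\delta$. Since $t_{k-1}+T_k<t_k\le n$ (recall \eqref{deftk} and \eqref{eq:choice-of-j0}), while any $z\in\gamma_\ell\cap B_n(x,\delta)$ and $z'\in\gamma_\ell'\cap B_n(x,\delta)$ satisfy $d(f^i(z),f^i(z'))<2\delta$ for all $0\le i<n$, this is a contradiction; hence $\gamma_k=\gamma_k'$.

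To obtain the separation of $\gamma_\ell$ and $\gamma_\ell'$, I would reproduce the mechanism from the case ``$\gamma_k\neq\gamma_k'$'' in the proof of Lemma~\ref{induzsepar}. First, iterating item~(2) of Proposition~\ref{prolempropos} along the chain of predecessors of $\gamma_\ell$ — and using that $t_{j-1}+T_j$ is non-decreasing in $j$, so the most restrictive range comes from the step $\gamma_k\to\gamma_{k+1}$ — one gets
\[
	f^i(\gamma_\ell)\subset\cW^\uu\bigl(f^i(\gamma_k),\,\delta/2^{k-1}\bigr),\qquad i=0,\dots,t_{k-1}+T_k,
\]
and likewise $f^i(\gamma_\ell')\subset\cW^\uu(f^i(\gamma_k'),\delta/2^{k-1})$ on the same range. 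By Corollary~\ref{corlem:unique-predecessor}, $\gamma_\ell$ is $(\c,\cW^\uu(\gamma_k,\delta/2^{k-1}))$-complete, so the point $z$ lies on the unstable plaque $\cW^\uu(w,\delta/2^{k-1})$ through a unique $w\in\gamma_k$, and similarly $z'\in\cW^\uu(w',\delta/2^{k-1})$ with $w'\in\gamma_k'$. Combining the displayed inclusion with the $f$-invariance of $\cW^\uu$ (so that $f^i(z)$ stays on the unstable leaf through $f^i(w)$, which meets $f^i(\gamma_k)$ only at $f^i(w)$ since $f^i(\gamma_k)$ is tangent to $E^\c$), one obtains $d(f^i(z),f^i(w))<\delta/2^{k-1}$ and $d(f^i(z'),f^i(w'))<\delta/2^{k-1}$ for $i=0,\dots,t_{k-1}+T_k$. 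Finally, by Lemma~\ref{induzsepar} the distinct curves $\gamma_k,\gamma_k'\in\mcE_k$ are $(t_{k-1}+T_k,\,3\delta_0/5)$-separated, so there is $0\le j_0<t_{k-1}+T_k$ with $d(f^{j_0}(w),f^{j_0}(w'))>3\delta_0/5$, and the triangle inequality yields
\[
	d(f^{j_0}(z),f^{j_0}(z'))\ >\ 3\delta_0/5-2\cdot\delta/2^{k-1}\ \ge\ 3\delta_0/5-2\delta\ >\ 2\delta,
\]
the last step using $\delta<\delta_0/20$ (see \eqref{fixdelta}); since $j_0<n$, this contradicts $d(f^{j_0}(z),f^{j_0}(z'))<2\delta$.

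The point requiring real care — and the one I would expect to be the main obstacle — is the claim that $f^i(z)$ remains $\delta/2^{k-1}$-close to the specific point $f^i(w)$, and not merely to the curve $f^i(\gamma_k)$; equivalently, that the unstable holonomy from $\gamma_\ell$ onto $\gamma_k$ is single-valued along the iterates $i=0,\dots,t_{k-1}+T_k$. I would justify this by recording, from the construction in Proposition~\ref{prolempropos} (through Proposition~\ref{proLem:key}), that every such iterate $f^i(\gamma_k)$ is contained in a thin $\u$-strip, a thin $\s$-strip, or the compact set $\mathbf{C}^-$ of the stable blender-horseshoe, hence is graph-like transverse to $\cW^\uu$ and meets each unstable plaque at most once — the same geometric fact that is already tacitly used in the proof of Lemma~\ref{induzsepar}. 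Beyond this, the argument is only bookkeeping with the time scales $t_{k-1}+T_k<t_k\le n$ and with the geometric series estimates handled as in Lemma~\ref{induzsepar}.
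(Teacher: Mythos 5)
Your proof is correct and is essentially the paper's argument phrased as a contradiction: you suppose $\gamma_k\neq\gamma_k'$, use the $(t_{k-1}+T_k,3\delta_0/5)$-separation of $\mcE_k$ together with the unstable tracking to find a time $j_0<t_{k-1}+T_k<n$ with $d(f^{j_0}(z),f^{j_0}(z'))>2\delta$, contradicting $z,z'\in B_n(x,\delta)$; the paper instead runs the same triangle inequalities forward to obtain $d(f^i(w),f^i(w'))\le 4\delta<\delta_0/2$ for all $i<t_{k-1}+T_k$ and then concludes $\gamma_k=\gamma_k'$ directly from Proposition~\ref{prooooop}. The tracking step you single out — that a fixed $w\in\gamma_k$ satisfies $f^i(z)\in\cW^\uu(f^i(w),\delta)$ for all $i$ in the range, rather than merely $f^i(z)$ being $\delta$-close to some time-dependent point of $f^i(\gamma_k)$ — is asserted without comment in the paper's proof, so your caution is well placed but does not point to a genuine gap.
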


\proof 
By hypothesis, we can choose points $z\in \gamma_\ell\cap B_n(x,\delta)$ and ${z}'\in \gamma_\ell'\cap B_n(x,\delta)$. Inductively applying  item (2) in Proposition \ref{prolempropos}, we get
\[ 
	f^i(\gamma_\ell)\subset\cW^\uu\big(f^i(\gamma_{k}),\delta/2^{k-1}\big) 
	\textrm{~~and~~}f^i({\gamma}_\ell')\subset \cW^\uu\big(f^i({\gamma}_{k}'),\delta/2^{k-1}\big) 
	\,\text{ for $0\leq i\leq t_{k-1}+T_{k}-1$.}
\] 
 Therefore, there exist $w\in\gamma_{k} $ and ${w}'\in\gamma_{k}'$ such that
\[ 
	f^i(z)\in \cW^\uu(f^i(w),\delta) 
	\textrm{~and~}
	f^i({z}')\in\cW^\uu(f^i({w}'),\delta) \quad\textrm{for $0\leq i\leq t_{k-1}+T_{k}-1$}.
\] 
As $n\geq t_{k}>t_{k-1}+T_{k}$, for $0\leq i\leq t_{k-1}+T_{k}-1$, one has
\[\begin{split} 
	d(f^i(w),f^i({w}'))
	&\leq d(f^i(w),f^i(z))+d(f^i(z),f^i(x))+d(f^i(x),f^i({z}'))
		+d(f^i({z}'),f^i({w}'))\\
 \text{{\small (by \eqref{fixdelta})}}	& \leq 4\delta<\delta_0/2.
\end{split}\]
By  Proposition~\ref{prooooop}, two curves in $\mcE_{k}$ are $(t_{k-1}+T_{k},\delta_0/2)$-separated. Hence, $\gamma_{k}={\gamma}_{k}'$.
\endproof

For the next lemma, recall $N$ in \eqref{eq:choice-of-j0} and the $(n_{k},\delta_0)$-separated set $\cS_{k}$ in \eqref{eqcSkcardinality}.

\begin{lemma}\label{lemcl.upper-mass-bound}
For every $\ell>k$ and every $x\in M$, one has 
\[
	\mu_{\ell}\big(B_n(x,\delta/2)\big)\leq \frac{1}{\card \mcE_{k}\cdot(\card \cS_{k+1})^{N}} .
\]
\end{lemma}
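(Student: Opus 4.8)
\textbf{Plan for the proof of Lemma \ref{lemcl.upper-mass-bound}.}

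The plan is to count how many curves in $\mcE_\ell$ can possibly meet the Bowen ball $B_n(x,\delta/2)$ and then use that $\mu_\ell$ assigns mass $1/\card\mcE_\ell$ to each curve. First I would invoke Lemma \ref{lemcl.same-predecessor}: every curve $\gamma_\ell\in\mcE_\ell$ that intersects $B_n(x,\delta/2)\subset B_n(x,\delta)$ shares the \emph{same} $(\ell-k)$-predecessor $\gamma_k\in\mcE_k$. (The preceding lemma shows there is at most one candidate predecessor at level $k$; call it $\gamma_k$, noting that if no curve of $\mcE_\ell$ meets the ball the bound is trivial.) By Corollary \ref{garb}, the number of $(\ell-k)$-successors of this fixed $\gamma_k$ is exactly $\prod_{j=k+1}^\ell\card\DD_j$, so already $\mu_\ell(B_n(x,\delta/2))\le \frac{1}{\card\mcE_\ell}\prod_{j=k+1}^\ell\card\DD_j=\frac{1}{\card\mcE_k}$.

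To improve this to the asserted bound with the extra factor $(\card\cS_{k+1})^{N}$, I would localize one step further, at level $k+1$. Write each successor $\gamma_{k+1}$ of $\gamma_k$ as $\Phi_k(\gamma_k,\sX)$ with $\sX\in\cS_{k+1}^{N_{k+1}}$ (Remark \ref{rembox}, Proposition \ref{prooooop}), so that specifying $\gamma_{k+1}$ amounts to specifying the $N_{k+1}$-tuple $\sX=(x_1,\ldots,x_{N_{k+1}})$ of points in $\cS_{k+1}$. The claim is that if $\gamma_\ell,\gamma_\ell'\in\mcE_\ell$ both meet $B_n(x,\delta/2)$, then their level-$(k+1)$ predecessors $\gamma_{k+1},\gamma_{k+1}'$, written as $\Phi_k(\gamma_k,\sX)$ and $\Phi_k(\gamma_k,\sX')$, must have $x_i=x_i'$ for all $i\le N$. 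Indeed, suppose not and let $N'\le N$ be the first index with $x_{N'}\ne x_{N'}'$; then Lemma \ref{lemcorcl.cardinality} (applied at level $k+1$, using that $\gamma_k$ is the common level-$k$ predecessor) produces a time
\[
	t_{k}+(N'-1)(n_{k+1}+\ell_{k+1})+a^\sharp_{k+1}<j<t_{k}+(N'-1)(n_{k+1}+\ell_{k+1})+n_{k+1}
\]
at which $d(f^j(\gamma_{k+1}),f^j(\gamma_{k+1}'))>3\delta_0/5$. Since $N'\le N$, inequality \eqref{eq:choice-of-j0} gives $j< t_k+N(n_{k+1}+\ell_{k+1})\le n$, and since the level-$\ell$ curves $\gamma_\ell,\gamma_\ell'$ stay within $\cW^\uu$-distance $\delta/2^{k}$ of $\gamma_{k+1},\gamma_{k+1}'$ up to time $t_k+T_{k+1}-1\ge j$ (iterating item (2) of Proposition \ref{prolempropos}), and both meet $B_n(x,\delta/2)$, a triangle-inequality estimate bounds $d(f^j(\gamma_{k+1}),f^j(\gamma_{k+1}'))$ by $\delta+2\cdot(\delta/2^{k})<\delta_0/2$ (using $\delta<\delta_0/20$ from \eqref{fixdelta}), a contradiction.

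Consequently, all curves of $\mcE_\ell$ meeting $B_n(x,\delta/2)$ descend from curves $\gamma_{k+1}\in\mcE_{k+1}$ whose encoding $\sX$ has its first $N$ coordinates fixed; there are at most $(\card\cS_{k+1})^{N_{k+1}-N}$ such $\gamma_{k+1}$, and each has $\prod_{j=k+2}^{\ell}\card\DD_j$ successors in $\mcE_\ell$. Using $\card\DD_{k+1}=(\card\cS_{k+1})^{N_{k+1}}$ (Remarks \ref{remcardDk}, \ref{rembox}) and Corollary \ref{garb}, the total count is at most $(\card\cS_{k+1})^{N_{k+1}-N}\prod_{j=k+2}^\ell\card\DD_j=\frac{1}{(\card\cS_{k+1})^N}\prod_{j=k+1}^\ell\card\DD_j=\frac{\card\mcE_\ell}{\card\mcE_k\,(\card\cS_{k+1})^N}$. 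Dividing by $\card\mcE_\ell$ yields the stated estimate. The main obstacle is the bookkeeping in the last step — carefully matching the ``first $N$ coordinates agree'' reduction against the product formula for $\card\mcE_\ell$ — together with making sure the separation time $j$ produced by Lemma \ref{lemcorcl.cardinality} genuinely lies below $n$, which is exactly what the choice of $N$ in \eqref{eq:choice-of-j0} guarantees.
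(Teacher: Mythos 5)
Your proof is correct and takes essentially the same approach as the paper: count how many level-$(k+1)$ predecessor encodings $\sX$ can occur (at most $(\card\cS_{k+1})^{N_{k+1}-N}$, since the first $N$ coordinates are pinned down by Lemma~\ref{lemcorcl.cardinality} together with the nesting from Proposition~\ref{prolempropos}(2)), then multiply by the number of $(\ell-(k+1))$-successors per predecessor. The only organizational difference is that the paper first proves Claim~\ref{claimlunch} at level $k+1$ and then distinguishes the cases $N<N_{k+1}$ and $N=N_{k+1}$ when passing from level-$\ell$ curves to their level-$(k+1)$ predecessors, whereas you fold this into a single uniform argument that directly pins down the first $N$ coordinates of the predecessors — a sound and slightly streamlined presentation of the same reasoning.
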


\proof 
By the definition  of $\mu_{\ell}$ in \eqref{defmuk},  one has 
\begin{equation}\label{waiwai}\begin{split}
	\mu_\ell\big(B_n(x,\delta/2)\big)
	&=\frac{1}{\card \mcE_\ell}
		\sum_{\gamma_\ell\in \mcE_\ell}\Leb_{\gamma_\ell}\big(\gamma_\ell\cap B_n(x,\delta/2)\big)\\
	&\le\frac{1}{\card \mcE_\ell}
		\card\big\{\gamma_\ell\in\mcE_\ell\colon\gamma_\ell\cap B_n(x,\delta/2)\ne\emptyset\big\}	.
\end{split}\end{equation}
Thus, to estimate this measure from above, it suffices to count the curves in $\mcE_{\ell}$ which intersect $B_n(x,\delta/2)$. 

\begin{claim}\label{claimlunch}
For every $x\in M$, the number of  curves in $\mcE_{k+1}$ intersecting $B_n(x,\delta)$ is bounded from above by $(\card \cS_{k+1})^{N_{k+1}-N}$. 
\end{claim}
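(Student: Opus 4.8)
The plan is to exploit the combinatorial parametrization $\Phi_k\colon\mcE_k\times\cS_{k+1}^{N_{k+1}}\to\mcE_{k+1}$ from Remark \ref{rembox} together with the separation estimates already established, so that counting curves reduces to counting admissible symbol-tuples. Throughout, $n$, $k$ and $N$ are as fixed in \eqref{eq:choice-of-j0}, so in particular $t_k\le n<t_{k+1}$ and $t_k+N(n_{k+1}+\ell_{k+1})\le n$; we may assume $N\ge1$, since otherwise the asserted bound is just the total number of tuples and is trivial.

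First I would fix $x\in M$ and observe that if $\gamma_{k+1},\gamma_{k+1}'\in\mcE_{k+1}$ both meet $B_n(x,\delta)$, then by Lemma \ref{lemcl.same-predecessor} (applied with $\ell=k+1$, using $n\ge t_k>t_{k-1}+T_k$) they have the same predecessor $\gamma_k\in\mcE_k$. Hence every curve of $\mcE_{k+1}$ meeting $B_n(x,\delta)$ is of the form $\Phi_k(\gamma_k,\sX)$ for this one fixed $\gamma_k$ and some $\sX\in\cS_{k+1}^{N_{k+1}}$, and it suffices to bound the number of tuples $\sX$ that can arise.

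The core step is to show that any two such tuples $\sX=(x_1,\dots,x_{N_{k+1}})$ and $\sX'=(x_1',\dots,x_{N_{k+1}}')$ agree in their first $N$ entries. Suppose not, and let $M\le N$ be the least index with $x_M\neq x_M'$. Applying Lemma \ref{lemcorcl.cardinality} with $k$ replaced by $k+1$ to $\gamma_{k+1}=\Phi_k(\gamma_k,\sX)$ and $\gamma_{k+1}'=\Phi_k(\gamma_k,\sX')$, one obtains an integer $j$ with
\[
t_k+(M-1)(n_{k+1}+\ell_{k+1})+a^\sharp_{k+1}<j<t_k+(M-1)(n_{k+1}+\ell_{k+1})+n_{k+1}
\]
and $d\big(f^j(\gamma_{k+1}),f^j(\gamma_{k+1}')\big)>3\delta_0/5$. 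Since $M\le N$, the right-hand inequality gives
\[
j<t_k+(N-1)(n_{k+1}+\ell_{k+1})+n_{k+1}=t_k+N(n_{k+1}+\ell_{k+1})-\ell_{k+1}\le n-\ell_{k+1}\le n,
\]
so $0\le j\le n-1$. Choosing $z\in\gamma_{k+1}\cap B_n(x,\delta)$ and $z'\in\gamma_{k+1}'\cap B_n(x,\delta)$ we then have $d(f^j(z),f^j(z'))<2\delta<\delta_0/10<3\delta_0/5$ by \eqref{fixdelta}, contradicting $d(f^j(z),f^j(z'))\ge d\big(f^j(\gamma_{k+1}),f^j(\gamma_{k+1}')\big)>3\delta_0/5$.

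Finally I would conclude: the tuples $\sX$ of curves meeting $B_n(x,\delta)$ all coincide on coordinates $1,\dots,N$, so there are at most $(\card\cS_{k+1})^{N_{k+1}-N}$ of them; since $\Phi_k$ is injective (Proposition \ref{prooooop}), the number of curves in $\mcE_{k+1}$ meeting $B_n(x,\delta)$ is at most $(\card\cS_{k+1})^{N_{k+1}-N}$, which is the assertion. The main obstacle I anticipate is precisely the index arithmetic in the core step — ensuring that when the first disagreement occurs at a position $M\le N$, the ``separation time'' $j$ furnished by Lemma \ref{lemcorcl.cardinality} still lies in $\{0,\dots,n-1\}$, the window on which points of $B_n(x,\delta)$ are forced to stay $2\delta$-close; this is exactly the role of the defining inequality $t_k+N(n_{k+1}+\ell_{k+1})\le n$ in \eqref{eq:choice-of-j0}, and one must track the strict inequalities and the integer rounding with care. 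Everything else is routine bookkeeping.
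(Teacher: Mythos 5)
Your proof is correct and follows essentially the same approach as the paper: both reduce matters to Lemma~\ref{lemcl.same-predecessor} (to fix the predecessor) plus Lemma~\ref{lemcorcl.cardinality} (to produce a separation time $j$) and then contradict the Bowen-ball closeness, with the paper leaving the index arithmetic $j<n$ implicit under the phrase ``using the arguments in the proof of Lemma~\ref{lemcl.same-predecessor}.'' Your write-up merely makes that arithmetic explicit, which is exactly the right thing to verify.
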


\begin{proof}
It follows from Lemma \ref{lemcl.same-predecessor} that all curves in $\mcE_{k+1}$ intersecting $B_n(x,\delta)$ have the same predecessor in $\gamma\in\mcE_{k}$. Given $\gamma_{k+1},\gamma_{k+1}'\in \mcE_{k+1}\cap B_n(x,\delta)$, take 
\[
	\sX=(x_1,\ldots,x_{N_{k+1}})\in \cS_{k+1}^{N_{k+1}}
	~\textrm{~~and~~}~ ~
	\sX'=({x}_1',\ldots,{x}_{N_{k+1}}')\in \cS_{k+1}^{N_{k+1}}
\] 
such that $\Phi_{k}(\gamma,\sX)=\gamma_{k+1}$ and $\Phi_{k}(\gamma,\sX')=\gamma_{k+1}'$.
By Lemma~\ref{lemcorcl.cardinality} and using the arguments in the proof of Lemma~\ref{lemcl.same-predecessor}, one has that $x_i={x}_i'$ for every $i\leq N$. As $0\leq N\leq N_{k+1}$, this implies the claim.
\end{proof}

By Corollary \ref{garb} and Remark \ref{remcardDk}, one has 
\[
	\card \mcE_{k+1}
	= \card \mcE_{k}\cdot \card \cS_{k+1}^{N_{k+1}}.
\]
Hence, by  Claim \ref{claimlunch} and applying \eqref{waiwai} to $\ell=k+1$, one gets that  
\[
	\mu_{k+1}\big(B_n(x,\delta/2)\big)
	\leq \frac{\card \cS_{k+1}^{N_{k+1}-N}}{\card \mcE_{k+1}} 
	=\frac{1}{\card \mcE_{k}\cdot (\card \cS_{k+1})^{N}}.
\]
This finishes the proof for $\ell=k+1$. For $\ell>k+1$, by applying  item (2)  in Proposition \ref{prolempropos} inductively, each curve $\gamma_{\ell}\in \mcE_{\ell}$ and its $(\ell-(k+1))$-predecessor $\gamma_{k+1}\in\mcE_{k+1}$ satisfy
\[
	f^i(\gamma_{\ell})\subset 
	\cW^\uu(f^i(\gamma_{k+1}),\delta/2^{k}), 
	\quad\text{ for every $0\leq i\leq  t_{k}+T_{k+1}-1$}. 
\]
Recall that $	t_{k}+N(n_{k+1}+\ell_{k+1})\leq n< t_{k}+(N+1)(n_{k+1}+\ell_{k+1})$ with $0\leq N\leq N_{k+1}$ and $n<t_{k+1}$ (see \eqref{eq:choice-of-j0}). Now, we have two cases to consider.

\noindent\textbf{Case (1) $0\leq N< N_{k+1}$.} Thus $n<t_k+T_{k+1}$.  If $\gamma_{\ell}\in \mcE_{\ell}$ intersects the Bowen ball $B_n(x,\delta/2)$, then its (unique) $(\ell-(k+1))$-predecessor in $\mcE_{k+1}$   intersects  $B_n(x,\delta)$.

\noindent\textbf{Case (2) $N=N_{k+1}$.} Thus $t_k+T_{k+1}\leq n<t_{k+1}$. If $\gamma_{\ell}\in \mcE_{\ell}$ intersects the Bowen ball $B_n(x,\delta/2)$, then its (unique) $(\ell-(k+1))$-predecessor in $\mcE_{k+1}$   intersects  $B_{t_k+T_{k+1}}(x,\delta)$. By  item (1) in Proposition~\ref{prooooop}, there is at most one center curve intersecting the Bowen ball $B_{t_k+T_{k+1}}(x,\delta)$.

Thus, by \eqref{waiwai}, one has 
\[\begin{split}
	\mu_{\ell}\big(B_n(x,\delta/2)\big)
	&\le\frac{1}{\card \mcE_\ell}
		\card\big\{\gamma_\ell\in\mcE_\ell\colon\gamma_\ell\cap B_n(x,\delta/2)\ne\emptyset\big\}	\\
	\text{\small{(by Claim \ref{claimlunch})}}\quad
	&\leq \frac{1}{\card \mcE_{\ell}}
		(\card \cS_{k+1})^{N_{k+1}-N}
		\cdot \frac{\card\mcE_\ell}{\card\mcE_{k+1}}\\
	\text{\small{(by Corollary \ref{garb})}}\quad
	&= \frac{1}{\card \mcE_{\ell}}
		(\card \cS_{k+1})^{N_{k+1}-N}\cdot\prod_{i=k+2}^{\ell}\card \DD_i\\
	&= \frac{(\card \cS_{k+1})^{N_{k+1}-N}}
		{\card \mcE_{k+1}}
	=\frac{1}{\card \mcE_{k}(\card \cS_{k+1})^{N}}.
\end{split}\]
This proves the lemma.
\endproof 

\begin{claim}
	It holds
\[
	\frac{1}{n}\log\big(\card \mcE_{k}(\card \cS_{k+1})^{N}\big)
	> \h(f)-\theta. 
\]
\end{claim}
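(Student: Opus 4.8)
The plan is to estimate $\card\mcE_k$ and $(\card\cS_{k+1})^N$ from below, then divide by $n$ and use the various smallness conditions on the sequences. First I would recall from Corollary \ref{garb} and Remark \ref{remcardDk} that
\[
	\card\mcE_k
	= \prod_{j=1}^k \card\DD_j
	= \prod_{j=1}^k \big(\card\cS_j\big)^{N_j},
\]
and from \eqref{eqcSkcardinality} that $\card\cS_j\ge e^{n_j(h_j-\varepsilon_j)}$. Hence
\[
	\card\mcE_k\cdot(\card\cS_{k+1})^N
	\ge \exp\Big(\sum_{j=1}^k N_j n_j(h_j-\varepsilon_j) + N n_{k+1}(h_{k+1}-\varepsilon_{k+1})\Big).
\]
Taking $\log$ and dividing by $n$, and using $h_j-\varepsilon_j\ge \h(f)-\e'$ for $j\ge k_0$ (from the first term in \eqref{eq:choice-of-kepsilon}), the main task is to show that the ``effective time'' $\sum_{j=1}^k N_j n_j + N n_{k+1}$ is at least $(1-2\e')n$, up to the bounded contribution of the indices $j<k_0$ which becomes negligible once $n>t_{k_0+1}$ is large (this uses $t_{k_0}/t_k<\e'$ from \eqref{eq:choice-of-kepsilon} and $t_k\le n$).

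Next I would compare $\sum_{j=1}^k N_j n_j$ with $t_k = \sum_{j=1}^k (T_j+m_j) = \sum_{j=1}^k\big(N_j(n_j+\ell_j)+m_j\big)$. The ratio $\frac{n_j}{n_j+\ell_j+m_j}\ge 1-\varepsilon_j$ by \eqref{cucucu}, and $\ell_j\le\ell_j^\sharp$; combined with $\frac{\ell_j+m_j}{n_j+\ell_j+m_j}<\e'$ for $j\ge k_0$ from \eqref{eq:choice-of-kepsilon}, this yields $\sum_{j=1}^k N_j n_j \ge (1-\e')t_k - C(k_0)$ for a constant depending only on the finitely many indices below $k_0$. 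Similarly $N n_{k+1}\ge (1-\e')N(n_{k+1}+\ell_{k+1})$, and by \eqref{eq:choice-of-j0} one has $N(n_{k+1}+\ell_{k+1})\ge n - t_k - (n_{k+1}+\ell_{k+1})$, where the last term satisfies $\frac{n_{k+1}+\ell_{k+1}}{t_k}<\e'$ (also from \eqref{eq:choice-of-kepsilon}), hence $n_{k+1}+\ell_{k+1}<\e' t_k\le\e' n$. Putting these together, the effective time is at least $(1-\e')\big(t_k + (n-t_k) - \e' n\big) - C(k_0) = (1-\e')(1-\e')n - C(k_0)$, which for $n$ large enough (so that $C(k_0)/n<\e'$, say) exceeds $(1-2\e')n$.

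Assembling the pieces gives
\[
	\frac1n\log\big(\card\mcE_k(\card\cS_{k+1})^N\big)
	\ge (\h(f)-\e')(1-2\e')
	> \h(f)-\theta
\]
by the choice \eqref{eq:choice-of-eprime} of $\e'$ (here I absorb the extra $(1-\e')$ factor, or rather note that \eqref{eq:choice-of-eprime} is stated with all three factors $(\h(f)-\e')(1-\e')(1-2\e')>\h(f)-\theta$, matching the bound $(\h(f)-\e')(1-\e')(1-2\e')$ one actually gets). The main obstacle, and the place requiring care, is the bookkeeping of the finitely many ``bad'' indices $j<k_0$: their total contribution to $t_k$ and to the effective time is a fixed constant, and one has to check that since $t_k\le n$ and $t_{k_0}/t_k<\e'$ this constant is indeed swallowed by an $\e' n$ error once $n$ (equivalently $k$) is large. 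Everything else is a routine chain of inequalities from \eqref{cucucu}, \eqref{eq:choice-of-j0}, and \eqref{eq:choice-of-kepsilon}.
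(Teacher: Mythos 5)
Your proposal is correct and follows essentially the same route as the paper's proof: lower bound $\card\mcE_k(\card\cS_{k+1})^N$ via Corollary~\ref{garb}, Remark~\ref{remcardDk}, and \eqref{eqcSkcardinality}; drop the nonnegative contributions from indices $j<k_0$; pass from $\sum N_jn_j$ to $\sum(T_j+m_j)=t_k-t_{k_0-1}$ using the ratios $\frac{\ell_j+m_j}{n_j+\ell_j+m_j}<\e'$; and invoke \eqref{eq:choice-of-j0} together with $n_{k+1}+\ell_{k+1}<\e't_k$ to recover $(1-2\e')n$. The only minor discrepancy is that you cite ``$t_{k_0}/t_k<\e'$'' where the paper uses $t_{k_0-1}<\e' t_{k_0}\le\e' t_k\le\e' n$; both follow from the third entry of \eqref{eq:choice-of-kepsilon} (yours by telescoping, using $k\ge k_0+1$) and deliver the same $\e'n$ error term, so this is harmless.
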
	

\begin{proof}
It follows from Corollary \ref{garb} and Remark \ref{remcardDk} that
\[
	\card \mcE_{k}
	= \prod_{i=1}^{k} \card \DD_i
	=\prod_{i=1}^{k} \big(\card\cS_i\big)^{N_i}.
\]
By our choice of $k_0$ in \eqref{eq:choice-of-kepsilon}, one has
\begin{equation}\label{eqsomethingnewer}
	h_k-\varepsilon_k
	\ge \h(f)-\varepsilon',\quad
	t_{k_0-1} \le \varepsilon't_{k_0},
	\quad\text{ and }\quad	
	(n_{k+1}+\ell_{k+1})\le\varepsilon' t_k.
\end{equation} 
Hence, one has the following estimation 
\[\begin{split}
	&\frac{1}{n}\log\big(\card \mcE_{k}(\card \cS_{k+1})^{N}\big)=\frac{1}{n}\log\big(\prod_{i=1}^{k} \big(\card\cS_i\big)^{N_i}\cdot (\card \cS_{k+1})^{N}\big)
	\\
\text{\small (by \eqref{eqcSkcardinality})}	&\geq \frac{1}{n}\sum_{i=1}^{k}\big(N_i n_i (h_i-\varepsilon_i)+N n_{k+1} (h_{k+1}-\varepsilon_{k+1}) \big)
	\\
	&\geq \frac{1}{n}\sum_{i=k_0}^{k}\big(N_i n_i (h_i-\varepsilon_i)+N n_{k+1} (h_{k+1}-\varepsilon_{k+1}) \big)
	\\
	\text{\small{(by~\eqref{eq:choice-of-kepsilon})}}\quad	
    &\ge \frac{1-\varepsilon'}{n}\sum_{i=k_0}^{k}\bigg(\big(N_i (n_i+\ell_i)+m_i\big)\cdot (h_i-\varepsilon_i)+N (n_{k+1}+\ell_{k+1}) (h_{k+1}-\varepsilon_{k+1})   \bigg)
    	\\
    \text{\small{(by~\eqref{eq:choice-of-kepsilon})}}\quad	
   	&\ge \frac{1-\e'}{n}(\h(f)-\varepsilon')
		\sum_{i=k_0}^{k}\bigg(\big(N_i (n_i+\ell_i)+m_i\big)+N (n_{k+1}+\ell_{k+1})   \bigg)\\
	&= (\h(f)-\varepsilon')(1-\e')\frac1n\big(t_k-t_{k_0-1} +N(n_{k+1}+\ell_{k+1})\big)	\\
\end{split}\]
By \eqref{eq:choice-of-j0}, we get
\[\begin{split}
\frac{1}{n}\log\big(\card \mcE_{k}(\card \cS_{k+1})^{N}\big)
	&\ge  (\h(f)-\varepsilon')(1-\e')\frac1n\big(n -t_{k_0-1} - (n_{k+1}+\ell_{k+1})\big)
		\\
	\text{\small (by \eqref{eqsomethingnewer})}\quad
	&\ge (\h(f)-\varepsilon')(1-\e')\frac1n\big(n -\varepsilon't_{k_0} - \varepsilon't_k\big)
	\\	
	\text{\small{(by \eqref{eq:choice-of-j0},  \eqref{eq:choice-of-eprime})}}\,
	&> (\h(f)-\varepsilon')(1-\e')(1 -2\varepsilon')	
	>\h(f)-\theta.
\end{split}\]
This proves the claim.
\end{proof}

By Lemma~\ref{lemcl.upper-mass-bound}, for every $\ell>k$ and $x\in M$, one gets $\mu_{\ell}(B_n(x,\delta/2))<e^{-n(\h(f)-\theta)}$. This implies 
\[
	\liminf_{\ell\to\infty}\mu_\ell(B_n(x,\delta/2))\leq e^{-n(\h(f)-\theta)}.
\]	 
As $B_n(x,\delta/2)$ is open and $\mu_\ell$ converges to $\mu$, by Portmanteau's theorem one has 
\[\mu(B_n(x,\delta/2))\leq \liminf_{\ell\to\infty}\mu_\ell(B_n(x,\delta/2))\leq e^{-n(\h(f)-\theta)},\] 
proving the second part of Proposition~\ref{prol.convergence-of-probability-measure}.
\qed

\subsection{Proof of Theorem \ref{THETHEObisbis} item \eqref{THETHEObisbis-1}}\label{proofitem1}

Let $\theta\in(0,\h(f))$.	
Let $\delta$ be sufficiently small as in Proposition \ref{prolempropos} and consider the asociated sequences $(t_k)_k$, $(T_k)_k$, and $(\mcE_k)_k$. Let $L_\delta=L\big(\delta,(t_k)_k,(T_k)_k,(\mcE_k)_k\big)$ as in \eqref{defLL} (in the following arguments, the only relevant parameter is $\delta$). Let $\mu_\delta$ be the probability measure and $n_0\in\bN$ provided by Proposition \ref{prol.convergence-of-probability-measure} such that $\mu_\delta(L_\delta)=1$ and that for every $n>n_0$ and $x\in M$ satisfying $B_n(x,\delta/2)\cap L_\delta\ne\emptyset$, it holds
\[
	\mu_\delta(B_n(x,\delta/2))\le e^{-n(\h(f)-\theta)}.
\]	 
Hence, by Theorem \ref{thm.entropy-distribution-principle} we get
\[
	 h_{\rm top}(f,L_\delta,\delta/2)
	\ge \h(f)-\theta.
\]
Lemmas \ref{lemmono} and \ref{lem:entropy} together imply that
\[
	h_{\rm top}(f,L_\delta)
	= \lim_{\varepsilon\to0}  h_{\rm top}(f,L_\delta,\varepsilon)
	\ge   h_{\rm top}(f,L_\delta,\delta/2)
	\ge \h(f)-\theta.
\]
As $\theta\in(0,\h(f))$ was arbitrary, this implies item \eqref{THETHEObisbis-1} in Theorem \ref{THETHEObisbis}.
\qed

\subsection{Comments on Scholium \ref{schol2}}\label{secSchol}

Let us briefly explain how this scholium is derived.
In Equation~\eqref{defmuk}, instead of using the Lebesgue measure on the curves $\gamma_k\in\mcE_k$, one can also define a probability measure on $L$ in the following way. Fix $\xi\in\mcE_1$ and a point $x\in \xi$. Note that $\gamma_k\in\mcE_k$ intersects $\cW^\uu(x,2\delta)$ in at most one point. Then one can define a probability measure 
\[\widehat \mu_k\eqdef
 \frac{\card \mcE_1}{\card \mcE_k} \sum_{\gamma_k\in \mcE_k}\delta_{\gamma_k\cap \cW^\uu(x,2\delta)}. \] 
 Analogously as in the proof of Proposition \ref{prol.convergence-of-probability-measure}, one gets that $\widehat\mu_k$ converges to a probability measure $\widehat\mu$ such that 
 \begin{itemize}
 	\item $\widehat\mu(L\cap \cW^\uu(x,2\delta))=1$;
 	\item  
 	for every $\theta>0$ there exists $n_0\in\bN$ such that for every $n> n_0$ and  every $z\in M$ satisfying $B_n(z,\delta/2)\cap \big(L\cap  \cW^\uu(x,2\delta)\big)\neq\emptyset$, one has 
 	\begin{equation*}
 		\widehat \mu(B_n(z,\delta/2))\leq e^{-n(\h(f)-\theta)}.
 	\end{equation*}
 \end{itemize}
 Arguing as in Section~\ref{proofitem1} and using Theorem \ref{thm.entropy-distribution-principle}, one can conclude. 
\qed

\section{End of the proof of Theorem \ref{THETHEObisbis}: Forward exponents}\label{secproofTHEO}

Throughout this section, we assume the hypotheses of Theorem \ref{THETHEObisbis}. 
Consider $L=L\big(\delta,(t_k)_k,(T_k)_k,(\mcE_k)_k\big)$ as in \eqref{defLL}, where $\delta$, $(t_k)_k$, $(T_k)_k$, and $(\mcE_k)_k$ are as in Proposition \ref{prolempropos}.
The following proposition is item \eqref{THETHEObisbis-2} in Theorem \ref{THETHEObisbis}.

\begin{proposition}\label{prolem:center-Lyapunov-exponent-converge-to-zero}
For every $x\in L$, $\lim_{n\to+\infty}\frac{1}{n}\log\|D^{\c}f^n(x)\|=0$ and this convergence is uniform on $L.$
\end{proposition}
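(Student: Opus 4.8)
The plan is to track the center derivative along the orbit of a point $x\in L$ by decomposing the time interval $\{0,\ldots,n\}$ into the blocks prescribed by the construction and estimating the growth of $\|D^\c f^\bullet\|$ on each block separately. Recall that $x$ belongs to some nested sequence of Bowen tubes $B^\cu_{t_{k-1}+T_k}(\gamma_k,\delta/2^{k-1})$, so for every $k$ there is a point $w_k$ lying on the curve $f^{t_{k-1}}(\gamma_k)$ (which, by item (3) of Proposition~\ref{prolempropos}, sits inside $\cW^\ss(\xi_k,\delta/2^{k-1})$ with $\xi_k=\xi(\Psi_k(\cdot),r_k)$ centered at a point of the hyperbolic set $\Lambda_k$) such that $f^i(x)$ shadows the relevant orbit. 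The first step is to write
\[
\log\|D^\c f^n(x)\|
=\sum_{i=0}^{n-1}\log\|D^\c f(f^i(x))\|
\]
and split the sum according to the times $t_{k-1},t_{k-1}+T_k,t_k$, etc., isolating the ``hyperbolic'' stretches of length $T_k$ (where the orbit shadows $\Lambda_k$) from the short ``blender'' stretches of length $m_k$ (where we only have the crude bound $|\log\|D^\c f\||\le C_{\rm max}/2$).

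Second, I would estimate the contribution of each hyperbolic block. On the block corresponding to $\gamma_k$, the point $f^{t_{k-1}}(x)$ lies (after going through the $\uu$-tube, by item (2) of Proposition~\ref{prolempropos}) in a strong unstable plaque over the curve $f^{t_{k-1}}(\gamma_k)\subset\cW^\ss(\xi_k,\delta/2^{k-1})$, and $\xi_k$ is a center curve centered at a point of $\Lambda_k$ of inner radius $r_k<r^\sharp(\Lambda_k)$. Hence item (iv) of Section~\ref{ssecquanti} (that is, Lemma~\ref{l.horseshoe-with-large-entropy}\eqref{l.horseshoe-with-large-entropy-iv}) applies along the stable direction, giving $K_k^{-1}e^{j(\chi_k-\varepsilon_k)}\le\|D^\c f^j\|\le K_k e^{j(\chi_k+\varepsilon_k)}$ on the plaque $\cW^\ss(\xi_k,\delta)$; to transfer this control from the stable plaque to the actual point $f^{t_{k-1}}(x)$, which differs from a point of the plaque by a bounded piece of strong unstable leaf, I would invoke the center distortion control of Lemma~\ref{lemsomecount} (the time $\ell^\flat_k$), whose hypotheses are guaranteed by $n_k>\ell^\flat_k$ in~\eqref{eq:condnk-1}. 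Thus on each full hyperbolic block of length $T_k$ the log-derivative is within $T_k\varepsilon_k + \log K_k + (\text{distortion})$ of $T_k\chi_k$, and since $\chi_k<0<|\chi_k|$ and $\chi_k\to0$, these blocks contribute a quantity that is $o(T_k)$ but with the $\chi_k$ term being negative — which actually helps for the upper bound and must be matched by a lower bound.

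Third, assemble the pieces. Using $n_k,\ell_k\le$ (their bounds) together with~\eqref{onemore}, \eqref{cucucu}, \eqref{eqexp1bis}, \eqref{eq:condnk-5}, \eqref{extra-cond-tsbis}--\eqref{eqexp1bisbis}, the total error from all the short blender blocks, from the constants $K_k$, and from the distortion corrections is a $o(t_k)$ quantity, uniformly in $x\in L$. So for $t_k\le n<t_{k+1}$ one gets
\[
\Bigl|\tfrac1n\log\|D^\c f^n(x)\|\Bigr|
\le \tfrac1n\Bigl(\sum_{j\le k}T_j\,(|\chi_j|+\varepsilon_j)\Bigr)+o(1)
\le \max_{j\ge k_0}(|\chi_j|+\varepsilon_j)+o(1)\xrightarrow[n\to\infty]{}0,
\]
where I also need to handle the incomplete final block (the part of the orbit between $t_k$ and $n$), which is covered by the same per-block estimate plus possibly one extra partial blender block, and the negligible initial segment $\{0,\ldots,t_{k_0}\}$, bounded by $t_{k_0}C_{\rm max}/n\to0$. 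The bound is independent of $x$, giving uniform convergence. The main obstacle, as usual in this kind of argument, is bookkeeping: one must verify that \emph{every} accumulated error term (short blocks, the $\log K_k$, the $\ell^\flat_k$-distortion corrections, the tails $t_{k-1}/t_k$ and $(n_{k+1}+\ell_{k+1})/t_k$) is swallowed by the explicitly chosen constants in Section~\ref{ssecquanti}, and that the transfer of the exponent estimate from the hyperbolic stable plaque to the orbit of $x$ — which only lies in a $\uu$-tube around the curve, not on the stable plaque itself — is legitimate; this is precisely what Lemma~\ref{lemsomecount} is designed to give, but one has to check the iterate ranges match. No genuinely new idea is needed beyond carefully combining Lemmas~\ref{lemsomecount} and~\ref{l.horseshoe-with-large-entropy} with the defining inequalities of the $n_k$ and $N_k$.
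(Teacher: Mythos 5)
Your proposal follows essentially the same decomposition-and-bookkeeping strategy as the paper's proof: split $\log\|D^\c f^n(x)\|$ along the blocks $T_j$ and $m_j$, use that $x$ shadows the curve $\gamma_j$ in the strong unstable direction (Proposition~\ref{prolempropos} items (2)--(3)), invoke the exponent estimate from item~(iv) of Section~\ref{ssecquanti} on the stable plaque $\cW^\ss(\xi_j,\delta)$, transfer it to the orbit of $x$ via the distortion bound in Lemma~\ref{lemsomecount}, and absorb all error terms (the $m_j$, $\log K_j$, $\ell^\flat_j$, initial and final partial blocks) using the chosen inequalities \eqref{eq:condnk-1}--\eqref{eqexp1bisbis}. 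This is precisely what the paper does, including the separate treatment of the terminal segment $\{t_k,\ldots,n\}$ (the paper's Cases (1)--(3)), so your plan is correct and matches the paper's proof.
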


\proof 
 Recall the quantifiers in Section \ref{ssecquanti}. 
 Recall $C_{\rm max}$ in \eqref{defCmax} and note that $|\chi_i|\leq C_{\rm max}/2$ for every  $i\in\mathbb{N}$.
 
 Fix $\e>0$.
By item (ii) in Section \ref{ssecquanti},  $\chi_i$ and $\varepsilon_i$ are chosen monotonically tending to zero. Hence there exists $k_0\in\mathbb{N}$ such that
\begin{equation}\label{eq:bound-LE-varepsilon}
	|\chi_i|+6\varepsilon_i<\e, 
	\quad\textrm{~ for every $i\geq k_0$.} 
\end{equation}  

Recall that \eqref{onemore} and \eqref{extra-cond-ts} imply that for all $i\in\bN$
\begin{equation}\label{useful}
	m_i C_{\rm max}<n_i\varepsilon_i<T_i\varepsilon_i<t_i\varepsilon_i
	\quad\text{ and }\quad
	t_iC_{\rm max}<t_{i+1}\varepsilon_{i+1}.
\end{equation}

Fix $n>t_{k_0+1}$. As in \eqref{eq:choice-of-j0}, we consider the associated index $k\geq k_0+1$ and $N\in\{0,\ldots,N_{k+1}\}$ such that
\begin{equation}\label{eqfixingn}\begin{split}
	&t_k\le n<t_{k+1},\\
	&t_k+N(n_{k+1}+\ell_{k+1})\le n<t_k+(N+1)(n_{k+1}+\ell_{k+1}).
\end{split}\end{equation}
From now on, $k$ and $N$ are fixed with the above properties.

Observe that
\begin{equation}\label{eqfixingnbis}\begin{split}
	\log\|D^{\c}f^{n}(x)\|
	= \log\|D^{\c}f^{t_k}(x)\| +\log\|D^{\c}f^{n-t_k}(f^{t_k}(x))\|.
\end{split}\end{equation}

For the next two lemmas, recall the distortion control-time $\ell_k^\flat=(\delta,\varepsilon_k)$ in Lemma~\ref{lemsomecount}. 

\begin{lemma}\label{lemc.estimate-center-LE-to-end}
	For every $x\in L$,  one has 
\[
	\big|\log\|D^{\c}f^{t_k}(x)\|\big|
	< t_k(|\chi_{k_0}|+5\varepsilon_{k_0}).
\]	
\end{lemma}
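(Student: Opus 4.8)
The plan is to split the orbit segment $\{0,\dots,t_k-1\}$ along the blocks produced by the construction in Section~\ref{ssecquanti}. By \eqref{deftk} one has $t_i=t_{i-1}+T_i+m_i$, so
\[
	\log\|D^{\c}f^{t_k}(x)\|
	= \log\|D^{\c}f^{t_{k_0-1}}(x)\|
	+\sum_{i=k_0}^{k}\big(A_i+B_i\big),
\]
where $A_i\eqdef\log\|D^{\c}f^{T_i}(f^{t_{i-1}}(x))\|$, $B_i\eqdef\log\|D^{\c}f^{m_i}(f^{t_{i-1}+T_i}(x))\|$, and $k_0$ is as in \eqref{eq:bound-LE-varepsilon}. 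The blender blocks $B_i$ and the initial term are harmless and handled by the trivial bound $|\log\|D^{\c}f^n(\cdot)\||\le \tfrac{C_{\rm max}}{2}n$ together with \eqref{onemore} and \eqref{useful}: one gets $|B_i|\le \tfrac{C_{\rm max}}{2}m_i<m_iC_{\rm max}<n_i\varepsilon_i\le T_i\varepsilon_i$ and $|\log\|D^{\c}f^{t_{k_0-1}}(x)\||<t_{k_0-1}C_{\rm max}<t_{k_0}\varepsilon_{k_0}\le t_k\varepsilon_{k_0}$.

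The heart of the argument is the estimate $|A_i-T_i\chi_i|<3T_i\varepsilon_i$ for $i\ge k_0$. Let $\gamma_i\in\mcE_i$ be the curve with $x\in B^{\cu}_{t_{i-1}+T_i}(\gamma_i,\delta/2^{i-1})$ (unique by Proposition~\ref{prooooop}), and let $\xi_i\in\DD_i$ be the curve with $f^{t_{i-1}}(\gamma_i)\subset\cW^\ss(\xi_i,\delta)$ provided by Proposition~\ref{prolempropos}(3). Since $x$ also belongs to $B^{\cu}_{t_i+T_{i+1}}(\gamma_{i+1},\delta/2^{i})$ and, by Proposition~\ref{prolempropos}(2), $\cW^\uu(f^j(\gamma_{i+1}),\delta/2^{i})\subset\cW^\uu(f^j(\gamma_i),\delta/2^{i-1})$ for $j=0,\dots,t_{i-1}+T_i$, the point $f^{t_{i-1}+T_i}(x)$ lies in $\cW^\uu(f^{t_{i-1}+T_i}(w_i),\delta)$ for some $w_i\in\gamma_i$. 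On the one hand, $f^{t_{i-1}}(w_i)\in f^{t_{i-1}}(\gamma_i)\subset\cW^\ss(\xi_i,\delta)$, so property (iv) of Section~\ref{ssecquanti} gives $|\log\|D^{\c}f^{T_i}(f^{t_{i-1}}(w_i))\|-T_i\chi_i|\le\log K_i+T_i\varepsilon_i$. On the other hand, since $T_i\ge n_i>\ell^\flat_i$ by \eqref{eq:condnk-1}, applying the center distortion control-time Lemma~\ref{lemsomecount} with iterate count $T_i$ to the points $f^{t_{i-1}}(w_i)$ and $f^{t_{i-1}}(x)$ (whose $T_i$-th iterates are $\delta$-close along $\cW^\uu$) yields $|A_i-\log\|D^{\c}f^{T_i}(f^{t_{i-1}}(w_i))\||<T_i\varepsilon_i$. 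Combining these two bounds with $\log K_i<T_i\varepsilon_i$ from \eqref{eqexp1bisbis} gives $|A_i-T_i\chi_i|<3T_i\varepsilon_i$, hence $|A_i|\le T_i(|\chi_i|+3\varepsilon_i)\le T_i(|\chi_{k_0}|+3\varepsilon_{k_0})$ by the monotonicity in Section~\ref{ssecquanti}(ii).

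Assembling the pieces, for $i\ge k_0$ one has $|A_i|+|B_i|<T_i(|\chi_{k_0}|+4\varepsilon_{k_0})$, and since $\sum_{i=k_0}^{k}T_i\le t_k-t_{k_0-1}<t_k$, adding the bound $t_k\varepsilon_{k_0}$ for the initial term gives
\[
	\big|\log\|D^{\c}f^{t_k}(x)\|\big|
	<(|\chi_{k_0}|+4\varepsilon_{k_0})t_k+\varepsilon_{k_0}t_k
	=t_k(|\chi_{k_0}|+5\varepsilon_{k_0}),
\]
which is the claim; uniformity in $x\in L$ is automatic as no constant depends on $x$. The one delicate point I anticipate is arranging the comparison point $w_i$ to be simultaneously close to $x$ in $\cW^\uu$ at the \emph{endpoint} time $t_{i-1}+T_i$ with the \emph{absolute} radius $\delta$ (so that Lemma~\ref{lemsomecount} applies without an $i$-dependent dilation of the radius) and close to $\Lambda_i$ in $\cW^\ss$ at the starting time $t_{i-1}$ (so that property (iv) of Section~\ref{ssecquanti} applies); this is precisely why one must invoke the level-$(i+1)$ membership $x\in B^{\cu}_{t_i+T_{i+1}}(\gamma_{i+1},\delta/2^{i})$ and Proposition~\ref{prolempropos}(2) rather than only the level-$i$ information, and it is what makes the constant $5\varepsilon_{k_0}$ in the statement sharp.
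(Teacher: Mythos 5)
Your proof is correct and follows essentially the same decomposition and chain of estimates as the paper's: split $\log\|D^\c f^{t_k}(x)\|$ into blocks, bound each $T_j$-block using property (iv) of Section~\ref{ssecquanti} together with the distortion control in Lemma~\ref{lemsomecount}, bound the $m_j$-blocks and initial piece trivially via $C_{\rm max}$ and the conditions \eqref{onemore}, \eqref{useful}, \eqref{extra-cond-ts}, then sum. The one place you improve on the paper's exposition is in carefully securing the hypothesis of Lemma~\ref{lemsomecount} at the endpoint iterate $t_{i-1}+T_i$ via the level-$(i+1)$ membership $x\in B^{\cu}_{t_i+T_{i+1}}(\gamma_{i+1},\delta/2^i)$ and Proposition~\ref{prolempropos}(2) (the paper quotes only \eqref{eq:follow-in-unstable}, whose range stops at $t_{j-1}+T_j-1$, and treats the extra iterate as self-evident).
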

\begin{proof}
Fix $x\in L$. Recalling the definition of $t_k$ in \eqref{deftk}, 
\[
	t_k= t_{k_0-1}+\sum_{j=k_0}^k(T_j+m_j),
\]	 
let us first write
\begin{equation}\label{anumber}\begin{split}
	\log\|D^{\c}f^{t_k}(x)\|
	&=\log\|D^{\c}f^{t_{k_0-1}}(x)\|\\
	&+\sum_{j=k_0}^k\Big(\log\|D^{\c}f^{T_j}(f^{t_{j-1}}(x))\|
		+\log\|D^{\c}f^{m_j}(f^{t_j-m_j}(x))\|\Big).
\end{split}\end{equation}
As a first step, for every $k_0\le j\le k+1$, we choose points $y_j\in\Lambda_j$ that have controlled finite-time Lyapunov exponents approximating the one of $x$. 
By the definition of $L$ in \eqref{defLL}, for every $k_0\le j\le k+1$, there exists a (unique) center curve $\gamma_{j}\in \mcE_{j}$ such that 
\[ 
x\in B^{\cu}_{t_{j-1}+T_{j}}\big(\gamma_{j},\delta/2^{j-1}\big),
\] 
and thus there exists a point $y_{j}\in  \gamma_j$ such that 
\begin{equation}~\label{eq:follow-in-unstable}
	f^i(x)\in \cW^\uu\big( f^i(y_j),\delta/2^{j-1}\big)
	\subset \cW^\uu\big(f^{i}(\gamma_j),\delta/2^{j-1}\big)
	\quad\textrm{~~for $t_{j-1}\leq i\leq t_{j-1}+T_j-1$}.
\end{equation}
On the other hand, by item (3) in Proposition~\ref{prolempropos}, there exists a curve $\xi_{j}\in \DD_j$ such that $f^{t_{j-1}}(\gamma_j)\subset\cW^\ss(\xi_j,\delta/2^{j-1})$ and in particular,
\[
	d(f^i(\gamma_j),f^i(\xi_j))
	\leq \delta/2^{j-1} 
	\quad\textrm{~~for $t_{j-1}\leq i\leq t_{j-1}+T_j-1$}.
\] 
By the definition of $\DD_j$ (see Lemma \ref{lemclaDk}) and item (iv) in Section \ref{ssecquanti} for $\Lambda_j$ and the point $f^{t_{j-1}}(y_j)\in\cW^\ss(\xi_j,\delta/2^{j-1})$, one has
\begin{equation}\label{thiseq}
	\big|\log\|D^{\c}f^{T_j}(f^{t_{j-1}}(y_j))\|-T_j\chi_{j}\big|
	\leq \log K_{j}+T_j\varepsilon_{j}.
\end{equation}
By \eqref{eq:condnk-1}, we have $\ell_j^\flat\leq n_j< T_j$ and hence, together with \eqref{eq:follow-in-unstable}, we can apply Lemma~\ref{lemsomecount} to the points $f^{t_{j-1}}(x)$ and $f^{t_{j-1}}(y_j)$, and get 
\begin{equation}\label{eqA}
	\begin{split}
	&	\big|\log\|D^{\c}f^{T_j}(f^{t_{j-1}}(x))\|-T_j\chi_{j}\big|
	\\
		&\leq\big|\log\|D^{\c}f^{T_j}(f^{t_{j-1}}(x))\|-\log\|D^{\c}f^{T_j}(f^{t_{j-1}}(y_j))\| \big|\\
		&\hspace{3mm}+	\big|\log\|D^{\c}f^{T_j}(f^{t_{j-1}}(y_j))\|-T_j\chi_j\big| \\
		\text{\small{(by Lemma~\ref{lemsomecount} and \eqref{thiseq})}}\quad
		&\leq  T_j\varepsilon_j+\big(T_j\varepsilon_j+\log K_j\big)\\
		&=2T_j\varepsilon_j+\log K_j.
\end{split}\end{equation}
In particular, recalling that $T_j=N_j(n_j+\ell_j)$, we get
\begin{equation}\label{eqB}\begin{split}
	\big|\frac{1}{T_j}\log\|D^{\c}f^{T_j}(f^{t_{j-1}}(x))\|-\chi_{j}\big|
	&\leq 2\varepsilon_j+\frac{\log K_j}{T_j}
	= 2\varepsilon_j+\frac{\log K_j}{N_j(n_j+\ell_j)}\\
	\text{\small{(by \eqref{eqexp1bisbis})}}\quad
	&\le 3\varepsilon_j.
\end{split}\end{equation}
Hence, recalling \eqref{anumber} and putting all steps together, one gets
\[\begin{split}
	\big|\log\|D^{\c}f^{t_k}(x)\|\big|
	&=\big|\log\|D^{\c}f^{t_{k_0-1}}(x)\|\\
	&+\sum_{j=k_0}^k\big(\log\|D^{\c}f^{T_j}(f^{t_{j-1}}(x))\|
		+\log\|D^{\c}f^{m_j}(f^{t_j-m_j}(x))\|\big)\big|\\
	\text{\small{(by \eqref{defCmax} and  \eqref{eqB})}}\quad
	&\leq  t_{k_0-1} C_{\rm max} +\sum_{j=k_0}^k\big(T_j(|\chi_j|+3\varepsilon_j)+m_jC_{\rm max}\big)\\
	\text{\small{(by \eqref{useful})}}\quad
	&\leq t_{k_0} \varepsilon_{k_0} +\sum_{j=k_0}^kT_j(|\chi_j|+4\varepsilon_j)\\
	&\leq  t_k(|\chi_{k_0}|+5\varepsilon_{k_0}),
\end{split}\] 
where the last inequality follows from $t_{k_0}+\sum_{j=k_0}^kT_j<t_{k}$, and  $\varepsilon_j\le \varepsilon_{k_0}$ and $|\chi_j|\le |\chi_{k_0}|$  for every $j\geq k_0$ (see the property (ii) in Section \ref{ssecquanti}). This proves the lemma.
\end{proof}

With Lemma \ref{lemc.estimate-center-LE-to-end} at hand, we are now ready to estimate $\log\|D^\c f^n(x)\|$ in \eqref{eqfixingnbis}. There are three cases to consider, according to the position of $n$ in \eqref{eqfixingn}. \smallskip

\noindent\textbf{Case (1) $t_{k} \leq n< t_{k}+\ell_{k+1}^\flat$.}
The fact that $n>t_{k_0+1}$ together with our choices in \eqref{eq:condnk-5} and property (ii) in Section \ref{ssecquanti} imply 
\[\begin{split}
	\frac{1}{n}\big|\log\|D^{\c}f^{n-t_{k}}(f^{t_{k}}(x))\|\big|
	\leq \frac{\ell_{k_0+1}^\flat}{n} C_{\rm max}
	\leq \frac{\ell_{k_0+1}^\flat}{t_{k_0+1}} C_{\rm max}<\varepsilon_{k_0}.
\end{split}\] 
Hence, with Lemma~\ref{lemc.estimate-center-LE-to-end} we get
\[\begin{split}
	\frac{1}{n}\big|\log\|D^{\c}f^{n}(x)\|\big|
	&\le \frac{1}{n}\Big(\log\|D^{\c}f^{t_k}(x)\| +\log\|D^{\c}f^{n-t_k}(f^{t_k}(x))\|\Big)\\
	&\le \frac1n \Big(
		t_{k}(|\chi_{k_0}|+5\varepsilon_{k_0})+\log\|D^{\c}f^{n-t_{k}}(f^{t_{k}}(x))\|\Big)
	<|\chi_{k_0}|+6\varepsilon_{k_0}	
	<\e,
\end{split}\]
where the last inequality follows from~\eqref{eq:bound-LE-varepsilon}.\medskip

\noindent\textbf{Case (2) $t_{k}+\ell_{k+1}^\flat\leq n< t_{k}+T_{k+1}$.}
Recall again that $\varepsilon_j\searrow0$ monotonically and $t_k\geq N_k(n_k+\ell_k)$ due to \eqref{deftk}.
Note that $\ell^\flat_{k+1}\le n-t_{k}$. Hence, arguing as in  \eqref{eqA} for $j=k$, we get
\[\begin{split}
	\big\lvert\log\|D^{\c}f^{n-t_{k}}(f^{t_{k}}(x))\|-(n-t_{k})\chi_{k+1}\big\rvert
	&\le 2(n-t_{k})\varepsilon_{k+1}+\log K_{k+1}\\
	\text{\small{(by \eqref{eqexp1bis})}}\quad
	&\le2(n-t_{k})\varepsilon_{k+1} +n_{k}\varepsilon_{k}
	\\
	&<2(n-t_{k})\varepsilon_{k_0} +t_{k}\varepsilon_{k_0}.
\end{split}\]
Hence, as $|\chi_k|\searrow0$ monotonically, one has
\[
	\big|\log\|D^{\c}f^{n-t_{k}}(f^{t_{k}}(x))\|\big|
	\le (n-t_{k})(|\chi_{k+1}|+2\varepsilon_{k_0})+t_{k}\varepsilon_{k_0}
	<(n-t_{k})(|\chi_{k_0}|+2\varepsilon_{k_0})+t_{k}\varepsilon_{k_0}.
\]
Combining with Lemma~\ref{lemc.estimate-center-LE-to-end} and \eqref{eq:bound-LE-varepsilon}, one gets   
\[
	\frac{1}{n}\log\|D^{\c}f^{n}(x)\|
	<\frac1n\big(t_k(|\chi_{k_0}|+5\varepsilon_{k_0})+(n-t_{k})(|\chi_{k_0}|+2\varepsilon_{k_0})+t_{k}\varepsilon_{k_0}\big)
	<|\chi_{k_0}|+6\varepsilon_{k_0}<\e.
\] 

\noindent\textbf{Case (3) $t_{k}+T_{k+1}\leq n< t_{k+1}$.} Note that in this case  Equation \eqref{deftk} implies
\begin{equation}\label{eqblue}
	n-t_{k}-T_{k+1}
	< m_{k+1}.
\end{equation}
First observe that
\[\begin{split}
	&\frac{1}{n}\log\|D^{\c}f^{n-t_{k}}(f^{t_{k}}(x))\|\\
	&= \frac{1}{n}\log\|D^{\c}f^{T_{k+1}}(f^{t_{k}}(x))\|
		+\frac{1}{n}\log\|D^{\c}f^{n-t_{k}-T_{k+1}}(f^{t_{k}+T_{k+1}}(x))\|.
\end{split}\] 
Arguing as in  \eqref{eqB} for $j=k+1$ and using \eqref{eqblue}, we continue to estimate  
\[\begin{split}
	\frac{1}{n}\log\|D^{\c}f^{n-t_{k}}(f^{t_{k}}(x))\|
	&\le  \frac{T_{k+1}}{n}(\chi_{k+1}+3\varepsilon_{k+1})
		+\frac{m_{k+1}}{n}C_{\rm max}\\
	\text{\small{(as we are in Case (3))}}\quad
	&\leq 		\frac{n-t_{k}}{n}(|\chi_{k+1}|+3\varepsilon_{k+1})+\frac{m_{k+1}}{t_{k}}C_{\rm max}\\
	\text{\small{(by \eqref{useful}, $|\chi_{j+1}|<|\chi_{j}|$, and $\varepsilon_{j+1}<\varepsilon_{j}$)}}\quad
	&<\frac{n-t_{k}}{n}(|\chi_{k_0}|+3\varepsilon_{k_0})+\varepsilon_{k_0}.
\end{split}\] 
By Lemma~\ref{lemc.estimate-center-LE-to-end}, one gets   
\[\begin{split}
	\frac{1}{n}\log\|D^{\c}f^{n}(x)\|
	&<\frac{1}{n}\big(t_{k}(|\chi_{k_0}|+5\varepsilon_{k_0})+\log\|D^{\c}f^{n-t_{k}}(f^{t_{k}}(x))\|\big)\\
	&\leq \frac{1}{n}\big(t_{k}(|\chi_{k_0}|+5\varepsilon_{k_0})+(n-t_{k})(|\chi_{k_0}|+3\varepsilon_{k_0})\big)+\varepsilon_{k_0}\\
	\text{\small (by \eqref{eq:bound-LE-varepsilon})}\quad
	&\leq	|\chi_{k_0}|+6\varepsilon_{k_0}<\e.
\end{split}\] 

To summarize, for every $n> t_{k_0+1}$,  one has $\frac{1}{n}\log\|D^{\c}f^{n}(x)\|<\e$ which implies that the point $x$ has zero center Lyapunov exponent and the convergence is uniform. This finishes the proof of the proposition.
\endproof

\section{Fractal sets in $\cL(0)$: Proof of Theorem~\ref{theorem2}}\label{s.final-fractal}

By Theorem \ref{THETHEObisbis}, we have an estimate of the \emph{forward} Lyapunov exponent and the topological entropy of the fractal set $L=L\big(\delta,(t_k)_k,(T_k)_k,(\mcE_k)_k\big)=\bigcup\gamma_\infty$ in \eqref{defLL}. In Section \ref{secBacCon}, we construct a set $\widehat L\subset \cW^{\ss}_{\rm loc}(L)\cap \cL(0)$ which has the same entropy as $L$. The proof of Theorem~\ref{theorem2} is completed in Section \ref{finnnnnallll}.

\subsection{The set $\widehat L$: Backward concatenations}\label{secBacCon}

\begin{proposition}\label{finalprop}
Let  $L=L\big(\delta,(t_k)_k,(T_k)_k,(\mcE_k)_k\big)$ be the set provided by Theorem \ref{THETHEObisbis}. 
For every curve $\gamma_\infty$ of $L$, there is a center curve $\widehat\gamma_\infty$ such that
\begin{itemize}[ leftmargin=0.7cm ]
\item $\widehat\gamma_\infty\subset \cW^{\ss}(\gamma_\infty,2\delta)$ and is  $(\c, \cW^\ss(\gamma_\infty, 2\delta))$-complete,
\item $\lim_{n\to\pm\infty}\frac1n\log\|D^\c f^n(x)\|=0$ and this convergence is uniform on $\widehat L\eqdef \bigcup\widehat\gamma_\infty$.	
\end{itemize}
In particular, $\widehat L\subset\cL(0)$. Moreover, $h_{\rm top}(f,\widehat L)=h_{\rm top}(f,L)$.
\end{proposition}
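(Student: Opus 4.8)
The plan is to run the concatenation machinery of Section \ref{sec:concat} once more, but now for $f^{-1}$ and with the \emph{stable} direction playing the role the unstable direction played in Sections \ref{secfractalset}--\ref{secproofTHEO}, using the \emph{unstable} blender-horseshoe $\blender^+$ guaranteed by hypothesis (B). First I would recall that each curve $\gamma_\infty$ of $L$ is $(\c,\cW^\uu(\xi,2\delta))$-complete for some $\xi\in\mcE_1$, hence is contained in a $\u$-strip of uniformly bounded size; in particular the curves $\gamma_\infty$ have lengths uniformly bounded away from $0$ and $\infty$, and by the weak integrability (Remark \ref{remweaint}) the sets $\cW^\ss(\gamma_\infty,2\delta)$ are $\cs$-strips tangent to $E^\ss\oplus E^\c$. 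The goal is to produce, inside each such $\cs$-strip and stretching completely across it, a center curve $\widehat\gamma_\infty$ whose \emph{backward} center exponent is zero; since backward iterates of a point in $\cW^\ss(\gamma_\infty,2\delta)$ stay exponentially close to the corresponding backward iterates of a point in $\gamma_\infty$ (uniform contraction of $f$ along $\cW^\ss$, i.e.\ expansion of $f^{-1}$), and forward iterates of points in $\widehat\gamma_\infty\subset\cW^\ss(\gamma_\infty,2\delta)$ stay exponentially close to forward iterates of points of $\gamma_\infty$, the Lemma \ref{lemsomecount}-type distortion control gives that $\widehat\gamma_\infty$ inherits forward exponent zero from $L$ (via Theorem \ref{THETHEObisbis}(2)) \emph{and} has backward exponent zero from the new construction. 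Thus $\lim_{n\to\pm\infty}\frac1n\log\|D^\c f^n(x)\|=0$ uniformly on $\widehat L$, which is precisely $\widehat L\subset\cL(0)$.

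Concretely, the construction of $\widehat\gamma_\infty$ mirrors Section \ref{secproof}: apply Theorem \ref{theorem1} (the limit $\h^+(f)=\h(f)$) to get ergodic measures of \emph{positive} center exponent tending to $0$ with entropy tending to $\h(f)$, extract basic sets $\widetilde\Lambda_k$ of expanding type via (the $f^{-1}$-version of) Lemma \ref{l.horseshoe-with-large-entropy}, choose center curves $\widetilde\xi_k$ of small inner radius inside unstable manifolds of points of $\widetilde\Lambda_k$, and concatenate them backwards using Proposition \ref{proLem:key} applied to $f^{-1}$ with the unstable blender $\blender^+$; the pivotal saddle $q_0$ of expanding type fixed in Remark \ref{pivotal} together with Lemma \ref{remfirstLambda} supplies the ``landing'' data needed to anchor the very first concatenation onto $\cW^\ss(\gamma_\infty,2\delta)$, which here is the analogue of the strip $\Delta^\cs$ in Proposition \ref{proLem:key}. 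Running the $f^{-1}$-analogue of Proposition \ref{prolempropos} (its Scholium \ref{schol}, which produces a single concatenated curve from a prescribed sequence $(\widetilde\xi_k)_k$ rather than whole families $\DD_k$, is exactly what is needed since we do \emph{not} want extra branching here — we want one $\widehat\gamma_\infty$ per $\gamma_\infty$), one obtains a nested intersection of $\cs$-Bowen-neighborhoods whose limit is a single non-degenerate center curve $\widehat\gamma_\infty$, which is $(\c,\cW^\ss(\gamma_\infty,2\delta))$-complete by the completeness-propagation argument in Section \ref{secproofTHEOBIS}, and whose backward finite-time center exponents tend to $0$ uniformly by the three-case estimate of Section \ref{secproofTHEO} (with $f$ replaced by $f^{-1}$, $\chi_i$ now positive tending to $0$).

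For the entropy equality $h_{\rm top}(f,\widehat L)=h_{\rm top}(f,L)$: the map $\gamma_\infty\mapsto\widehat\gamma_\infty$ is a bijection from the center curves of $L$ onto those of $\widehat L$, and since $\widehat\gamma_\infty\subset\cW^\ss(\gamma_\infty,2\delta)$ the forward orbit of any $x\in\widehat\gamma_\infty$ shadows, at distance $\to 0$, the forward orbit of some point of $\gamma_\infty$; conversely each point of $L$ is shadowed by a point of $\widehat L$. This gives a ``holonomy'' $h\colon L\to\widehat L$ along strong stable leaves which is injective (the $\cs$-strips $\cW^\ss(\gamma_\infty,2\delta)$ over distinct $\gamma_\infty$ are pairwise disjoint, since the $\gamma_\infty$'s lie in pairwise disjoint $\u$-strips $B^\cu_{\cdots}$) and which is a ``uniform forward contraction'' in the sense that $d_n(h(x),h(y))\le d_n(x,y)+\varepsilon_n$ with $\varepsilon_n\to 0$; by the standard Bowen-entropy comparison for such maps (Appendix \ref{App:B}) one gets $h_{\rm top}(f,\widehat L)\ge h_{\rm top}(f,L)$, and symmetrically $\le$ using the inverse holonomy. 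The main obstacle I expect is bookkeeping, not conceptual: one must verify that the two distortion controls — forward along $\cW^\ss$ (inherited from $L$) and backward along $\cW^\uu$ (from the new concatenation) — are \emph{simultaneously} compatible with a \emph{single} choice of scales $\delta/2^k$, $(t_k)_k$, $(T_k)_k$, so that $\widehat\gamma_\infty$ genuinely lies in $\cW^\ss(\gamma_\infty,2\delta)$ for \emph{all} $k$ at once; this forces re-checking that the quantifier choices in Section \ref{ssecquanti} (now made for both $f$ and $f^{-1}$) can be interleaved, which is routine but must be done with care to avoid circularity in the inequalities \eqref{eq:condnk-1}--\eqref{eqexp1bisbis} and their $f^{-1}$-counterparts.
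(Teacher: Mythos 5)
Your plan matches the paper's own proof in all essentials: the pivotal saddle $q_0$ and Lemma \ref{remfirstLambda} to anchor the first concatenation onto $\cW^\ss(\gamma_\infty,\cdot)$, the $f^{-1}$-version of Scholium \ref{schol} applied to a single prescribed sequence of center curves inside expanding-type basic sets $\Gamma_k$ whose center exponents shrink to zero, the three-case exponent estimate of Section \ref{secproofTHEO} run backwards for uniform convergence, and the stable-shearing entropy-preservation result (Theorem \ref{theop.entropy-in-cu} in Appendix \ref{App:B}) applied to the two-sided inclusions $\widehat L\subset\cW^\ss(L,2\delta)$ and $L\subset\cW^\ss(\widehat L,2\delta)$. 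One minor correction: the $\cs$-strips $\cW^\ss(\gamma_\infty,2\delta)$ over distinct $\gamma_\infty$ need not be pairwise disjoint (disjointness of the $\u$-strips $B^\cu_{\cdots}$ gives no control in the strong stable direction), so your ``injective holonomy'' claim does not hold as stated — but this is harmless, since Theorem \ref{theop.entropy-in-cu} only needs the two-sided strong-stable inclusions, not injectivity.
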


\begin{proof}
Recall the choice of the ``pivotal'' point $q_0$ and the numbers  $r^\c,r^\s,r^\u_1,r^\u_2$ in Remark \ref{pivotal}. 
By hypothesis (B), $f$ has an unstable blender-horseshoe $\blender^+$. 
Given $r>0$, let $N(r)\in\mathbb{N}$ and $R(N(r))>0$ be its associated constants as in Lemma~\ref{lem:insideblenders} (with respect to $\blender^+$). 

Recall that, by item \eqref{THETHEObisbis-3} in Theorem~\ref{THETHEObisbis}, each center curve   $\gamma_\infty$ in $L$ is $(\c,\cW^\uu(\xi,2\delta))$-complete  
  for some curve $\xi\in \DD_1$. Now we are ready to construct for each $\gamma_\infty\subset L$ an auxiliary curve $\gamma_\infty'$. 

As $\gamma_\infty$ is $(\c,\cW^\uu(\xi,2\delta))$-complete and $2\delta<r^u$ due to Equation \eqref{fixdelta}, by Lemma \ref{remfirstLambda},  there is a center curve $\gamma_\infty' \subset \cW^{\u}(q,r^\u_1)$ which is $(\c,\cW^\ss(\gamma_\infty,r^\s))$-complete. 
 The point $q_0$ is now used to start a cascade of concatenations of center curves, each of which has   center exponents uniformly converging to zero (for $f^{-1}$). 

As $q_0$ is a hyperbolic periodic point with exponent $\chi^\c(q_0)>0$ and with unstable manifold $\cW^{\u}(q_0,r^\u_1)$ of inner radius $r^\u_1$, fixing any constant $\varepsilon_1\in(0,\chi^\c(q_0))$ there is $C_1\in\bN$ such that for every $x\in\gamma_\infty'$ and every $n\in\bN$ one has 
\[ 
	C_1^{-1} \cdot e^{-n(\chi^\c(q_0)+\varepsilon_1)}
	\leq \|D^{\c}f^{-n}(x)\|
	\leq C_1 \cdot e^{-n(\chi^\c(q_0)-\varepsilon_1)} .
\]

Analogously as in Section~\ref{ssecquanti}, there exists a sequence $(\Gamma_k)_k$ of basic sets of expanding type such that $\chi^\c(\nu_k)\to0$ as $k\to\infty$ uniformly for every sequence of invariant measures $\nu_k\in\cM(f|_{\Gamma_k})$. Moreover, we get a sequence $r_k\searrow0$, a sequences of positive numbers $(C_k)_k$, and sequences $\chi_k\searrow0$, and $\varepsilon_k\searrow0$.  
In each set $\Lambda_k$, we choose one center curve $\xi_k$ centered at some point in $\Lambda_k$ of length $r_k<\frac12 R(N(\delta/2^k))$ such that for every $y\in\xi_k$ and $n\in\bN$, one has 
\[ 
	C_k^{-1}  e^{-n(\chi_k+\varepsilon_k)}\leq \|D^{\c}f^{-n}(y)\|\leq C_k  e^{-n(\chi_k-\varepsilon_k)} .
\]

Consider now the sequence of center curves $\gamma_\infty',\xi_2,\xi_3,\ldots$. By Scholium \ref{schol} (applied to $f^{-1}$), we get a sequence of center curves $(\gamma_k)_k$ such that each $\gamma_k$ is $(\c,\cW^\ss(\gamma_\infty',\delta))$-complete. Now we let
\[
	\widehat\gamma_\infty
	\eqdef \lim_{k\to\infty}\gamma_k.
\]
By construction and  arguing as in Section \ref{secproofTHEO}, we get that for every $x\in\widehat\gamma_\infty$ we have
\begin{itemize}
\item  $\widehat\gamma_\infty\subset \cW^{\ss}(\gamma_\infty',2\delta)$; 
\item  $\lim_{n\to-\infty}\frac1n\log\,\lVert D^\c f^n(y)\rVert=0, $ for every $y\in\widehat\gamma_\infty$.
\end{itemize}
Note that, \emph{a priori}, the speed of the latter limit may depend on the curve $\gamma_\infty$ chosen. However, arguing as in Section \ref{secproofTHEO}, we get that it is uniform. Also, by construction, $\widehat L\eqdef \bigcup\widehat\gamma_\infty$ is a collection of pairwise disjoint center curves.

Observe finally that, by construction, $\widehat L \subset \cW^\ss(L,2\delta)$, and conversely one has $L\subset \cW^\ss(\widehat L,2\delta)$. By Theorem~\ref{theop.entropy-in-cu}, one has that $h_{\rm top}(f,\widehat L)=h_{\rm top}(f,L)$. This proves the proposition.
\end{proof} 

\subsection{End of the proof of Theorem \ref{theorem2}}\label{finnnnnallll}

Let $L_\delta=L(\delta,(t_k)_k,(T_k)_k,(\mcE_k)_k\big)$ as in Theorem \ref{THETHEObisbis} (here, the only relevant parameter is $\delta$).
Let $\widehat L_\delta\subset\cL(0)$ be the associated set of non-degenerate center curves provided by Proposition \ref{finalprop}, then  $h_{\rm top}(f,\widehat L_\delta)=h_{\rm top}(f,L_\delta)$. Hence, $h_{\rm top}(f,\widehat L_\delta)\ge\h(f)$.
 As $\widehat L_\delta\subset\cL(0)$, the monotonicity of entropy (Remark \ref{rem:monent}) implies
\[
	h_{\rm top}(f,\cL(0))
	\ge h_{\rm top}(f,\widehat L_\delta)
	\ge \h(f).
\]
By Theorem \ref{theorem1}, we proved already $h_{\rm top}(f,\cL(0))\le\h(f)$. Hence, we get equality.
Finally,  $\h(f)\ge\cH(0)>0$ was already shown in Remark \ref{remLFTransforFigures}.
This proves the theorem.
\qed

\appendix
\section{Topological entropy}\label{App:B}

\subsection{Definition of topological entropy and a distribution principle}\label{App:Bbowen}

Let $(X,d)$ be a compact metric space, $f\colon X\to X$ be a continuous map, and $Y\subset X$. We recall the definition of topological entropy of $f$ on $Y$. For that, we rely on \cite{Pes:97}. 
We also introduce its capacitive versions $\underline{Ch}_{\rm top}$ and $\overline{Ch}_{\rm top}$.

Recall the definition of a set of $(n,\e)$-separated points in \eqref{defseparated} and the definition of a Bowen ball in \eqref{defBowBal}.
Denote by $r_n(Y,\varepsilon)$ the smallest cardinality of a family of sets $\{B_n(x_i,\varepsilon)\}_i$ whose union covers $Y$ and denote by $s_n(Y,\e)$ the maximum cardinality of  a $(n,\varepsilon)$-separated set in $Y$. Recall that $r_n(Y,\varepsilon)\le s_n(Y,\varepsilon)$. Given $h\ge0$ let 
\[\begin{split}
	 m_{\varepsilon,h}(Y)
	&\eqdef \lim_{N\to\infty}  \Big( \inf\Big\{\sum_ie^{-hn_i}\colon
	Y\subset\bigcup_iB_{n_i}(x_i,\varepsilon),n_i\ge N\text{ for every }i\Big\} \Big),
	\\
	\underline{Cm}_{\varepsilon,h}(Y)
	&\eqdef \liminf_{n\to\infty}~ 
	e^{-nh}\cdot	r_n(Y,\varepsilon),\quad
	\overline{Cm}_{\varepsilon,h}(Y)
	\eqdef \limsup_{n\to\infty}
	e^{-nh}\cdot s_n(Y,\varepsilon).
\end{split}\]
Fixing $\varepsilon$, this 
 function jumps from $\infty$ to $0$ at a unique critical parameter. Let
\begin{equation}\label{eqjin}\begin{split}
 h_{\rm top}(f,Y,\varepsilon)
	&\eqdef \inf\big\{h\colon  m_{\varepsilon,h}(Y)=0\big\},\\
	\overline{Ch}_{\rm top}(f,Y,\varepsilon)
	&\eqdef \inf\big\{h\colon \underline{Cm}_{\varepsilon,h}(Y)=0\big\},\quad
	\underline{Ch}_{\rm top}(f,Y,\varepsilon)
	\eqdef \inf\big\{h\colon \overline{Cm}_{\varepsilon,h}(Y)=0\big\}.
\end{split}\end{equation}

\begin{lemma}\label{lemmono}
	For every $Y\subset X$, $h>0$, and $0<\varepsilon'<\varepsilon$, it holds $m_{\varepsilon,h}(Y)\le m_{\varepsilon',h}(Y)$. Hence, in particular, $ h_{\rm top}(f,Y,\varepsilon)\le   h_{\rm top}(f,Y,\varepsilon')$.
\end{lemma}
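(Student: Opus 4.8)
The plan is to exploit the elementary monotonicity of Bowen balls in the radius parameter. First I would record that, directly from the definition of $d_n$ in \eqref{defBowBal}, for every $x\in X$, $n\in\bN$, and $0<\varepsilon'<\varepsilon$ one has $B_n(x,\varepsilon')\subseteq B_n(x,\varepsilon)$. Consequently, whenever a countable family $\{B_{n_i}(x_i,\varepsilon')\}_i$ with $n_i\ge N$ for all $i$ covers $Y$, the family $\{B_{n_i}(x_i,\varepsilon)\}_i$ with the \emph{same} centers and times (hence the same weights $e^{-hn_i}$) also covers $Y$ and is again admissible at level $N$. Therefore every competitor in the infimum defining $m_{\varepsilon',h}$ at level $N$ is, with unchanged weight sum, also a competitor in the infimum defining $m_{\varepsilon,h}$ at level $N$, which gives
\[
\inf\Big\{\textstyle\sum_i e^{-hn_i}\colon Y\subset\bigcup_i B_{n_i}(x_i,\varepsilon),\ n_i\ge N\ \forall i\Big\}
\le
\inf\Big\{\textstyle\sum_i e^{-hn_i}\colon Y\subset\bigcup_i B_{n_i}(x_i,\varepsilon'),\ n_i\ge N\ \forall i\Big\}
\]
for every $N\in\bN$. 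Since both bracketed quantities are nondecreasing in $N$, letting $N\to\infty$ (the limit being the supremum over $N$) preserves the inequality and yields $m_{\varepsilon,h}(Y)\le m_{\varepsilon',h}(Y)$.

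For the stated consequence about topological entropy, I would argue at the level of threshold sets: the inequality just proved shows that if $m_{\varepsilon',h}(Y)=0$ then $m_{\varepsilon,h}(Y)=0$, so
\[
\big\{h\ge 0\colon m_{\varepsilon',h}(Y)=0\big\}\subseteq\big\{h\ge 0\colon m_{\varepsilon,h}(Y)=0\big\}.
\]
Taking infima of these two sets and recalling the definition of $h_{\rm top}(f,Y,\cdot)$ in \eqref{eqjin} gives $h_{\rm top}(f,Y,\varepsilon)\le h_{\rm top}(f,Y,\varepsilon')$.

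I do not expect any genuine obstacle here; the statement is a soft monotonicity fact and the only point deserving a line of care is the passage to the limit in $N$, which is harmless precisely because the inequality holds for each fixed $N$ and the quantities are monotone in $N$. I would keep the write-up to the two short paragraphs above.
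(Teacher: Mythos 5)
Your proof is correct and follows essentially the same argument as the paper's: Bowen balls are monotone in the radius, so covers at radius $\varepsilon'$ remain covers at radius $\varepsilon$, giving the inequality between infima at each level $N$, and then one passes to the limit. You additionally spell out the (trivial) deduction for $h_{\rm top}$ via threshold sets, which the paper leaves implicit.
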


\begin{proof}
	Note that if $Y\subset \bigcup_iB_{n_i}(x_i,\varepsilon')$, then also $Y\subset\bigcup_iB_{n_i}(x_i,\varepsilon)$. Thus,
\[
	\inf\Big\{\sum_ie^{-hn_i}\colon
	Y\subset\bigcup_iB_{n_i}(x_i,\varepsilon),n_i\ge N\Big\}
	\le \inf\Big\{\sum_ie^{-hn_i}\colon
	Y\subset\bigcup_iB_{n_i}(x_i,\varepsilon'),n_i\ge N\Big\}.
\]	
Taking the limit $N\to\infty$, we get $m_{\varepsilon,h}(Y)\le m_{\varepsilon',h}(Y)$. This proves the lemma.
\end{proof}

Denote by $\widehat h_{\rm top}(f,Y)$ the \emph{topological entropy} of $f$ on $Y$ as defined in \cite{Bow:73}.

\begin{lemma}[{\cite[Chapter 4 Section 11]{Pes:97}}]\label{lem:entropy}
	For every $Y\subset X$, one has that
	\begin{itemize}
		\item[(1)] the following limit  exists:
		\[\begin{split}
			 h_{\rm top}(f,Y)
			&\eqdef\lim_{\varepsilon\to0}  h_{\rm top}(f,Y,\varepsilon).
			,\\
			\overline{Ch}_{\rm top}(f,Y)
			&\eqdef\lim_{\varepsilon\to0} \overline{Ch}_{\rm top}(f,Y,\varepsilon),\quad
			\underline{Ch}_{\rm top}(f,Y)
			\eqdef\lim_{\varepsilon\to0} \underline{Ch}_{\rm top}(f,Y,\varepsilon).
		\end{split}\]
		\item[(2)]  
		$\overline{Ch}_{\rm top}(f,Y)\ge\underline{Ch}_{\rm top}(f,Y)\ge
		\widehat  h_{\rm top}(f,Y)
		=  h_{\rm top}(f,Y)$.
	\end{itemize}
\end{lemma}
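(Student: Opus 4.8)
The plan is to follow the general Carath\'eodory-structure formalism of \cite[Chapter~4]{Pes:97}: item (1) is purely a monotonicity statement in the scale parameter $\varepsilon$, while item (2) combines the classical identification of Bowen's entropy with its Bowen-ball reformulation and two elementary comparisons between the three set functions $m_{\varepsilon,h}$, $\underline{Cm}_{\varepsilon,h}$, $\overline{Cm}_{\varepsilon,h}$ appearing in \eqref{eqjin}.

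For item (1), I would first record that each of $h_{\rm top}(f,Y,\varepsilon)$, $\overline{Ch}_{\rm top}(f,Y,\varepsilon)$, $\underline{Ch}_{\rm top}(f,Y,\varepsilon)$ is non-increasing in $\varepsilon$, so that, being monotone with values in $[0,\infty]$, each admits a limit as $\varepsilon\to0$, equal to its supremum over $\varepsilon>0$. For $h_{\rm top}(f,Y,\varepsilon)$ this is exactly Lemma~\ref{lemmono}. For the capacitive versions the input is that whenever $0<\varepsilon'<\varepsilon$ any cover of $Y$ by $(n,\varepsilon')$-Bowen balls is also a cover by $(n,\varepsilon)$-Bowen balls, so $r_n(Y,\varepsilon)\le r_n(Y,\varepsilon')$, and any $(n,\varepsilon)$-separated subset of $Y$ is $(n,\varepsilon')$-separated, so $s_n(Y,\varepsilon)\le s_n(Y,\varepsilon')$; substituting these monotonicities into $\underline{Cm}_{\varepsilon,h}(Y)=\liminf_n e^{-nh}r_n(Y,\varepsilon)$ and $\overline{Cm}_{\varepsilon,h}(Y)=\limsup_n e^{-nh}s_n(Y,\varepsilon)$, and then into the critical-parameter definitions \eqref{eqjin}, yields the claimed monotonicity of $\overline{Ch}_{\rm top}(f,Y,\varepsilon)$ and $\underline{Ch}_{\rm top}(f,Y,\varepsilon)$ in $\varepsilon$, and hence the existence of the three limits.

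For item (2), the equality $\widehat h_{\rm top}(f,Y)=h_{\rm top}(f,Y)$ is the standard equivalence between Bowen's original definition of topological entropy on a (possibly non-invariant, non-compact) set via open covers \cite{Bow:73} and the Bowen-ball/separated-set description used here, and I would simply invoke \cite{Bow:73} and \cite[Chapter~4]{Pes:97} for it. The remaining inequalities I would obtain by comparing the set functions at fixed $\varepsilon$ and $h$: since $r_n(Y,\varepsilon)\le s_n(Y,\varepsilon)$ and $\liminf\le\limsup$, one has $\underline{Cm}_{\varepsilon,h}(Y)\le\overline{Cm}_{\varepsilon,h}(Y)$, hence the set of $h$ for which the larger one vanishes is contained in the set for which the smaller one does, which upon taking infima orders the two capacitive critical exponents; moreover a minimal cover of $Y$ by $r_n(Y,\varepsilon)$ Bowen balls all of index $n\ge N$ is an admissible family in the infimum defining $m_{\varepsilon,h}(Y)$, with total weight $r_n(Y,\varepsilon)e^{-hn}$, so evaluating along a subsequence of $n$'s realizing $\liminf_n e^{-nh}r_n(Y,\varepsilon)$ and letting $N\to\infty$ gives $m_{\varepsilon,h}(Y)\le\underline{Cm}_{\varepsilon,h}(Y)$. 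Feeding these into \eqref{eqjin} and then letting $\varepsilon\to0$ using item (1) chains $h_{\rm top}(f,Y)$ below the two capacitive entropies, and $\widehat h_{\rm top}(f,Y)=h_{\rm top}(f,Y)$ supplies the final equality.

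The only mild subtlety — and the place where I expect to have to be careful — is the bookkeeping in the comparison $m_{\varepsilon,h}(Y)\le\underline{Cm}_{\varepsilon,h}(Y)$: one must pick, for each $N$, an index $n=n(N)\ge N$ along a single $\liminf$-realizing subsequence and check that the interchange with the $N\to\infty$ limit defining $m_{\varepsilon,h}$ is legitimate; and one must track the $\liminf$/$\limsup$ and the covering/separated counting consistently so that the inequalities land with the bars as stated. No genuinely new ideas are needed beyond these routine points and the cited equivalence $\widehat h_{\rm top}=h_{\rm top}$.
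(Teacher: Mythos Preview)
The paper does not prove this lemma at all: it is stated with the citation \cite[Chapter~4 Section~11]{Pes:97} and no proof is given, so there is nothing in the paper to compare against. Your sketch is precisely the standard Carath\'eodory-structure argument from Pesin's book (monotonicity in $\varepsilon$ for item (1); the chain $m_{\varepsilon,h}\le\underline{Cm}_{\varepsilon,h}\le\overline{Cm}_{\varepsilon,h}$ for item (2), together with the Bowen identification $\widehat h_{\rm top}=h_{\rm top}$), and is correct in substance.

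One cautionary remark: as literally typeset in the paper, the labels on the capacitive quantities are swapped --- $\overline{Ch}_{\rm top}$ is declared to be the critical exponent of $\underline{Cm}$ and $\underline{Ch}_{\rm top}$ that of $\overline{Cm}$ --- so with those literal definitions your comparison $\underline{Cm}_{\varepsilon,h}\le\overline{Cm}_{\varepsilon,h}$ would yield $\overline{Ch}_{\rm top}\le\underline{Ch}_{\rm top}$, the reverse of item~(2). This is a typographical inconsistency in the paper rather than a flaw in your argument; in Pesin's conventions (and in the only use the paper makes of the lemma, in the proof of Proposition~\ref{pro:skel}) the intended inequality $\underline{Ch}_{\rm top}\ge h_{\rm top}$ is exactly what your comparison $m_{\varepsilon,h}\le\overline{Cm}_{\varepsilon,h}$ delivers.
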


\begin{remark}\label{rem:monent}
We recall some further properties which are relevant in our context:
\begin{itemize}
\item[(E1)] (monotonicity) if $Z\subset Y\subset X$, then $ h_{\rm top}(f,Z)\le  h_{\rm top}(f,Y)$, 
\item[(E2)] (countable stability) $ h_{\rm top}(f,\bigcup_iY_i)=\sup_i  h_{\rm top}(f,Y_i)$.
\end{itemize}
\end{remark}

The following can be considered as an analogy with the classical \emph{Mass Distribution Principle} to estimate Hausdorff dimension (see, for example, \cite[Chapter 2]{Pes:97}). For completeness, we add its short proof.

\begin{theorem}[{Entropy Distribution Principle,  \cite[Theorem 3.6]{TakVer:03}}]\label{thm.entropy-distribution-principle}
	Let $f$ be a continuous map on a metric space $X$ and $Y\subset X$. Assume that there exist constants $h\geq 0$, $\delta>0$, and a Borel  probability measure $\mu_\delta$ such that 
\begin{itemize}
\item $\mu_\delta(Y)>0$;
\item there is $n_0\in\bN$ such that $\mu_\delta(B_n(x,\delta))\leq e^{-nh}$ for every $n\ge n_0$ and every Bowen ball $B_n(x,\delta)$ intersecting the set $Y$.
\end{itemize}
Then $ h_{\rm top}(f,Y,\delta)\geq h.$
\end{theorem}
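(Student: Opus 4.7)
The plan is to adapt the classical mass distribution argument: the measure $\mu_\delta$ will serve as a ``witness'' that places nontrivial mass on $Y$ yet cannot be concentrated on too few dynamical balls, forcing any such cover to be costly.

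First, I would fix $N \ge n_0$ and consider an arbitrary countable cover $Y \subset \bigcup_i B_{n_i}(x_i,\delta)$ with $n_i \ge N$ for every $i$. Discarding balls disjoint from $Y$ (which does not affect the fact that the remaining ones still cover $Y$), one may assume each $B_{n_i}(x_i,\delta)$ meets $Y$. Then by hypothesis $\mu_\delta(B_{n_i}(x_i,\delta)) \le e^{-h n_i}$ for every $i$, and by countable subadditivity
\[
	0 < \mu_\delta(Y)
	\le \sum_i \mu_\delta(B_{n_i}(x_i,\delta))
	\le \sum_i e^{-h n_i}.
\]
Taking the infimum over all such covers and then the limit $N \to \infty$, one obtains
\[
	m_{\delta,h}(Y) \ge \mu_\delta(Y) > 0.
\]

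Next, I would invoke the characterization recalled in the excerpt: fixing $\delta$, the function $h' \mapsto m_{\delta,h'}(Y)$ jumps from $\infty$ to $0$ at the unique critical value $h_{\rm top}(f,Y,\delta)$. Since $m_{\delta,h}(Y) > 0$, the value $h$ cannot belong to the set $\{h' : m_{\delta,h'}(Y) = 0\}$ over which the infimum defining $h_{\rm top}(f,Y,\delta)$ is taken. Consequently $h \le h_{\rm top}(f,Y,\delta)$, which is the desired conclusion.

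There is no real obstacle here: the only subtle point is that one must insist on restricting the cover to balls intersecting $Y$ before applying the mass bound (the hypothesis only controls such balls), and one must be careful to let $N \to \infty$ \emph{after} the subadditivity estimate so that the bound $\mu_\delta(Y)$ survives passage to $m_{\delta,h}(Y)$. Both steps are routine.
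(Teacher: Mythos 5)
Your proposal is correct and follows essentially the same route as the paper's proof: discard the Bowen balls disjoint from $Y$, apply the hypothesis ball by ball, use countable subadditivity to get $0<\mu_\delta(Y)\le\sum_i e^{-hn_i}$, and conclude $m_{\delta,h}(Y)\ge\mu_\delta(Y)>0$, hence $h_{\rm top}(f,Y,\delta)\ge h$. The care you take about restricting to balls meeting $Y$ and about passing to the infimum and the limit $N\to\infty$ only after the subadditivity estimate matches the paper's argument exactly.
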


\begin{proof}
By definition,  it suffices to show that $\widehat m_{\delta,h}(Y)>0$. Consider any countable family of Bowen balls $\{ B_{n_i}(x_i,\delta)\}_i$, $n_i\ge n_0$, which covers $Y$. Up to removing some Bowen balls, one can assume that each Bowen ball $B_{n_i}(x_i,\delta)$ intersects $Y$. 
Then 
\[
	0<\mu_\delta(Y)
	\leq \mu_\delta\big(\bigcup_i B_{n_i}(x_i,\delta)\big)
	\leq \sum_ie^{-n_i h}.
\]
Thus, 
\[
	  m_{\delta, h}(Y)
	=\lim_{N\to\infty} \inf\Big\{\sum_ie^{-hn_i}\colon Y\subset\bigcup_iB_{n_i}(x_i,\delta),n_i\ge N\text{ for every }i\Big\}\geq  \mu_\delta(Y)>0.
\]
This implies $ h_{\rm top}(f,Y,\delta)\geq h.$
\end{proof}

\subsection{``Stable shearing'' preserves topological entropy}

By definition, the concept of entropy only relies on the \emph{forward} iterates of a set, and, indeed, one can show that for a partially hyperbolic diffeomorphism, if one ``shears'' a set along its strong stable leaves, the entropy is unchanged. Let us make this more precise. Given a subset  $Y\subset M$ and $\ell>0$, in analogy to our notation in \eqref{defWgamma}, let 
\[
	\cW^\ss(Y,\ell)\eqdef \bigcup_{Y\in Y}\cW^\ss(Y,\ell).
\] 

\begin{theorem}\label{theop.entropy-in-cu}
Let $f$ be a $C^1$-partially hyperbolic diffeomorphism. Assume that $Y,Y^\prime\subset M$ are two subsets and $\ell>0$ such that $Y^\prime\subset \cW^\ss(Y,\ell)$ and $Y\subset \cW^\ss(Y^\prime,\ell)$. Then
\[
	h_{\rm top}(f,Y)=h_{\rm top}(f,Y^\prime).
\]
\end{theorem}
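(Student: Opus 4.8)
The statement is that shearing a set along strong stable leaves leaves its topological entropy (in the sense of Bowen, which by Lemma~\ref{lem:entropy} agrees with the Pesin--Pitskel'/Caratheodory definition used here) unchanged. The plan is to prove it as a two-sided inequality; by the symmetry of the hypothesis (the relation $Y'\subset\cW^\ss(Y,\ell)$ and $Y\subset\cW^\ss(Y',\ell)$ is symmetric in $Y$ and $Y'$), it suffices to prove $h_{\rm top}(f,Y')\le h_{\rm top}(f,Y)$. The heart of the matter is the following metric observation: along the strong stable foliation $f$ is \emph{uniformly contracting}, so if $y'\in\cW^\ss(y,\ell)$ then $d(f^n(y'),f^n(y))\le C\lambda^n\ell$ for all $n\ge0$, with constants $C\ge1$, $\lambda\in(0,1)$ depending only on $f$ and the metric. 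In particular $\sup_{n\ge0}d(f^n(y'),f^n(y))\le C\ell$, and the ``discrepancy'' decays geometrically. This means that, up to a bounded and eventually negligible error, a point of $Y'$ and its corresponding point of $Y$ have \emph{the same forward orbit} at the resolution of any fixed $\varepsilon$.

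\textbf{Key steps.} First I would fix a map $\pi\colon Y'\to Y$ assigning to each $y'\in Y'$ some $y=\pi(y')\in Y$ with $y'\in\cW^\ss(y,\ell)$ (such a $y$ exists by hypothesis; use the axiom of choice, no measurability needed since we work with covers). Second, given $\varepsilon>0$, choose $n_\ast=n_\ast(\varepsilon,\ell)\in\bN$ with $C\lambda^{n}\ell<\varepsilon/2$ for all $n\ge n_\ast$. Third, the covering comparison: suppose $\{B_{n_i}(x_i,\varepsilon/2)\}_i$ is a cover of $Y$ by Bowen balls (for $f$) with $n_i\ge N$ and $\sum_i e^{-h n_i}$ small. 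For each $i$, I claim $\pi^{-1}\big(B_{n_i}(x_i,\varepsilon/2)\cap Y\big)$ is contained in a Bowen ball $B_{n_i-n_\ast}(x_i',\varepsilon)$ for a suitable center $x_i'$ and for $N$ large (so that $n_i-n_\ast\ge N'$): indeed if $y'\in Y'$ and $\pi(y')=y\in B_{n_i}(x_i,\varepsilon/2)$, then for $n_\ast\le j<n_i$ we have $d(f^j(y'),f^j(x_i))\le d(f^j(y'),f^j(y))+d(f^j(y),f^j(x_i))<\varepsilon/2+\varepsilon/2=\varepsilon$, so $f^{n_\ast}(y')$ lies in the $(n_i-n_\ast,\varepsilon)$-Bowen ball around $f^{n_\ast}(x_i)$. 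Thus the preimages $\{f^{-n_\ast}(B_{n_i-n_\ast}(f^{n_\ast}(x_i),\varepsilon))\}_i$ cover $Y'$, and since $f^{-n_\ast}$ applied to a Bowen ball is again contained in a Bowen ball of the same radius times the distortion of $f^{n_\ast}$ — better: simply note $f^{-n_\ast}(B_{m}(z,\varepsilon))=B_{m+n_\ast}(f^{-n_\ast}(z),\varepsilon')$-ish requires care, so instead I would directly cover $Y'$ by the balls $B_{n_i-n_\ast}(f^{n_\ast}(x_i),\varepsilon)$ after noting that $Y'$ maps into $\bigcup_i f^{n_\ast}\big(\text{orbit piece}\big)$; cleanly: the set $f^{n_\ast}(Y')$ is covered by $\{B_{n_i-n_\ast}(f^{n_\ast}(x_i),\varepsilon)\}_i$, and $h_{\rm top}(f,f^{n_\ast}(Y'))=h_{\rm top}(f,Y')$ since finite iteration preserves Bowen entropy. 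Fourth, $\sum_i e^{-h(n_i-n_\ast)}=e^{hn_\ast}\sum_i e^{-hn_i}$, so $m_{\varepsilon,h}(f^{n_\ast}(Y'))\le e^{hn_\ast}m_{\varepsilon/2,h}(Y)$; hence if $m_{\varepsilon/2,h}(Y)=0$ then $m_{\varepsilon,h}(f^{n_\ast}(Y'))=0$, giving $h_{\rm top}(f,f^{n_\ast}(Y'),\varepsilon)\le h_{\rm top}(f,Y,\varepsilon/2)$. Fifth, let $\varepsilon\to0$: using Lemma~\ref{lemmono} and Lemma~\ref{lem:entropy}(1), $h_{\rm top}(f,f^{n_\ast}(Y'))=\lim_\varepsilon h_{\rm top}(f,f^{n_\ast}(Y'),\varepsilon)\le\lim_\varepsilon h_{\rm top}(f,Y,\varepsilon/2)=h_{\rm top}(f,Y)$, and since $h_{\rm top}(f,f^{n_\ast}(Y'))=h_{\rm top}(f,Y')$ (invariance under finite iterates, a standard property — or cite \cite{Bow:73,Pes:97}) we conclude $h_{\rm top}(f,Y')\le h_{\rm top}(f,Y)$. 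The reverse inequality follows by swapping the roles of $Y$ and $Y'$.

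\textbf{Main obstacle.} The only delicate point is bookkeeping the ``loss of $n_\ast$ iterates'' correctly so that one genuinely ends with a Bowen cover of a set whose entropy equals that of $Y'$. Working with $f^{n_\ast}(Y')$ rather than $Y'$ directly sidesteps the awkward behavior of $f^{-n_\ast}$ on Bowen balls (expansion of radius), and the fact that $h_{\rm top}(f,f^{n_\ast}(Z))=h_{\rm top}(f,Z)$ for any $Z$ is elementary and part of the Bowen formalism in \cite{Bow:73} (topological entropy on a set is unchanged under applying a power of the map, since $B_n(f^k(x),\varepsilon)\supseteq f^k(B_{n+k}(x,\varepsilon))$ and $f^k$ is uniformly continuous, yielding two-sided comparisons of covering numbers up to a shift by $k$). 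One must also make sure the constant $N$ in the definition of $m_{\varepsilon,h}$ can be taken large enough that $n_i-n_\ast\ge N'$ whenever $n_i\ge N$; since $m_{\varepsilon,h}(Y)$ is defined via $\lim_{N\to\infty}$ this is harmless. No dynamical coherence, no smoothness beyond $C^1$, and no property of $Y,Y'$ other than the stated inclusions are needed.
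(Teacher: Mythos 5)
Your reduction to the one-sided inequality, the choice of $\pi\colon Y'\to Y$, the use of uniform stable contraction, and the resulting covering estimate in Step~3 are all correct. The gap is in Step~5. You write
\[
h_{\rm top}\big(f,f^{n_\ast}(Y')\big)=\lim_{\varepsilon\to0}h_{\rm top}\big(f,f^{n_\ast}(Y'),\varepsilon\big)\le\lim_{\varepsilon\to0}h_{\rm top}(f,Y,\varepsilon/2)=h_{\rm top}(f,Y),
\]
but $n_\ast=n_\ast(\varepsilon)$ is \emph{not} independent of $\varepsilon$: it must satisfy $C\lambda^{n_\ast}\ell<\varepsilon/2$, hence $n_\ast(\varepsilon)\to\infty$ as $\varepsilon\to0$. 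So the set $f^{n_\ast(\varepsilon)}(Y')$ changes with $\varepsilon$, and the displayed equality — which is only valid for a \emph{fixed} set — does not apply. For a genuinely fixed $n_\ast$, your estimate $h_{\rm top}(f,f^{n_\ast}(Y'),\varepsilon)\le h_{\rm top}(f,Y,\varepsilon/2)$ is only established for $\varepsilon\ge 2C\lambda^{n_\ast}\ell$, i.e. for $\varepsilon$ bounded away from zero; since $h_{\rm top}(f,f^{n_\ast}(Y'))=\sup_{\varepsilon>0}h_{\rm top}(f,f^{n_\ast}(Y'),\varepsilon)$ is taken as $\varepsilon\to0$, this gives no bound. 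In short, the invariance $h_{\rm top}(f,f^{n_\ast}(Z))=h_{\rm top}(f,Z)$ cannot be combined with an estimate that holds only at a single scale $\varepsilon$ coupled to $n_\ast$.

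The obstruction you flag — ``awkward behavior of $f^{-n_\ast}$ on Bowen balls (expansion of radius)'' — is exactly the real difficulty, and passing to $f^{n_\ast}(Y')$ does not sidestep it; it merely relocates it to the $\varepsilon$-dependence of $n_\ast$. The paper's fix is to \emph{not} shorten the Bowen balls: one covers $Y'$ itself (not $f^{n_\ast}(Y')$) by Bowen balls of the \emph{original} length $n_i$ and radius $3\varepsilon$, at the cost of replacing each ball by at most $c\cdot C^{k_\varepsilon\dim M}$ balls, where the extra factor comes from an $\varepsilon\cdot C^{-k_\varepsilon}$-dense net that handles the first $k_\varepsilon$ iterates (the ones where the stable discrepancy has not yet contracted below $\varepsilon$). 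Since this multiplicity depends only on $\varepsilon$ and not on $n_i$ or $N$, it is a constant prefactor in $\sum_i e^{-hn_i}$ and is irrelevant for whether $m_{3\varepsilon,h}(Y')$ vanishes. This yields $m_{3\varepsilon,h}(Y')\le c\,C^{k_\varepsilon\dim M}\, m_{\varepsilon,h}(Y)$, and hence $h_{\rm top}(f,Y',3\varepsilon)\le h_{\rm top}(f,Y,\varepsilon)$ \emph{for every} $\varepsilon$, which does close when $\varepsilon\to0$. Your argument can be repaired along these lines by replacing Steps~4--5 with this net-subdivision step, keeping the ball length $n_i$ throughout.
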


\proof
As $Y^\prime\subset \cW^\ss(Y,\ell)$ and $Y\subset \cW^\ss(Y^\prime,\ell)$, by monotonicity of entropy, it suffices to show that $h_{\rm top}(f,Y)\geq h_{\rm top}(f,\cW^\ss(Y,\ell))$.
	
By the uniform contraction of $f$ along the strong stable bundle, for every $\e>0,$ there is $k_\e\in\bN$  such that 
\begin{equation}\label{eqcontractionss}
	f^{k}(\cW^\ss(x,\ell))\subset \cW^\ss(f^{k}(x),{\e/2})
	\quad\text{ for every $x\in M$ and $k\geq k_\e$.}
\end{equation}	
	
Given any $\e>0$ and $N\in\bN$, fix now a countable collection of Bowen balls $\{B_{n_i}(x_i,\e)\}$ whose union covers $Y$ and which  satisfy $n_i\geq N+k_\e$ for each $i$.  

\begin{claim}\label{c.cover-under-iterates}
		$f^{k_\e}(\cW^\ss(Y,\ell))\subset \bigcup_{i} B_{n_i-k_\e}\big(f^{k_\e}(x_i),3\e/2\big)$. 
\end{claim}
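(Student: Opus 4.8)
The plan is to establish the inclusion pointwise, by tracing an arbitrary point of the left-hand set backward along its strong stable leaf and then matching it against the given Bowen cover. First I would fix, as in the paragraph preceding the claim, the parameter $\e>0$, the integer $N\in\bN$, and the countable collection $\{B_{n_i}(x_i,\e)\}_i$ which covers $Y$ and satisfies $n_i\ge N+k_\e$ for every $i$. Then I would take an arbitrary $w\in f^{k_\e}(\cW^\ss(Y,\ell))$ and write $w=f^{k_\e}(z)$ with $z\in\cW^\ss(y,\ell)$ for some $y\in Y$. Since the balls cover $Y$, I can select an index $i$ with $y\in B_{n_i}(x_i,\e)$, i.e. $d(f^j(y),f^j(x_i))<\e$ for $0\le j\le n_i-1$. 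The goal is then to show $w\in B_{n_i-k_\e}\big(f^{k_\e}(x_i),3\e/2\big)$.

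The heart of the argument is a single triangle inequality over the relevant range of iterates. I would estimate $d\big(f^{j'}(f^{k_\e}(z)),f^{j'}(f^{k_\e}(x_i))\big)=d\big(f^{k_\e+j'}(z),f^{k_\e+j'}(x_i)\big)$ for $0\le j'\le n_i-k_\e-1$; writing $j=k_\e+j'$, this means $k_\e\le j\le n_i-1$. For such $j$, since $j\ge k_\e$ and $z\in\cW^\ss(y,\ell)$, property \eqref{eqcontractionss} gives $f^j(z)\in f^j(\cW^\ss(y,\ell))\subset\cW^\ss(f^j(y),\e/2)$, and hence $d(f^j(z),f^j(y))<\e/2$ (the ambient distance being bounded by the intrinsic leaf distance used to define the ball $\cW^\ss(\cdot,\e/2)$). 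Combining with $d(f^j(y),f^j(x_i))<\e$, which holds because $j\le n_i-1$, yields $d(f^j(z),f^j(x_i))<\e/2+\e=3\e/2$ for all such $j$. By definition of the Bowen ball this is exactly $w=f^{k_\e}(z)\in B_{n_i-k_\e}\big(f^{k_\e}(x_i),3\e/2\big)$, and since $w$ was arbitrary the claimed inclusion follows.

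I do not expect any serious obstacle; the only point requiring care is the index bookkeeping. Applying $f^{k_\e}$ "consumes" the first $k_\e$ iterates, which forces the Bowen balls on the right to have length $n_i-k_\e$ rather than $n_i$, and the hypothesis $n_i\ge N+k_\e$ is precisely what guarantees these lengths are still at least $N$ — this is what the subsequent step will need in order to bound $\sum_i e^{-h(n_i-k_\e)}$ from below and thereby compare $m_{3\e/2,h}\big(f^{k_\e}(\cW^\ss(Y,\ell))\big)$ with $m_{\e,h}(Y)$ up to the harmless factor $e^{hk_\e}$. All the genuine dynamical content is already contained in the uniform contraction \eqref{eqcontractionss}.
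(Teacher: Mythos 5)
Your proof is correct and takes essentially the same route as the paper: pick a point $x\in\cW^\ss(Y,\ell)$ (your $z$), find $y\in Y$ on the same strong stable leaf and an index $i$ with $y\in B_{n_i}(x_i,\e)$, then use the triangle inequality together with the contraction estimate \eqref{eqcontractionss} to bound $d(f^{j+k_\e}(x),f^{j+k_\e}(x_i))<\e/2+\e=3\e/2$ for $0\le j<n_i-k_\e$. The only cosmetic difference is that you phrase the argument after applying $f^{k_\e}$, whereas the paper does the bookkeeping on the un-shifted iterates; both amount to the identical estimate.
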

	
\proof
For every $x\in\cW^\ss(Y,\ell)$, there exists $y\in Y$ such that $x\in\cW^\ss(y,\ell)$. Since there exists $x_i$ such that $y\in B_{n_i}(x_i,\e)$, thus  for every $n_i-k_\e>j\geq 0$, one has that \[d(f^{j+k_\e}(x),f^{j+k_\e}(x_i))\leq d(f^{j+k_\e}(x),f^{j+k_\e}(y))+d(f^{j+k_\e}(y),f^{j+k_\e}(x_i))<3\e/2. \] This proves the claim.
\endproof

Fixing some quantifiers, let
\begin{equation}\label{deflargestDf}
	C\eqdef\sup_{x\in M}\big\{\|Df(x)\|,\|Df^{-1}(x)\|\big\}.
\end{equation}
The following fact is straightforward to prove.

\begin{claim}\label{clfact}
There is a constant $c>0$ such that for every $x\in M$, there are $c\cdot C^{k_\e\cdot\dim(M)}$-many points which are $\e\cdot C^{-k_\e}$-dense in $B(x,3\e/2)$.  
\end{claim}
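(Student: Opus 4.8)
The statement to prove (Claim~\ref{clfact}) is elementary and purely metric: it only uses that $f$ is a $C^1$-diffeomorphism of the compact manifold $M$, so that $C\eqdef\sup_{x\in M}\{\|Df(x)\|,\|Df^{-1}(x)\|\}$ in \eqref{deflargestDf} is finite, and that $k_\e$ (from \eqref{eqcontractionss}) is a fixed integer once $\e>0$ is fixed. The idea is to reduce the problem to a covering count in a Euclidean chart and to control how the metric distorts under the fixed iterate $f^{k_\e}$.

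\textbf{Key steps.} First I would fix, once and for all, a finite atlas of charts on $M$ with uniformly bi-Lipschitz transition to Euclidean balls; by compactness there is a single Lebesgue-number scale $\rho_0>0$ and a single distortion constant so that any metric ball of radius at most $\rho_0$ is comparable (up to a universal factor) to a Euclidean ball of the same radius. Since $\e$ is the separation/covering scale used throughout Section~\ref{App:B}, we may assume $3\e/2<\rho_0$ (this is the regime in which the theorem is applied; if needed one first shrinks $\e$, which only makes the covering finer). Second, in such a chart, a Euclidean ball of radius $3\e/2$ in $\dim(M)$ dimensions can be covered by at most $c_0\cdot(3\e/2 \div (\e\cdot C^{-k_\e}))^{\dim(M)} = c_0\cdot(3/2)^{\dim(M)}\cdot C^{k_\e\cdot\dim(M)}$ Euclidean balls of radius $\e\cdot C^{-k_\e}$, where $c_0$ depends only on $\dim(M)$; taking the centers of these balls gives a set that is $\e\cdot C^{-k_\e}$-dense in $B(x,3\e/2)$ in the Euclidean metric, hence (after absorbing the chart distortion constant into $c$) $\e\cdot C^{-k_\e}$-dense in the Riemannian metric. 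Third, I would set $c$ to be the product of $c_0\cdot(3/2)^{\dim(M)}$ with the universal chart-distortion constant, which depends only on $M$ and the chosen atlas, not on $x$ or on $\e$; this yields the asserted bound $c\cdot C^{k_\e\cdot\dim(M)}$ uniformly in $x\in M$. A cleaner alternative, avoiding the chart bookkeeping, is to invoke directly the standard volume-doubling / packing property of a compact Riemannian manifold: there is a constant $D=D(M)$ such that every ball of radius $r\le\mathrm{diam}(M)$ contains at most $D\cdot(r/s)^{\dim(M)}$ disjoint balls of radius $s\le r$; a maximal $(\e C^{-k_\e})$-separated subset of $B(x,3\e/2)$ is then automatically $(\e C^{-k_\e})$-dense and has cardinality at most $D\cdot(3C^{k_\e}/2)^{\dim(M)}=c\cdot C^{k_\e\dim(M)}$ with $c\eqdef D\cdot(3/2)^{\dim(M)}$.

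\textbf{Main obstacle.} There is essentially no serious obstacle here — the claim is a bookkeeping statement, which is precisely why the paper states it ``is straightforward to prove.'' The only point requiring a little care is making the constant $c$ \emph{genuinely uniform}: one must make sure the covering/packing constant does not secretly depend on $x$ or on the scale, which is guaranteed by compactness of $M$ (finitely many charts, a uniform injectivity radius, and a uniform doubling constant). Once one adopts the packing-constant formulation, the proof is one line: a maximal $\e C^{-k_\e}$-separated set in $B(x,3\e/2)$ is $\e C^{-k_\e}$-dense and, by the doubling property of the compact manifold $M$, has at most $c\cdot C^{k_\e\dim(M)}$ points. Hence I would simply record this argument and conclude.
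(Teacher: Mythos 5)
Your argument is correct, and it is the natural one: the paper gives no proof of Claim~\ref{clfact}, simply labelling it ``straightforward to prove.'' The packing/doubling bound on the compact Riemannian manifold $M$ does the job exactly as you describe, and the key bookkeeping point — that $c$ is uniform in $x$ and independent of $\e$ and $k_\e$ — holds because the ratio of scales $(3\e/2)\big/(\e C^{-k_\e}) = \tfrac{3}{2}\,C^{k_\e}$ has the $\e$-dependence cancel, so $c$ need only absorb the packing constant of $M$ together with $(3/2)^{\dim(M)}$. Your remark that one may assume $3\e/2$ is below the scale where the volume comparison is clean is fine (it is the regime $\e\to 0$ that matters in Theorem~\ref{theop.entropy-in-cu}); and in any case the bound persists for large $\e$ after covering $M$ by finitely many small balls, so no restriction on $\e$ is actually needed.
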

 
\begin{claim}\label{someclaimabove}
There exists a collection of sets $\{Y_i\}$ such that 
\[
	\cW^\ss(Y,\ell)\subset \bigcup_{i} \bigcup_{y\in Y_i} B_{n_i}(f^{-k_\e}(y),3\e)
	\quad\text{ and }\quad
	\card(Y_i)\le c\cdot C^{k_\e\cdot\dim(M)}.
\]	 
\end{claim}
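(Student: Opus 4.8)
The plan is to prove Claim~\ref{someclaimabove} by combining Claim~\ref{c.cover-under-iterates} and Claim~\ref{clfact} and then pulling the covering back under $f^{-k_\e}$. First I would start from the covering of $f^{k_\e}(\cW^\ss(Y,\ell))$ by the Bowen balls $B_{n_i-k_\e}(f^{k_\e}(x_i),3\e/2)$ provided by Claim~\ref{c.cover-under-iterates}. For each index $i$, apply Claim~\ref{clfact} to the point $f^{k_\e}(x_i)$ to obtain a finite set $Z_i$ with $\card(Z_i)\le c\cdot C^{k_\e\cdot\dim(M)}$ whose points are $\e\cdot C^{-k_\e}$-dense in $B(f^{k_\e}(x_i),3\e/2)$; in particular $B(f^{k_\e}(x_i),3\e/2)\subset\bigcup_{z\in Z_i}B(z,\e\cdot C^{-k_\e})$.

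Next I would upgrade this from a cover of the $0$-th coordinate to a cover in the Bowen metric of length $n_i-k_\e$. The key point is that $f$ and $f^{-1}$ are $C$-Lipschitz (recall \eqref{deflargestDf}), so any point $w$ with $d(w,f^{k_\e}(x_i))<3\e/2$ that lies in $B_{n_i-k_\e}(f^{k_\e}(x_i),3\e/2)$ actually lies in $B_{n_i-k_\e}(z,3\e/2+\e)$ for some $z\in Z_i$ provided we control forward drift. More carefully, one should instead run the density argument along the whole orbit: since $f^j$ is $C^j$-Lipschitz, a point that is $\e C^{-k_\e}$-close to $z$ at time $0$ and $3\e/2$-shadowed by $f^{k_\e}(x_i)$ stays within $3\e$ of $z$ in the $(n_i-k_\e,\cdot)$-Bowen metric once we absorb the $\e C^{-k_\e}$ error using the bound $C^{k_\e}$; this is where the exponent $C^{-k_\e}$ in Claim~\ref{clfact} was chosen. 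This yields
\[
	f^{k_\e}\big(\cW^\ss(Y,\ell)\big)
	\subset \bigcup_i\bigcup_{z\in Z_i}B_{n_i-k_\e}\big(z,3\e\big).
\]
Finally, applying $f^{-k_\e}$ and using that $f^{-k_\e}$ maps the $(n_i-k_\e,3\e)$-Bowen ball around $z$ into the $(n_i,3\e)$-Bowen ball around $f^{-k_\e}(z)$ (again by the Lipschitz bound, after possibly enlarging constants absorbed into the radius), and setting $Y_i\eqdef f^{-k_\e}(Z_i)$, we obtain
\[
	\cW^\ss(Y,\ell)
	\subset \bigcup_i\bigcup_{y\in Y_i}B_{n_i}\big(f^{-k_\e}(y),3\e\big),
	\quad
	\card(Y_i)\le c\cdot C^{k_\e\cdot\dim(M)},
\]
which is the claim.

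The main obstacle I expect is the bookkeeping of the radii and the precise way the Lipschitz constant interacts with the Bowen metric over a window whose length depends on $i$: one must check that the passage from a $0$-th coordinate $\e C^{-k_\e}$-net to a $(n,3\e)$-Bowen cover does not require enlarging the net (only the radius), since the number of sets $\card(Y_i)$ must stay bounded by $c\cdot C^{k_\e\cdot\dim(M)}$ uniformly in $i$. The reason it works is that the error $\e C^{-k_\e}$ is only propagated over the \emph{fixed} number $k_\e$ of initial steps (the first $k_\e$ iterates, where the contraction along $\cW^\ss$ has not yet kicked in), after which \eqref{eqcontractionss} keeps the strong-stable spread below $\e/2$ for all remaining times; so the net size depends only on $k_\e$, not on $n_i$. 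Once Claim~\ref{someclaimabove} is in hand, the entropy estimate $h_{\rm top}(f,Y)\ge h_{\rm top}(f,\cW^\ss(Y,\ell))$ follows by comparing the sums $\sum_i e^{-h n_i}$ against $\sum_i \card(Y_i)e^{-h n_i}=c\,C^{k_\e\dim M}\sum_i e^{-hn_i}$, so the defining infima for $m_{3\e,h}$ of $\cW^\ss(Y,\ell)$ and of $Y$ differ only by the constant factor $c\,C^{k_\e\dim M}$, which does not affect whether the critical exponent is reached; letting $\e\to0$ and invoking Lemma~\ref{lem:entropy} then gives the desired equality of entropies, completing the proof of Theorem~\ref{theop.entropy-in-cu}.
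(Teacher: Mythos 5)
Your overall scheme is the one the paper follows: pull the cover from Claim~\ref{c.cover-under-iterates} back under $f^{-k_\e}$, inserting a finite $\e C^{-k_\e}$-net (Claim~\ref{clfact}) of fixed cardinality inside each ball. But the two key logical steps are misidentified, and one of them would not survive a careful check.

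For the forward window $j\in[0,n_i-k_\e)$, you claim that a point $\e C^{-k_\e}$-close to the net point $z$ stays within $3\e$ of $z$ in the Bowen metric ``once we absorb the $\e C^{-k_\e}$ error using the bound $C^{k_\e}$.'' That Lipschitz argument only controls the first $k_\e$ iterates; for larger $j$ the error $C^j\,\e C^{-k_\e}$ is unbounded in $j$, so it cannot give a Bowen ball over a window of length $n_i-k_\e$ uniformly in $i$. The mechanism that actually works in this window is a plain triangle inequality inside the Bowen ball: after discarding net points outside $B_{n_i-k_\e}(f^{k_\e}(x_i),3\e/2)$ (which is what the ``WLOG $Y_i\subset B_{n_i-k_\e}(\cdot,3\e/2)$'' step is for), both $y\in Y_i$ and $f^{k_\e}(x)$ lie within $3\e/2$ of $f^{j+k_\e}(x_i)$ for every $j$ in the window, hence $d(f^j(y),f^{j+k_\e}(x))<3\e$. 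No Lipschitz propagation is used here, and \eqref{eqcontractionss} is not invoked again either (it has already been spent in Claim~\ref{c.cover-under-iterates}).

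Conversely, your pull-back step asserts that $f^{-k_\e}$ sends $B_{n_i-k_\e}(z,3\e)$ \emph{into} $B_{n_i}(f^{-k_\e}(z),3\e)$; the inclusion in fact goes the other way, since $B_{n_i}(f^{-k_\e}(z),3\e)$ imposes extra conditions at the times $0,\dots,k_\e-1$. Controlling those $k_\e$ initial coordinates is precisely where the $\e C^{-k_\e}$-density of the net and the $C$-Lipschitz bound on $f^{-1}$ come in: if $d(y,f^{k_\e}(x))<\e C^{-k_\e}$ then $d(f^{-j}(y),f^{k_\e-j}(x))\le C^j\,\e C^{-k_\e}\le\e$ for $0\le j\le k_\e$. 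In your writeup this step is relegated to a parenthetical about ``enlarging constants absorbed into the radius,'' but it is the crux of why the net size can be taken uniform in $i$. Swapping the roles of the Lipschitz estimate (use it backward over the fixed window $k_\e$, not forward over $n_i-k_\e$) and the triangle-in-Bowen-ball estimate (use it forward) repairs the proof and matches what the paper does.
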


\proof
Note that for each $x_i$, one has 
\[
	B_{n_i-k_\e}\big(f^{k_\e}(x_i),3\e/2\big)\subset B\big(f^{k_\e}(x_i),3\e/2\big).
\]	 
Hence, by Claim \ref{c.cover-under-iterates},
\[
	\cW^\ss(Y,\ell)
	\subset\bigcup_if^{-k_\e}\big(B_{n_i-k_\e}(f^{k_\e}(x_i),3\e/2)\big)
	\subset\bigcup_if^{-k_\e}\big(B(f^{k_\e}(x_i),3\e/2)\big).
\]
By Claim \ref{clfact}, for each $i$, we can choose  a finite set of points $Y_i\subset B(f^{k_\e}(x_i),3\e/2)$ which is $\e\cdot C^{-k_\e}$-dense in $B(f^{k_\e}(x_i),3\e/2)$ and has cardinality at most $c\cdot C^{k_\e\cdot\dim(M)}$. Without loss of generality, we can assume that $Y_i\subset B_{n_i-k_\e}(f^{k_\e}(x_i),3\e/2)$.

Now fix $x\in \cW^\ss(Y,\ell)$,  then there exists $x_i$ such that 
\begin{equation}\label{choisess}
	f^{k_\e}(x)\in  B_{n_i-k_\e}(f^{k_\e}(x_i),3\e/2).
\end{equation}
Then take $y\in Y_i$ such that $d(f^{k_\e}(x),y)<\e\cdot C^{-k_\e}.$ Now, we have the following estimates:
By Equation \eqref{deflargestDf},  for every  $0\leq j\leq k_\e$, one has
\[
	d(f^{-j}(y),f^{k_\e-j}(x))
	\le C^jd(y,f^{k_\e}(x))
	\le C^j\cdot \e\cdot C^{-k_\e}
	\le \e.
\] 
Moreover, as $y\in Y_i\subset  B_{n_i-k_\e}(f^{k_\e}(x_i),3\e/2)$ and $f^{k_\e}(x)\in  B_{n_i-k_\e}(f^{k_\e}(x_i),3\e/2)$, for $0\leq j<n_i-k_\e$, one has 
\[\begin{split}
	d\big(f^j(y),f^{j+k_\e}(x)\big)
	\leq d\big(f^j(y),f^{j+k_\e}(x_i)\big)+d\big(f^{j+k_\e}(x_i),f^{j+k_\e}(x)\big)
	\le 3\e/2+3\e/2
	= 3\e.
\end{split}\]
To summarize, one has
\[
	d(f^{j-k_\e}(y),f^j(x))
	\le3\e,\textrm{ for $0\leq j<n_i$},
\]
proving the claim.	
\endproof 
	
For every $h>h_{\rm top}(f,Y)$, 
by definition, there exists $\varepsilon_0>0$ such that for every $\e<\varepsilon_0$, one has $ m_{\e,h}(Y)=0$. By definition, we have
\[
 m_{3\e,h}(\cW^\ss(Y,\ell))
=\lim_{N\to\infty} \inf\Big\{\sum_je^{-hn_j}\colon
		\cW^\ss(Y,\ell)\subset\bigcup_jB_{n_j}(y_j,3\varepsilon),n_j\ge N+k_\e\text{ for every }j\Big\}.	
\]
Given now any countable cover $\{B_{n_i}(x_i,\e)\}$ of $Y$, by Claim \ref{someclaimabove}, we get a particular cover of $\cW^\ss(Y,\ell)$ and thus we can estimate as follows:
\[\begin{split}
	& m_{3\e,h}(\cW^\ss(Y,\ell))\\
	&\leq \lim_{N\to\infty} \inf\Big\{\sum_i\sum_{y\in Y_i}e^{-hn_i}\colon
		 Y\subset\bigcup_iB_{n_i}(x_i,\varepsilon),n_i\ge N+k_\e\text{ for every }i\Big\}\\
	&= \lim_{N\to\infty} \inf\Big\{\sum_i\card(Y_i)e^{-hn_i}\colon
		 Y\subset\bigcup_iB_{n_i}(x_i,\varepsilon),n_i\ge N+k_\e\text{ for every }i\Big\}\\
	&\leq c\cdot C^{k_\e\cdot\dim(M)}\cdot
		 \lim_{N\to\infty} \inf\Big\{ \sum_i e^{-hn_i}\colon
		 Y\subset\bigcup_iB_{n_i}(x_i,\varepsilon),n_i\ge N+k_\e\text{ for every }i\Big\}\\
	&=c\cdot C^{k_\e\cdot\dim(M)}\cdot  m_{\e,h}(Y)
		 =0.
\end{split}\]
	
As $\e\in(0,\varepsilon_0)$ is arbitrary, we get $ h_{\rm top}(f,\cW^\ss(Y,\ell))\leq h$. As $h>h_{\rm top}(f,Y)$ is arbitrary, this implies that $h_{\rm top}(f,\cW^\ss(Y,\ell))\leq h_{\rm top}(f,Y).$
\endproof

\end{document}